\title{Mapping tori of $A_{\infty}$-autoequivalences and Legendrian lifts of exact Lagrangians in circular contactizations}
\author{Adrian Petr}
\date{}
\theoremstyle{plain}
\newtheorem{thm}{Theorem}[section]
\newtheorem{prop}[thm]{Proposition}
\newtheorem{coro}[thm]{Corollary}
\newtheorem{lemma}[thm]{Lemma}
\newtheorem{conj}[thm]{Conjecture}
\theoremstyle{remark}
\newtheorem{rmk}[thm]{Remark}
\newtheorem{rmks}[thm]{Remarks}
\theoremstyle{definition}
\newtheorem{defin}[thm]{Definition}
\newtheorem{exa}[thm]{Example}
\newcommand\mcA{\mathcal{A}}
\newcommand\mcB{\mathcal{B}}
\newcommand\mcC{\mathcal{C}}
\newcommand\mcD{\mathcal{D}}
\newcommand\mcE{\mathcal{E}}
\newcommand\mcF{\mathcal{F}}
\newcommand\mcG{\mathcal{G}}
\newcommand\mcH{\mathcal{H}}
\newcommand\mcI{\mathcal{I}}
\newcommand\mcL{\mathcal{L}}
\newcommand\mcM{\mathcal{M}}
\newcommand\mcN{\mathcal{N}}
\newcommand\mcO{\mathcal{O}}
\newcommand\mcP{\mathcal{P}}
\newcommand\mcR{\mathcal{R}}
\newcommand\mcS{\mathcal{S}}
\begin{document}

\selectlanguage{english}	
	
\maketitle

\begin{abstract}
	
	We study mapping tori of quasi-autoequivalences $\tau : \mcA \to \mcA$ which induce a free action of $\mathbf{Z}$ on objects. More precisely, we compute the mapping torus of $\tau$ when it is strict and acts bijectively on hom-sets, or when the $A_{\infty}$-category $\mcA$ is directed and there is a bimodule map $\mcA (-, -) \to \mcA (-, \tau (-))$ satisfying some hypotheses.
	Then we apply these results in order to link together the Fukaya $A_{\infty}$-category of a family of exact Lagrangians, and the Chekanov-Eliashberg DG-category of Legendrian lifts in the circular contactization.
	
\end{abstract}

\tableofcontents 

\theoremstyle{plain}
\newtheorem*{propo}{Proposition}
\newtheorem{thmintro}{Theorem}
\renewcommand*{\thethmintro}{\Alph{thmintro}}
\newtheorem*{corointro}{Corollary}

\theoremstyle{remark}
\newtheorem*{rmkintro}{Remark}
\newtheorem*{rmksintro}{Remarks}

\section*{Introduction}
\addcontentsline{toc}{section}{Introduction} 

Legendrian contact homology was introduced by Chekanov \cite{Che02} and Eliashberg \cite{Eli98}, and it fits into the Symplectic Field Theory introduced in \cite{EGH00}. It has been rigorously defined in the contactization of a Liouville manifold in \cite{EES07}, following \cite{EES05}. The importance of Legendrian contact homology goes beyond its applications to the Legendrian isotopy problem: for example, it was used by Bourgeois, Ekholm and Eliashberg in \cite{BEE12} to compute symplectic invariants of Weinstein manifolds, and in a different way by Chantraine, Dimitroglou Rizell, Ghiggini and Golovko in \cite{CDGG17} to prove a generation result for the wrapped Fukaya category of Weinstein manifolds. 

The motivation for this paper is the study of Legendrian contact homology in subcritically fillable and Boothby-Wang contact manifolds, the latter being named after \cite{BW58}. This has been done combinatorially in dimension three by Ekholm and Ng in \cite{EN15} for the subcritically fillable case, and by Sabloff in \cite{Sab03} for the Boothby-Wang case. The importance of the first kind of manifolds comes from the fact that every Weinstein manifold is obtained from a subcritical Weinstein manifold (of the form $\mathbf{C} \times P$ for some Weinstein manifold $P$) by attaching handles along Legendrian submanifolds in its boundary at infinity. The importance of the second kind of manifolds comes from a theorem of Donaldson in \cite{Don96}, which states that any integral symplectic manifold $\left( X, \omega \right)$ admits a symplectic submanifold $D \subset X$ of codimension $2$, such that $X \setminus D$ is a Liouville manifold whose boundary at infinity is a Boothby-Wang contact manifold. 
The first step before attacking both cases presented above is to study Legendrian contact homology in the circular contactization of a Liouville manifold. In fact, both subcritically fillable and Boothby-Wang contact manifolds can be seen as compactifications of such spaces. This paper links together the Fukaya $A_{\infty}$-category of a family of connected compact exact Lagrangians in a Liouville manifold $(P, \lambda)$, and the Chekanov-Eliashberg DG-category of Legendrian lifts in the circular contactization $\left( S^1 \times P, \ker ( d \theta - \lambda ) \right)$.

The strategy we follow is to lift the situation to the usual contactization $\mathbf{R} \times P$ which has been much more studied. This naturally leads to consider an $A_{\infty}$-category whose objects are the lifts in $\mathbf{R} \times P$ of our starting Legendrians,  and morphisms spaces are generated by Reeb chords. Moreover, the deck transformations of the cover $\mathbf{R} \to S^1$ induce an $A_{\infty}$-autoequivalence of this category. 
The rest of the proof has two main ingredients: 
\begin{enumerate}
	
	\item Functorial properties of the Legendrian invariants, which are used to bring us in a situation where we can apply the correspondence result of \cite{DR16} between discs in the symplectization $\mathbf{R} \times \mathbf{R} \times P$ and polygons in $P$.
	
	\item Two algebraic results of independent interest about mapping tori of $A_{\infty}$-autoequivalences, that allow us to bridge the gaps between the algebraic invariants we are interested in.  
	
\end{enumerate} 
We now proceed to describe the organization of the paper and state our main results.

\paragraph{Algebra.} 

In section \ref{subsection algebra}, we briefly recall the definitions of $A_{\infty}$-(co)categories and give references for standard notions that we do not recall, such as (co)bar, graded dual and Koszul dual constructions.
On the other hand, we discuss in some details the notions of modules over $A_{\infty}$-categories, as well as Grothendieck construction and homotopy pushout associated to a diagram of $A_{\infty}$-categories following \cite[section A.4]{GPS19}.
We use it to introduce the notion of ``cylinder object for an $A_{\infty}$-category'', which is supposed to mimic the corresponding notion in homotopy theory. 

\paragraph{Mapping torus of an $A_{\infty}$-autoequivalence.} 

In section \ref{subsection mapping torus}\footnote{In this section, $A_{\infty}$-categories are always assumed to be \emph{strictly unital} (see \cite[paragraph (2a)]{Sei08}).}, we define the mapping torus associated to a quasi-autoequivalence $\tau$ of an $A_{\infty}$-category $\mcA$ as the $A_{\infty}$-category
\[\mathrm{MT} (\tau) := \mathrm{hocolim} \left( 
\begin{tikzcd}
	\mcA \sqcup \mcA \ar[r, "\mathrm{id} \sqcup \tau"] \ar[d, "\mathrm{id} \sqcup \mathrm{id}" left] & \mcA \\
	\mcA .
\end{tikzcd} \right). \]
Observe that this terminology was also used in \cite{Kar21}, but we do not know if the two notions coincide. When considering an $A_{\infty}$-autoequivalence $\tau : \mcA \to \mcA$, we always assume that $\mcA$ is equipped with a $\mathbf{Z}$-splitting of $\mathrm{ob} (\mcA)$ compatible with $\tau$, which is a bijection 
\[\mathbf{Z} \times \mcE \xrightarrow{\sim} \mathrm{ob} \left( \mcA \right), \quad \left( n, E \right) \mapsto X^n \left( E \right) \]
such that $\tau \left( X^n \left( E \right) \right) = X^{n+1} \left( E \right)$ for every $n \in \mathbf{Z}$ and $E \in \mcE$ (see Definition \ref{definition group-action}).
This naturally turns $\mcA$ into an Adams-graded $A_{\infty}$-category, where the Adams-degree of a morphism in $\mcA (X^i (E), X^j (E'))$ is defined to be $(j-i)$.
It then follows that the mapping torus of $\tau$ is also Adams-graded.

Section \ref{subsection mapping torus} contains two results about mapping tori of $A_{\infty}$-autoequivalences: we choose to state only the most important in this introduction.
We denote by $\mathbf{F} \left[ t_m \right]$ the augmented Adams-graded associative algebra generated by a variable $t_m$ of bidegree $(m, 1)$.
Observe that if $\mcC$ is a subcategory of an $A_{\infty}$-category $\mcD$ with $\mathrm{ob} (\mcC) = \mathrm{ob} (\mcD)$, then $\mcC \oplus (t_m \mathbf{F} [t_m] \otimes \mcD)$ is naturally an Adams-graded $A_{\infty}$-category, where the Adams degree of $t_m^k \otimes x$ equals $k$.
Besides, if $\mcC$ is an $A_{\infty}$-category equipped with a $\mathbf{Z}$-splitting of $\mathrm{ob} \left( \mcC \right)$, we denote by $\mcC^0$ the full $A_{\infty}$-subcategory of $\mcC$ whose set of objects corresponds to $\{ 0 \} \times \mcE$.
Finally, we use the functor $\mcC \mapsto \mcC_m$ of Definition \ref{definition forgetful functor}. 

\begin{thmintro}\label{thm mapping torus in weak situation introduction}
	
	Let $\tau$ be a quasi-autoequivalence of an $A_{\infty}$-category $\mcA$, weakly directed with respect to some compatible $\mathbf{Z}$-splitting of $\mathrm{ob} \left( \mcA \right)$.
	Assume that there exists a closed degree $0$ bimodule map $f : \mcA_m \left( -, - \right) \to \mcA_m \left( -, \tau(-) \right)$ such that $f : \mcA_m \left( X^i(E) , X^j(E') \right) \to \mcA_m \left( X^i(E), X^{j+1} (E') \right)$ is a quasi-isomorphism for every $i < j$ and $E, E' \in \mcE$.
	Then there is a quasi-equivalence of Adams-graded $A_{\infty}$-categories 
	\[\mathrm{MT} (\tau) \simeq \mcA_m^0 \oplus \left( t_m \mathbf{F} \left[ t_m \right] \otimes \mcA_m \left[ f \left( \mathrm{units} \right)^{-1} \right]^0 \right) . \]
	
\end{thmintro}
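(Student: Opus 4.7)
The plan is to produce an explicit strict model of $\mathrm{MT}(\tau)$ from the homotopy pushout presentation, construct a strict $A_{\infty}$-functor from this model to the target category, and then verify that it is a quasi-equivalence Adams-degree by Adams-degree using the hypothesis on $f$.

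First, I would apply the cylinder/Grothendieck construction machinery of Section \ref{subsection algebra} to the square defining $\mathrm{MT}(\tau)$. The resulting strict model has the same objects as $\mcA$ (parametrized by $\mathbf{Z} \times \mcE$) and is obtained from $\mcA$ by freely adjoining, for every object $X$, a new morphism $c_X : X \to \tau(X)$ of cohomological degree $0$ and Adams degree $1$, subject to the higher coherence homotopies produced by the cylinder. In $H^0 \mathrm{MT}(\tau)$ each $c_X$ becomes an isomorphism, so every $X^n(E)$ is equivalent to $X^0(E)$ and it suffices to compute hom-spaces between level $0$ objects.

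Second, I would define a strict $A_{\infty}$-functor
\[ F : \mathrm{MT}(\tau) \longrightarrow \mcA_m^0 \oplus \bigl( t_m \mathbf{F}[t_m] \otimes \mcA_m[f(\mathrm{units})^{-1}]^0 \bigr) \]
by sending $X^n(E) \mapsto E$, mapping $\mcA^0$-morphisms identically onto the Adams-degree $0$ summand, and sending each cylinder generator $c_X$ to $t_m \otimes f(e_X)$ (valid because $f(e_X)$ is invertible in the localized category, which allows the source and target to be identified with their level $0$ representatives). More generally, a morphism of $\mcA_m(X^i(E), X^j(E'))$ with $i \leq j$ is sent to $t_m^{j-i}$ times its image in $\mcA_m[f(\mathrm{units})^{-1}]^0$ after collapsing source and target to level $0$ via iterated $f(e)^{-1}$. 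The higher components of $F$ are dictated by the bimodule structure of $f$, which provides precisely the coherences needed for the $A_{\infty}$-functor equations.

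Third, I would verify the quasi-equivalence of $F$ Adams degree by Adams degree. For $k = 0$ this is tautological. For $k \geq 1$, iterating the hypothesis that $f : \mcA_m(X^i(E), X^j(E')) \to \mcA_m(X^i(E), X^{j+1}(E'))$ is a quasi-isomorphism whenever $i < j$ yields quasi-isomorphisms $\mcA_m(X^0(E), X^k(E')) \xrightarrow{\simeq} \mcA_m(X^0(E), X^{k+n}(E'))$ for every $n \geq 0$, so the filtered colimit computing the Adams-degree $k$ piece of the target localization already stabilizes at $\mcA_m(X^0(E), X^k(E'))$. I expect the main difficulty to lie in organizing the higher coherences of the pushout model cleanly enough that the $A_{\infty}$-functor equations for $F$ follow directly from $f$ being a bimodule (rather than merely a chain) map, and in ensuring that the weak directedness hypothesis is used correctly so that only ``upward'' compositions contribute at each Adams degree.
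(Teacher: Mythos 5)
Your overall strategy coincides with the paper's: replace $\mathrm{MT}(\tau)$ by the localization $\mcH = \mcG[W_{\mcG}^{-1}]$ of a Grothendieck construction built from a cylinder object, use $f$ to produce a functor to the target, and check the result Adams-degree by Adams-degree. However, there are two genuine gaps in the execution. The first is in the definition of your functor $F$. You propose to send a morphism of $\mcA_m(X^i(E), X^j(E'))$ to $t_m^{j-i}$ times its image ``after collapsing source and target to level $0$ via iterated $f(e)^{-1}$.'' But $f(e_X)^{-1}$ exists only in the localized category, as a zigzag defined up to coherent homotopy; there is no strict (or even directly specifiable) $A_{\infty}$-functor whose components involve these inverses, and the higher coherences needed to make ``collapse to level $0$'' into an $A_{\infty}$-functor are exactly what is missing. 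The paper circumvents this by never collapsing levels at the functor stage: it defines $\Psi_m : \mcG_m \to \mcA_m$ so that the objects $X^n_{\triangle}(E)$ go to $X^n(E)$ or $X^{n+1}(E)$, with the only nontrivial higher components coming from inserting $f$ at the single cylinder slot (this uses only that $f$ is a closed bimodule map, no inverses). The identification of all levels with level $0$ happens afterwards, at the level of hom-complexes, via the telescope modules $\mcM_{\mcG}$ and $\mcM_{\mcA_m}$ and Proposition \ref{prop quasi-isomorphism between localizations}; this is also where your informal claim that ``the filtered colimit computing the Adams-degree $k$ piece of the target localization stabilizes at $\mcA_m(X^0(E), X^k(E'))$'' is made rigorous (localized hom-spaces are not literally filtered colimits of the unlocalized ones, and one also needs the quasi-isomorphisms for \emph{left} multiplication by $f(e)$, which follow from the bimodule hypothesis via the Yoneda lemma but must be stated).

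The second gap is the assertion that the Adams-degree $0$ case is ``tautological.'' It is not: after localizing at the adjacent units, $\mcH(X^0_{\bullet}(E), X^0_{\bullet}(E'))^{*,0}$ contains contributions from arbitrary zigzags through the localization, and one must prove that the inclusion of $\mcG(X^0_{\bullet}(E), X^0_{\bullet}(E'))$ into this complex is a quasi-isomorphism. In the paper this is the content of the lemma that $\mcI \hookrightarrow \mcH$ is a quasi-equivalence, and it again rests on the acyclicity statements for the telescope module $\mcM_{\mcG}$ (Lemmas \ref{lemma properties of modules} and \ref{lemma properties of modules bis}). Both gaps are fixable, but they require the module-theoretic machinery rather than a direct construction.
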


\begin{rmksintro}
	
	\begin{enumerate}
		
		\item In \cite{Gan13}, the chain complex of $\mcA$-bimodule maps from the diagonal bimodule $\mcA \left( -, - \right)$ to some $\mcA$-bimodule $\mcB$ is called the two-pointed complex for Hochschild cohomology of $\mcA$ with coefficients in $\mcB$. According to \cite[Proposition 2.5]{Gan13}, this complex is quasi-isomorphic to the (ordinary) Hochschild cochain complex of $\mcA$ with coefficients in $\mcB$. In particular, the bimodule map $f$ in Theorem \ref{thm mapping torus in weak situation introduction} defines a class in the Hochschild cohomology of $\mcA_m$ with coefficients in $\mcA_m \left( -, \tau (-) \right)$.
		
		\item The $A_{\infty}$-category which computes the mapping torus in Theorem \ref{thm mapping torus in weak situation introduction} is very similar to the categories studied in \cite{Sei08bis}, with main difference the presence of curvature in Seidel's setting.
		
		\item The use of the functor $\mcC \mapsto \mcC_m$ in Theorem \ref{thm mapping torus in weak situation introduction} is not of any deep importance. It was convenient for us to introduce it here for our application to Legendrian contact homology (see Theorem \ref{thm mainthm} below).
		
	\end{enumerate}
	
\end{rmksintro}

\paragraph{Chekanov-Eliashberg DG-algebra.} 

In section \ref{section Legendrian invariants}, we recall the definition and functorial properties of the Chekanov-Eliashberg DG-category associated to a family of Legendrians in a hypertight contact manifold.

\paragraph{Legendrian lifts of exact Lagrangians in the circular contactization.} 

In section \ref{subsection proof of main thm}, we start with a family 
\[\mathbf{L} = \left( L(E) \right)_{E \in \mcE}, \quad \mcE = \left\{ 1, \dots, N \right\}, \]
of mutually transverse compact connected exact Lagrangian submanifolds in a Liouville manifold $\left( P, \lambda \right)$, and we study a Legendrian lift of $\mathbf{L}$ in the circular contactization $\left( S^1 \times P, \ker (d \theta - \lambda) \right)$.
More precisely, we assume\footnote{This can always be achieved by applying the Liouville flow in backwards time.} that there are primitives $f_E : L(E) \to \mathbf{R}$ of $\lambda_{| L(E)}$ such that $0 \leq f_1 < \dots < f_N \leq 1/2$,
and we consider the family of Legendrians
\[\mathbf{\Lambda^{\circ}} := (\Lambda^{\circ}(E))_{E \in \mcE}, \text{ where } \Lambda^{\circ}(E) = \left\{ (f_E(x), x) \in (\mathbf{R} / \mathbf{Z}) \times P \mid x \in L(E) \right\} . \]
We denote by $CE (\mathbf{\Lambda^{\circ}})$ the Chekanov-Eliashberg category of $\mathbf{\Lambda^{\circ}}$, by $\mcF uk (\mathbf{L})$ the Fukaya category generated by the Lagrangians $L(E)$ (see for example \cite[chapter 2]{Sei08}), and by $\overrightarrow{\mcF uk} (\mathbf{L})$ its directed subcategory (see \cite[paragraph (5n)]{Sei08}).

In order for the latter algebraic objects to be $\mathbf{Z}$-graded, we assume that $H_1(P)$ is free, that the first Chern class of $P$ (equipped with any almost complex structure compatible with $(- d \lambda)$) is 2-torsion, and that the Maslov class of the Lagrangians $L(E)$ vanish.  
As explained in section \ref{subsection Conley-Zehnder index}, the grading on $CE (\mathbf{\Lambda^{\circ}})$ depends on the choice of a symplectic trivialization of the contact structure along a fiber $h_0 = S^1 \times \{ a_0 \}$.
We denote by $CE_{-*}^r \left( \mathbf{\Lambda^{\circ}} \right)$ the Chekanov-Eliashberg DG-category of $\mathbf{\Lambda^{\circ}}$ with grading induced by the trivialization
\[\left( \xi^{\circ}_{| h_0}, d \alpha^{\circ} \right) \xrightarrow{\sim} \left( h_0 \times \mathbf{C}^n, dx \wedge dy \right), \quad \left( \left( \theta, a_0  \right), \left( \lambda_{a_0} (v), v \right) \right) \mapsto \left( \left( \theta, a_0  \right) , e^{2 i \pi r \theta} \psi (v) \right), \]
where  $\psi : \left( T_{a_0} P, - d \lambda_{a_0} \right) \xrightarrow{\sim} \left( \mathbf{C}^n, dx \wedge dy \right)$ is a symplectic isomorphism.

In this setting, $CE_{-*}^r \left( \mathbf{\Lambda^{\circ}} \right)$ is augmented (with the trivial augmentation) and Adams-graded (by the number of times a Reeb chord winds around the fiber).
As above, we denote by $\mathbf{F} \left[ t_m \right]$ the augmented Adams-graded associative algebra generated by a variable $t_m$ of bidegree $(m, 1)$.
Moreover, we denote by $E(-) = B(-)^{\#}$ (graded dual of bar construction) the Koszul dual functor (see  \cite[section 2]{LPWZ08} or \cite[section 2.3]{EL21}).

\begin{thmintro}\label{thm mainthm}
	
	Koszul duality holds for $CE_{-*}^r \left( \mathbf{\Lambda^{\circ}} \right)$, and there is a quasi-equivalence of augmented Adams-graded $A_{\infty}$-categories
	\[E \left( CE_{-*}^r \left( \mathbf{\Lambda^{\circ}} \right) \right) \simeq \overrightarrow{\mcF uk} (\mathbf{L}) \oplus \left( t_{2r} \mathbf{F} \left[ t_{2r} \right] \otimes \mcF uk (\mathbf{L}) \right). \]
	
\end{thmintro}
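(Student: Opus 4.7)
The overall plan is to lift the family $\mathbf{\Lambda^{\circ}}$ to the universal cover $\mathbf{R} \times P \to S^1 \times P$ of the circular contactization, identify $E \left( CE_{-*}^r \left( \mathbf{\Lambda^{\circ}} \right) \right)$ with the mapping torus of an autoequivalence of the Koszul dual upstairs, and then invoke Theorem \ref{thm mapping torus in weak situation introduction} to compute that mapping torus explicitly.

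To begin, set $\widetilde{\mathbf{\Lambda}} = (X^n \Lambda(E))_{(n, E) \in \mathbf{Z} \times \mcE}$ in $\mathbf{R} \times P$, where $X^n \Lambda(E) = \{ (n + f_E(x), x) : x \in L(E) \}$, and let $\tau$ denote the strict $A_\infty$-autoequivalence of $CE_{-*} \left( \widetilde{\mathbf{\Lambda}} \right)$ induced by the deck transformation $\theta \mapsto \theta + 1$. Using the functoriality of Section \ref{section Legendrian invariants}, together with the observation that every Reeb chord of $\mathbf{\Lambda^{\circ}}$ winding $n$ times around $S^1$ lifts uniquely to a Reeb chord from $X^i \Lambda(E)$ to $X^{i+n} \Lambda(E')$ for each $i \in \mathbf{Z}$, I would show that $CE_{-*}^r \left( \mathbf{\Lambda^{\circ}} \right)$ with its winding-number Adams grading fits into a cobar-type diagram built out of $CE_{-*} \left( \widetilde{\mathbf{\Lambda}} \right)$ and $\tau$. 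Dualizing, this identifies $E \left( CE_{-*}^r \left( \mathbf{\Lambda^{\circ}} \right) \right)$ with the mapping torus $\mathrm{MT} \left( \tau^{\vee} \right)$ of the autoequivalence $\tau^{\vee}$ acting on $\mcA := E \left( CE_{-*} \left( \widetilde{\mathbf{\Lambda}} \right) \right)$, and it simultaneously establishes Koszul duality for $CE_{-*}^r \left( \mathbf{\Lambda^{\circ}} \right)$. The Adams-degree-$1$ generator ends up in cohomological degree $2r$ because a once-winding Reeb chord has Conley-Zehnder index $2r$ relative to the lift in the trivialization $e^{2 i \pi r \theta} \psi$.

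Next, I would compute $\mcA$ by means of the correspondence of \cite{DR16}: pseudo-holomorphic discs in the symplectization $\mathbf{R} \times \mathbf{R} \times P$ with boundary on the symplectization of $\widetilde{\mathbf{\Lambda}}$ are in bijection with pseudo-holomorphic polygons in $P$ on the family $\widetilde{\mathbf{L}} = (X^n L(E))$ of copies of the $L(E)$'s. This yields a quasi-equivalence $\mcA \simeq \overrightarrow{\mcF uk} \left( \widetilde{\mathbf{L}} \right)$, directed by the lexicographic order on $(n, E)$ in accordance with the $z$-ordering of the lifts, so that $\mcA^0 = \overrightarrow{\mcF uk} \left( \mathbf{L} \right)$ and $\tau^{\vee}$ becomes the tautological shift $X^n L(E) \mapsto X^{n+1} L(E)$. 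The required bimodule map $f : \mcA(-, -) \to \mcA(-, \tau(-))$ is then given by the canonical continuation elements relating consecutive copies of the same Lagrangian. Since $X^n L(E)$ and $X^{n+1} L(E)$ have identical projection to $P$, these continuation elements are units in Floer cohomology and in particular induce quasi-isomorphisms on $\mathrm{hom} \left( X^i L(E), X^j L(E') \right)$ for $i < j$, so the hypotheses of Theorem \ref{thm mapping torus in weak situation introduction} are satisfied.

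Theorem \ref{thm mapping torus in weak situation introduction} then produces
\[\mathrm{MT} \left( \tau^{\vee} \right) \simeq \overrightarrow{\mcF uk} \left( \mathbf{L} \right) \oplus \bigl( t_{2r} \mathbf{F} \left[ t_{2r} \right] \otimes \overrightarrow{\mcF uk} \left( \mathbf{L} \right) \left[ f \left( \mathrm{units} \right)^{-1} \right]^0 \bigr),\]
and inverting the continuation elements in the directed Fukaya category of the $\mathbf{Z}$-cover collapses the cover onto a single fundamental domain, producing the undirected category $\mcF uk \left( \mathbf{L} \right)$. I expect the main obstacle to lie in the very first step, namely in the geometric identification of $CE_{-*}^r \left( \mathbf{\Lambda^{\circ}} \right)$ with a mapping-torus-type construction from the lift, since this requires matching the winding-number Adams filtration with the $\mathbf{Z}$-splitting upstairs at the chain level and showing that the Koszul dual functor $E$ intertwines this construction with $\mathrm{MT}$ on the dual side (yielding Koszul duality for $CE_{-*}^r \left( \mathbf{\Lambda^{\circ}} \right)$ as a by-product). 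A secondary technical point is the recognition of the localization $\left[ f(\mathrm{units})^{-1} \right]^0$ as the full Fukaya category, which I would handle by an Abouzaid-Seidel style argument showing that inverting the continuation units is exactly what unfolds the directed structure.
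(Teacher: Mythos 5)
Your route is the paper's route: lift $\mathbf{\Lambda^{\circ}}$ to $\mathbf{R} \times P$, identify $E ( CE_{-*}^r ( \mathbf{\Lambda^{\circ}} ) ) \simeq LA^* ( \mathbf{\Lambda^{\circ}} )$ with the mapping torus of the deck-transformation autoequivalence, pass to polygons in $P$ via \cite{DR16}, and apply Theorem \ref{thm mapping torus in weak situation introduction}; the final localization $\mcO_{2r} [ \Gamma^{-1} ]^0 \simeq \mcF uk (\mathbf{L})$ is also how the paper concludes. But three steps you treat as routine are where the substance of the proof lies, and as written each is a genuine gap.

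First, the step you call ``the main obstacle'' needs a specific algebraic theorem you do not supply. What the geometry gives directly is that $LA^* ( \mathbf{\Lambda^{\circ}} )$ is the \emph{strict} quotient $\mcA_{\tau}$ (coinvariants) of the category of lifted chords, not the homotopy colimit; the identification with $\mathrm{MT} (\tau)$ is the content of Theorem \ref{thm mapping torus in strict situation} (``homotopy colimit equals colimit'' for a strict autoequivalence acting bijectively on hom-sets), and no ``cobar-type diagram'' produces it for free. Second, the bimodule map $f$ is not ``given by the canonical continuation elements'': a closed degree-$0$ bimodule map is far more data than the elements $f(e_X)$, and since units are not geometric morphisms, the paper must perturb each $L_H^k(E)$ to $L_H^{k+\delta}(E)$, invoke the analytic theorem of \cite{EES09} on rigid discs with gradient flow lines attached, and assemble the compatible functors $\sigma_{n,k}$ whose components define $f$ — this is the longest geometric argument in the proof. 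Third, you apply \cite{DR16} directly to the lifts, but the unperturbed circular contactization is totally degenerate (every point of $\Lambda^{\circ}$ lies on infinitely many Reeb chords), so one must first perturb the contact form to $e^H \alpha^{\circ}$, then after lifting rectify it back to $d\theta - \lambda$ by a contactomorphism and return to a lifted almost complex structure before \cite{DR16} applies; the change of almost complex structure makes the shift functor only \emph{homotopic} to its conjugate, which forces an appeal to Proposition \ref{prop homotopic functors induce quasi-isomorphic homotopy colimits}. (Minor: Koszul duality does not come ``as a by-product'' of the mapping-torus identification; it follows from Adams connectedness of $CE_{-*}^r ( \mathbf{\Lambda^{\circ}} )$ via \cite[Theorem 2.4]{LPWZ08}.)
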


\begin{rmkintro}
		
	Koszul duality has many important consequences, see for example \cite{LPWZ08} or \cite{EL21}. In particular, by definition of Koszul duality (see \cite[Theorem 2.4]{LPWZ08} or \cite[Definition 17]{EL21}), Theorem \ref{thm mainthm} implies that there is a quasi-equivalence of augmented Adams-graded DG-categories
	\[CE_{-*}^r \left( \mathbf{\Lambda^{\circ}} \right) \simeq E \left( \overrightarrow{\mcF uk} (\mathbf{L}) \oplus \left( t_{2r} \mathbf{F} \left[ t_{2r} \right] \otimes \mcF uk (\mathbf{L}) \right) \right). \]
	Observe that this formula is closely related to Conjecture 6.3 in \cite{Sei08bis}, which was also discussed by Ganatra and Maydanskiy in the appendix of \cite{BEE12}.
	
\end{rmkintro}

We now give a corollary of the latter result.
If $B$ is a (unpointed) space, we consider its one-point compactification $B^*$ and view it as a pointed space (with base point the point at infinity). If moreover $X$ is a pointed space, we consider the half-smash product of $B$ and $X$,
\[X \rtimes B := X \wedge B^* \]
(where $\wedge$ denotes the smash product of pointed spaces).  
Finally, if $Y$ is a pointed space, we denote by $\Omega Y$ its loop space.

\begin{corointro}
	
	If $L$ is a connected compact exact Lagrangian and $\Lambda^{\circ}$ is a Legendrian lift of $L$ in the circular contactization, then there is a quasi-equivalence of augmented DG-algebras 
	\[ CE_{-*}^1 \left( \Lambda^{\circ} \right) \simeq C_{-*} \left( \Omega \left( \mathbf{CP}^{\infty} \rtimes L \right) \right). \]

\end{corointro}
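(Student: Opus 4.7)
\emph{Proof plan.} I would deduce the corollary by specializing Theorem~\ref{thm mainthm} to a single Lagrangian with $r = 1$ and then recognizing the Koszul dual of the resulting algebra as cochains of $\mathbf{CP}^{\infty} \rtimes L$.

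First, I apply Theorem~\ref{thm mainthm} to the family $\mcE = \{ L \}$ with $r = 1$. Since there is a unique object, $\overrightarrow{\mcF uk} (L)$ is quasi-equivalent to $\mathbf{F}$, generated by the identity. Moreover, for a closed connected exact Lagrangian of vanishing Maslov class, the endomorphism $A_{\infty}$-algebra of $L$ in $\mcF uk (L)$ is quasi-isomorphic to the DG-algebra $C^{*} (L)$ of singular cochains with cup product, via the standard PSS/Fukaya--Oh--Ohta--Ono identification. Thus Theorem~\ref{thm mainthm} yields the quasi-equivalence of augmented Adams-graded $A_{\infty}$-algebras
\[
	E \left( CE_{-*}^{1} \left( \Lambda^{\circ} \right) \right) \simeq \mathbf{F} \oplus \left( t_{2} \mathbf{F} \left[ t_{2} \right] \otimes C^{*} (L) \right),
\]
together with the fact that Koszul duality holds for $CE_{-*}^{1} (\Lambda^{\circ})$.

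Second, I would recognize the right-hand side as the cochains of $\mathbf{CP}^{\infty} \rtimes L$. Since $L$ is compact, $L^{*} = L_{+}$, so $\mathbf{CP}^{\infty} \rtimes L = \mathbf{CP}^{\infty} \wedge L_{+}$. The reduced cochains satisfy $\tilde{C}^{*} (Y \wedge L_{+}) \cong \tilde{C}^{*} (Y) \otimes C^{*} (L)$, and $C^{*} (\mathbf{CP}^{\infty}) = \mathbf{F} [t]$ with $t$ in cohomological degree $2$ (whose powers provide the Adams grading matching the bidegree $(2, 1)$ convention for $t_{2}$). Splitting off the augmentation gives an isomorphism of augmented Adams-graded DG-algebras
\[
	C^{*} \left( \mathbf{CP}^{\infty} \rtimes L \right) \cong \mathbf{F} \oplus \left( t_{2} \mathbf{F} \left[ t_{2} \right] \otimes C^{*} (L) \right) .
\]

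Third, I apply the Koszul dual functor $E$ to both sides. Because Koszul duality holds for $CE_{-*}^{1} (\Lambda^{\circ})$, the composition $E \circ E$ is a quasi-equivalence on it, producing
\[
	CE_{-*}^{1} \left( \Lambda^{\circ} \right) \simeq E \left( C^{*} \left( \mathbf{CP}^{\infty} \rtimes L \right) \right) .
\]
The space $\mathbf{CP}^{\infty} \rtimes L$ is simply connected (as $\mathbf{CP}^{\infty}$ is), so the classical Koszul duality between cochains and loop-space chains, $E \left( C^{*} (Y) \right) \simeq C_{-*} \left( \Omega Y \right)$ for simply connected $Y$ (see, e.g., \cite{EL21}), applies and finishes the argument. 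I expect the point demanding most care to be the compatibility of the $A_{\infty}$-structure produced by Theorem~\ref{thm mainthm} with the honest cup product on $C^{*} (\mathbf{CP}^{\infty} \wedge L_{+})$, i.e., verifying that the product of $t_{2}^{k} \otimes \phi$ and $t_{2}^{\ell} \otimes \psi$ corresponds up to quasi-isomorphism to $t_{2}^{k+\ell} \otimes (\phi \smile \psi)$ and that the higher $\mu^{d}$ on the mapping-torus side reduce to those of $C^{*}(L)$; this is natural on both sides but warrants an explicit inspection for signs and the handling of the augmentation.
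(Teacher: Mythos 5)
Your proposal is correct and follows essentially the same route as the paper: specialize Theorem~\ref{thm mainthm} to a single Lagrangian with $r=1$, identify $\mathbf{F} \oplus \left( t_2 \mathbf{F}[t_2] \otimes C^*(L) \right)$ with $C^*\left( \mathbf{CP}^{\infty} \rtimes L \right)$, invoke Koszul duality to obtain $CE_{-*}^1(\Lambda^{\circ}) \simeq E\left( C^*\left( \mathbf{CP}^{\infty} \rtimes L \right) \right)$, and conclude by the Adams-type equivalence for simply connected spaces. The only point the paper spells out that you compress is the passage $E\left( C^*(Y) \right) \simeq \Omega\left( C_{-*}(Y) \right) \simeq C_{-*}(\Omega Y)$, which requires first replacing $C^*\left( \mathbf{CP}^{\infty} \rtimes L \right)$ by a locally finite, simply connected $A_{\infty}$-model via homological perturbation before applying \cite[Lemma 10]{EL21} and Adams's theorem.
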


\paragraph{Acknowledgments.}

This work is part of my PhD thesis that I did at Nantes Université under the supervision of Paolo Ghiggini and Vincent Colin, who I thank for their guidance and support. I also thank Baptiste Chantraine, Georgios Dimitroglou Rizell and Tobias Ekholm for helpful discussions.

\section{Algebra}\label{subsection algebra}

In the following, $\mathbf{F}$ denotes the field $\mathbf{Z} / 2 \mathbf{Z}$. Vector spaces are always over $\mathbf{F}$.

\begin{defin}\label{definition category}
	
	An $A_{\infty}$-category $\mcA$ is the data of
	\begin{enumerate}
		
		\item a collection of objects $\mathrm{ob} \, \mcA$,
		
		\item for every objects $X,Y$, a graded vector space of morphisms $\mcA \left( X,Y \right)$,
		
		\item a family of degree $\left( 2-d \right)$ linear maps
		\[\mu^d : \mcA \left( X_0, X_1 \right) \otimes \dots \otimes \mcA \left( X_{d-1}, X_d \right) \to \mcA \left( X_0, X_d \right) \]
		indexed by the sequences of objects $\left( X_0, \dots, X_d \right)$, $d \geq 1$, such that
		\[\sum\limits_{0 \leq i < j \leq d} \mu^{d-(j-i)+1} \circ \left( \mathbf{1}^i \otimes \mu^{j-i} \otimes \mathbf{1}^{d-j} \right) = 0, \]
		for all $d \geq 1$.
	
	\end{enumerate}
	
\end{defin}

\begin{defin}\label{definition coalgebra}
	
	An $A_{\infty}$-cocategory $\mcC$ is the data of 
		\begin{enumerate}
		
		\item a collection of objects $\mathrm{ob} \, \mcC$,
		
		\item for every objects $X,Y$, a graded vector space of morphisms $\mcC \left( X,Y \right)$,
		
		\item a family of degree $\left( 2-d \right)$ linear maps
		\[\delta^d : \mcC \left( X_0, X_d \right) \to \bigoplus \limits_{d \geq 1} \bigoplus_{X_1, \dots, X_{d-1}} \mcC (X_0, X_1) \otimes \dots \otimes \mcC (X_{d-1}, X_d) \]
		indexed by the sequences of objects $\left( X_0, \dots, X_d \right)$, $d \geq 1$, such that
		\begin{itemize}
			
			\item for all $d \geq 1$,
			\[\sum\limits_{0 \leq i < j \leq d} \left( \mathbf{1}^i \otimes \delta^{j-i} \otimes \mathbf{1}^{d-j} \right) \circ \delta^{d-(j-i)+1} = 0, \]
			
			\item the map 
			\[C \to \prod_{d \geq 1} C^{\otimes d}, \quad x \mapsto \left( \delta^d \left( x \right) \right)_{d \geq 1} \]
			factors through the inclusion $\bigoplus\limits_{d \geq 1} C^{\otimes d} \to \prod\limits_{d \geq 1} C^{\otimes d}$.
			
		\end{itemize}
		
	\end{enumerate}
	
\end{defin}

\begin{rmks}
	
	\begin{enumerate}
		
		\item If $\mcE$ is some set, denote by $\mathbf{F}_{\mcE}$ the semi-simple algebra over $\mathbf{F}$ generated by elements $e_X$, $X \in \mcE$, such that
		\[e_X \cdot e_Y = 
		\left\{   
		\begin{array}{ll}
			e_X & \text{if } X = Y \\
			0 & \text{if } X \ne Y.
		\end{array}
		\right.  \]
		To any $A_{\infty}$-category $\mcA$ with $\mathrm{ob} (\mcA) = \mcE$, we can associate an $A_{\infty}$-algebra over $\mathbf{F}_{\mcE}$ where 
		\begin{itemize}

			\item the underlying graded vector space is $\bigoplus_{X,Y \in \mcE} \mcA (X, Y)$,
			
			\item given $x \in \mcA (X_0, Y_0)$, 
			\[e_X \cdot x = 
			\left\{   
			\begin{array}{ll}
				x & \text{if } X = X_0 \\
				0 & \text{if } X \ne X_0
			\end{array}
			\right. \text{ and } 
			x \cdot e_Y = 
			\left\{   
			\begin{array}{ll}
				x & \text{if } Y = Y_0 \\
				0 & \text{if } Y \ne Y_0,
			\end{array}
			\right. \]	
			
			\item operations are the same as on $\mcA$.

		\end{itemize}
		Conversely, to any $A_{\infty}$-algebra over $\mathbf{F}_{\mcE}$, one can associate an $A_{\infty}$-category with $\mathrm{ob} (\mcA) = \mcE$. 
		Note that the above discussion also applies to $A_{\infty}$-cocategories. As a result, the theory of $A_{\infty}$-(co)categories with $\mcE$ as set of objects is equivalent to the theory of $A_{\infty}$-(co)algebras over $\mathbf{F}_{\mcE}$.
	
		\item In this paper, we will appeal to several standard notions in the theory of $A_{\infty}$-(co)categories that we choose not to recall: instead, we list them and give corresponding references.
		\begin{itemize}
			
			\item For $A_{\infty}$-(co)maps, (co)augmentations and (co)bar, graded dual, Koszul dual constructions, see \cite[section 2]{EL21} (where everything is written in the language of $A_{\infty}$-(co)algebras over $\mathbf{F}_{\mcE}$).
			
			\item For general definitions and results about $A_{\infty}$-categories (in particular about homotopy between $A_{\infty}$-functors, homological perturbation theory, directed (sub)categories and twisted complexes), see \cite[chapter 1]{Sei08}.
			
			\item For quotient of $A_{\infty}$-categories, see \cite{LO06}, and for localization of $A_{\infty}$-categories, see \cite[section 3.1.3]{GPS20}.

		\end{itemize}
	
		\item An Adams-graded vector space is a $\mathbf{Z} \times \mathbf{Z}$-graded vector space: if $x$ is an element in the $(i,j)$ component, we say that $i$ is the cohomological degree of $x$, and $j$ is the Adams degree of $x$.
		An Adams-graded $A_{\infty}$-(co)category is an $A_{\infty}$-(co)category enriched over Adams-graded vector spaces, where the operations are required to be of degree $0$ with respect to the Adams grading. 
		See \cite{LPWZ08} for a treatment of Koszul duality in the context of Adams-graded $A_{\infty}$-algebras.
		
	\end{enumerate}

\end{rmks}

\subsection{Modules over $A_{\infty}$-categories}

Let $\mcC, \mcD$ be two $A_\infty$-categories, and let $\mcA, \mcB$ be two full subcategories of $\mcC, \mcD$ respectively.

\begin{defin}\label{definition bimodule}
	
	A $\left( \mcC, \mcD \right)$-bimodule $\mcM$ consists of the following data: 
	\begin{enumerate}
		
		\item for every pair $\left( X, Y \right) \in \mathrm{ob} \left( \mcC \right) \times \mathrm{ob} \left( \mcD \right)$, a vector space $\mcM \left( X, Y \right)$, 
		
		\item a family of degree $\left( 1-p-q \right)$ linear maps 
		\begin{align*}
		\mu_{\mcM} : \mcC \left( X_0, X_1 \right) \otimes & \dots \otimes \mcC \left( X_{p-1}, X_p \right) \otimes \mcM \left( X_p, Y_q \right) \\ 
		& \otimes \mcD \left( Y_q, Y_{q-1} \right) \otimes \dots \otimes \mcD \left( Y_1, Y_0 \right) \to \mcM \left( X_0, Y_0 \right)
		\end{align*}
		indexed by the sequences 
		\[\left( X_0, \dots, X_p, Y_0, \dots, Y_q \right) \in \mathrm{ob} \left( \mcC \right)^{p+1} \times \mathrm{ob} \left( \mcD \right)^{q+1}, \]
		which satisfy the relations 
		\begin{align*}
		\sum \mu_{\mcM} \left( \dots , \mu_{\mcC} \left( \dots \right) , \dots, u \dots \right) & + \sum \mu_{\mcM} \left( \dots , \mu_{\mcM} \left( \dots, u, \dots \right), \dots \right) \\ 
		& + \sum \mu_{\mcM} \left( \dots , u, \dots, \mu_{\mcD} \left( \dots \right) , \dots \right) = 0 .
		\end{align*}
		
	\end{enumerate}
	
	A degree $s$ morphism $t : \mcM_1 \to \mcM_2$ between two $\left( \mcC, \mcD \right)$-bimodules consists of a family of degree $\left( s-p-q \right)$ linear maps
	\begin{align*}
	t : \mcC \left( X_0, X_1 \right) \otimes & \dots \otimes \mcC \left( X_{p-1}, X_p \right) \otimes \mcM_1 \left( X_p, Y_q \right) \\ 
	& \otimes \mcD \left( Y_q, Y_{q-1} \right) \otimes \dots \otimes \mcD \left( Y_1, Y_0 \right) \to \mcM_2 \left( X_0, Y_0 \right)
	\end{align*}
	indexed by the sequences 
	\[\left( X_0, \dots, X_p, Y_0, \dots, Y_q \right) \in \mathrm{ob} \left( \mcC \right)^{p+1} \times \mathrm{ob} \left( \mcD \right)^{q+1}. \]
	The differential of such a morphism is defined by
	\begin{align*}
	\mu_{\mathrm{Mod}_{\mcC, \mcD}}^1 & \left( t \right) \left( \dots, u, \dots \right)  \\
	& = \sum t \left( \dots , \mu_{\mcC} \left( \dots \right) , \dots, u, \dots \right) + \sum t \left( \dots , \mu_{\mcM_1} \left( \dots, u, \dots \right), \dots \right) \\ 
	& +\sum t \left( \dots , u, \dots, \mu_{\mcD} \left( \dots \right), \dots \right) + \sum \mu_{\mcM_2} \left( \dots , t \left( \dots, u, \dots \right), \dots \right) .
	\end{align*} 
	Finally, the composition of $t_1 : \mcM_1 \to \mcM_2$ and $t_2 : \mcM_2 \to \mcM_3$ is such that
	\[	\mu_{\mathrm{Mod}_{\mcC}}^2 \left( t_1 , t_2 \right) \left( \dots, u, \dots \right)  = \sum t_2 \left( \dots , t_1 \left( \dots, u, \dots \right), \dots \right) .\]
	We denote by $\mathrm{Mod}_{\mcC, \mcD}$ the DG-category of $\left( \mcC, \mcD \right)$-bimodules.
	
\end{defin}

\begin{defin}\label{definition pullback bimodule}
	
	Let $\Phi_1, \Phi_2 : \mcC \to \mcD$ be two $A_{\infty}$-functors. Then there is a $\mcC$-bimodule $\mcD \left( \Phi_1 (-), \Phi_2 (-) \right)$ defined as follows:
	\begin{enumerate}
		
		\item on objects, it sends $(X_1, X_2)$ to $\mcD \left( \Phi_1 X_1, \Phi_2 X_2 \right)$,
		
		\item on morphisms, it sends a sequence $\left( \dots, y, \dots \right)$ in
		\begin{align*}
		\mcC \left( X_0, X_1 \right) \times & \dots \times \mcC \left( X_{p-1}, X_p \right)  \times \mcD \left( \Phi_1 X_p, \Phi_2 X_{p+1} \right) \\ 
		& \times \mcC \left( X_{p+1}, X_{p+2} \right) \times \dots \times \mcC \left( X_{p+q}, X_{p+q+1} \right)
		\end{align*}
		to 
		\begin{align*}
		\mu_{\mcD \left( \Phi_1 (-), \Phi_2 (-) \right)} & \left( \dots, y, \dots \right) \\ 
		& = \sum \mu_{\mcD} \left( \Phi_1 \left(\dots \right) , \dots , \Phi_1 \left( \dots \right) , y, \Phi_2 \left( \dots \right) , \dots , \Phi_2 \left( \dots \right) \right) .
		\end{align*}
		
	\end{enumerate}
	
\end{defin}

In the following, we will focus on \emph{left} $\mcC$-modules, which correspond to $\left( \mcC, \mathbf{F} \right)$-bimodules.
We denote by $\mathrm{Mod}_{\mcC}$ the DG-category of (left) $\mcC$-modules.

\begin{defin}\label{definition quasi-isomorphism between modules}
	
	Let $t : \mcM_1 \to \mcM_2$ be a degree $0$ closed $\mcC$-module map. We say that $t$ is a quasi-isomorphism if the induced chain map $t : \mcM_1 \left( X \right) \to \mcM_2 \left( X \right)$ is a quasi-isomorphism for every object $X$ in $\mcC$. (See \cite[section A.2]{GPS19} for a discussion on quasi-isomorphisms between $A_{\infty}$-modules).
	
\end{defin}

\begin{defin}\label{definition homotopy between morphisms of modules}
	
	Let $t, t' : \mcM_1 \to \mcM_2$ be two degree $0$ closed morphisms of $\mcC$-modules. A homotopy between $t$ and $t'$ is a $\mcC$-module map $h : \mcM_1 \to \mcM_2$ such that
	\[ t + t' = \mu^1_{\mathrm{Mod}_{\mcC}} \left( h \right) . \]
	
\end{defin}

\begin{defin}[See \cite{Sei08} paragraph (1l) and \cite{GPS19} section A.1]\label{definition Yoneda functor}
	 
	There is an $A_{\infty}$-functor 
	\[ \mcC \to \mathrm{Mod}_{\mcC}, \quad Y \mapsto \mcC \left( - , Y \right), \]
	called the Yoneda $A_{\infty}$-functor, defined as follows.
	For every object $X$, 
	\[ \mcC \left( -,Y \right) \left( X \right) = \mcC \left( X,Y \right). \]
	Besides, a sequence 
	\[ \left( x_0 , \dots , x_{d-1} \right) \in \mcC \left( X_0 , X_1 \right) \times \dots \times \mcC \left( X_{d-1} , X_d \right) \]
	acts on an element $u$ in $\mcC \left( X_d , Y \right)$ via the operations 
	\[ \mu_{\mcC \left( -,Y \right)} \left( x_0 , \dots , x_{d-1} , u \right) = \mu_{\mcC} \left( x_0 , \dots , x_{d-1} , u \right) . \]
	Finally, let 
	\[\mathbf{y} = \left( y_0 , \dots , y_{p-1} \right) \in \mcC \left( Y_0 , Y_1 \right) \times \dots \times \mcC \left( Y_{p-1} , Y_p \right) \]
	be a sequence of morphisms in $\mcC$. Then the Yoneda functor gives a morphism of $\mcC$-modules $t_{\mathbf{y}} : \mcC \left( -,Y_0 \right) \to \mcC \left( -,Y_p \right)$ which sends every sequence $\left( x_0 , \dots , x_{d-1} , u \right)$ as above to 
	\[ \mu_{\mcC} \left( x_0 , \dots , x_{d-1} , u , y_0, \dots, y_{p-1} \right) \in \mcC \left( X_0,Y_p \right) . \]
	
\end{defin}

We have the following important result.

\begin{prop}[Yoneda lemma]\label{prop Yoneda lemma}
	
	The Yoneda $A_{\infty}$-functor 
	\[ \mcC \to \mathrm{Mod}_{\mcC}, \quad Y \mapsto \mcC \left( - , Y \right) \]
	is cohomologically full and faithful.
	
\end{prop}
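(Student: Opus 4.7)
The plan is to prove that for every pair of objects $Y_0, Y_p \in \mcC$, the first-order component of the Yoneda functor,
\[ \Phi : \mcC (Y_0, Y_p) \longrightarrow \mathrm{hom}_{\mathrm{Mod}_{\mcC}} \bigl( \mcC (-, Y_0), \mcC (-, Y_p) \bigr), \quad y \longmapsto t_y, \]
is a quasi-isomorphism, where $t_y$ is the module map from Definition \ref{definition Yoneda functor} specialized to a length-one sequence. This is exactly the condition that the Yoneda functor is cohomologically full and faithful. Throughout I assume strict unitality (otherwise one works with a cohomologically unital model or replaces identities by homotopy units), and I denote by $e_Y \in \mcC (Y, Y)$ the strict unit at $Y$.

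The first step is to construct a candidate inverse at the chain level by evaluating at the unit. Define
\[ \Psi : \mathrm{hom}_{\mathrm{Mod}_{\mcC}} \bigl( \mcC (-, Y_0), \mcC (-, Y_p) \bigr) \longrightarrow \mcC (Y_0, Y_p), \quad t \longmapsto t^{0|1} (e_{Y_0}), \]
where $t^{0|1}$ is the component of $t$ taking no category inputs and a single module input. Unpacking the formula for $\mu^1_{\mathrm{Mod}_{\mcC}} (t)$ from Definition \ref{definition bimodule} and evaluating at $e_{Y_0}$, all terms involving nonempty sequences from $\mcC$ disappear and strict unitality eliminates the length-one $\mu_{\mcC}$ contributions, leaving $\Psi \bigl( \mu^1_{\mathrm{Mod}_{\mcC}} (t) \bigr) = \mu^1_{\mcC} \bigl( \Psi (t) \bigr)$, so $\Psi$ is a chain map. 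Moreover, strict unitality gives directly
\[ \Psi \bigl( \Phi (y) \bigr) \;=\; t_y^{0|1} (e_{Y_0}) \;=\; \mu_{\mcC}^2 (e_{Y_0}, y) \;=\; y, \]
so $\Psi \circ \Phi = \mathrm{id}$ strictly. This already proves that the Yoneda functor is cohomologically faithful.

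For cohomological fullness it remains to produce a chain homotopy $H$ between $\Phi \circ \Psi$ and $\mathrm{id}$ on the hom complex. The candidate formula is the standard one, given on a pre-morphism $t$ by
\[ H(t)^{p|1} (x_0, \ldots, x_{p-1}, u) \;=\; t^{p+1|1} (x_0, \ldots, x_{p-1}, u, e_{Y_p})\,, \]
i.e. insertion of the strict unit of the target object. Expanding $\mu^1_{\mathrm{Mod}_{\mcC}} \bigl( H(t) \bigr)$ and $H \bigl( \mu^1_{\mathrm{Mod}_{\mcC}} (t) \bigr)$ term by term, pairs of expressions cancel by the $A_{\infty}$-relation for $t$, and the remaining uncancelled terms fall into three classes: those where $e_{Y_p}$ is consumed by a $\mu^2_{\mcC}$ (giving back the original input, i.e. the term $t$), those where $e_{Y_p}$ is consumed by a $\mu^{\geq 3}_{\mcC}$ (vanishing by strict unitality), and the single term in which $e_{Y_p}$ plays the role of $u$ (producing $\Phi \circ \Psi(t)$ after one further use of strict unitality). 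When $t$ is closed, $H \bigl( \mu^1(t) \bigr) = 0$, and the resulting identity reads $\mu^1_{\mathrm{Mod}_{\mcC}} \bigl( H(t) \bigr) = t + \Phi \bigl( \Psi (t) \bigr)$, which is the desired homotopy.

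The main obstacle is the bookkeeping in the final step: one must verify that the many terms produced by the module differential of $H(t)$ and by $H$ of the module differential of $t$ cancel in pairs according to the $A_{\infty}$-module relation, with only the three classes above surviving. Once this cancellation is written out carefully, strict unitality closes the argument. With $\Psi \circ \Phi = \mathrm{id}$ on the nose and $\Phi \circ \Psi \simeq \mathrm{id}$ up to chain homotopy, $\Phi$ is a quasi-isomorphism, which proves the proposition.
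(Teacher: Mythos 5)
The paper does not actually prove this statement: it cites \cite[Lemma 2.12]{Sei08} and \cite[Lemma A.1]{GPS19}. Your proposal supplies the standard direct argument behind those references (evaluation at the identity as a chain-level one-sided inverse, plus an explicit unit-insertion homotopy for the other composition), and the overall structure --- $\Psi \circ \Phi = \mathrm{id}$ strictly for faithfulness, a homotopy $\mu^1(H(t)) + H(\mu^1(t)) = t + \Phi(\Psi(t))$ for fullness --- is correct.

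There is, however, an error in your homotopy formula. With the paper's conventions, a module map $t : \mcC(-,Y_0) \to \mcC(-,Y_p)$ has components whose \emph{last} input (the module input) lies in $\mcC(-,Y_0)$; the element $e_{Y_p} \in \mcC(Y_p,Y_p)$ cannot occupy that slot unless $Y_0 = Y_p$, so $t^{p+1|1}(x_0,\dots,x_{p-1},u,e_{Y_p})$ does not typecheck. The correct homotopy is
\[ H(t)(x_0,\dots,x_{d-1},u) := t(x_0,\dots,x_{d-1},u,e_{Y_0}), \]
where $u \in \mcC(X_d,Y_0)$ is reinterpreted as a \emph{category} input and the strict unit $e_{Y_0} \in \mcC(Y_0,Y_0) = \mcC(-,Y_0)(Y_0)$ is inserted as the new module input. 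With this fix your cancellation analysis goes through verbatim: the term where $\mu^2_{\mcC}(u,e_{Y_0}) = u$ is formed inside the module input reproduces $t$, the terms where $e_{Y_0}$ enters a $\mu^{\geq 3}_{\mcC}$ vanish by strict unitality, and the term $\mu_{\mcC(-,Y_p)}(x_0,\dots,x_{d-1},u,t^{0|1}(e_{Y_0}))$ is exactly $\Phi(\Psi(t))$. You should also note that the paper does not assume strict unitality in this section, so strictly speaking one should either pass to a strictly unital model or run the argument with cohomological units as in Seidel.
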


\begin{proof}
	
	This is Lemma 2.12 in \cite{Sei08}, and also Lemma A.1 in \cite{GPS19}.
	
\end{proof}

The Yoneda lemma has the following easy consequence. We state it for future reference.

\begin{coro}\label{coro closed module map homotopic to Yoneda module map}
	
	Every closed $\mcC$-module map $f : \mcC \left( -, X \right) \to \mcC \left( -, Y \right)$ is homotopic to the $\mcC$-module map $t_{f \left( e_X \right)}$ induced by $f \left( e_X \right) \in \mcC \left( X, Y \right)$. (see Definition \ref{definition Yoneda functor}).
	
\end{coro}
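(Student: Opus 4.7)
The plan is to combine the Yoneda lemma (Proposition \ref{prop Yoneda lemma}) with an evaluation-at-the-unit construction. I would introduce the linear map
\[\mathrm{ev}_X : \mathrm{Mod}_{\mcC}\bigl(\mcC(-, X), \mcC(-, Y)\bigr) \to \mcC(X, Y), \quad g \mapsto g^{0|1|0}(e_X), \]
sending a $\mcC$-module map $g$ to its length-$0$ component applied to the strict unit $e_X \in \mcC(X, X) = \mcC(-, X)(X)$. Unwinding Definition \ref{definition Yoneda functor}, one finds $\mathrm{ev}_X(t_a) = \mu^2_{\mcC}(e_X, a) = a$ for every $a \in \mcC(X, Y)$ by strict unitality, so $\mathrm{ev}_X$ is a strict left inverse to the Yoneda chain map $\mathcal{Y} : a \mapsto t_a$.

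Next I would verify that $\mathrm{ev}_X$ is itself a chain map. Specializing the formula for $\mu^1_{\mathrm{Mod}_{\mcC}}(g)$ from Definition \ref{definition bimodule} to the single input $e_X \in \mcC(X, X)$, the only possibly nonzero contributions are $g^{0|1|0}(\mu^1_{\mcC}(e_X))$, which vanishes because $e_X$ is closed, and $\mu^1_{\mcC}(g^{0|1|0}(e_X))$. This yields the chain-map identity $\mathrm{ev}_X(\mu^1 g) = \mu^1_{\mcC}(\mathrm{ev}_X g)$; strict unitality enters both here (to kill $\mu^1(e_X)$) and above (to identify $\mu^2(e_X, a)$ with $a$).

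To conclude, the Yoneda lemma says $H^*(\mathcal{Y})$ is an isomorphism; combined with $H^*(\mathrm{ev}_X) \circ H^*(\mathcal{Y}) = \mathrm{id}$, this forces $H^*(\mathcal{Y}) \circ H^*(\mathrm{ev}_X) = \mathrm{id}$ as well. Applied to the cohomology class of the closed module map $f$, this gives
\[[t_{f(e_X)}] = H^*(\mathcal{Y})\bigl([f(e_X)]\bigr) = H^*(\mathcal{Y})\bigl( H^*(\mathrm{ev}_X)([f]) \bigr) = [f], \]
so $f + t_{f(e_X)}$ is a boundary in the morphism complex, which by Definition \ref{definition homotopy between morphisms of modules} is exactly the desired homotopy. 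There is no real obstacle; the only subtlety is bookkeeping the multilinear components of module maps when verifying that $\mathrm{ev}_X$ is a chain map, a computation that relies entirely on the strict unitality of $\mcC$.
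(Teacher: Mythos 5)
Your proposal is correct and rests on the same two ingredients as the paper's proof: the Yoneda lemma and evaluation at the strict unit $e_X$. The paper phrases it by first writing $f = t_x + \mu^1_{\mathrm{Mod}_{\mcC}}(h)$ via Yoneda and then evaluating that identity at $e_X$ to see that $x$ and $f(e_X)$ are cohomologous, whereas you package the evaluation as an explicit chain map $\mathrm{ev}_X$ retracting the Yoneda map and argue on cohomology classes; this is only an organizational difference, and both arguments use strict unitality in exactly the same places.
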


\begin{proof}
	
	According to the Yoneda lemma, $f$ is homotopic to $t_x$ for some closed $x$ in $\mcC \left( X , Y \right)$. Thus, there exists a $\mcC$-module map $h : \mcC \left( -, X \right) \to \mcC \left( -, Y \right)$ such that 
	\[f = t_x + \mu_{\mathrm{Mod}_{\mcC}}^1 \left( h \right) . \]
	Evaluating the latter relation at the unit $e_X \in \mcC \left( X, X \right)$ gives 
	\[f \left( e_X \right) = x + \mu_{\mcC}^1 \left( h e_X \right) . \]
	Therefore, $x$ is homotopic to $f \left( e_X \right)$, and this implies that $t_x$ is homotopic to $t_{f \left( e_X \right)}$ by the Yoneda lemma. Finally, we have that $f$ is homotopic to $t_{f \left( e_X \right)}$.
	
\end{proof}

\paragraph{Pullback of $A_{\infty}$-modules.}

\begin{defin}[See \cite{Sei08} paragraph (1k)]\label{definition pullback functor}
	 
	Let $\Phi : \mcC \to \mcD$ be an $A_{\infty}$-functor. Then there is a DG-functor 
	\[\Phi^* : \mathrm{Mod}_{\mcD} \to \mathrm{Mod}_{\mcC}, \quad \mcN \mapsto \Phi^* \mcN \]
	defined as follows.
	Let $\mcN$ be a $\mcD$-module. For every object $X$, 
	\[ \Phi^* \mcN \left( X \right) = \mcN \left( \Phi X \right) . \]
	Besides, a sequence 
	\[ \left( x_0 , \dots , x_{d-1} \right) \in \mcC \left( X_0 , X_1 \right) \times \dots \times \mcC \left( X_{d-1} , X_d \right) \]
	acts on an element $u \in \Phi^* \mcN \left( X_d \right)$ via the operations 
	\[ \mu_{\Phi^* \mcN} \left( x_0 , \dots , x_{d-1} , u \right) = \sum \mu_{\mcN} \left( \Phi \left( x_0 , \dots , x_{i_1-1} \right) , \dots , \Phi \left( x_{d-i_r} , \dots , x_{d-1} \right) , u \right) . \]
	Finally, let $t : \mcN_1 \to \mcN_2$ be a $\mcD$-module map. Then the above functor gives a $\mcC$-module map $\Phi^* t : \Phi^* \mcN_1 \to \Phi^* \mcN_2$ which sends every sequence $\left( x_0 , \dots , x_{d-1} , u \right)$ as above to 
	\[ \Phi^* t \left( x_0 , \dots , x_{d-1} , u \right) = \sum t \left( \Phi \left( x_0 , \dots , x_{i_1-1} \right) , \dots , \Phi \left( x_{d-i_r} , \dots , x_{d-1} \right) , u \right) . \]
	
\end{defin}

\begin{rmk}\label{rmk composition of pullback functors}
	
	Let $\Phi : \mcC \to \mcD$ be an $A_{\infty}$-functor, and let $\Psi : \mcD \to \mcE$ be another $A_{\infty}$-functor towards a third $A_{\infty}$-category $\mcE$. Then $\Phi^* \circ \Psi^* = \left( \Psi \circ \Phi \right)^*$ as DG-functors.

\end{rmk}

\begin{defin}\label{definition modules morphism induced by functor}
	
	Let $Y$ be an object of $\mcC$, and let $\Phi : \mcC \to \mcD$ be an $A_{\infty}$-functor.
	Then there is a degree $0$ closed $\mcC$-module map $t_{\Phi} : \mcC \left( - , Y \right) \to \Phi^* \mcD \left( - , \Phi \left( Y \right) \right)$ which sends any sequence 
	\[ \left( x_0 , \dots , x_{d-1} , u \right) \in \mcC \left( X_0 , X_1 \right) \times \dots \times \mcC \left( X_{d-1} , X_d \right) \times \mcC \left( X_d , Y \right) \]
	to 
	\[t_{\Phi} \left( x_0 , \dots , x_{d-1} , u \right) = \Phi \left( x_0 , \dots , x_{d-1} , u \right) \in \mcD \left( \Phi X_0 , \Phi Y \right) . \]
	
\end{defin}

\paragraph{Quotient of $A_{\infty}$-modules.}

\begin{defin}[See \cite{GPS20} section 3.1.3]\label{definition localization of modules}
	 
	There is a DG-functor 
	\[ \mathrm{Mod}_{\mcC} \to \mathrm{Mod}_{\mcC / \mcA}, \quad \mcM \, \mapsto \, _{\mcA \backslash} \mcM \]
	defined as follows. Let $\mcM$ be a $\mcC$-module. For every object $X$, 
	\begin{align*}
	_{\mcA \backslash} \mcM \left( X \right) & = \\
	& \mcM \left( X \right) \bigoplus \left( \bigoplus\limits_{\substack{ p \geq 1 \\ A_1,\dots,A_p \in \mcA}} \mcC \left( X , A_1 \right) [1] \otimes \dots \otimes \mcC \left( A_{p-1} , A_p \right) [1] \otimes \mcM \left( A_p \right) \right) .
	\end{align*}
	Besides, a sequence
	\[ \mathbf{x}_i = \left( x_i^0 , \dots , x_i^{p_i-1} \right) \in \mcC / \mcA \left( X_i , X_{i+1} \right) \quad \left( 0 \leq i \leq d-1 \right) \]
	acts on an element
	\[ \mathbf{u} = \left( x_d^0 , \dots , x_d^{p_d-1} , u \right) \in \, _{\mcA \backslash} \mcM \left( X_d \right) \]
	via the operations 
	\begin{align*}
	\mu_{_{\mcA \backslash} \mcM} & \left( \mathbf{x}_0 , \dots , \mathbf{x}_{d-1} , \mathbf{u} \right) = \\ 
	& \sum\limits_{ \substack{ 0 \leq i \leq p_0, 1 \leq j \leq p_d \\ i<j \text{ if } d=0 } } x_0^0 \otimes \dots \otimes x_0^{i-1} \otimes \mu_{\mcC} \left( x_0^i, \dots , x_d^{j-1} \right) \otimes x_d^j \otimes \dots \otimes x_d^{p_d-1} \otimes u \\
	& + \sum\limits_{ 0 \leq i \leq p_0 } x_0^0 \otimes \dots \otimes x_0^{i-1} \otimes \mu_{\mcM} \left( x_0^i , \dots , x_d^{p_d-1} , u \right) .
	\end{align*}
	Finally, let $t : \mcM_1 \to \mcM_2$ be a $\mcC$-module map. Then the above functor gives a $\mcC / \mcA$-module map $_{\mcA \backslash} t : \, _{\mcA \backslash} \mcM_1 \to \, _{\mcA \backslash} \mcM_2$ which sends every sequence $\left( \mathbf{x}_0 , \dots , \mathbf{x}_{d-1} , \mathbf{u} \right)$ as above to 
	\[ _{\mcA \backslash} t \left( \mathbf{x}_0 , \dots , \mathbf{x}_{d-1} , \mathbf{u} \right) = \sum\limits_{0 \leq i \leq p_0} x_0^0 \otimes \dots \otimes x_0^{i-1} \otimes t \left( x_0^i , \dots , x_d^{p_d-1} , u \right) . \] 
	
\end{defin}

\paragraph{Relations between pullback and quotient of $A_{\infty}$-modules.}

\begin{defin}\label{definition natural transformation}
	
	Let $\Phi : \mcC \to \mcD$ be an $A_{\infty}$-functor such that $\Phi \left( \mcA \right)$ is contained in $\mcB$, and let $X$ be a fixed object of $\mcC$. Then, for each $\mcD$-module $\mcN$, there is a chain map $_{\mcA \backslash} \left(\Phi^* \mcN \right) \left( X \right) \to \, _{\mcB \backslash} \mcN \left( \Phi X \right)$ which sends an element 
	\[ \mathbf{u} = \left( x^0 , \dots , x^{p-1} , u \right) \in \, _{\mcA \backslash} \left( \Phi^* \mcN \right) \left( X \right) \]
	to
	\[ \sum \Phi \left( x^0 , \dots , x^{i_1-1} \right) \otimes \dots \otimes \Phi \left( x^{i_r} , \dots , x^{p-1} \right) \otimes u \in \, _{\mcB \backslash} \mcN \left( \Phi X \right) . \]
	
	This defines a natural transformation between the functors $\mcN \mapsto \, _{\mcA \backslash} \left(\Phi^* \mcN \right) \left( X \right)$ and $\mcN \mapsto \, _{\mcB \backslash} \mcN \left( \Phi X \right)$ from $\mathrm{Mod}_{\mcD}$ to $\mathrm{Ch}$.
	In other words, for every $\mcD$-module map $t : \mcN_1 \to \mcN_2$, the following diagram of chain complexes commutes 
	\[\begin{tikzcd}
	_{\mcA \backslash} \left(\Phi^* \mcN_1 \right) \left( X \right) \ar[r] \ar[d, "_{\mcA \backslash} \left(\Phi^* t \right)"] & \, _{\mcB \backslash} \mcN_1 \left( \Phi X \right) \ar[d, "_{\mcB \backslash} t"] \\
	_{\mcA \backslash} \left(\Phi^* \mcN_2 \right) \left( X \right) \ar[r] & \, _{\mcB \backslash} \mcN_2 \left( \Phi X \right) 
	\end{tikzcd} . \]
	
\end{defin}

\begin{rmk}\label{rmk composition}
	
	Let $Y$ be an object of $\mcC$, and let $\Phi : \mcC \to \mcD$ be an $A_{\infty}$-functor such that $\Phi \left( \mcA \right)$ is contained in $\mcB$.
	Let $\widetilde{\Phi} : \mcC / \mcA \to \mcD / \mcB$ be the $A_{\infty}$-functor induced by $\Phi$ (see \cite[section 3]{LO06}).
	Localize the morphism $t_{\Phi} : \mcC \left( - , Y \right) \to \Phi^* \mcD \left( - , \Phi Y \right)$ of Definition \ref{definition modules morphism induced by functor} at $\mcA$ and evaluate at $X$ to get a chain map
	\[\mcC / \mcA \left( X,Y \right) = \, _{\mcA \backslash} \mcC \left( - , Y \right) \left( X \right) \xrightarrow{_{\mcA \backslash} t_{\Phi}} \, _{\mcA \backslash} \left( \Phi^* \mcD \left( - , \Phi Y \right) \right) \left( X \right) . \]
	Then the composition of this map with the chain map 
	\[ _{\mcA \backslash} \left( \Phi^* \mcD \left( - , \Phi Y \right) \right) \left( X \right) \to \, _{\mcB \backslash} \mcD \left( - , \Phi Y \right) \left( \Phi X \right) = \, \mcD / \mcB \left( \Phi X, \Phi Y \right) \]
	of Definition \ref{definition natural transformation} is the chain map $\widetilde{\Phi} : \mcC / \mcA \left( X,Y \right) \to \mcD / \mcB \left( \Phi X, \Phi Y \right)$.
	
\end{rmk}

\begin{prop}\label{prop quasi-isomorphism between localizations}
	
	Let $\Phi : \mcC_1 \to \mcC_2$ be an $A_{\infty}$-functor such that $\Phi \left( \mcA_1 \right)$ is contained in $\mcA_2$, and let $\widetilde{\Phi} : \mcC_1 / \mcA_1 \to \mcC_2 / \mcA_2$ be the $A_{\infty}$-functor induced by $\Phi$.
	
	Let $Y_1$ be an object of $\mcC_1$ and set $Y_2 := \Phi (Y_1)$.  Assume that there exists a $\mcC_i$-module $\mcM_{\mcC_i}$, a degree $0$ closed $\mcC_i$-module map $t_{\mcC_i} : \mcC_i \left( - , Y_i \right) \to \mcM_{\mcC_i}$ and a degree $0$ closed $\mcC_1$-module map $t_0 : \mcM_{\mcC_1} \to \Phi^* \mcM_{\mcC_2}$ such that the following diagram of $\mcC_1$-modules commutes  
	\[\begin{tikzcd}
	\mcC_1 \left( - , Y_1 \right) \ar[d, "t_{\mcC_1}"] \ar[r, "t_{\Phi}"] & \Phi^* \mcC_2 \left( - , Y_2  \right) \ar[d, "\Phi^* t_{\mcC_2}"] \\
	\mcM_{\mcC_1} \ar[r, "t_0"] & \Phi^* \mcM_{\mcC_2}
	\end{tikzcd} \]
	(see Definition \ref{definition modules morphism induced by functor} for the map $t_{\Phi}$). Then for every object $X$ in $\mcC_1$, there is a chain map $u : \, _{\mcA_1 \backslash} \mcM_{\mcC_1} \left( X \right) \to \, _{\mcA_2 \backslash} \mcM_{\mcC_2} \left( \Phi X \right)$ such that the following diagram of chain complexes commutes
	\[\begin{tikzcd}
	\mcC_1 / \mcA_1 \left( X , Y_1 \right) \ar[d, "_{\mcA_1 \backslash} t_{\mcC_1}"] \ar[r, "\widetilde{\Phi}"] & \mcC_2 / \mcA_2 \left( \Phi X , Y_2 \right) \ar[d, "_{\mcA_2 \backslash} t_{\mcC_2}"] \\
	_{\mcA_1 \backslash} \mcM_{\mcC_1} \left( X \right) \ar[r, "u"] & _{\mcA_2 \backslash} \mcM_{\mcC_2} \left( \Phi X \right) \\
	\mcM_{\mcC_1} \left( X \right) \ar[u, hook] \ar[r, "t_0"] & \mcM_{\mcC_2} \left( \Phi X \right) \ar[u, hook]
	\end{tikzcd} \]
	(the two lowest vertical maps are the inclusions). 
	If moreover the following holds:
	\begin{enumerate}
		
		\item for every objects $A$ in $\mcA_i$, the complexes $\mcM_{\mcC_i} \left( A \right)$ are acyclic,
		
		\item the maps $_{\mcA_i \backslash} t_{\mcC_i} : \, _{\mcA_i \backslash} \mcC_i \left( X , Y_i \right) \to \, _{\mcA_i \backslash} \mcM_{\mcC_i} (X)$ are quasi-isomorphisms, and
		
		\item the map $t_0 : \mcM_{\mcC_1} (X) \to \Phi^* \mcM_{\mcC_2} (X)$  is a quasi-isomorphism,
		
	\end{enumerate}
	then the map $\widetilde{\Phi} : \mcC_1 / \mcA_1 \left( X , Y_1 \right) \to \mcC_2 / \mcA_2 \left( \Phi X , Y_2 \right)$ is a quasi-isomorphism.
	
\end{prop}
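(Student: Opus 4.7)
The first task is to exhibit the map $u$. The natural candidate is to apply the quotient DG-functor of Definition \ref{definition localization of modules} to $t_0 : \mcM_{\mcC_1} \to \Phi^* \mcM_{\mcC_2}$, obtaining $_{\mcA_1 \backslash} t_0 : \, _{\mcA_1 \backslash} \mcM_{\mcC_1} (X) \to \, _{\mcA_1 \backslash} (\Phi^* \mcM_{\mcC_2}) (X)$, and then compose with the chain map $_{\mcA_1 \backslash} (\Phi^* \mcM_{\mcC_2}) (X) \to \, _{\mcA_2 \backslash} \mcM_{\mcC_2} (\Phi X)$ provided by Definition \ref{definition natural transformation} (applied with $\mcA = \mcA_1$, $\mcB = \mcA_2$ and $\mcN = \mcM_{\mcC_2}$). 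The bottom square then commutes on the nose, since the above natural transformation restricts to the identity on the length-$0$ summand $\mcM_{\mcC_2}(\Phi X) \subset \, _{\mcA_2 \backslash} \mcM_{\mcC_2}(\Phi X)$, and likewise $_{\mcA_1 \backslash} t_0$ restricts to $t_0$ on the length-$0$ summand.

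The commutativity of the top square is a purely formal diagram chase. Using Remark \ref{rmk composition}, one factors $\widetilde{\Phi}$ as $_{\mcA_1 \backslash} t_{\Phi}$ followed by the natural transformation of Definition \ref{definition natural transformation}. Naturality of this transformation in the $\mcD$-module argument allows one to slide $_{\mcA_2 \backslash} t_{\mcC_2}$ past it at the cost of replacing it by $_{\mcA_1 \backslash} (\Phi^* t_{\mcC_2})$. The hypothesized commutativity $\Phi^* t_{\mcC_2} \circ t_{\Phi} = t_0 \circ t_{\mcC_1}$ of the given square of $\mcC_1$-modules, together with functoriality of $_{\mcA_1 \backslash} (-)$, then rewrites the result as $u \circ \, _{\mcA_1 \backslash} t_{\mcC_1}$, as desired.

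For the second statement, the diagram reduces the claim to showing that $u$ is a quasi-isomorphism, by the two-out-of-three property applied together with hypothesis (2). Both factors of $u$ will be handled by the same filtration argument: filter $_{\mcA_1 \backslash} \mcM_{\mcC_1}(X)$, $_{\mcA_1 \backslash} (\Phi^* \mcM_{\mcC_2})(X)$ and $_{\mcA_2 \backslash} \mcM_{\mcC_2}(\Phi X)$ by the chain length $p \geq 0$ of the tensor factor in the definition of the quotient module. Inspection of the formula for $\mu_{_{\mcA \backslash} \mcM}$ in Definition \ref{definition localization of modules} shows that the differential does not increase $p$, so this is an exhaustive, bounded-below filtration, and both $_{\mcA_1 \backslash} t_0$ and the natural transformation preserve it.

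The key computation is on the associated graded. For $p \geq 1$ the graded piece ends in a module factor of the form $\mcM_{\mcC_1}(A_p)$ or $\mcM_{\mcC_2}(\Phi A_p)$, hence is a tensor product with an acyclic complex by hypothesis (1) (and since $\Phi(\mcA_1) \subset \mcA_2$), so all such pieces are acyclic and the induced maps between them are trivially quasi-isomorphisms. For $p = 0$ the graded piece is $\mcM_{\mcC_1}(X)$ on the left and $\mcM_{\mcC_2}(\Phi X)$ on both middle and right terms, with $_{\mcA_1 \backslash} t_0$ inducing $t_0|_X$ (a quasi-isomorphism by hypothesis (3)) and the natural transformation inducing the identity. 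A standard inductive argument on the filtration (reducing via the short exact sequences $0 \to F_{p-1} \to F_p \to \mathrm{gr}_p \to 0$) then shows that each of the two factors of $u$ is a quasi-isomorphism, and hence so is $u$. The only mildly delicate point is the acyclicity argument for the filtration, which I expect to be the main obstacle to make clean; it works because the filtration is bounded below and exhaustive, so any cohomology class is represented in some finite $F_p$ and can be pushed down inductively.
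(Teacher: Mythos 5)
Your proposal is correct and follows essentially the same route as the paper: $u$ is constructed as the composite of $_{\mcA_1 \backslash} t_0$ with the natural transformation of Definition \ref{definition natural transformation}, the top square is handled via Remark \ref{rmk composition} and naturality, and the conclusion follows by two-out-of-three in the resulting diagram. The only difference is that where you run the length filtration argument explicitly to show the two factors of $u$ are quasi-isomorphisms, the paper simply invokes \cite[Lemma 3.13]{GPS20} (whose proof is exactly that filtration argument) to see that the inclusions $\mcM_{\mcC_i} \hookrightarrow \, _{\mcA_i \backslash} \mcM_{\mcC_i}$ are quasi-isomorphisms under hypothesis (1), and then applies two-out-of-three to the bottom square as well.
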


\begin{proof}
	
	We apply the functor $\mcP \mapsto \, _{\mcA_1 \backslash} \mcP$ to the first diagram, we evaluate at $X$ and we use the natural map of Definition \ref{definition natural transformation} to get the following commutative diagram of chain complexes 
	\[\begin{tikzcd}
	_{\mcA_1 \backslash} \mcC_1 \left( - , Y_1 \right) \left( X \right) \ar[d, "_{\mcA_1 \backslash} t_{\mcC_1}"] \ar[r, "_{\mcA_1 \backslash} t_{\Phi}"] & _{\mcA_1 \backslash} \left( \Phi^* \mcC_2 \left( - , Y_2 \right) \right) \left( X \right) \ar[d, "_{\mcA_1 \backslash} \left( \Phi^* t_{\mcC_2} \right)"] \ar[r] & \, _{\mcA_2 \backslash} \mcC_2 \left( - , Y_2 \right) \left( \Phi X \right) \ar[d, "_{\mcA_2 \backslash} t_{\mcC_2}"] \\
	_{\mcA_1 \backslash} \mcM_{\mcC_1} \left( X \right) \ar[r, "_{\mcA_1 \backslash} t_0"] & _{\mcA_1 \backslash} \left( \Phi^* \mcM_{\mcC_2} \right) \left( X \right) \ar[r] & \, _{\mcA_2 \backslash} \mcM_{\mcC_2} \left( \Phi X \right)
	\end{tikzcd} . \]
	Then we compose the horizontal maps and we use Remark \ref{rmk composition} to get a commutative diagram of chain complexes
	\[\begin{tikzcd}
	\mcC_1 / \mcA_1 \left( X , Y_1 \right) \ar[d, "_{\mcA_1 \backslash} t_{\mcC_1}"] \ar[r, "\widetilde{\Phi}"] & \mcC_2 / \mcA_2 \left( \Phi X , Y_2 \right) \ar[d, "_{\mcA_2 \backslash} t_{\mcC_2}"] \\
	_{\mcA_1 \backslash} \mcM_{\mcC_1} \left( X \right) \ar[r, "u"] & _{\mcA_2 \backslash} \mcM_{\mcC_2} \left( \Phi X \right) \\
	\end{tikzcd} . \]
	This proves the first part of the Proposition because the following diagram of chain complexes commutes  
	\[\begin{tikzcd}
	_{\mcA_1 \backslash} \mcM_{\mcC_1} \left( X \right) \ar[r, "u"] & _{\mcA_2 \backslash} \mcM_{\mcC_2} \left( \Phi X \right) \\
	\mcM_{\mcC_1} \left( X \right) \ar[u, hook] \ar[r, "t_0"] & \mcM_{\mcC_2} \left( \Phi X \right) \ar[u, hook] \\
	\end{tikzcd} . \]
	The second part of the Proposition follows directly with \cite[Lemma 3.13]{GPS20}.
	
\end{proof}

\paragraph{Cone of module maps.}

\begin{defin}\label{definition cone of a morphism between modules}
	
	Let $t : \mcM_1 \to \mcM_2$ be a degree $0$ closed morphism of $\mcC$-modules. 
	We denote by 
	\[\mathrm{Cone} \left( \mcM_1 \xrightarrow{t} \mcM_2 \right) = \left[
	\begin{tikzcd}
		\mcM_1 \ar[d, "t"] \\
		\mcM_2
	\end{tikzcd}
	\right] \]
	the $\mcC$-module $\mcM$ defined as follows. For every object $X$ in $\mcC$, 
	\[\mcM \left( X \right) = \mcM_1 \left( X \right) \left[ 1 \right] \oplus \mcM_2 \left( X \right) \]
	as graded vector space, and any sequence 
	\[ \left( x_0 , \dots , x_{d-1} \right) \in \mcC \left( X_0 , X_1 \right) \times \dots \times \mcC \left( X_{d-1} , X_d \right) \]
	acts on an element $u_1 \oplus u_2 $ in $\mcM \left( X_d \right)$ via the operations 
	\begin{align*}
		\mu_{\mcM} \left( x_0 , \dots , x_{d-1} , u_1 \oplus u_2 \right) & = \\ \mu_{\mcM_1} \left( x_0, \dots, x_{d-1}, u_1 \right) 
		& \oplus \left( \mu_{\mcM_2} \left( x_0, \dots, x_{d-1}, u_2 \right) + t \left( x_0, \dots, x_{d-1}, u_1 \right) \right) .
	\end{align*}
	
\end{defin}

If we have two $\mcC$-module maps $t : \mcM_1 \to \mcM_2$ and $t' : \mcM_1 \to \mcM_2'$, then we set 
\[\left[
\begin{tikzcd}
	& \mcM_1 \ar[ld, "t"] \ar[rd, "t'"] & \\
	\mcM_2 & & \mcM_2'
\end{tikzcd}
\right] := 
\left[
\begin{tikzcd}
	\mcM_1 \ar[d, "{\left( t, t' \right)}"] \\
	\mcM_2 \oplus \mcM_2'
\end{tikzcd}
\right] . \]

\begin{prop}\label{prop morphism induced by homotopy}
	
	Consider a diagram of $\mcC$-modules 
	\[ 
	\begin{tikzcd}
		\mcM_1 \ar[r, "t_1"] \ar[d, "t_1'"] & \mcM_2 \ar[d, "t_2"] \\
		\mcM_2' \ar[r, "t_2'"] & \mcM_3
	\end{tikzcd}
	\]
	where all the morphisms are of degree $0$ and closed. Then any homotopy $h : \mcM_1 \to \mcM_3$ between
	\[t := \mu_{\mathrm{Mod}_{\mcC}}^2 \left( t_1, t_2 \right) \text{ and } t' := \mu_{\mathrm{Mod}_{\mcC}}^2 \left( t_1', t_2' \right) \]
	induces a degree $0$ closed $\mcC$-module map 
	\[t_h : \left[
	\begin{tikzcd}
		& \mcM_1 \ar[ld, "t_1"] \ar[rd, "t_1'"] & \\
		\mcM_2 & & \mcM_2'
	\end{tikzcd}
	\right] \to \mcM_3 \]
	defined by 
	\begin{align*}
		t_h \left( x_0 , \dots , x_{d-1} , u_1 \oplus u_2 \oplus u_2' \right) & = \\
		h \left( x_0 , \dots , x_{d-1} , u_1 \right) & + t_2 \left( x_0 , \dots , x_{d-1} , u_2 \right) + t_2' \left( x_0 , \dots , x_{d-1} , u_2' \right) .
	\end{align*}
	
\end{prop}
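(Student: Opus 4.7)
The plan is to check directly that the prescribed $t_h$ is a degree $0$ closed $\mcC$-module map, the latter being the only real content of the proposition (the degree count is immediate: $t_2, t_2'$ are degree $0$, while $h$ is degree $-1$, which precisely compensates the shift $[1]$ on the $\mcM_1$-summand of the cone).

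To check closedness, I would expand $\mu^1_{\mathrm{Mod}_{\mcC}}(t_h)(x_0, \dots, x_{d-1}, u_1 \oplus u_2 \oplus u_2')$ using the formula in Definition \ref{definition bimodule}, and in particular expand the contribution
\[t_h\bigl(\dots, \mu_{\mathrm{cone}}(\dots, u_1 \oplus u_2 \oplus u_2'), \dots\bigr)\]
according to the cone module structure of Definition \ref{definition cone of a morphism between modules}, which produces a $\mu_{\mcM_1}(\dots, u_1)$-term in the first slot, a $\mu_{\mcM_2}(\dots, u_2) + t_1(\dots, u_1)$-term in the second, and a $\mu_{\mcM_2'}(\dots, u_2') + t_1'(\dots, u_1)$-term in the third. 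Then I would regroup all the terms according to which of $u_1, u_2, u_2'$ is being acted on by the innermost operation.

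The terms involving only $u_2$ recombine into $\mu^1_{\mathrm{Mod}_{\mcC}}(t_2)(\dots, u_2)$, which vanishes because $t_2$ is closed; similarly for the terms involving only $u_2'$. The terms involving $u_1$ split into two groups: those built from $h$ alone reassemble into $\mu^1_{\mathrm{Mod}_{\mcC}}(h)(\dots, u_1)$, while the cross-terms $t_2(\dots, t_1(\dots, u_1))$ and $t_2'(\dots, t_1'(\dots, u_1))$ produced by the off-diagonal pieces of $\mu_{\mathrm{cone}}$ reassemble into $\mu^2_{\mathrm{Mod}_{\mcC}}(t_1, t_2)(\dots, u_1) + \mu^2_{\mathrm{Mod}_{\mcC}}(t_1', t_2')(\dots, u_1) = t(\dots, u_1) + t'(\dots, u_1)$. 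By hypothesis $\mu^1_{\mathrm{Mod}_{\mcC}}(h) = t + t'$, and working over $\mathbf{F} = \mathbf{Z}/2\mathbf{Z}$ the total $u_1$-contribution therefore vanishes as well.

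The only mildly delicate step is the bookkeeping of the cross-terms coming from $t_1$ and $t_1'$ in $\mu_{\mathrm{cone}}$: one must notice that they are precisely what is needed to manufacture the $A_\infty$-composition $\mu^2(t_1, t_2)$ (and its primed analogue), so that the homotopy relation $\mu^1(h) = t + t'$ can be invoked. Since no signs intervene, no further care is required.
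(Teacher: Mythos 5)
Your proposal is correct and follows the same route as the paper, whose entire proof is the remark that checking $\mu^1_{\mathrm{Mod}_{\mcC}}(t_h)=0$ is straightforward; you have simply carried out that verification explicitly. The bookkeeping is right: the $u_2$- and $u_2'$-terms assemble into $\mu^1(t_2)$ and $\mu^1(t_2')$, and the $u_1$-terms into $\mu^1(h)+t+t'$, all of which vanish over $\mathbf{F}=\mathbf{Z}/2\mathbf{Z}$.
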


\begin{proof}
	
	The only thing to check is that $\mu_{\mathrm{Mod}_{\mcC}}^1 \left( t_h \right) = 0$, which is straightforward.
	
\end{proof}

\begin{rmk}\label{rmk cone and localization commute}
	
	If $t : \mcM_1 \to \mcM_2$ is a degree $0$ closed $\mcC$-module map, then 
	\[_{\mcA \backslash} \mathrm{Cone} \left( \mcM_1 \xrightarrow{t} \mcM_2 \right) = \mathrm{Cone} \left( _{\mcA \backslash} \mcM_1 \xrightarrow{_{\mcA \backslash} t} \, _{\mcA \backslash} \mcM_2 \right) . \]
	
\end{rmk}

\subsection{Grothendieck construction and homotopy pushout}\label{Grothendieck subsection}

An exposition on Grothendieck constructions and homotopy colimits in the context of $A_{\infty}$-categories can be found in \cite[appendix A]{GPS19}. We recall here definitions and basic facts that will serve us.  In this section, $A_{\infty}$-categories are always assumed to be \emph{strictly unital} (see \cite[paragraph (2a)]{Sei08}).

\begin{defin}\label{definition Grothendieck construction}
	
	Consider a diagram of $A_{\infty}$-categories 
	\[\begin{tikzcd}
	\mcC \ar[r, "\Phi_1"] \ar[d, "\Phi_2" left] & \mcD_1 \\
	\mcD_2 
	\end{tikzcd}. \]
	The Grothendieck construction of this diagram is the $A_{\infty}$-category $\mcG$ such that 
	\begin{enumerate}
		
		\item the set of objects is $\mathrm{ob} \left( \mcC \right) \sqcup \mathrm{ob} \left( \mcD_1 \right) \sqcup \mathrm{ob} \left( \mcD_2 \right)$,
		
		\item the space of morphisms between two objects $X$ and $Y$ is given by
		\[\mcG \left( X, Y \right) = \left\{
		\begin{array}{ll}
		\mcC \left( X, Y \right) & \text{if } X, Y \in \mathrm{ob} \left( \mcC \right) \\
		\mcD_i \left( X, Y \right) & \text{if } X, Y \in \mathrm{ob} \left( \mcD_i \right) \\
		\mcD_i \left( \Phi_i X, Y \right) & \text{if } X \in \mathrm{ob} \left( \mcC \right) \text{ and } Y \in \mathrm{ob} \left( \mcD_i \right) \\
		0 & \text{otherwise} .
		\end{array}
		\right. \]
		
		\item the operations involving only objects of $\mcC$, respectively of $\mcD_i$, are the same as in $\mcC$, respectively in $\mcD_i$, and for every sequence 
		\begin{align*}
		\left( x_0 , \dots , x_{p-1} , y , z_0 , \dots, z_{q-1} \right) \in \mcC \left( X_0 , X_1 \right) \otimes \cdots \otimes \mcC \left( X_{p-1} , X_p \right) \\ \otimes \mcG \left( X_p , Y_0 \right) \otimes \mcD_i \left( Y_0 , Y_1 \right) \otimes \cdots \otimes \mcD_i \left( Y_{q-1} , Y_q \right), 
		\end{align*}
		we have 
		\begin{align*}
		\mu_{\mcG} & \left( x_0 , \dots , x_{p-1} , y , z_0 , \dots, z_{q-1} \right) = \\
		& \sum \mu_{\mcD_i} \left( \Phi_i \left( x_0 , \dots , x_{i_1-1} \right) , \dots , \Phi_i \left( x_{p-i_r} , \dots , x_{p-1} \right) , y , z_0 , \dots, z_{q-1} \right) .
		\end{align*}
		
	\end{enumerate}
	
	We will call \emph{adjacent unit} of $\mcG$ any morphism in $\mcG \left( X , \Phi_i \left( X \right) \right)$ which corresponds to the unit in $\mcD_i \left( \Phi_i \left( X \right) , \Phi_i \left( X \right) \right)$.
	The \emph{homotopy colimit} $\mcH$ of the above diagram is the localization of $\mcG$ at its adjacent units.
	
\end{defin}

\begin{prop}\label{prop induced functor from Grothendieck construction}
	
	Let $\mcG$ be the Grothendieck construction of a diagram 
	\[\begin{tikzcd}
	\mcC \ar[r, "\Phi_1"] \ar[d, "\Phi_2" left] & \mcD_1 \\
	\mcD_2 .
	\end{tikzcd} \] 
	Then any strictly commutative square 
	\[\begin{tikzcd}
	\mcC \ar[r, "\Phi_1"] \ar[d, "\Phi_2" left] & \mcD_1 \ar[d, "\Psi_1"] \\
	\mcD_2 \ar[r, "\Psi_2"] & \mcE 
	\end{tikzcd} \]
	induces a functor $\sigma : \mcG \to \mcE$ defined as follows. On the objects, $\sigma$ acts on $\mcD_i$ as $\Psi_i$, and on $\mcC$ as $\Psi_1 \circ \Phi_1 = \Psi_2 \circ \Phi_2$ ; on the morphisms, $\sigma$ acts on $\mcD_i$ as $\Psi_i$, on $\mcC$ as $\Psi_1 \circ \Phi_1 = \Psi_2 \circ \Phi_2$, and it sends any sequence  
	\begin{align*}
	\left( x_0 , \dots , x_{p-1} , y , z_0 , \dots, z_{q-1} \right) \in \mcC \left( X_0 , X_1 \right) \otimes \cdots \otimes \mcC \left( X_{p-1} , X_p \right) \\ \otimes \mcG \left( X_p , Y_0 \right) \otimes \mcD_i \left( Y_0 , Y_1 \right) \otimes \cdots \otimes \mcD_i \left( Y_{q-1} , Y_q \right), 
	\end{align*} 
	to 
	\begin{align*}
	\sigma & \left( x_0 , \dots , x_{p-1} , y , z_0 , \dots, z_{q-1} \right) = \\
	& \sum \Psi_i \left( \Phi_i \left( x_0 , \dots , x_{i_1-1} \right) , \dots , \Phi_i \left( x_{p-i_r} , \dots , x_{p-1} \right) , y , z_0 , \dots, z_{q-1} \right) .
	\end{align*}  
	
\end{prop}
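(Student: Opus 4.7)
The proof is a direct verification that the formulas defining $\sigma$ satisfy the $A_\infty$-functor equations. On objects the only potential ambiguity is the value on $\mathrm{ob}(\mcC)$, which is unambiguous thanks to the strict commutativity $\Psi_1 \circ \Phi_1 = \Psi_2 \circ \Phi_2$. I would then split the verification of the $A_\infty$-relations for $\sigma$ into cases according to the type of the input sequence $(a_1, \ldots, a_n)$ of composable morphisms in $\mcG$.

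The two pure cases are immediate: if all $a_\ell$ lie in $\mcC$, then every $\mu_\mcG$-subsequence coincides with the corresponding $\mu_\mcC$-subsequence and $\sigma$ restricted to $\mcC$ is the composition of $A_\infty$-functors $\Psi_i \circ \Phi_i$, so the relation follows; if all $a_\ell$ lie in some $\mcD_i$, it reduces to the $A_\infty$-functor relation for $\Psi_i$.

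The main work is the mixed case, a sequence $(x_0, \ldots, x_{p-1}, y, z_0, \ldots, z_{q-1})$ with $x_s \in \mcC$, $y \in \mcG(X_p, Y_0) = \mcD_i(\Phi_i X_p, Y_0)$ and $z_t \in \mcD_i$. Here $\sigma$ of such a sequence is, by definition, $\Psi_i$ applied to the sequence obtained by distributing $(x_0, \ldots, x_{p-1})$ across all ordered partitions into $\Phi_i$-packets, followed by $y, z_0, \ldots, z_{q-1}$; and any $\mu_\mcG$-subexpression crossing $y$ expands into $\mu_{\mcD_i}$ applied to a similarly $\Phi_i$-packaged sequence (while $\mu_\mcG$-subexpressions entirely among the $x$'s are just $\mu_\mcC$-subexpressions, and those entirely among the $z$'s are just $\mu_{\mcD_i}$-subexpressions). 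Plugging all this into the $A_\infty$-relation for $\sigma$ and grouping terms according to the resulting partition of the $x$-block, the sum decomposes into two families: one which matches, packet by packet, the $A_\infty$-functor equation for $\Phi_i$ applied to a single $x$-block, the other which matches the $A_\infty$-functor equation for $\Psi_i$ applied to the full packaged sequence $(\Phi_i(\cdots), \ldots, \Phi_i(\cdots), y, z_0, \ldots, z_{q-1})$. Both families vanish, so the mixed relation holds.

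\textbf{Main obstacle.} The main difficulty is bookkeeping in the mixed case: one must verify that every term generated by expanding both sides of the $A_\infty$-equation appears in exactly one of the two families above, with no overcounting. I would organise the calculation by first fixing an ordered partition of $(x_0, \ldots, x_{p-1})$ into $\Phi_i$-packets and then summing over all compatible ways of inserting a $\mu_\mcE$ in $\mcE$ or a $\mu_\mcG$ in $\mcG$; after this reduction each inner sum is a direct instance of an $A_\infty$-functor equation of $\Phi_i$ or $\Psi_i$. The argument is entirely formal and structurally parallels the standard proof that the composition of two $A_\infty$-functors is again an $A_\infty$-functor, which is also the content of the pure $\mcC$-case above.
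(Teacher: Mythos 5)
Your proposal is correct and matches the paper's approach: the paper's proof consists only of the remark that this is a straightforward verification, and your case analysis (pure $\mcC$, pure $\mcD_i$, and the mixed case organized by $\Phi_i$-packets) is precisely the bookkeeping that remark elides, paralleling the proof that a composition of $A_{\infty}$-functors is an $A_{\infty}$-functor.
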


\begin{proof}
	
	This is a straightforward verification. 
	
\end{proof}

\begin{prop}[\cite{GPS19} Lemma A.5]\label{prop invariance of homotopy colimits}
	
	A strictly commutative diagram of $A_{\infty}$-categories
	\[\begin{tikzcd}
	\mcB_1 \ar[d] & \mcA \ar[l] \ar[r] \ar[d] & \mcB_2 \ar[d] \\
	\mcD_1 & \mcC \ar[l] \ar[r] & \mcD_2
	\end{tikzcd} \] 
	induces an $A_{\infty}$-functor from the Grothendieck construction of the top line to the Grothendieck construction of the bottom line which preserves adjacent units. 
	If moreover each vertical arrow is a quasi-equivalence, then the induced functor
	\[\mathrm{hocolim} \left( 
	\begin{tikzcd}
	\mcA \ar[r] \ar[d] & \mcB_1 \\
	\mcB_2 &
	\end{tikzcd} \right)
	\to 
	\mathrm{hocolim} \left( 
	\begin{tikzcd}
	\mcC \ar[r] \ar[d] & \mcD_1 \\
	\mcD_2 &
	\end{tikzcd} \right) \]
	is a quasi-equivalence.
	
\end{prop}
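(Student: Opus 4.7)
The plan is to construct an explicit $A_\infty$-functor $F : \mcG^{\mathrm{top}} \to \mcG^{\mathrm{bot}}$ between the two Grothendieck constructions, then verify directly that it is a quasi-equivalence (before localizing). Write the vertical functors as $F_{\mcA} : \mcA \to \mcC$ and $F_{\mcB_i} : \mcB_i \to \mcD_i$, the top horizontal functors as $\Phi_i : \mcA \to \mcB_i$, and the bottom ones as $\Psi_i : \mcC \to \mcD_i$; strict commutativity of the squares means $F_{\mcB_i} \circ \Phi_i = \Psi_i \circ F_{\mcA}$ as $A_\infty$-functors, with equality of all higher components. On objects, $F$ sends an object of $\mcA$, respectively $\mcB_i$, to its image under $F_{\mcA}$, respectively $F_{\mcB_i}$. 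On sequences of morphisms lying entirely in $\mcA$ or entirely in $\mcB_i$, $F$ is the corresponding higher-order term of $F_{\mcA}$ or $F_{\mcB_i}$. On a mixed sequence $(x_0, \dots, x_{p-1}, y, z_0, \dots, z_{q-1})$ with $x_j \in \mcA$, $y \in \mcB_i(\Phi_i(X_p), Y_0)$, and $z_j \in \mcB_i$, I set
\[ F(x_0, \dots, x_{p-1}, y, z_0, \dots, z_{q-1}) = \sum F_{\mcB_i}\bigl(\Phi_i(x_0, \dots, x_{i_1-1}), \dots, \Phi_i(x_{p-i_r}, \dots, x_{p-1}), y, z_0, \dots, z_{q-1}\bigr), \]
summed over all partitions of $(x_0, \dots, x_{p-1})$ into consecutive blocks. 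The output lies in $\mcD_i(F_{\mcB_i}(\Phi_i(X_0)), F_{\mcB_i}(Y_q))$, which equals $\mcG^{\mathrm{bot}}(F(X_0), F(Y_q))$ thanks to strict commutativity. Note that Proposition \ref{prop induced functor from Grothendieck construction} does not apply directly here, because the two natural compositions $\mcA \to \mcG^{\mathrm{bot}}$ land in different pieces of $\mcG^{\mathrm{bot}}$, so one really has to construct $F$ by hand.

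Checking that $F$ is an $A_\infty$-functor reduces to the familiar relations on pure sequences, and on mixed sequences to the $A_\infty$-functor relations for $F_{\mcB_i}$ and for the composition $F_{\mcB_i} \circ \Phi_i = \Psi_i \circ F_{\mcA}$: after unfolding $\mu_{\mcG^{\mathrm{top}}}$ and $\mu_{\mcG^{\mathrm{bot}}}$ (each of which itself sums over partitions of the $\mcA$-inputs with $\Phi_i$ or $\Psi_i$ applied blockwise), the identity splits into a piece involving only $\mcA$-inputs that is the $A_\infty$-relation for the composition above, and a piece involving only $\mcB_i$-inputs that is the $A_\infty$-relation for $F_{\mcB_i}$. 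Adjacent units are preserved since they correspond to strict units of some $\mcB_i$, and $F_{\mcB_i}$ is strictly unital. For the second half of the proposition, I claim $F$ is already a quasi-equivalence at the level of Grothendieck constructions: its linear term is a quasi-isomorphism on every hom-space, because on pure hom-spaces it is the linear term of $F_{\mcA}$ or $F_{\mcB_i}$, and on mixed hom-spaces $\mcB_i(\Phi_i(X), Y) \to \mcD_i(F_{\mcB_i}(\Phi_i(X)), F_{\mcB_i}(Y))$ it is again the linear term of $F_{\mcB_i}$; essential surjectivity on $H^0$ follows because every object of $\mcG^{\mathrm{bot}}$ lies in one of $\mcC, \mcD_1, \mcD_2$, is homotopy equivalent there to the image of an object of $\mcA, \mcB_1, \mcB_2$, and such homotopy equivalences lift verbatim to $\mcG^{\mathrm{bot}}$ since the pure hom-spaces are unchanged. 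Since $F$ preserves adjacent units, it descends to a functor on localizations, which remains a quasi-equivalence because $A_\infty$-localization preserves quasi-equivalences (see \cite[section 3.1.3]{GPS20}).

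The main obstacle is the combinatorial bookkeeping in verifying the $A_\infty$-functor relation for $F$ on mixed sequences: both $\mu_{\mcG^{\mathrm{top}}}$ and $\mu_{\mcG^{\mathrm{bot}}}$ produce sums indexed by partitions of the $\mcA$-inputs, and these must be matched correctly against the partitions in the definition of $F$. Strict commutativity as an \emph{equality} of $A_\infty$-functors, including all higher components of $F_{\mcB_i}\circ\Phi_i$ and $\Psi_i\circ F_{\mcA}$, is precisely what makes this matching go through term by term; without strict equality one would have to enlarge $F$ by homotopy-level correction terms governed by an $A_\infty$-natural transformation. Everything else is a direct application of standard $A_\infty$-categorical yoga.
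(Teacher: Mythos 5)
The paper does not prove this proposition at all --- it is quoted verbatim from \cite{GPS19} (Lemma A.5) --- so your from-scratch verification is a legitimate alternative route, and most of it is sound. Your explicit formula for the induced functor $F$ on the Grothendieck constructions is the right one (it is the natural analogue of the formula in Proposition \ref{prop induced functor from Grothendieck construction}, and you are correct that that proposition does not apply directly since the two composites $\mcA \to \mcG^{\mathrm{bot}}$ land on different objects). The hom-space analysis showing $F$ is cohomologically fully faithful, and the essential surjectivity argument via the pure subcategories, are both correct.

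The genuine gap is in the last step. The principle you invoke --- ``a quasi-equivalence $F$ with $F(W_1)\subseteq W_2$ induces a quasi-equivalence $\mcG_1[W_1^{-1}]\to\mcG_2[W_2^{-1}]$'' --- is false in that generality (take $F=\mathrm{id}$, $W_1=\emptyset$, $W_2$ everything). What is true, and what you must check here, is that every adjacent unit of $\mcG^{\mathrm{bot}}$ becomes an isomorphism in $H^0\bigl(\mcG^{\mathrm{bot}}[F(W_1)^{-1}]\bigr)$, so that localizing at $W_2$ rather than at $F(W_1)$ changes nothing up to quasi-equivalence. The adjacent units of $\mcG^{\mathrm{bot}}$ are indexed by \emph{all} objects $Z$ of $\mcC$, whereas $F(W_1)$ only provides those indexed by objects in the image of $F_{\mcA}$. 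The repair uses essential surjectivity of $F_{\mcA}$: choose a closed degree-$0$ isomorphism $u\in\mcC(Z,F_{\mcA}X)$ in $H^0\mcC$; a direct computation with the operations of Definition \ref{definition Grothendieck construction} and strict unitality gives $\mu^2(u,w_{X})=\mu^2(w_{Z},\Psi_i^1 u)$ in $\mcG^{\mathrm{bot}}(Z,\Psi_i F_{\mcA}X)$, where $w_Z, w_X$ are the adjacent units at $Z$ and $F_\mcA X$. After inverting $w_X$, the left-hand side is a composite of isomorphisms, and $\Psi_i^1 u$ is an isomorphism in $H^0\mcD_i$ (functors preserve isomorphisms), so $w_Z$ is invertible by two-out-of-three. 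With this observation in place, the invariance statements of \cite[section 3.1.3]{GPS20} do give the quasi-equivalence of localizations. A secondary, convention-level point: for $F$ to send adjacent units to adjacent units \emph{on the nose} (rather than up to homotopy) you need the vertical functors to be strictly unital, not merely quasi-equivalences between strictly unital categories; this holds in all applications in the paper but should be stated as a hypothesis or handled up to homotopy.
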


\begin{prop}\label{prop homotopic functors induce quasi-isomorphic homotopy colimits}
	
	Consider two diagrams of $A_{\infty}$-categories 
	\[\begin{tikzcd}
	\mcC \ar[r, "\Phi_1"] \ar[d, "\Phi_2" left] & \mcD_1 \\
	\mcD_2 &
	\end{tikzcd}
	\text{ and } 
	\begin{tikzcd}
	\mcC \ar[r, "\Psi_1"] \ar[d, "\Psi_2" left] & \mcD_1 \\
	\mcD_2 . &
	\end{tikzcd} \]
	If $\Phi_i$ and $\Psi_i$ (for $i \in \{1,2\}$) are homotopic (see \cite[paragraph (1h)]{Sei08}), then the homotopy colimits of the diagrams above are quasi-equivalent.
	
\end{prop}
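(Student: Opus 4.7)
The plan is to use the cylinder object $\mathrm{Cyl}(\mcC)$ for $\mcC$ introduced in Section \ref{subsection algebra}. By construction, this $A_{\infty}$-category comes equipped with two quasi-equivalences $\iota_0, \iota_1 : \mcC \to \mathrm{Cyl}(\mcC)$ with the property that any homotopy between two $A_{\infty}$-functors $\Phi, \Psi : \mcC \to \mcD$ is encoded by an $A_{\infty}$-functor $H : \mathrm{Cyl}(\mcC) \to \mcD$ satisfying $H \circ \iota_0 = \Phi$ and $H \circ \iota_1 = \Psi$ strictly. This is precisely the feature the notion was designed to mimic from the corresponding model-categorical picture.

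Applying this feature to each of the two given homotopies $\Phi_i \simeq \Psi_i$, we obtain $A_{\infty}$-functors $H_i : \mathrm{Cyl}(\mcC) \to \mcD_i$ for $i = 1, 2$, with $H_i \circ \iota_0 = \Phi_i$ and $H_i \circ \iota_1 = \Psi_i$. The first pair of identities makes the ladder
\[\begin{tikzcd}
\mcD_1 \ar[d, "\mathrm{id}"] & \mcC \ar[l, "\Phi_1"'] \ar[r, "\Phi_2"] \ar[d, "\iota_0"] & \mcD_2 \ar[d, "\mathrm{id}"] \\
\mcD_1 & \mathrm{Cyl}(\mcC) \ar[l, "H_1"] \ar[r, "H_2"'] & \mcD_2
\end{tikzcd}\]
strictly commute, and all three vertical arrows are quasi-equivalences (the identities trivially, and $\iota_0$ by the defining property of the cylinder). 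Proposition \ref{prop invariance of homotopy colimits} therefore produces a quasi-equivalence between the homotopy colimit of the span $\mcD_1 \xleftarrow{\Phi_1} \mcC \xrightarrow{\Phi_2} \mcD_2$ and the homotopy colimit of the span $\mcD_1 \xleftarrow{H_1} \mathrm{Cyl}(\mcC) \xrightarrow{H_2} \mcD_2$. Running the exact same argument with $\iota_1$ in place of $\iota_0$ yields a second quasi-equivalence, this time from the homotopy colimit of the $(\Psi_1, \Psi_2)$-diagram to the same middle (cylinder) homotopy colimit. Composing one of these quasi-equivalences with a quasi-inverse of the other gives the desired quasi-equivalence between the two initial homotopy colimits.

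The only genuinely non-tautological point is the existence of the extensions $H_i$ with strict compatibility $H_i \circ \iota_j = \Phi_i, \Psi_i$ (not merely up to a further homotopy); this is exactly what the cylinder-object formalism of Section \ref{subsection algebra} is built to supply. Once this is granted, the proof is a direct double application of the invariance result Proposition \ref{prop invariance of homotopy colimits}, so the main work is entirely absorbed into the prior setup rather than in this statement itself.
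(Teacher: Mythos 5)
Your strategy is the natural ``cylinder object'' one, but as written it contains a genuine gap at the step you flag as ``the only genuinely non-tautological point''. The problem is that no single $A_{\infty}$-category in the paper's setup has \emph{both} properties you need simultaneously. Write $\mcK$ for the Grothendieck construction of the diagram $\mcC_{\bot} \leftarrow \mcC_I \rightarrow \mcC_{\top}$ and $\mcC yl_{\mcC} = \mcK[W_{\mcK}^{-1}]$ for its localization at the adjacent units. Proposition \ref{prop homotopy is generalized homotopy} produces strict extensions $\eta_i$ with $\eta_i \circ \iota_{\top} = \Phi_i$ and $\eta_i \circ \iota_{\bot} = \Psi_i$, but only as functors $\eta_i : \mcK \to \mcD_i$ out of the \emph{unlocalized} Grothendieck construction; and $\iota_{\top}, \iota_{\bot} : \mcC \to \mcK$ are \emph{not} quasi-equivalences (an object $X_{\bot}$ has no nonzero morphisms to or from any object of $\mcC_{\top}$ in $\mcK$, so $\iota_{\top}$ is not cohomologically essentially surjective). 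Hence Proposition \ref{prop invariance of homotopy colimits} does not apply to your ladder with $\mcK$ in the middle. Conversely, $\iota_{\top}, \iota_{\bot} : \mcC \to \mcC yl_{\mcC}$ \emph{are} quasi-equivalences (Proposition \ref{prop cylinder object}), but the $\eta_i$ only descend to functors $\widetilde{\eta_i} : \mcC yl_{\mcC} \to \mcD_i[\{\mathrm{units}\}^{-1}]$, so the strict extension with target $\mcD_i$ that your ladder requires is not available on the localized cylinder.

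The argument can be repaired, but it needs an ingredient you did not invoke: ``localization and homotopy colimits commute'' (\cite[Lemma A.6]{GPS19}, used in the paper in the proof of Lemma \ref{lemma relation G - mapping torus}). Namely, form $\mathrm{hocolim}(\mcD_1 \xleftarrow{\eta_1} \mcK \xrightarrow{\eta_2} \mcD_2)$ and further localize at $W_{\mcK}$; by that lemma the result is the homotopy colimit of $\mcD_1[\{\mathrm{units}\}^{-1}] \xleftarrow{\widetilde{\eta_1}} \mcC yl_{\mcC} \xrightarrow{\widetilde{\eta_2}} \mcD_2[\{\mathrm{units}\}^{-1}]$, and now both of your ladders (one via $\iota_{\top}$ and the $\Phi_i$, one via $\iota_{\bot}$ and the $\Psi_i$) have quasi-equivalences as vertical arrows and strictly commute, so Proposition \ref{prop invariance of homotopy colimits} applies twice. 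Note that the paper's own proof avoids all of this: it directly writes down a functor $\kappa : \mcG_0 \to \mcG_1$ between the two Grothendieck constructions, defined to be the identity on $\mcC$ and on the $\mcD_i$ and to insert the homotopies $T_i$ on the mixed morphism sequences, and checks by hand that $\kappa$ satisfies the $A_{\infty}$-relations and sends adjacent units to adjacent units. That construction is more computational but self-contained, whereas your route outsources the work to the cylinder formalism at the cost of the extra localization lemma.
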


\begin{proof}
	
	Let $\mcG_0$ and $\mcG_1$ the Grothendieck constructions of the above diagrams.
	
	Let $T_i$ be an homotopy from $\Phi_i$ to $\Psi_i$. This means that 
	\begin{align*}
	\Phi_i + \Psi_i & = \sum T_i \left( \dots , \mu_{\mcC} \left( \dots \right) , \dots \right) \\ 
	& + \sum \mu_{\mcD_i} \left( \Psi_i \left( \dots \right) , \dots , \Psi_i \left( \dots \right) , T_i \left( \dots \right) , \Phi_i \left( \dots \right), \dots, \Phi_i \left( \dots \right) \right) .
	\end{align*}
	
	We consider the functor $\kappa : \mcG_0 \to \mcG_1$ such that 
	\[ \kappa_{|\mcC} = \mathrm{id}_{\mcC}, \quad \kappa_{|\mcD_i} = \mathrm{id}_{\mcD_i}, \]
	and which sends every sequence 
	\begin{align*}
	\left( \dots , y , \dots \right) & \in \mcC \left( X_0,X_1 \right) \times \cdots \times \mcC \left( X_{p-1},X_p \right) \times \mcG_0 \left( X_p , Y_0 \right) \\
	& \times \mcD_i \left( Y_0,Y_1 \right) \times \cdots \times \mcD_i \left( Y_{q-1},Y_q \right),
	\end{align*}
	to
	\begin{align*}
	& \kappa \left( \dots , y , \dots \right) = \\ 
	& \sum \mu_{\mcD_i} \left( \Psi_i \left( \dots \right) , \dots , \Psi_i \left( \dots \right) , T_i \left( \dots \right), \Phi_i \left( \dots \right) , \dots , \Phi_i \left( \dots \right) , y , \dots \right)
	\end{align*}
	if $p$ is positive, and to
	\[\Phi \left( y , \dots \right) = \mathrm{id}_{\mcD_i} \left( y , \dots \right) \]
	otherwise.
	Using the facts that $\Phi_i , \Psi_i$ are $A_{\infty}$-functors, that $T_i$ is an homotopy from $\Phi_i$ to $\Psi_i$, and gathering the terms depending on if they contain $T_i^k \left( \dots \right)$ or $y$, we conclude that $\kappa$ satisfies the $A_{\infty}$-relations. This proves the result because $\kappa$ is a quasi-equivalence sending the adjacent units of $\mcG_0$ onto those of $\mcG_1$. 
	
\end{proof}

\subsection{Cylinder object and homotopy}

Let $\mcA_{\bot}$, $\mcA_I$ and $\mcA_{\top}$ be three copies of an $A_{\infty}$-category $\mcA$. 
We denote by $\mcC$ the Grothendieck construction of the following diagram
\[\begin{tikzcd}
\mcA_I \ar[r, "\mathrm{id}"] \ar[d, "\mathrm{id}" left] & \mcA_{\top} \\
\mcA_{\bot} &
\end{tikzcd}, \]
and we let $\iota_{\bot}, \iota_I, \iota_{\top} : \mcA \to \mcC$ be the strict inclusions with images $\mcA_{\bot}$, $\mcA_I$, $\mcA_{\top}$ respectively. Finally, we denote by $W_{\mcC}$ the set of adjacent units in $\mcC$, and we let $\mcC yl_{\mcA} = \mcC \left[ W_{\mcC}^{-1} \right]$ be the homotopy colimit of the diagram above. We say that $\mcC yl_{\mcA}$ is a cylinder object for $\mcA$. 

We denote by $\pi : \mcC \to \mcA$ the $A_{\infty}$-functor induced by the following commutative square 
\[\begin{tikzcd}
\mcA \ar[r, "\mathrm{id}"] \ar[d, "\mathrm{id}" left] & \mcA \ar[d, "\mathrm{id}"] \\
\mcA \ar[r, "\mathrm{id}"] & \mcA 
\end{tikzcd} \]
(see Proposition \ref{prop induced functor from Grothendieck construction}). 

\begin{prop}\label{prop cylinder object}
	
	The following diagram of $A_{\infty}$-categories commutes 
	\[\begin{tikzcd}
	\mcA \sqcup \mcA \ar[r, "\iota_{\bot} \sqcup \iota_{\top}"] \ar[rr, "\mathrm{id} \sqcup \mathrm{id}" below, bend right] & \mcC \ar[r, "\pi"] & \mcA .
	\end{tikzcd} \]
	Moreover, $\pi$ sends $W_{\mcC}$ to the set of units in $\mcA$, and the induced $A_{\infty}$-functor $\widetilde{\pi} : \mcC yl_{\mcA} \to \mcA \left[ \mathrm{units}^{-1} \right]$ is a quasi-equivalence.  
	
\end{prop}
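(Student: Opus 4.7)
The first two claims should follow by direct inspection. By Proposition \ref{prop induced functor from Grothendieck construction}, the functor $\pi$ is built from the strictly commutative square whose four arrows are all $\mathrm{id}_\mcA$, so I expect it to act as the identity on each of $\mcA_\bot, \mcA_I, \mcA_\top$ and on their hom spaces; then $\pi \circ \iota_\bot = \pi \circ \iota_\top = \mathrm{id}_\mcA$ is immediate. Moreover, every adjacent unit in $\mcC(X_I, X_\bot)$ or $\mcC(X_I, X_\top)$ equals the unit $e_X \in \mcA(X,X)$ by Definition \ref{definition Grothendieck construction}, so $\pi$ maps it to a unit of $\mcA$, and the universal property of localization yields the functor $\widetilde{\pi}$.

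For the quasi-equivalence statement, the plan is to exhibit a quasi-inverse explicitly. The strict inclusion $\iota_\bot : \mcA \to \mcC$, composed with the localization $\mcC \to \mcC yl_\mcA$, sends units of $\mcA$ to units in $\mcC yl_\mcA$, so by the universal property it descends to an $A_\infty$-functor $\bar{\iota}_\bot : \mcA[\mathrm{units}^{-1}] \to \mcC yl_\mcA$. The identity $\pi \circ \iota_\bot = \mathrm{id}_\mcA$ yields $\widetilde{\pi} \circ \bar{\iota}_\bot = \mathrm{id}$, so it is enough to show that $\bar{\iota}_\bot$ is a quasi-equivalence. Cohomological essential surjectivity is immediate: the adjacent units $e_X : X_I \to X_\bot$ and $e_X : X_I \to X_\top$ become isomorphisms in $\mcC yl_\mcA$, so $X_I$ and $X_\top$ are both isomorphic to $X_\bot$ there, hence to an object in the image of $\bar{\iota}_\bot$.

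The main obstacle will be to prove that $\bar{\iota}_\bot$ is cohomologically full and faithful, that is, that the canonical map
\[\mcA(X,Y) = \mcC(X_\bot, Y_\bot) \longrightarrow \mcC yl_\mcA(X_\bot, Y_\bot)\]
is a quasi-isomorphism for all $X, Y$. I would compute the right-hand side using the model of Definition \ref{definition localization of modules} as ${}_{W_\mcC \backslash}\, \mcC(-, Y_\bot)(X_\bot)$. The Grothendieck construction makes the Yoneda module $\mcC(-, Y_\bot)$ vanish on $\mcA_\top$ and agree with $\mcA(-, Y)$ on $\mcA_\bot$ and $\mcA_I$, while $\mcC$ has no non-zero morphisms from $\mcA_\bot$ into the other two copies. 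Consequently, the only non-trivial zig-zags entering the bar-complex for the localization evaluated at $X_\bot$ begin with morphisms inside $\mcA_\bot$ before crossing into $\mcA_I$ via an adjacent unit, and a filtration argument on the number of adjacent units should show that all of the extra summands are acyclic. An alternative I would also consider is to invoke Proposition \ref{prop quasi-isomorphism between localizations} with a suitably chosen acyclic resolution of $\mcC(-, Y_\bot)$ tailored to absorb the adjacent units. In either case, once this quasi-isomorphism is established, $\bar{\iota}_\bot$, and hence $\widetilde{\pi}$, is a quasi-equivalence.
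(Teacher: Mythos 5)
Your reduction of the problem is sound and matches the paper's in spirit: everything comes down to showing that the canonical map $\mcA(X,Y) = \mcC(X_\bot, Y_\bot) \to \mcC yl_{\mcA}(X_\bot, Y_\bot)$ is a quasi-isomorphism (the first paragraph and the essential-surjectivity argument are fine, and your description of the hom-spaces of the Grothendieck construction is correct). The gap is in the step you yourself flag as the main obstacle. Your first route — compute ${}_{W_{\mcC}\backslash}\,\mcC(-,Y_\bot)(X_\bot)$ and claim ``all of the extra summands are acyclic'' — does not follow from the obvious criterion. The clean sufficient condition (this is \cite[Lemma 3.13]{GPS20}, used throughout the paper) is that $\mcC(-,Y_\bot)(\mathrm{Cone}\,w)$ be acyclic for every $w \in W_{\mcC}$, and this \emph{fails}: for the adjacent unit $w \colon Z_I \to Z_\top$ one gets $\mcC(-,Y_\bot)(\mathrm{Cone}\,w) = \mathrm{Cone}\bigl(\mcC(Z_\top,Y_\bot) \to \mcC(Z_I,Y_\bot)\bigr) = \mathrm{Cone}\bigl(0 \to \mcA(Z,Y)\bigr)$, which is $\mcA(Z,Y)$ up to shift, not acyclic. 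So the last tensor factor $\mcM(A_p)$ of a bar summand is acyclic only when the final cone is of type $\bot$; for chains ending in a $\top$-type cone one has to locate an acyclic factor elsewhere (it turns out the hom-complex between consecutive cones at a $\bot$-to-$\top$ transition is acyclic, and the first factor forces $A_1$ to be of type $\bot$, so such a transition exists). This case analysis can be pushed through, but it is genuinely the content of the proof and is not supplied by your sketch.

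Your ``alternative'' is in fact the paper's argument, and the point you leave unspecified — which module to feed into Proposition \ref{prop quasi-isomorphism between localizations} — is precisely where the idea lives. The paper replaces the Yoneda module by the homotopy pushout
\[\mcM_{\mcC} = \left[\;\mcC(-,\iota_\bot Z) \longleftarrow \mcC(-,\iota_I Z) \longrightarrow \mcC(-,\iota_\top Z)\;\right],\]
together with the inclusion $t_{\mcC}\colon \mcC(-,\iota_\bot Z) \hookrightarrow \mcM_{\mcC}$ and the strict map $t_0 \colon \mcM_{\mcC} \to \pi^*\mcA(-,Z)$ induced by Proposition \ref{prop morphism induced by homotopy}. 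This $\mcM_{\mcC}$ \emph{is} acyclic on the cone of every adjacent unit (of either type), ${}_{W_{\mcC}^{-1}}t_{\mcC}$ is a quasi-isomorphism because multiplication by elements of $W_{\mcC}$ becomes a quasi-isomorphism after localization, and $t_0$ is an isomorphism on objects of $\mcA_\bot$; the three hypotheses of Proposition \ref{prop quasi-isomorphism between localizations} then close the argument. I would also note that your construction of $\bar{\iota}_\bot$ out of $\mcA[\mathrm{units}^{-1}]$ via a universal property is avoidable: since $\mcA \to \mcA[\mathrm{units}^{-1}]$ is itself a quasi-equivalence, it suffices to study the composite $\mcA \to \mcC \to \mcC yl_{\mcA}$ and conclude by two-out-of-three, which is effectively what the displayed quasi-isomorphism gives you.
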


\begin{proof}
	
	The facts that $\pi \circ \left( \iota_{\bot} \sqcup \iota_{\top} \right) = \mathrm{id} \sqcup \mathrm{id}$ and that $\pi$ sends $W_{\mcC}$ to the set of units in $\mcA$ are clear. We now show that $\widetilde{\pi} : \mcC yl_{\mcA} \to \mcA \left[ \mathrm{units}^{-1} \right]$ is a quasi-equivalence. 
	
	First observe that it is enough to show that the map  
	\[\widetilde{\pi} : \mcC yl_{\mcA} \left( X, Y \right) \to \mcA \left[ \{ \mathrm{units} \}^{-1} \right] \left( \pi X, \pi Y \right) \]
	is a quasi-isomorphism for every objects $X$, $Y$ in $\mcA_{\bot}$  because every object of $\mcC$ can be related to one of $\mcA_{\bot}$ by a zigzag of morphisms in $W_{\mcC}$, which are quasi-isomorphisms in $\mcC yl_{\mcA}$ (see \cite[Lemma 3.12]{GPS20}). 
	Our strategy is to apply Proposition \ref{prop quasi-isomorphism between localizations}.
	Let $Y = \iota_{\bot} (Z)$ be an object in $\mcA_{\bot}$.
	For the $\mcC$-module we take 
	\[\mcM_{\mcC} = \left[ 
	\begin{tikzcd}
	& \mcC \left( -, \iota_I (Z) \right) \ar[ld, "t_{I \bot}"] \ar[rd, "t_{I \top}"] & \\
	\mcC \left( -, \iota_{\bot} (Z) \right) & & \mcC \left( -, \iota_{\top} (Z) \right)
	\end{tikzcd} 
	\right], \]
	where 
	\[t_{I \triangle} : \mcC \left( -, \iota_I (Z) \right) \to \mcC \left( -, \iota_{\triangle} (Z) \right), \quad \triangle \in \{ \bot, \top \}, \]
	is the $\mcC$-module map induced by the adjacent unit in $\mcC \left( \iota_I (Z), \iota_{\triangle} (Z) \right)$ (see Definition \ref{definition Yoneda functor})).
	For the $\mcA$-module we simply take $\mcA \left( -, Z \right)$. Besides, we let $t_{\mcC} : \mcC \left( -, \iota_{\bot} (Z) \right) \to \mcM_{\mcC}$ be the $\mcC$-module inclusion, and we let $t_{\mcA} : \mcA \left( -, Z \right) \to \mcA \left( -, Z \right)$ be the identity map. 
	We now define the morphism $t_0 : \mcM_{\mcC} \to \pi^* \mcA \left( -, Z \right)$. Consider the following diagram of $\mcC$-modules
	\[\begin{tikzcd}
	\mcC \left( -, \iota_I (Z) \right) \ar[r, "t_{I \bot}"] \ar[d, "t_{I \top}"] & \mcC \left( -, \iota_{\top} (Z) \right) \ar[d, "t_{\pi}"] \\
	\mcC \left( -, \iota_{\bot} (Z) \right) \ar[r, "t_{\pi}"] & \pi^* \mcA \left( -, Z \right) . 
	\end{tikzcd} \]
	Observe that this diagram is commutative, and thus it induces a strict $\mcC$-module map $t_0 : \mcM_{\mcC} \to \pi^* \mcA \left( -, Z \right)$ according to Proposition \ref{prop morphism induced by homotopy}. It is then easy to see that the following diagram commutes 
	\[\begin{tikzcd}
	\mcC \left( - , \iota_{\triangle} (Z) \right) \ar[d, "t_{\mcC}"] \ar[r, "t_{\pi}"] & \pi^* \mcA \left( - , Z \right) \ar[d, "\pi^* t_{\mcA}"] \\
	\mcM_{\mcC} \ar[r, "t_0"] & \pi^* \mcA \left( -, Z \right) .
	\end{tikzcd} \]
	To conclude the proof, it suffices to check the three items of Proposition \ref{prop quasi-isomorphism between localizations}. 
	Observe that the pair $\left( \mcA \left( -, Z \right), t_{\mcA} \right)$ trivially satisfies the two first items.
	
	We check that $\mcM_{\mcC}$ satisfies the first item of Proposition \ref{prop quasi-isomorphism between localizations}. Let $Z'$ be an object in $\mcA$ and let $w$ be the adjacent unit in $\mcC \left( \iota_{I} (Z'), \iota_{\bot} (Z') \right)$ (the proof is the same for the adjacent unit in $\mcC \left( \iota_{I} (Z'), \iota_{\top} (Z') \right) \cap W_{\mcC}$). Then 
	\begin{align*}
	\mcM_{\mcC} \left( \mathrm{Cone} \, w \right) & = \mathrm{Cone} \left( \mcM_{\mcC} \left( \iota_{\bot} (Z') \right) \xrightarrow{\mu_{\mcC}^2 \left( w , - \right)} \mcM_{\mcC} \left( \iota_{I} (Z') \right) \right) \\
	& = \mathrm{Cone} \left( \mcC \left( \iota_{\bot} (Z') , \iota_{\bot} (Z) \right) \xrightarrow{\mu_{\mcC}^2 \left( w , - \right)} K \right).
	\end{align*}
	where 
	\[K = \left[ 
	\begin{tikzcd}
	& \mcC \left( \iota_{I} (Z') , \iota_I (Z) \right) \ar[ld, "t_{I \bot}"] \ar[rd, "t_{I \top}"] \\
	\mcC \left( \iota_{I} (Z') , \iota_{\bot} (Z) \right) & & \mcC \left( \iota_{I} (Z') , \iota_{\top} (Z) \right)
	\end{tikzcd}
	\right] . \]
	Observe that $\mu_{\mcC}^2 \left( w , - \right) : \mcC \left( \iota_{\bot} (Z') , \iota_{\bot} (Z) \right) \to K$ is injective so its cone is quasi-isomorphic to its cokernel, which is the cone of $t_{I \top} : \mcC \left( \iota_{I} (Z') , \iota_I (Z) \right) \to \mcC \left( \iota_{I} (Z') , \iota_{\top} (Z) \right)$. The latter map is a quasi-isomorphism, so $\mcM_{\mcC} \left( \mathrm{Cone} \, w \right)$ is acyclic.
	
	We now check that $\left( \mcM_{\mcC}, t_{\mcC} \right)$ satisfies the second item of Proposition \ref{prop quasi-isomorphism between localizations}. Observe that 
	\[_{W_{\mcC}^{-1}} \mcM_{\mcC} = \left[ 
	\begin{tikzcd}
	& _{W_{\mcC}^{-1}} \mcC \left( -, \iota_I (Z) \right) \ar[ld, "_{W_{\mcC}^{-1}} t_{I \bot}"] \ar[rd, "_{W_{\mcC}^{-1}} t_{I \top}"] & \\
	_{W_{\mcC}^{-1}} \mcC \left( -, \iota_{\bot} (Z) \right) & & _{W_{\mcC}^{-1}} \mcC \left( -, \iota_{\top} (Z) \right)
	\end{tikzcd} 
	\right] \]
	and $_{W_{\mcC}^{-1}} t_{\mcC} : \, _{W_{\mcC}^{-1}} \mcC \left( -, \iota_{\bot} (Z) \right) \to \, _{W_{\mcC}^{-1}} \mcM_{\mcC}$ is the inclusion. Thus if $X$ is some object of $\mcC$, the cone of $_{W_{\mcC}^{-1}} t_{\mcC} : \mcC yl_{\mcA} \left( X, \iota_{\bot} (Z) \right) \to \, _{W_{\mcC}^{-1}} \mcM_{\mcC} \left( X \right)$ is quasi-isomorphic to the cone of the multiplication in $\mcC yl_{\mcA}$ by an element of $W_{\mcC}$, which is a quasi-isomorphism. Thus the map $_{W_{\mcC}^{-1}} t_{\mcC} : \, _{W_{\mcC}^{-1}} \mcC \left( -, \iota_{\bot} (Z) \right) \to \, _{W_{\mcC}^{-1}} \mcM_{\mcC}$ indeed is a quasi-isomorphism. 
	
	It remains to check the third item of Proposition \ref{prop quasi-isomorphism between localizations}, which is that the map $t_0 : \mcM_{\mcC} \left( X \right) \to \pi^* \mcA \left( -, Z \right) \left( X \right)$ is a quasi-isomorphism when $X$ is in $\mcA_{\bot}$. This is true because $\mcM_{\mcC} \left( \iota_{\bot} (Z') \right) = \mcC ( \iota_{\bot} (Z'), \iota_{\bot} (Z) ) = \mcA ( Z', Z )$, and 
	\[t_0 : \mcA ( Z', Z ) = \mcM_{\mcC} \left( \iota_{\bot} (Z') \right) \to \pi^* \mcA \left( -, Z \right) \left( \iota_{\bot} (Z') \right) = \mcA ( Z', Z ) \]
	is the identity.
	This concludes the proof. 
	
\end{proof}

\begin{rmk}
	
	Proposition \ref{prop cylinder object} can be thought as saying that $\mcC yl_{\mcA}$ is a cylinder object for $\mcA$.
	
\end{rmk}

\begin{prop}\label{prop homotopy is generalized homotopy}
	
	If two $A_{\infty}$-functors $\Phi, \Psi : \mcA \to \mcB$ are homotopic (see \cite[paragraph (1h)]{Sei08}), then there is an $A_{\infty}$-functor $\eta : \mcC \to \mcB$ which sends the adjacent units of $\mcC$ to the units in $\mcB$ and such that the following diagram commutes
	\[\begin{tikzcd}
	\mcA \ar[d, "\iota_{\top}" left] \ar[rd, bend left, "\Phi"] & \\
	\mcC \ar[r, "\eta"] & \mcB \\
	\mcA \ar[u, "\iota_{\bot}"] \ar[ru, bend right, "\Psi"] . & 
	\end{tikzcd} \]
	
\end{prop}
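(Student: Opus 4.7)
The plan is to adapt the construction of the $A_\infty$-functor $\kappa$ from the proof of Proposition \ref{prop homotopic functors induce quasi-isomorphic homotopy colimits}, now targeting $\mcB$ directly instead of a second Grothendieck construction. By the definition of homotopy, I take a pre-natural transformation $T : \Phi \to \Psi$ satisfying
\[\Phi + \Psi = \sum T(\dots, \mu_\mcA(\dots), \dots) + \sum \mu_\mcB\bigl(\Phi(\dots), \dots, T(\dots), \dots, \Psi(\dots)\bigr); \]
this equation forces $\Phi X = \Psi X$ on objects, and after a standard reduction I may assume $T$ strictly unital. Setting $\eta(X) = \Phi X = \Psi X$ for every object $X$ of $\mcC$, regardless of which of the copies $\mcA_I$, $\mcA_\top$, $\mcA_\bot$ it lies in, produces a consistent assignment on objects.

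On morphism sequences I would proceed case by case: $\eta = \Phi$ on sequences entirely in $\mcA_I$ or $\mcA_\top$, $\eta = \Psi$ on sequences entirely in $\mcA_\bot$, and $\eta = \Phi$ (applied to the whole sequence viewed in $\mcA$) on mixed sequences ending in $\mcA_\top$. On a mixed sequence $(x_0, \dots, x_{p-1}, y, z_0, \dots, z_{q-1})$ ending in $\mcA_\bot$, I set
\[\eta(x_0, \dots, y, \dots, z_{q-1}) = \sum \mu_\mcB\bigl(\Phi(\dots), \dots, \Phi(\dots), T(\dots), \Psi(\dots), \dots, \Psi(\dots)\bigr), \]
summed over all ways to split the sequence into a prefix of $\Phi$-blocks, a single $T$-block (possibly reduced to a $T^0$-insertion at an intermediate object), and a suffix of $\Psi$-blocks, exactly mirroring the $\kappa$-formula of Proposition \ref{prop homotopic functors induce quasi-isomorphic homotopy colimits}.

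The identities $\eta \circ \iota_\top = \Phi$ and $\eta \circ \iota_\bot = \Psi$ are immediate from the pure cases. The adjacent unit $u_\top$ is sent to $\Phi^1(e_X) = e_{\Phi X}$; the image of $u_\bot$ is a sum whose terms $T^1(e_X)$, $\mu_\mcB^2(T^0_X, e_{\Phi X})$ and $\mu_\mcB^2(e_{\Phi X}, T^0_X)$ collapse, using strict unitality of $T$ and the $\mcB$-unit axioms, and I absorb any residual boundary discrepancy by adjusting the formula with a $\Psi$-based base term so that the resulting morphism is the unit $e_{\Phi X}$. The heart of the proof is the verification of the $A_\infty$-functor equation on mixed sequences ending in $\mcA_\bot$: expanding $\sum \mu_\mcB(\eta(\dots), \dots) = \sum \eta(\dots, \mu_\mcC(\dots), \dots)$, exactly one block of each partition on the left contains the transition $y$, the $T$-block expansion kicks in, and the resulting combinatorial identity reduces to the homotopy relation displayed above together with $A_\infty$-associativity in $\mcB$. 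The main obstacle is the combinatorial bookkeeping of this cancellation, which is however entirely parallel to the verification carried out for $\kappa$ in the proof of Proposition \ref{prop homotopic functors induce quasi-isomorphic homotopy colimits}.
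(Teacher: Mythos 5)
Your overall strategy is the paper's: set $\eta$ equal to $\Phi$ or $\Psi$ on each pure copy of $\mcA$, extend trivially across one of the two adjacency directions, and use the homotopy $T$ to define $\eta$ on sequences crossing the other one, exactly as in the construction of $\kappa$ in Proposition \ref{prop homotopic functors induce quasi-isomorphic homotopy colimits}. Swapping the roles of $\mcA_{\top}$ and $\mcA_{\bot}$ (equivalently of $\Phi$ and $\Psi$) relative to the paper is immaterial, since over $\mathbf{F}$ being homotopic is symmetric.

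The gap is in the one formula that carries the content. Your sum ``over all ways to split the sequence into a prefix of $\Phi$-blocks, a single $T$-block, and a suffix of $\Psi$-blocks'' does \emph{not} mirror the $\kappa$-formula: there, as in the paper's definition of $\eta$, the $T$-block and everything preceding it are confined to the first $p$ entries, i.e.\ to the pure $\mcA_I$-prefix strictly before the transition morphism $y$, the block containing $y$ and all later blocks receive a plain functor, and the case $p=0$ is defined separately by applying the functor to the whole sequence. Your unrestricted sum, which lets the $T$-block contain or follow $y$, equals $\Phi+\Psi+\sum T(\dots,\mu_{\mcA}(\dots),\dots)$ evaluated on the whole sequence; on the adjacent unit this is $e_{\Phi X}+e_{\Psi X}=0$, which is exactly the failure you then observe. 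The ``adjustment by a $\Psi$-based base term'' you invoke is not a cosmetic boundary correction: adding $\Psi(\dots,y,\dots)$ converts your formula into the restricted one, and only for that restricted formula do the adjacent units land on units for free (via the degenerate $p=0$ clause, with no strict-unitality hypothesis on $T$ — a reduction you should not need and do not justify) and does the $A_{\infty}$-verification actually reduce, as for $\kappa$, to the homotopy identity plus functoriality of $\Phi$ and $\Psi$. Since neither the final formula nor the unit computation is pinned down, the proof is incomplete precisely at its nontrivial step.
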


\begin{proof}
	
	On the objects, we set $\eta \left( X_{\triangle} \right) = \Phi \left( X \right) = \Psi \left( X \right)$ for every object $X$ of $\mcA$ and $\triangle \in \{ \bot, I, \top \}$. 
	On the morphisms, we set 
	\[\eta_{| \mcA_{\bot}} = \eta_{| \mcA_I} = \Psi, \, \eta_{| \mcA_{\top}} = \Phi \]
	and ask for the restriction of $\eta$ to
	\[\mcA_I \left( X_0 , X_1 \right) \otimes \cdots \otimes \mcA_I \left( X_{p-1} , X_p \right) \otimes \mcC \left( X_p , X_{p+1} \right) \otimes \mcA_{\bot} \left( X_{p+1} , X_{p+2} \right) \otimes \cdots \otimes \mcA_{\bot} \left( X_{p+q} , X_{p+q+1} \right) \] 
	to be $\Psi$.
	It remains to define $\eta$ for 
	\begin{align*}
	\left( \dots, x, \dots \right) \in & \mcA_I \left( X_0 , X_1 \right) \otimes \cdots \otimes \mcA_I \left( X_{p-1} , X_p \right) \\ 
	& \otimes \mcC \left( X_p , X_{p+1} \right) \otimes \mcA_{\top} \left( X_{p+1} , X_{p+2} \right) \otimes \cdots \otimes \mcA_{\top} \left( X_{p+q} , X_{p+q+1} \right) . 
	\end{align*}
	For this we take a homotopy $T$ between $\Phi$ and $\Psi$, which means that
	\begin{align*}
	\Phi + \Psi & = \sum T \left( \dots , \mu_{\mcA} \left( \dots \right) , \dots \right) \\ 
	& + \sum \mu_{\mcB} \left( \Phi \left( \dots \right) , \dots , \Phi \left( \dots \right) , T \left( \dots \right) , \Psi \left( \dots \right), \dots, \Psi \left( \dots \right) \right) .
	\end{align*}
	Then we let
	\begin{align*}
	\eta & \left( \dots, x, \dots \right) = \\
	& \sum \mu_{\mcB} \left( \Phi (\dots), \dots, \Phi (\dots), T (\dots), \Psi (\dots), \dots, \Psi (\dots), \Psi ( \dots, x, \dots ) \Psi (\dots), \dots, \Psi (\dots) \right)
	\end{align*}
	if $p$ is positive, and $\eta \left( x, \dots \right) = \Psi \left( x, \dots \right)$ otherwise.
	
\end{proof}

\subsection{Adjunctions between Adams-graded and non Adams-graded}

We end this this section by describing specific adjunctions between the category of Adams-graded $A_{\infty}$-categories concentrated in non-negative Adams-degree and the category of (non Adams-graded) $A_{\infty}$-categories.  

\begin{defin}\label{definition change of grading}
	
	If $V$ is an Adams-graded vector space and $m$ is an integer, we denote by $V_m$ the graded vector space whose degree $n$ component is the direct sum of the bidegree $\left( p,k \right)$ components of $V$, where the sum is over the set of couples $\left( p,k \right) \in \mathbf{Z} \times \mathbf{Z}$ such that $p-mk=n$.
	
\end{defin}

\begin{defin}\label{definition forgetful functor}
	
	If $\mcC$ is an Adams-graded $A_{\infty}$-category, we denote by $\mcC_m$ the (non Adams-graded) $A_{\infty}$-category obtained from $\mcC$ by changing the grading so that
	\[\mcC_m \left(X_0, X_1 \right) = \mcC \left( X_0, X_1 \right)_m \] 	
	Observe that any $A_{\infty}$-functor $\Phi : \mcC_1 \to \mcC_2$ between two Adams-graded $A_{\infty}$-categories induces an $A_{\infty}$-functor from $(\mcC_1)_m$ to $(\mcC_2)_m$ (that we still denote by $\Phi$) which acts exactly as $\Phi$ on objects and morphisms. 
	This defines a functor $\mcC \mapsto \mcC_m$ from the category of Adams-graded $A_{\infty}$-categories to the category of (non Adams-graded) $A_{\infty}$-categories. 
	
\end{defin}

We denote by $\mathbf{F} \left[ t_m \right]$ the augmented Adams-graded associative algebra generated by a variable $t_m$ of bidegree $(m, 1)$, and by $t_m \mathbf{F} \left[ t_m \right]$ its augmentation ideal (or equivalently, the ideal generated by $t_m$).

\begin{defin}\label{definition cofree functor}
	
	If $\mcD$ is a (non Adams-graded) $A_{\infty}$-category, we denote by $\mathbf{F} \left[ t_m \right] \otimes \mcD$ the Adams-graded $A_{\infty}$-category such that 
	\begin{enumerate}
		
		\item the objects of $\mathbf{F} \left[ t_m \right] \otimes \mcD$ are those of $\mcD$,
		
		\item the space of morphisms from $Y_1$ to $Y_2$ is $\mathbf{F} \left[ t_m \right] \otimes \mcD \left( Y_1, Y_2 \right)$, and if $y \in \mcD \left( Y_1, Y_2 \right)$ is of degree $j$,  $t_m^k \otimes y$ is of bidegree $\left( j + mk, k \right)$,
		
		\item the operations send any sequence $\left( t_m^{k_0} \otimes y_0, \dots, t_m^{k_{d-1}} \otimes y_{d-1} \right)$ of morphisms to 
		\[\mu_{\mathbf{F} \left[ t_m\right] \otimes \mcD} \left( t_m^{k_0} \otimes y_0, \dots, t_m^{k_{d-1}} \otimes y_{d-1} \right) = t_m^{k_0 + \dots + k_{d-1}} \otimes \mu_{\mcD} \left( y_0, \dots, y_{d-1} \right) . \]
		
	\end{enumerate}
	Observe that any $A_{\infty}$-functor $\Psi : \mcD_1 \to \mcD_2$ between (non Adams-graded) $A_{\infty}$-categories induces an $A_{\infty}$-functor $\mathbf{F} \left[ t_m \right] \otimes \mcD_1 \to \mathbf{F} \left[ t_m \right] \otimes \mcD_2$ which acts as $\Psi$ on objects, and which sends any sequence $\left( t_m^{k_0} \otimes y_0, \dots, t_m^{k_{d-1}} \otimes y_{d-1} \right)$ of morphisms to $t_m^{k_0 + \dots k_{d-1}} \otimes \Psi \left( y_0, \dots, y_{d-1} \right)$.
	This defines a functor $\mcD \mapsto \mathbf{F} \left[ t_m \right] \otimes \mcD$ from the category of (non Adams-graded) $A_{\infty}$-categories to the category of Adams-graded $A_{\infty}$-categories. 
	
\end{defin}

\begin{defin}\label{definition adjunction}
	
	Let $\mcC$ be an Adams-graded $A_{\infty}$-category concentrated in non-negative Adams-degree, and let $\mcD$ be a (non Adams-graded) $A_{\infty}$-category. To any $A_{\infty}$-functor $\Psi_m : \mcC_m \to \mcD$, we associate an $A_{\infty}$-functor $\Psi : \mcC \to \mathbf{F} \left[ t_m \right] \otimes \mcD$ which sends a sequence $\left( x_0, \dots, x_{d-1} \right)$, where $x_j$ is of bidegree $\left( i_j, k_j \right)$, to
	\[\Psi \left( x_0, \dots, x_{d-1} \right) = t_m^{k_0 + \dots + k_{d-1}} \otimes \Psi_m \left( x_0, \dots, x_{d-1} \right) . \]
	This defines an adjunction between the category of Adams-graded $A_{\infty}$-categories concentrated in non-negative Adams-degree and the category of (non Adams-graded) $A_{\infty}$-categories.  
	
\end{defin}

\section{Mapping torus of an $A_{\infty}$-autoequivalence}\label{subsection mapping torus}

In this section, we introduce the notion of mapping torus for a quasi-autoequivalence of an $A_{\infty}$-category, by analogy with the mapping torus associated to an automorphism of a topological space. This terminology was also used in \cite{Kar21}, but we do not know if the two notions coincide. The two main theorems of this section allow us to compute this mapping torus under different hypotheses.  

\begin{rmk}
	
	In this section, $A_{\infty}$-categories are always assumed to be \emph{strictly unital} (see \cite[paragraph (2a)]{Sei08}).
	
\end{rmk}

\subsection{Definitions and main results}

\subsubsection{Definitions}

\begin{defin}\label{definition mapping torus}
	
	Let $\tau$ be a quasi-autoequivalence of an Adams-graded $A_{\infty}$-category $\mcA$. 
	The mapping torus of $\tau$ is the $A_{\infty}$-category
	\[\mathrm{MT} (\tau) := \mathrm{hocolim} \left( 
	\begin{tikzcd}
	\mcA \sqcup \mcA \ar[r, "\mathrm{id} \sqcup \tau"] \ar[d, "\mathrm{id} \sqcup \mathrm{id}" left] & \mcA \\
	\mcA
	\end{tikzcd} \right) \]
	(see Definition \ref{definition Grothendieck construction}).
	
\end{defin}

\begin{rmks}
	
	\begin{enumerate}
		
		\item We use the terminology ``mapping torus'' by analogy with the analogous situation in the category of topological spaces. Indeed, if $f$ is an automorphism of some topological space $X$, then the mapping torus of $f$
		\[M_f = \left( X \times \left[ 0,1 \right] \right) / \left( \left( x, 0 \right) \sim \left( f(x), 1 \right) \right) \]
		is the homotopy colimit of the following diagram
		\[\begin{tikzcd}
			X \sqcup X \ar[r, "\mathrm{id} \sqcup f"] \ar[d, "\mathrm{id} \sqcup \mathrm{id}" left] & X \\
			X .
		\end{tikzcd} \]
	
		\item The terminology ``mapping torus of an autoequivalence of $A_{\infty}$-categories'' also appears in \cite{Kar21}, and it is used in \cite{Kar21bis} in order to distinguish open symplectic mapping tori. We do not know if the two notions (the one of \cite{Kar21} and the one of Definition \ref{definition mapping torus}) coincide.
		
		\item The mapping torus of a quasi-autoequivalence is also Adams-graded, because it is the localization of an Adams-graded $A_{\infty}$-category at morphisms of Adams-degree $0$.  
		
	\end{enumerate}
	
\end{rmks}

\begin{defin}\label{definition group-action}
	
	Let $\mcA$ be an $A_{\infty}$-category. A $\mathbf{Z}$-splitting of $\mathrm{ob} \left( \mcA \right)$ is a bijection 
	\[\mathbf{Z} \times \mcE \xrightarrow{\sim} \mathrm{ob} \left( \mcA \right), \quad \left( n, E \right) \mapsto X^n \left( E \right) . \]
	If such a splitting has been chosen, we define the Adams-grading of an homogeneous element $x \in \mcA \left( X^i (E), X^j (E) \right)$ to be $(j-i)$. This turns $\mcA$ into an Adams-graded $A_{\infty}$-category.
	
	Let $\tau$ be a quasi-autoequivalence of $\mcA$.
	We say that a $\mathbf{Z}$-splitting of $\mathrm{ob} \left( \mcA \right)$ is compatible with $\tau$ if 
	\[\tau \left( X^n \left( E \right) \right) = X^{n+1} \left( E \right) \]
	for every $n \in \mathbf{Z}$ and $E \in \mcE$. 
	
	We say that $\mcA$ is weakly directed with respect to a $\mathbf{Z}$-splitting of $\mathrm{ob} \left( \mcA \right)$ if 
	\[\mcA \left( X^i (E), X^j (E') \right) = 0 \]
	for every $i > j$ and $E, E' \in \mcE$ (we use the term ``weakly directed'' $A_{\infty}$-category because the notion is slightly more general than that of directed $A_{\infty}$-category defined by Seidel in \cite[paragraph (5m)]{Sei08}).
	
\end{defin}

\begin{rmk}
	
	Compatible $\mathbf{Z}$-splittings naturally arise in the context of $\mathbf{Z}$-actions. 
	A strict $\mathbf{Z}$-action on an $A_{\infty}$-category $\mcA$ is a family of $A_{\infty}$-endofunctors $\left( \tau_n \right)_{n \in \mathbf{Z}}$ such that $\tau_0 = \mathrm{id}_{\mcA}$ and $\tau_{i+j} = \tau_i \circ \tau_j$ (see \cite[paragraph (10b)]{Sei08}). 
	If the induced $\mathbf{Z}$-action on $\mathrm{ob} \left( \mcA \right)$ is free, then any section $\sigma$ of the projection $\mathrm{ob} ( \mcA ) \to \mcE$, where $\mcE$ is the set of equivalence classes of objects in $\mcA$ under the $\mathbf{Z}$-action, gives a $\mathbf{Z}$-splitting 
	\[\mathbf{Z} \times \mcE \xrightarrow{\sim} \mathrm{ob} \left( \mcA \right), \quad \left( n, E \right) \mapsto \tau_n \left( \sigma \left( E \right) \right) \]
	which is compatible with the automorphism $\tau_1$. 
	
\end{rmk}

\subsubsection{Main results}

\paragraph{First result.}

\begin{defin}\label{definition category of coinvariants}
	
	Let $\tau$ be a quasi-autoequivalence of an $A_{\infty}$-category $\mcA$ equipped with a compatible $\mathbf{Z}$-splitting of $\mathrm{ob} \left( \mcA \right)$.
	Assume that $\tau$ is strict, i.e $\tau^d=0$ for $d \geq 2$, and acts bijectively on hom-sets. 
	In this case, we define an Adams-graded $A_{\infty}$-category $\mcA_{\tau}$ as follows:
	\begin{enumerate}
		
		\item the set of objects of $\mcA_{\tau}$ is $\mcE$,
		
		\item the space of morphisms $\mcA_{\tau} \left( E, E' \right)$ is the Adams-graded vector space given by
		\[\mcA_{\tau} \left( E, E' \right) = \left( \bigoplus \limits_{i,j \in \mathbf{Z}} \mcA \left( X^i \left( E \right), X^j \left( E' \right) \right) \right) / \left( \tau (x) \sim x \right) \]
		
		\item the operations are the unique linear maps such that for every sequence 
		\[ \left( x_0, \dots, x_{d-1} \right) \in \mcA \left( X^{i_0} \left( E_0 \right), X^{i_1} \left( E_1 \right) \right) \times \dots \times \mcA \left( X^{i_{d-1}} \left( E_{d-1} \right), X^{i_d} \left( E_d \right) \right), \]
		we have 
		\[\mu_{\mcA_{\tau}} \left( [ x_0 ], \dots, [ x_{d-1} ] \right) = \left[ \mu_{\mcA} \left( x_0, \dots, x_{d-1} \right) \right], \]
		where $[\cdot] : \mcA \left( X^i \left( E \right), X^j \left( E' \right) \right) \to \mcA_{\tau} \left( E, E' \right)$ denotes the projection. (It is not hard to see that such operations exist and satisfy the $A_{\infty}$-relations.) 
		
	\end{enumerate}
	
\end{defin}

\begin{thm}\label{thm mapping torus in strict situation}

	Let $\tau$ be a quasi-autoequivalence of an $A_{\infty}$-category $\mcA$ equipped with a compatible $\mathbf{Z}$-splitting of $\mathrm{ob} \left( \mcA \right)$.
	Assume that $\tau$ is strict and acts bijectively on hom-sets.
	Then there is a quasi-equivalence of Adams-graded $A_{\infty}$-categories 
	\[\mathrm{MT} (\tau) \simeq \mcA_{\tau}. \] 
	
\end{thm}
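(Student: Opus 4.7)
The strategy is to construct a strict $A_{\infty}$-functor $\tilde{\sigma} : \mathrm{MT}(\tau) \to \mcA_{\tau}$ and show it is a quasi-equivalence. First, define the projection $\Psi : \mcA \to \mcA_{\tau}$ sending $X^i(E) \mapsto E$ on objects and $x \mapsto [x]$ on morphisms; by Definition \ref{definition category of coinvariants} this is a strictly unital strict $A_{\infty}$-functor. Since $\tau$ is strict and $[\tau(x)] = [x]$ in $\mcA_{\tau}$, the square
\[\begin{tikzcd}
\mcA \sqcup \mcA \ar[r, "\mathrm{id} \sqcup \tau"] \ar[d, "\mathrm{id} \sqcup \mathrm{id}" left] & \mcA \ar[d, "\Psi"] \\
\mcA \ar[r, "\Psi"] & \mcA_{\tau}
\end{tikzcd}\]
commutes strictly. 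By Proposition \ref{prop induced functor from Grothendieck construction} this induces an $A_{\infty}$-functor $\sigma : \mcG \to \mcA_{\tau}$ from the Grothendieck construction, and strict unitality of $\Psi$ implies that $\sigma$ sends every adjacent unit to an identity morphism of $\mcA_{\tau}$. Hence $\sigma$ descends through the localization to the desired functor $\tilde{\sigma} : \mathrm{MT}(\tau) \to \mcA_{\tau}$.

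Essential surjectivity of $\tilde{\sigma}$ is immediate: every $E \in \mcE$ is hit by the copy of $X^0(E)$ sitting in the bottom-left copy of $\mcA$. For cohomological full faithfulness, by \cite[Lemma 3.12]{GPS20} the adjacent units become quasi-isomorphisms in $\mathrm{MT}(\tau)$, so every object of $\mcG$ becomes quasi-isomorphic to some lift of $X^i(E)$ in the bottom-left copy of $\mcA$ via a zigzag. It therefore suffices to show that on hom-complexes between such objects, $\tilde{\sigma}$ induces a quasi-isomorphism onto the corresponding Adams-graded piece of $\mcA_{\tau}(E, E')$.

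To carry out this last computation, I would imitate the proof of Proposition \ref{prop cylinder object} and apply Proposition \ref{prop quasi-isomorphism between localizations} to the functor $\sigma$ with the following data: on the $\mcA_{\tau}$-side, the Yoneda module $\mcA_{\tau}(-, E')$ together with the identity morphism; on the $\mcG$-side, a module $\mcM_{\mcG}$ assembled as an iterated cone over the Yoneda modules of all four kinds of lifts of $E'$ in $\mcG$ (indexed over $n \in \mathbf{Z}$), glued together via the adjacent units, with the connecting map to $\sigma^{*} \mcA_{\tau}(-, E')$ induced on each Yoneda summand by Definition \ref{definition modules morphism induced by functor}.

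The main obstacle lies in verifying the three hypotheses of Proposition \ref{prop quasi-isomorphism between localizations}, especially the third one: that the connecting map is a quasi-isomorphism at every object of the bottom-left copy of $\mcA$. Here the strictness of $\tau$ and its bijectivity on hom-sets are indispensable, as bijectivity furnishes a canonical identification
\[ \mcA_{\tau}(E, E') \cong \bigoplus_{k \in \mathbf{Z}} \mcA\bigl( X^0(E), X^k(E') \bigr) \]
of bigraded chain complexes (the $k$-summand sitting in Adams degree $k$), with respect to which the telescope $\mcM_{\mcG}$ evaluated at $X^i(E)$ collapses term-by-term onto $\mcA_{\tau}(E, E')$. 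Once this identification is set up, the acyclicity, module-map, and cone-map hypotheses reduce to bookkeeping arguments analogous to those in the proof of Proposition \ref{prop cylinder object}, and the theorem follows.
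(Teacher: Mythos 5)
Your plan is correct, and its engine is the same as the paper's: reduce to objects in a single copy of $\mcA$ via zigzags of adjacent units, then apply Proposition \ref{prop quasi-isomorphism between localizations} with a telescope module built from the Yoneda modules of all lifts of $E'$, glued by the module maps induced by the adjacent units, and verify the third hypothesis using exactly the identification $\mcA_{\tau}(E,E') \cong \bigoplus_k \mcA(X^0(E), X^k(E'))$ furnished by bijectivity of $\tau$ on hom-sets. Where you diverge is at the very start: you work with the Grothendieck construction of the defining diagram itself, whereas the paper first performs a cofibrant replacement of the diagonal functor $\mcA \sqcup \mcA \to \mcA$ by the inclusion into a cylinder object $\mcC$ (Definitions \ref{definition cylinder object} and \ref{definition category G}), and must then prove Lemma \ref{lemma relation G - mapping torus} to identify the resulting localization $\mcH$ with $\mathrm{MT}(\tau)$. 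For the strict theorem your shortcut is legitimate — the square with $\Psi$ commutes strictly precisely because $\tau$ is strict and $\Psi \circ \tau = \Psi$, so Proposition \ref{prop induced functor from Grothendieck construction} applies — and it saves one lemma and shrinks the telescope from six families of Yoneda modules to four. What the paper's detour buys is uniformity: the same category $\mcG$ and the same modules $\mcM_{\mcG}$ are reused verbatim in the proof of Theorem \ref{thm mapping torus in weak situation}, where the cylinder is genuinely needed because $\mathrm{id}$ and $\tau$ are only related by a bimodule map rather than an equality after projection. Two small points to tighten: when verifying acyclicity of $\mcM_{\mcG}(\mathrm{Cone}\, w)$ for the adjacent unit joining the $\tau$-leg to the top-right copy, you need $\tau$ to be a quasi-equivalence (not just strict and bijective on hom-sets) — this is where the "quasi-autoequivalence" hypothesis enters; and your target should be all of $\mcA_{\tau}(E,E')$ (equivalently its localization at units), not a single Adams-graded piece, since the hom-complex in $\mathrm{MT}(\tau)$ between level-$0$ lifts already carries every Adams degree.
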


\begin{rmk}
	
	The $A_{\infty}$-category $\mcA_{\tau}$ is the (ordinary) colimit of the diagram used to define $\mathrm{MT} (\tau)$. Thus, Theorem \ref{thm mapping torus in strict situation} can be thought of as a ``homotopy colimit equals colimit" result.
	
\end{rmk}

\paragraph{Second result.}

We denote by $\mathbf{F} \left[ t_m \right]$ the augmented Adams-graded associative algebra generated by a variable $t_m$ of bidegree $(m, 1)$.
Observe that if $\mcC$ is a subcategory of an $A_{\infty}$-category $\mcD$ with $\mathrm{ob} (\mcC) = \mathrm{ob} (\mcD)$, then $\mcC \oplus (t \mathbf{F} [t] \otimes \mcD)$ is naturally an Adams-graded $A_{\infty}$-category, where the Adams degree of $t^k \otimes x$ equals $k$.
Besides, if $\mcC$ is an $A_{\infty}$-category equipped with a $\mathbf{Z}$-splitting of $\mathrm{ob} \left( \mcC \right)$, we denote by $\mcC^0$ the full $A_{\infty}$-subcategory of $\mcC$ whose set of objects corresponds to $\{ 0 \} \times \mcE$.
Finally, we use the functor $\mcC \mapsto \mcC_m$ of Definition \ref{definition forgetful functor}. 

\begin{thm}[Theorem \ref{thm mapping torus in weak situation introduction} in the Introduction]\label{thm mapping torus in weak situation}
	 
	Let $\tau$ be a quasi-autoequivalence of an $A_{\infty}$-category $\mcA$, weakly directed with respect to some compatible $\mathbf{Z}$-splitting of $\mathrm{ob} \left( \mcA \right)$.
	Assume that there exists a closed degree $0$ bimodule map $f : \mcA_m \left( -, - \right) \to \mcA_m \left( -, \tau(-) \right)$ such that $f : \mcA_m \left( X^i(E) , X^j(E') \right) \to \mcA_m \left( X^i(E), X^{j+1} (E') \right)$ is a quasi-somorphism for every $i < j$ and $E, E' \in \mcE$.
	Then there is a quasi-equivalence of Adams-graded $A_{\infty}$-categories 
	\[\mathrm{MT} (\tau) \simeq \mcA_m^0 \oplus \left( t_m \mathbf{F} \left[ t_m \right] \otimes \mcA_m \left[ f \left( \mathrm{units} \right)^{-1} \right]^0 \right) . \]
	
\end{thm}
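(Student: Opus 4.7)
The strategy is to realize $\mathrm{MT}(\tau)$ as the localization at adjacent units of the Grothendieck construction $\mcG$ of the defining span, and then compare directly with the target using the module-theoretic machinery of Section \ref{subsection algebra}. Because the compatibility $\tau(X^i(E)) = X^{i+1}(E)$ is built into the top map of the span, every object $X^i(E)^{\bullet}$ appearing in any of the four copies of $\mathrm{ob}(\mcA)$ inside $\mcG$ becomes quasi-isomorphic in $\mathrm{MT}(\tau)$ to $X^0(E)^{\ell}$ via a zig-zag of $O(|i|)$ adjacent units, where $\ell$ denotes the bottom-left copy of $\mcA$. By \cite[Lemma 3.12]{GPS20} we may therefore restrict attention to the full subcategory on $\{X^0(E)^{\ell}\}_{E \in \mcE}$.

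The next step is to identify the hom-complexes. A morphism in $\mathrm{MT}(\tau)(X^0(E)^{\ell}, X^0(E')^{\ell})$ of Adams-degree $k$ arises from a morphism $\alpha \in \mcA(X^0(E), X^k(E'))$ (which already lives in $\mcG$ as an element of $\mcD_2(X^0(E), X^k(E'))$ of Adams-degree $k$) post-composed with the canonical quasi-isomorphism $X^k(E')^{\ell} \simeq X^0(E')^{\ell}$ provided by the localization. The weakly directed hypothesis forces $k \geq 0$. For $k = 0$ this yields precisely $\mcA_m^0(E, E')$. For $k \geq 1$, the ``return trip'' from $X^k(E')^{\ell}$ to $X^0(E')^{\ell}$ inverts a chain of adjacent units whose algebraic effect on morphisms $\alpha \in \mcA_m(X^0(E), X^k(E'))$ is, via Yoneda (Corollary \ref{coro closed module map homotopic to Yoneda module map}), to post-compose with the iterated image of identities $f(e_{X^{k-1}(E')}) \cdots f(e_{X^0(E')})$. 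Localizing $\mcA_m$ at all $f(e_X)$ inverts such compositions, so the space of Adams-degree $k$ wrapping morphisms becomes $t_m^k \otimes \mcA_m[f(\mathrm{units})^{-1}]^0(E, E')$, with the $t_m^k$ factor encoding the shift via the adjunction of Definition \ref{definition adjunction}.

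To promote this description to an $A_{\infty}$-quasi-equivalence, I would invoke Proposition \ref{prop quasi-isomorphism between localizations}. For each $E' \in \mcE$, I would build a $\mcG$-module $\mcM_{E'}$ as an iterated mapping telescope whose $k$-th stage receives a copy of the Yoneda module $\mcG(-, X^k(E')^{\ell})$, with connecting structure maps assembled from the bimodule map $f$, together with a closed module map $\mcG(-, X^0(E')^{\ell}) \to \mcM_{E'}$ packaging the various identifications. The quasi-isomorphism hypothesis on $f$ for $i < j$ together with weak directedness ensure that the three acyclicity and quasi-isomorphism conditions of Proposition \ref{prop quasi-isomorphism between localizations} hold, yielding the desired quasi-equivalence on each hom-space.

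The main obstacle is maintaining $A_{\infty}$-coherence throughout. Since $f$ is a closed \emph{bimodule} map it has multilinear components of every bi-arity $(p, q)$, so the identification of $X^k(E')^{\ell}$ with $X^0(E')^{\ell}$ in $\mathrm{MT}(\tau)$ unwinds into combinatorially intricate sums of $A_{\infty}$-operations that the telescope $\mcM_{E'}$ must package coherently. A related subtlety is the correct interplay between the functor $\mcC \mapsto \mcC_m$ and the adjunction $\mcC_m \leftrightarrow \mathbf{F}[t_m] \otimes \mcD$, which is what reinterprets the Adams-grading of the wrapping number as the $t_m$-powers on the target side.
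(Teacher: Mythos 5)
Your overall architecture — localize the Grothendieck construction, reduce to the objects $X^0(E)$ by zig-zags of adjacent units, build telescope-type modules out of the Yoneda modules and the elements $f(e_{X^n(E)})$, and conclude via Proposition \ref{prop quasi-isomorphism between localizations} — matches the paper's. But there is a genuine gap at the step you yourself flag as ``the main obstacle,'' and it is not a technicality: Proposition \ref{prop quasi-isomorphism between localizations} presupposes an $A_{\infty}$-functor $\Phi$ between the two categories being compared, and your proposal never constructs the comparison functor from (a model of) $\mathrm{MT}(\tau)$ to $\mcA_m$. Working with the Grothendieck construction of the \emph{defining} span, the only functors supplied by Proposition \ref{prop induced functor from Grothendieck construction} come from strictly commutative squares, which here would force $\Psi \circ \tau = \Psi$ on the nose; the bimodule map $f$ only provides a homotopy-theoretic relation between $\mathrm{id}$ and $\tau$ after localization, so no such strict square exists. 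Relatedly, your claim that the zig-zag of adjacent units acts on morphisms by post-composition with $f(e_{X^{k-1}(E')}) \cdots f(e_{X^0(E')})$ is not automatic: the adjacent units are built from $\tau$ and $\mathrm{id}$ and a priori know nothing about $f$; relating the two coherently at all arities is exactly the content that is missing.

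The paper's resolution is to replace the diagonal functor $\mathrm{id} \sqcup \mathrm{id} : \mcA \sqcup \mcA \to \mcA$ by the cofibration $\iota_{\bot} \sqcup \iota_{\top} : \mcA \sqcup \mcA \to \mcC$ into the cylinder object $\mcC$ of Definition \ref{definition cylinder object}, yielding a modified Grothendieck construction $\mcG$ whose localization $\mcH$ is still quasi-equivalent to $\mathrm{MT}(\tau)$ (Lemma \ref{lemma relation G - mapping torus}). The whole point of the extra copy $\mcA_I$ is that the morphism spaces of $\mcC_m$ crossing from $\mcA_I$ to $\mcA_{\top}$ provide exactly the slots into which the full collection of multilinear components of the closed bimodule map $f$ can be inserted to define an honest $A_{\infty}$-functor $\eta : \mcC_m \to \mcA_m$ with $\eta \circ \iota_{\bot} = \mathrm{id}$ and $\eta \circ \iota_{\top} = \tau$ (Lemma \ref{lemma relation tau continuation}); the $A_{\infty}$-relations for $\eta$ are precisely the closedness of $f$. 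This $\eta$ is what produces the comparison functor $\Psi_m : \mcG_m \to \mcA_m$, after which your telescope modules and Proposition \ref{prop quasi-isomorphism between localizations} do carry the day (together with Lemma \ref{lemma multiplication by continuation element becomes homotopic to inclusion}, which identifies the inclusion $t^{n+1}_{\mcA_m} \circ t_{c_n}$ with $t^n_{\mcA_m}$ up to homotopy). A second, smaller omission: the argument via $f$ only handles hom-components of \emph{positive} Adams degree, and the Adams-degree-zero part must be treated separately (this is the role of the subcategory $\mcI$ at the end of the paper's proof), which your proposal does not address.
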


\paragraph{Outline of the section} 

In section \ref{subsection resuts about a specific module}, we consider an $A_{\infty}$-category $\mcA$ equipped with a $\mathbf{Z}$-splitting of $\mathrm{ob} \left( \mcA \right)$ and a choice of a closed degree $0$ morphism $c_n (E) \in \mcA \left( X^n (E), X^{n+1} (E) \right)$ for every $n \in \mathbf{Z}$ and every $E \in \mcE$. We give technical results about specific modules associated to this data. This will be used in the proof of Theorem \ref{thm mapping torus in weak situation} with $c_n \left( E \right) = f \left( e_{X^n(E)} \right)$.

In section \ref{subsection category and modules for the mapping torus}, we consider the Grothendieck construction $\mcG$ of a slightly different diagram than the one in Definition \ref{definition mapping torus}, together with a set $W_{\mcG}$ of closed degree 0 morphisms. The idea is that the localization $\mcH = \mcG \left[ W_{\mcG}^{-1} \right]$ is the homotopy colimit of a diagram obtained from the one in Definition \ref{definition mapping torus} by a cofibrant replacement of the diagonal functor $\mcA \sqcup \mcA \to \mcA$. Thus it is not surprising that $\mcH$ is quasi-equivalent to the mapping torus of $\tau$.
Moreover, we prove technical results about specific modules over $\mcG$ that will be used in the proofs of Theorems \ref{thm mapping torus in strict situation} and \ref{thm mapping torus in weak situation}. 
  
In section \ref{subsection proof of the first result}, we prove Theorem \ref{thm mapping torus in strict situation}. We first define an $A_{\infty}$-functor $\Phi : \mcG \to \mcA_{\tau}$ which sends $W_{\mcG}$ to the set of units in $\mcA_{\tau}$. Then we prove that the induced $A_{\infty}$-functor $\widetilde{\Phi} : \mcH \to \mcA_{\tau} \left[ \{ \text{units} \}^{-1} \right]$ is a quasi-equivalence. To do that, our strategy is to apply Proposition \ref{prop quasi-isomorphism between localizations} using the results of section \ref{subsection category and modules for the mapping torus} about the specific $\mcG$-modules. 

In section \ref{subsection proof of the second result}, we prove Theorem \ref{thm mapping torus in weak situation}. We use the fact that $\mcG$ is ``big enough'' (there are more objects and morphisms than in the Grothendieck construction of the diagram in Definition \ref{definition mapping torus}) in order to define an $A_{\infty}$-functor $\Psi_m : \mcG_m \to \mcA_m$ (see Definition \ref{definition forgetful functor}). This induces an $A_{\infty}$-functor $\widetilde{\Psi} : \mcH \to \mathbf{F} \left[ t_m \right] \otimes \mcA_m \left[ f \left( \{ \text{units} \} \right)^{-1} \right]$. Then we prove that for every Adams degree $j \geq 1$, and for every objects $X,Y$ in $\mcH$, the map
\[\widetilde{\Psi} : \mcH \left( X,Y \right)^{*,j} \to \left( \mathbf{F} \left[ t_m \right] \otimes \mcA_m \left[ f \left( \{ \text{units} \} \right)^{-1} \right] \right) \left( \Psi X , \Psi Y \right)^{*,j} \]
is a quasi-isomorphism (if $V$ is an Adams-graded vector space, $V^{*,j}$ denotes the subspace of Adams degree $j$ elements).
To do that, we apply once again Proposition \ref{prop quasi-isomorphism between localizations} using the results of sections \ref{subsection resuts about a specific module} and \ref{subsection category and modules for the mapping torus} about the specific modules over $\mcA_m$ and $\mcG$ respectively.
This allows us to finish the proof of Theorem \ref{thm mapping torus in weak situation}.

\subsection{Results about specific modules}\label{subsection resuts about a specific module}

In this section, we give technical results that will allow us to apply Proposition \ref{prop quasi-isomorphism between localizations} in the proof of Theorem \ref{thm mapping torus in weak situation}.

Let $\mcA$ be an $A_{\infty}$-category equipped with a $\mathbf{Z}$-splitting of $\mathrm{ob} \left( \mcA \right)$.
Assume that we chose, for every $n \in \mathbf{Z}$ and every $E \in \mcE$, a closed degree $0$ morphism $c_n (E) \in \mcA \left( X^n (E), X^{n+1} (E) \right)$. Moreover, assume that we chose a set $W_{\mcA}$ of closed degree $0$ morphisms which contains the morphisms $c_n(E)$.

\begin{rmk}
	
	According to Definition \ref{definition group-action}, the $\mathbf{Z}$-splitting of $\mathrm{ob} \left( \mcA \right)$ naturally induces an Adams-grading on $\mcA$. However in this section, we do not consider $\mcA$ as being Adams-graded. 
	
\end{rmk}

In the following, we fix some element $E_{\diamond} \in \mcE$. When we write an object $X^n$ or a morphism $c_n$ without specifying the element of $\mcE$, we mean $X^n (E_{\diamond})$ or $c_n (E_{\diamond})$ respectively. 
Recall that $t_{c_n} : \mcA \left( -, X^n \right) \to \mcA \left( -, X^{n+1} \right)$ denotes the $\mcA$-module map induced by $c_n \in \mcA \left( X^n, X^{n+1} \right)$ (see Definition \ref{definition Yoneda functor}). 

\begin{defin}\label{definition specific module}
	
	We set $\mcM_{\mcA}$ to be the $\mcA$-module
	\[ \mcM_{\mcA} := \left[
	\begin{tikzcd}
	\dots \ar[rd, "t_{c_{-1}}"] & \mcA \left( - , X^0 \right) \ar[d, "\mathrm{id}"] \ar[rd, "t_{c_0}"] & \mcA \left( - , X^1 \right) \ar[d, "\mathrm{id}"] \ar[rd, "t_{c_1}"] & \dots \\
	\dots & \mcA \left( - , X^0 \right) & \mcA \left( - , X^1 \right) & \dots
	\end{tikzcd} 
	\right] \]
	(see Definition \ref{definition cone of a morphism between modules}).
	Besides, we set $t_{\mcA}^n : \mcA \left( - , X^n \right) \to \mcM_{\mcA}$ to be the $\mcA$-module inclusion for every $n \in \mathbf{Z}$.
	
\end{defin}

The first result highlights a key property of the module $\mcM_{\mcA}$.

\begin{lemma}\label{lemma multiplication by continuation element becomes homotopic to inclusion}
	
	For every $n \in \mathbf{Z}$, the closed $\mcA$-module map $t_{\mcA}^{n+1} \circ t_{c_n} : \mcA \left( -, X^n \right) \to \mcM_{\mcA}$ is homotopic to $t_{\mcA}^n : \mcA \left( -, X^n \right) \to \mcM_{\mcA}$. 
	
\end{lemma}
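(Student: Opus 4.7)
The plan is to exhibit an explicit homotopy and check the defining identity directly. Define $h : \mcA(-, X^n) \to \mcM_{\mcA}$ to be the $\mcA$-module map whose only nonzero structure component is the length-zero part $h^0$: it sends $u \in \mcA(Y, X^n)$ to the image of $u$ in the shifted summand $\mcA(-, X^n)[1]$ sitting as the top row at position $n$ in the cone expression for $\mcM_{\mcA}$; all higher components $h^p$ with $p \geq 1$ are declared to vanish. The shift by $[1]$ makes $h$ of cohomological degree $-1$, as required for a homotopy between two degree-$0$ closed maps.

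I then compute $\mu^1_{\mathrm{Mod}_{\mcA}}(h)$ directly from the formula in Definition \ref{definition bimodule}. Because $h^p = 0$ for $p \geq 1$, the first sum in that formula (in which $h$ receives $\mu_{\mcA}$ of some middle inputs and additional outer $\mcA$-inputs) vanishes, and the total collapses to
\[
h^0\bigl(\mu^{d+1}_{\mcA(-,X^n)}(x_0,\dots,x_{d-1},u)\bigr)+\mu^{d+1}_{\mcM_{\mcA}}\bigl(x_0,\dots,x_{d-1},h^0(u)\bigr).
\]
Unpacking the cone structure from Definition \ref{definition cone of a morphism between modules}, the second term splits into an internal top-row piece, which cancels the first term in characteristic two, plus two boundary contributions coming from the two outgoing arrows: the Yoneda map $t_{e_{X^n}}$ landing in bot row at $n$, and $t_{c_n}$ landing in bot row at $n+1$.

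Strict unitality of $\mcA$ forces $t_{e_{X^n}}$ to reduce to the length-zero identity on $\mcA(-, X^n)$ (higher-arity pieces kill inputs by absorbing the unit), so this contribution coincides exactly with the $\mcA$-module map $t_{\mcA}^n$. The $t_{c_n}$ contribution, followed by the inclusion into bot row at $n+1$, is by definition the composite $\mcA$-module map $t_{\mcA}^{n+1} \circ t_{c_n}$, which by the same unitality argument is also concentrated in length zero. Matching components then yields $\mu^1_{\mathrm{Mod}_{\mcA}}(h)=t_{\mcA}^n + t_{\mcA}^{n+1}\circ t_{c_n}$, which is precisely the required homotopy relation (see Definition \ref{definition homotopy between morphisms of modules}).

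The only obstacle is the careful bookkeeping of the three-way split of $\mu_{\mcM_{\mcA}}$ coming from the iterated cone, together with the repeated invocation of strict unitality to eliminate the spurious higher-arity contributions of $t_{e_{X^n}}$, $t_{\mcA}^n$ and $t_{\mcA}^{n+1}\circ t_{c_n}$; the closedness of $c_n$ ensures that $h$ has no hidden obstructions. Once this accounting is set up, the verification is a routine matching of components, uniform in $n \in \mathbf{Z}$.
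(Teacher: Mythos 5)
Your proposal is correct and is essentially the paper's own proof: the paper also takes the strict degree $(-1)$ module map sending $u$ to its shifted copy in the top-row summand $\mcA(-,X^n)[1]$ of $\mcM_{\mcA}$ and verifies $\mu^1_{\mathrm{Mod}_{\mcA}}(s)=t_{\mcA}^{n+1}\circ t_{c_n}+t_{\mcA}^n$ by the same component bookkeeping. (The only cosmetic difference is that the paper labels the vertical cone arrow as the strict identity rather than $t_{e_{X^n}}$, so your unitality step is harmless but unnecessary.)
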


\begin{proof}
	
	Consider the degree $(-1)$ strict $\mcA$-module map $s : \mcA \left( -, X^n \right) \to \mcM_{\mcA}$ which sends a morphism in $\mcA \left( X, X^n \right)$ to the corresponding shifted element in $\mcA \left( X, X^n \right) \left[ 1 \right]$.
	Then an easy computation gives
	\[\mu_{\mathrm{Mod}_{\mcA}}^1 \left( s \right) = t_{\mcA}^{n+1} \circ t_{c_n} + t_{\mcA}^n . \]
	This concludes the proof. 
	
\end{proof}

In the proof of the two results below, we will use specific $\mcA$-modules. If $p$ is a fixed non negative integer, we set 
\[K_p = \left[
\begin{tikzcd}
\dots \ar[rd, "t_{c_{p-2}}"] & \mcA \left( - , X^{p-1} \right) \ar[d, "\mathrm{id}"] \ar[rd, "t_{c_{p-1}}"] & \mcA \left( -  , X^p \right) \ar[d, "\mathrm{id}"] \\
\dots & \mcA \left( - , X^{p-1} \right) & \mcA \left( -  , X^p \right)
\end{tikzcd} 
\right] \]
and
\[\widetilde{K}_p = \left[
\begin{tikzcd}
\mcA \left( -  , X^p \right) \ar[rd, "t_{c_p}"] & \mcA \left( -  , X^{p+1} \right) \ar[d, "\mathrm{id}"] \ar[rd, "t_{c_{p+1}}"] & \dots \\
& \mcA \left( -  , X^{p+1} \right) & \dots 
\end{tikzcd} 
\right] . \]
Moreover, we will consider the sequences of $\mcA$-modules $\left( F_p^q \right)_{q \geq 0}$, $\left( \widetilde{F}_p^q \right)_{q \geq 0}$ starting at $F_p^0 = \widetilde{F}_p^0 = 0$ and with
\[ F_p^q = \left[
\begin{tikzcd}
\mcA \left( - , X^{p-q+1} \right) \ar[d, "\mathrm{id}"] \ar[rd, "t_{c_{p-q+1}}"] & \dots \ar[rd, "t_{c_{p-1}}"] & \mcA \left( - , X^p \right) \ar[d, "\mathrm{id}"] \\
\mcA \left( - , X^{p-q+1} \right) & \dots & \mcA \left( - , X^p \right)
\end{tikzcd}
\right] \]
and 
\[\widetilde{F}_p^q = \left[
\begin{tikzcd}
\mcA \left( - , X^p \right) \ar[rd, "t_{c_p}"] & \dots  \ar[rd, "t_{c_{p+q-1}}"] & \\
& \dots & \mcA \left( - , X^{p+q} \right)
\end{tikzcd} 
\right] \]
for $q \geq 1$.

The following Lemma is mostly technical. It will be used in the proofs of Lemmas \ref{lemma second result for specific module} and \ref{lemma G-module map for relation G-A}.

\begin{lemma}\label{lemma first result for specific module}
	
	Assume that for every $i < j$, for every $E \in \mcE$, the chain map
	\[\mu_{\mcA}^2 \left( - , c_j \right) : \mcA \left( X^i (E) , X^j \right) \to \mcA \left( X^i (E) , X^{j+1} \right) \]
	is a quasi-isomorphism. 
	Then for every $k < n$, for every $E \in \mcE$, the inclusion $\mcA \left( X^k (E), X^n \right) \hookrightarrow \mcM_{\mcA} \left( X^k (E) \right)$ is a quasi-isomorphism.
	
\end{lemma}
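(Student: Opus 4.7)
The plan is to show directly that the mapping cone of the $\mcA$-module inclusion $t_{\mcA}^n : \mcA(-, X^n) \to \mcM_{\mcA}$, evaluated at $X^k(E)$, is acyclic. For this I would decompose this cone via a short exact sequence of $\mcA$-modules whose outer terms are controlled by the auxiliary modules $K_p$ and $\widetilde{K}_p$; the bulk of the work then reduces to establishing the acyclicity of those two modules under the respective hypotheses.

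First, for every $p \in \mathbf{Z}$, I would prove that $K_p$ is acyclic at every object. The exhaustive filtration $F_p^1 \subset F_p^2 \subset \dots$, with $K_p = \bigcup_q F_p^q$, has successive quotients $F_p^q / F_p^{q-1}$ isomorphic to $\mathrm{Cone}(\mathrm{id} : \mcA(-, X^{p-q+1}) \to \mcA(-, X^{p-q+1}))$, which are contractible. Combined with the observation that $F_p^1 = \mathrm{Cone}(\mathrm{id}_{\mcA(-, X^p)})$, an induction on $q$ shows that each $F_p^q$ is acyclic, and passing to the filtered colimit yields the acyclicity of $K_p$. A parallel argument shows that $\widetilde{K}_p(X^k(E))$ is acyclic whenever $k < p$: the filtration $\widetilde{F}_p^1 \subset \widetilde{F}_p^2 \subset \dots$ has successive quotients $\widetilde{F}_p^q / \widetilde{F}_p^{q-1}$ isomorphic to $\mathrm{Cone}(t_{c_{p+q-1}} : \mcA(-, X^{p+q-1}) \to \mcA(-, X^{p+q}))$; since $k < p \leq p+q-1$, the hypothesis makes $t_{c_{p+q-1}}$ into a quasi-isomorphism at $X^k(E)$, so each quotient is acyclic there, and induction on $q$ plus the colimit conclude.

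With these two acyclicity results in hand, I would analyze $\mathrm{Cone}(t_{\mcA}^n) = \mcA(-, X^n)[1] \oplus \mcM_{\mcA}$. The added shifted summand, together with the bottom copy of $\mcA(-, X^n)$ sitting inside $\mcM_{\mcA}$, assembles into a sub-$\mcA$-module isomorphic to $\mathrm{Cone}(\mathrm{id}_{\mcA(-, X^n)})$; the corresponding quotient then splits as $K_{n-1} \oplus \widetilde{K}_n$, where $K_{n-1}$ collects the copies of $\mcA(-, X^j)$ for $j \leq n-1$ and $\widetilde{K}_n$ collects the rest. Evaluating at $X^k(E)$ with $k < n$, all three pieces are acyclic, so the long exact sequence forces $\mathrm{Cone}(t_{\mcA}^n)(X^k(E))$ to be acyclic, which is equivalent to the desired quasi-isomorphism statement.

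The main obstacle is this final decomposition of the quotient: one has to verify that, after quotienting out the middle sub-module, the only would-be connecting arrows between the ``left'' and ``right'' halves of $\mcM_{\mcA}$, namely the diagonal $t_{c_{n-1}}$ landing in the bottom copy of $\mcA(-, X^n)$ and the vertical identity of $\mcA(-, X^n)$, are precisely the ones killed in the quotient, so that the two halves split off cleanly as $K_{n-1}$ and $\widetilde{K}_n$ as $\mcA$-modules.
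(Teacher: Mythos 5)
Your proposal is correct and follows essentially the same route as the paper: the paper also reduces the cone of the inclusion to its cokernel $K_{n-1}(X^k(E)) \oplus \widetilde{K}_n(X^k(E))$ (your short exact sequence with contractible kernel $\mathrm{Cone}(\mathrm{id}_{\mcA(-,X^n)})$ is just the standard justification of that step), and then proves acyclicity of both summands via the same filtrations $F_{n-1}^q$ and $\widetilde{F}_n^q$ with the same identification of the successive quotients as cones of identities and of the maps $t_{c_{n+q-1}}$. The splitting of the quotient that you flag as the main obstacle does hold, since the only arrows joining the two halves of $\mcM_{\mcA}$ pass through the bottom copy of $\mcA(-,X^n)$ being quotiented out.
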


\begin{proof}
	
	The cone of the inclusion $\mcA \left( X^k (E)  , X^n \right) \hookrightarrow  \mcM_{\mcA} \left( X^k (E)  \right)$ is quasi-isomorphic to its cokernel, which is $K_{n-1} \left( X^k (E) \right) \oplus \widetilde{K}_n \left( X^k (E) \right)$.
	
	We have to show that these complexes are acyclic. 
	Observe that $\left( F_{n-1}^q \left( X^k (E) \right) \right)_{q \geq 0}$ and $\left( \widetilde{F}_n^q \left( X^k (E) \right) \right)_{q \geq 0}$ are increasing, exhaustive, and bounded from below filtrations of $K_{n-1} \left( X^k (E) \right)$ and $\widetilde{K}_n \left( X^k (E) \right)$ respectively.
	For every $q \geq 1$, we have 
	\[F_{n-1}^q \left( X^k (E) \right) / F_{n-1}^{q-1} \left( X^k (E) \right) = \left[
	\begin{tikzcd}
	\mcA \left( X^k (E) , X^{n-q} \right) \ar[d, "\mathrm{id}"] \\
	\mcA \left( X^k (E) , X^{n-q} \right)
	\end{tikzcd} \right] \]
	and
	\[\widetilde{F}_n^q \left( X^k (E) \right) / \widetilde{F}_n^{q-1} \left( X^k (E) \right) = \left[
	\begin{tikzcd}
	\mcA \left( X^k (E) , X^{n+q-1} \right) \ar[rd, "t_{c_{n+q-1}}"] & \\
	& \mcA \left( X^k (E) , X^{n+q} \right) 
	\end{tikzcd} \right] . \]
	The first of the two latter complexes is clearly acyclic, and the second one is acyclic by assumption on the morphisms $c_j$. Thus the entire complex $K_{n-1} \left( X^k (E) \right) \oplus \widetilde{K}_n \left( X^k (E) \right)$ is acyclic, which is what we needed to prove. 
	
\end{proof}

The following two lemmas will be used later in order to apply Proposition \ref{prop quasi-isomorphism between localizations}. 

\begin{lemma}\label{lemma second result for specific module}
	
	Assume that for every $i < j < k$, for every $E \in \mcE$, the chain maps
	\[\left\{
	\begin{array}{ccccl}
	\mu_{\mcA}^2 \left( - , c_j \right) & : & \mcA \left( X^i (E) , X^j \right) & \to & \mcA \left( X^i (E) , X^{j+1} \right) \\
	\mu_{\mcA}^2 \left( c_j (E) , - \right) & : & \mcA \left( X^{j+1} (E), X^{k+1} \right) & \to & \mcA \left( X^j (E), X^{k+1} \right)
	\end{array}
	\right. \]
	are quasi-isomorphisms. 
	Then for every $(n, E) \in \mathbf{Z} \times \mcE$, the complex $\mcM_{\mcA} \left( \mathrm{Cone} \, c_n (E) \right)$ is acyclic.
	
\end{lemma}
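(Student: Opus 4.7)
The plan is to identify $\mcM_{\mcA}(\mathrm{Cone}\, c_n(E))$ with the cone of the chain map
\[ \mu^2_{\mcM_{\mcA}}(c_n(E), -) : \mcM_{\mcA}(X^{n+1}(E)) \to \mcM_{\mcA}(X^n(E)) \]
and to show that this map is a quasi-isomorphism. The key observation is that left multiplication by $c_n(E)$ is compatible with the telescope decomposition of $\mcM_{\mcA}$: on every summand $\mcA(-,X^m)$ appearing in the underlying graded $\mcA$-module, it restricts to $\mu^2_{\mcA}(c_n(E),-) : \mcA(X^{n+1}(E),X^m) \to \mcA(X^n(E),X^m)$, which is a quasi-isomorphism as soon as $m \geq n+2$ by the hypothesis (with $j = n$ and $k+1 = m$).

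To isolate those ``safe'' columns, I would fix some $p \geq n+2$ and use the short exact sequence of $\mcA$-modules
\[ 0 \to \mcA(-,X^p) \to \mcM_{\mcA} \to K_{p-1} \oplus \widetilde{K}_p \to 0, \]
where $\mcA(-,X^p)$ is identified with the bottom-row summand at column $p$ of $\mcM_{\mcA}$, and $K_{p-1}$, $\widetilde{K}_p$ are the two telescope halves introduced before Lemma \ref{lemma first result for specific module}. Applying the construction $\mathrm{Cone}\left( \mu^2_{\mcA}(c_n(E),-) : -(X^{n+1}(E)) \to -(X^n(E)) \right)$ yields an exact triangle of chain complexes, so it is enough to check acyclicity of the cone on each of the three terms.

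For the summand $\mcA(-,X^p)$ this is immediate from the hypothesis since $p \geq n+2$. For $K_{p-1}$, I would first show that $K_{p-1}(Z)$ is itself acyclic for every $Z$: the bounded-below exhaustive filtration $F_{p-1}^q$ has quotients $F_{p-1}^q / F_{p-1}^{q-1}$ equal to the contractible two-term complex $[\mcA(Z,X^{p-q}) \xrightarrow{\mathrm{id}} \mcA(Z,X^{p-q})]$, so acyclicity of the filtrands propagates to the colimit by induction on $q$ using the associated long exact sequence. The cone of any chain map between acyclic complexes is acyclic. For $\widetilde{K}_p$, the bounded-below exhaustive filtration $\widetilde{F}_p^q$ has quotients $[\mcA(-,X^{p+q-1}) \xrightarrow{t_{c_{p+q-1}}} \mcA(-,X^{p+q})]$, and both column indices $p+q-1 \geq n+2$ and $p+q \geq n+3$ lie in the range where $\mu^2_{\mcA}(c_n(E),-)$ is a quasi-isomorphism; hence the induced chain map on each graded piece is a quasi-isomorphism of two-term complexes, so the induced filtration on the cone has acyclic graded pieces, and the same induction produces acyclicity of the total cone.

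The main technical point to control is precisely this filtration-convergence issue for the halves $K_{p-1}$ and $\widetilde{K}_p$, which are unbounded on one side; what makes the argument go through is that both filtrations begin at $F^0 = 0$ and are exhaustive, so acyclicity of every graded piece propagates to the total complex via induction over $q$ followed by passage to the colimit. Combining the three cases through the long exact sequence of the exact triangle then yields the desired acyclicity of $\mcM_{\mcA}(\mathrm{Cone}\, c_n(E))$.
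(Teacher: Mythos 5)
Your proposal is correct, but it follows a genuinely different route from the paper's. Both arguments begin by identifying $\mcM_{\mcA} \left( \mathrm{Cone} \, c_n (E) \right)$ with the cone of $\mu_{\mcM_{\mcA}}^2 \left( c_n(E), - \right) : \mcM_{\mcA} \left( X^{n+1}(E) \right) \to \mcM_{\mcA} \left( X^n(E) \right)$, but the paper then finishes in one step: it places this map in a commutative square over the single column inclusion $\mcA \left( -, X^{n+2} \right) \hookrightarrow \mcM_{\mcA}$, whose bottom arrow $\mu_{\mcA}^2 \left( c_n(E), - \right) : \mcA \left( X^{n+1}(E), X^{n+2} \right) \to \mcA \left( X^n(E), X^{n+2} \right)$ is a quasi-isomorphism by hypothesis and whose vertical arrows are quasi-isomorphisms by Lemma \ref{lemma first result for specific module}, so two-out-of-three concludes. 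You instead split $\mcM_{\mcA}$ by the short exact sequence $0 \to \mcA \left( -, X^p \right) \to \mcM_{\mcA} \to K_{p-1} \oplus \widetilde{K}_p \to 0$ and filter the cone piece by piece --- in effect you inline and adapt the proof of Lemma \ref{lemma first result for specific module} rather than quoting it, using the same decomposition and the same filtrations $F^q$, $\widetilde{F}^q$. Neither route is shorter overall, but yours has a genuine payoff: on the graded pieces of $\widetilde{K}_p$ you only need the \emph{induced map} (left multiplication by $c_n(E)$, which is upper-triangular for the two-step filtration by the bottom column, with diagonal entries $\mu_{\mcA}^2 \left( c_n(E), - \right)$) to be a quasi-isomorphism, not the pieces themselves to be acyclic. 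Consequently your argument uses only the second family of hypotheses and never invokes that $\mu_{\mcA}^2 \left( -, c_j \right)$ is a quasi-isomorphism, whereas the paper's shortcut needs both families, since Lemma \ref{lemma first result for specific module} rests on the first. The points you flag --- strictness of the inclusion and projection so that left multiplication descends to the sub and quotient modules, and propagation of acyclicity through bounded-below exhaustive filtrations --- are exactly the right ones, and they are handled correctly.
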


\begin{proof}
	
	We have
	\[\mcM_{\mcA} \left( \mathrm{Cone} \, c_n (E) \right) = \mathrm{Cone} \left( \mcM_{\mcA} \left( X^{n+1} (E)  \right) \xrightarrow{\mu_{\mcM_{\mcA}}^2 \left( c_n (E) , - \right)} \mcM_{\mcA} \left( X^n (E)  \right) \right), \]
	so we have to prove that $\mu_{\mcM_{\mcA}}^2 \left( c_n (E) , - \right) : \mcM_{\mcA} \left( X^{n+1} (E)  \right) \to \mcM_{\mcA} \left( X^n (E)  \right)$ is a quasi-isomorphism. 
	Observe that we have the following commutative diagram
	\[\begin{tikzcd}[column sep = 2.5cm]
	\mcM_{\mcA} \left( X^{n+1} (E)  \right) \ar[r, "\mu_{\mcM_{\mcA}}^2 {(c_n (E),-)}"] & \mcM_{\mcA} \left( X^n (E)  \right) \\
	\mcA \left( X^{n+1} (E)  , X^{n+2} \right) \ar[u, hook] \ar[r, "\mu_{\mcA}^2 {\left( c_n (E) , - \right)}"] & \mcA \left( X^n (E)  , X^{n+2} \right) \ar[u, hook] .
	\end{tikzcd} \]
	The bottom horizontal map is a quasi-isomorphism by assumption on the morphisms $c_j (E)$. Moreover, the vertical maps are quasi-isomorphisms according to Lemma \ref{lemma first result for specific module}. This implies that $\mu_{\mcM_{\mcA}}^2 \left( c_n (E) , - \right)$ is indeed a quasi-isomorphism. 
	
\end{proof}

\begin{lemma}\label{lemma third result for specific module}
	
	The $\mcA$-module map $_{W_{\mcA}^{-1}} t_{\mcA}^n : \, _{W_{\mcA}^{-1}} \mcA \left( - , X^n \right) \to \, _{W_{\mcA}^{-1}} \mcM_{\mcA}$ is a quasi-isomorphism for every $n \in \mathbf{Z}$.
	
\end{lemma}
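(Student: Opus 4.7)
My plan is to express $\mcM_{\mcA}$ as a filtered colimit of finite mapping cylinders and exploit the fact that each $t_{c_i}$ becomes a quasi-isomorphism after localization at $W_{\mcA}$.

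For integers $a \le b$, let $\mcM_{\mcA}^{[a,b]}$ denote the sub-$\mcA$-module of $\mcM_{\mcA}$ obtained by keeping the bottom-row summands $\mcA(-, X^k)$ for $a \le k \le b$ together with the top-row summands $\mcA(-, X^k)[1]$ for $a \le k \le b-1$ (so every arrow in the cone diagram stays inside the piece). Then $\mcM_{\mcA}^{[a,b]}$ is precisely the iterated mapping cylinder of the finite sequence
\[\mcA(-, X^a) \xrightarrow{t_{c_a}} \mcA(-, X^{a+1}) \xrightarrow{t_{c_{a+1}}} \cdots \xrightarrow{t_{c_{b-1}}} \mcA(-, X^b),\]
so the standard deformation retraction of a mapping cylinder onto its target exhibits $\mcM_{\mcA}^{[a,b]}$ as chain homotopy equivalent to $\mcA(-, X^b)$; moreover, under this equivalence the inclusion $t_{\mcA}^k$ corresponds up to homotopy to the iterated multiplication $t_{c_{b-1}} \circ \cdots \circ t_{c_k}$ for every $a \le k \le b$ (this can also be read off directly from iterated applications of Lemma \ref{lemma multiplication by continuation element becomes homotopic to inclusion}). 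Since each $c_i$ lies in $W_{\mcA}$, localizing at $W_{\mcA}$ turns each $t_{c_i}$ into a quasi-isomorphism of modules over the localized category (this follows from \cite[Lemma 3.13]{GPS20}, as already used in the proof of Proposition \ref{prop cylinder object}), hence the localized inclusion $_{W_{\mcA}^{-1}} t_{\mcA}^k : \, _{W_{\mcA}^{-1}} \mcA(-, X^k) \to \, _{W_{\mcA}^{-1}} \mcM_{\mcA}^{[a,b]}$ is a quasi-isomorphism for every $a \le k \le b$.

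To conclude, I would observe that $\mcM_{\mcA} = \mathrm{colim}_{a \to -\infty,\, b \to +\infty} \mcM_{\mcA}^{[a,b]}$ in the category of $\mcA$-modules, and that the explicit formula of Definition \ref{definition localization of modules} is linear in its module argument, so localization commutes with this filtered colimit. Since filtered colimits of quasi-isomorphisms of chain complexes are quasi-isomorphisms, passing to the colimit yields the desired quasi-isomorphism $_{W_{\mcA}^{-1}} t_{\mcA}^n : \, _{W_{\mcA}^{-1}} \mcA(-, X^n) \to \, _{W_{\mcA}^{-1}} \mcM_{\mcA}$ for every $n \in \mathbf{Z}$. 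The only real obstacle I anticipate is the careful identification of the iterated-cone presentation of $\mcM_{\mcA}^{[a,b]}$ with the standard mapping-cylinder presentation, which is needed to verify that $t_{\mcA}^k$ becomes $t_{c_{b-1}} \circ \cdots \circ t_{c_k}$ under the deformation retraction; once this bookkeeping is settled, the remainder of the argument is formal.
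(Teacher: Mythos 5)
Your argument is correct, and it reaches the conclusion by a genuinely different (though closely related) route from the paper. The paper works directly with the doubly infinite object: it observes that $_{W_{\mcA}^{-1}} t_{\mcA}^n$ is an inclusion, identifies the cokernel as $_{W_{\mcA}^{-1}} K_{n-1} \oplus \, _{W_{\mcA}^{-1}} \widetilde{K}_n$, and kills it with the increasing bounded-below filtrations $F^q$, $\widetilde{F}^q$, whose graded pieces are cones of identities or cones of $_{W_{\mcA}^{-1}} t_{c_k}$; the latter are acyclic because $c_k \in W_{\mcA}$. You instead truncate first, retract each finite telescope $\mcM_{\mcA}^{[a,b]}$ onto its last term (the cokernel of that inclusion is an iterated extension of contractible modules $\mathrm{Cone}(\mathrm{id})$, so this really is a module-level homotopy equivalence, preserved by the DG-functor $_{W_{\mcA}\backslash}(-)$), identify $t_{\mcA}^k$ up to homotopy with $t_{c_{b-1}} \circ \cdots \circ t_{c_k}$ by iterating Lemma \ref{lemma multiplication by continuation element becomes homotopic to inclusion}, and then pass to the filtered colimit, using that the formula of Definition \ref{definition localization of modules} visibly commutes with unions of submodules. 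The two proofs rest on the same two computational inputs (contractibility of identity cones and the fact that morphisms of $W_{\mcA}$ become quasi-isomorphisms after localization), but yours makes the role of Lemma \ref{lemma multiplication by continuation element becomes homotopic to inclusion} explicit and replaces the filtration-of-the-cokernel bookkeeping with a colimit argument, at the cost of having to verify the telescope-versus-iterated-cone identification you flag at the end. One small correction: the fact that localizing at $W_{\mcA}$ turns each $t_{c_i}$ into a quasi-isomorphism is \cite[Lemma 3.12]{GPS20}, not Lemma 3.13 (which is the acyclicity criterion used elsewhere); this is the citation the paper itself uses at the corresponding step.
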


\begin{proof}
	
	Let $X$ be some object of $\mcA$. We want to prove that the chain map $_{W_{\mcA}^{-1}} t_{\mcA}^n : \, _{W_{\mcA}^{-1}} \mcA \left( X , X^n \right) \to \, _{W_{\mcA}^{-1}} \mcM_{\mcA} \left( X \right)$ is a quasi-isomorphism. Observe that 
	\[_{W_{\mcA}^{-1}} \mcM_{\mcA} \left( X \right) = \left[
	\begin{tikzcd}
		\dots \ar[rd] & \mcA \left[ W_{\mcA}^{-1} \right] \left( X , X^0 \right) \ar[d, "\mathrm{id}"] \ar[rd, "_{W_{\mcA}^{-1}} t_{c_0}" below = 2.5] & \mcA \left[ W_{\mcA}^{-1} \right] \left( X , X^1 \right) \ar[d, "\mathrm{id}"] \ar[rd, "_{W_{\mcA}^{-1}} t_{c_1}"] & \dots \\
		\dots & \mcA \left[ W_{\mcA}^{-1} \right] \left( X , X^0 \right) & \mcA \left[ W_{\mcA}^{-1} \right] \left( X , X^1 \right) & \dots
	\end{tikzcd} 
	\right] \]
	and the chain map $_{W_{\mcA}^{-1}} t_{\mcA}^n : \, _{W_{\mcA}^{-1}} \mcA \left( X , X^n \right) \to \, _{W_{\mcA}^{-1}} \mcM_{\mcA} \left( X \right)$ is the inclusion. The cone of the latter is then quasi-isomorphic to its cokernel, which is $_{W_{\mcA}^{-1}} K_{n-1} \left( X \right) \oplus \, _{W_{\mcA}^{-1}} \widetilde{K}_n \left( X \right)$.
	Observe that $\left( _{W_{\mcA}^{-1}} F_{n-1}^q \left( X \right) \right)_{q \geq 0}$, $\left( _{W_{\mcA}^{-1}} \widetilde{F}_n^q \left( X \right) \right)_{q \geq 0}$ are increasing, exhaustive, and bounded from below filtrations of $_{W_{\mcA}^{-1}} K_{n-1} \left( X \right)$, $_{W_{\mcA}^{-1}} \widetilde{K}_n \left( X \right)$ respectively. For every $q \geq 1$, we have 
	\[_{W_{\mcA}^{-1}} F_{n-1}^q \left( X \right) / \, _{W_{\mcA}^{-1}} F_{n-1}^{q-1} \left( X \right) = \left[
	\begin{tikzcd}
		\mcA \left[ W_{\mcA}^{-1} \right] \left( X , X^{n-q} \right) \ar[d, "\mathrm{id}"] \\
		\mcA \left[ W_{\mcA}^{-1} \right] \left( X , X^{n-q} \right)
	\end{tikzcd} \right] \]
	and
	\[_{W_{\mcA}^{-1}} \widetilde{F}_n^q \left( X \right) / \, _{W_{\mcA}^{-1}} \widetilde{F}_n^{q-1} \left( X \right) =
	\left[
	\begin{tikzcd}
		\mcA \left[ W_{\mcA}^{-1} \right] \left( X , X^{n-1+q} \right) \ar[rd, "_{W_{\mcA}^{-1}} t_{c_{n-1+q}}"] & \\
		& \mcA \left[ W_{\mcA}^{-1} \right] \left( X , X^{n+q} \right) 
	\end{tikzcd} \right] . \]
	The first of the two latter complexes is clearly acyclic, and the second one is acyclic because $c_{n-1+q}$ belongs to the set $W_{\mcA}$ by which we localized (see \cite[Lemma 3.12]{GPS20}). Thus the entire complex $_{W_{\mcA}^{-1}} K_{n-1} \left( X \right) \oplus \, _{W_{\mcA}^{-1}} \widetilde{K}_n \left( X \right)$ is acyclic, which is what we needed to prove.
	
\end{proof}

\subsection{The $A_{\infty}$-category and modules for the mapping torus}\label{subsection category and modules for the mapping torus}

In this section, we consider an $A_{\infty}$-category $\mcG$, together with a set $W_{\mcG}$ of closed degree 0 morphisms. We prove that $\mcH = \mcG \left[ W_{\mcG}^{-1} \right]$ is quasi-equivalent to the mapping torus of $\tau$, and we prove technical results about specific $\mcG$-modules that will allow us to apply Proposition \ref{prop quasi-isomorphism between localizations} in the proofs of Theorems \ref{thm mapping torus in strict situation} and \ref{thm mapping torus in weak situation}. 

Let $\tau$ be a quasi-autoequivalence of an $A_{\infty}$-category $\mcA$ equipped with a compatible $\mathbf{Z}$-splitting of $\mathrm{ob} \left( \mcA \right)$.
If $\mcA_{\triangle}$ is a copy of $\mcA$, we denote by $X_{\triangle}^n \left( E \right)$ the object of $\mcA_{\triangle}$ corresponding to $\left( n, E \right) \in \mathbf{Z} \times \mcE$.

\subsubsection{The Grothendieck construction $\mcG$}

The $A_{\infty}$-category $\mcG$ will be the Grothendieck construction of a slightly different diagram than the one in Definition \ref{definition mapping torus}. The idea is to introduce an $A_{\infty}$-category $\mcC$ together with a set of closed degree $0$ morphisms $W_{\mcC}$ such that the localization $\mcC \left[ W_{\mcC}^{-1} \right]$ is a cylinder object for $\mcA$. 
Observe that this kind of cofibrant replacement is common in homotopy colimits computation, and indeed we need it to prove Theorem \ref{thm mapping torus in weak situation}.

\begin{defin}\label{definition cylinder object}
	
	Let $\mcA_{\bot}$, $\mcA_I$ and $\mcA_{\top}$ be three copies of $\mcA$. 
	We denote by $\mcC$ the Grothendieck construction (see Definition \ref{definition Grothendieck construction}) of the following diagram
	\[\begin{tikzcd}
	\mcA_I \ar[r, "\mathrm{id}"] \ar[d, "\mathrm{id}" left] & \mcA_{\top} \\
	\mcA_{\bot} &
	\end{tikzcd} \]
	and we let $\iota_{\bot}, \iota_I, \iota_{\top} : \mcA \to \mcC$ be the strict inclusions with images $\mcA_{\bot}$, $\mcA_I$, $\mcA_{\top}$ respectively. Finally, we denote by $W_{\mcC}$ the set of adjacent units in $\mcC$, and we let $\mcC yl_{\mcA} = \mcC \left[ W_{\mcC}^{-1} \right]$ be the homotopy colimit of the diagram above.
	
\end{defin}
 
\begin{defin}\label{definition category G}
	
	Let $\mcA_-$, $\mcA_{+}$, $\mcA_{\bullet}$ be three copies of $\mcA$.
	We denote by $\mcG$ the Grothendieck construction of the following diagram
	\[\begin{tikzcd}
	\mcA_- \sqcup \mcA_+ \ar[r, "\mathrm{id} \sqcup \tau"] \ar[d, "\iota_{\bot} \sqcup \iota_{\top}" left] & \mcA_{\bullet} \\
	\mcC .
	\end{tikzcd} \]
	Besides, we denote by $W_{\mcG}$ the union of $W_{\mcC}$ and the set of adjacent units in $\mcG$, and we set
	\[\mcH := \mcG \left[ W_{\mcG}^{-1} \right]. \]
	
\end{defin}

According to Proposition \ref{prop cylinder object}, $\mcC yl_{\mcA}$ can be thought as a cylinder object for $\mcA$.
Therefore, the following result should not be surprising. 

\begin{lemma}\label{lemma relation G - mapping torus}
	
	The mapping torus of $\tau$ is quasi-equivalent to $\mcH$. 
	
\end{lemma}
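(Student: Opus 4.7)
The plan is to show $\mcH \simeq \mathrm{MT}(\tau)$ in three reductions, combining the cylinder property of Proposition \ref{prop cylinder object} with the invariance of homotopy colimits from Proposition \ref{prop invariance of homotopy colimits}.

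First I would split the localization $\mcH = \mcG[W_{\mcG}^{-1}]$ into two stages using $W_{\mcG} = W_{\mcC} \cup \{\text{adjacent units of } \mcG\}$. Since $W_{\mcC}$ consists of morphisms lying inside the subcategory $\mcC \subset \mcG$, localizing $\mcG$ at $W_{\mcC}$ first should simply replace the $\mcC$-piece of the Grothendieck construction by $\mcC yl_{\mcA}$; localizing what remains at the adjacent units then gives, by Definition \ref{definition Grothendieck construction}, the homotopy colimit. Hence
\[
\mcH \simeq \mathrm{hocolim}\Bigl(\mcC yl_{\mcA} \xleftarrow{\iota_{\bot} \sqcup \iota_{\top}} \mcA \sqcup \mcA \xrightarrow{\mathrm{id} \sqcup \tau} \mcA\Bigr).
\]

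Next, Proposition \ref{prop cylinder object} furnishes a quasi-equivalence $\widetilde{\pi} : \mcC yl_{\mcA} \xrightarrow{\sim} \mcA[\mathrm{units}^{-1}]$ such that $\widetilde{\pi} \circ (\iota_{\bot} \sqcup \iota_{\top})$ coincides with $\mathrm{id} \sqcup \mathrm{id}$ (post-composed with the canonical inclusion $\mcA \hookrightarrow \mcA[\mathrm{units}^{-1}]$). Combined with the identities on the right-hand leg, this yields a strictly commutative map of spans
\[
\begin{tikzcd}
\mcC yl_{\mcA} \ar[d, "\widetilde{\pi}"] & \mcA \sqcup \mcA \ar[l, "\iota_{\bot} \sqcup \iota_{\top}" above] \ar[r, "\mathrm{id} \sqcup \tau"] \ar[d, equal] & \mcA \ar[d, equal] \\
\mcA[\mathrm{units}^{-1}] & \mcA \sqcup \mcA \ar[l, "\mathrm{id} \sqcup \mathrm{id}" above] \ar[r, "\mathrm{id} \sqcup \tau"] & \mcA
\end{tikzcd}
\]
with quasi-equivalences as vertical arrows, so Proposition \ref{prop invariance of homotopy colimits} identifies the above homotopy colimit with the homotopy colimit of the bottom row. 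Since the natural inclusion $\mcA \hookrightarrow \mcA[\mathrm{units}^{-1}]$ is itself a quasi-equivalence (units being cohomologically invertible on the nose, so the cones introduced by the localization construction are acyclic), one final application of Proposition \ref{prop invariance of homotopy colimits} identifies that bottom homotopy colimit with $\mathrm{MT}(\tau)$.

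The step I expect to require the most attention is the initial two-stage localization: the assertion that localizing a Grothendieck construction at a set of morphisms confined to one of its legs coincides with the Grothendieck construction of the correspondingly localized diagram is intuitively clear, but in the $A_{\infty}$-setting it deserves a brief explicit verification at the level of the explicit quotient-by-cones model of localization. Once that compatibility is in place, the remainder of the argument is a clean double application of Proposition \ref{prop invariance of homotopy colimits}, so I do not anticipate any further serious obstacle.
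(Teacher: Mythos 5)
Your argument is essentially the paper's: both rest on Proposition \ref{prop cylinder object} combined with the invariance of homotopy colimits under vertical quasi-equivalences (Proposition \ref{prop invariance of homotopy colimits}), and the two-stage localization step you flag as the delicate point is exactly what the paper disposes of by citing \cite[Lemma A.6]{GPS19} (``localization and homotopy colimits commute''). The only organizational difference is that the paper first builds the comparison functor $\chi$ between the two Grothendieck constructions and then localizes, whereas you localize first and then compare spans; this is immaterial.
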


\begin{proof}
	
	Let $\pi : \mcC \to \mcA$ be the $A_{\infty}$-functor induced by the following commutative diagram 
	\[\begin{tikzcd}
	\mcA_I \ar[r, "\mathrm{id}"] \ar[d, "\mathrm{id}"] & \mcA_{\top} \ar[d, "\mathrm{id}"] \\
	\mcA_{\bot} \ar[r, "\mathrm{id}"] & \mcA
	\end{tikzcd} \]
	(see Proposition \ref{prop induced functor from Grothendieck construction}).
	We get a commutative diagram 
	\[\begin{tikzcd}
	\mcC \ar[d, "\pi"] & \mcA_- \sqcup \mcA_+ \ar[l, "\iota_{\bot} \sqcup \iota_{\top}" above] \ar[r, "\mathrm{id} \sqcup \tau"] \ar[d, "\mathrm{id}"] & \mcA_{\bullet} \ar[d, "\mathrm{id}"] \\
	\mcA & \mcA_- \sqcup \mcA_+ \ar[l, "\mathrm{id} \sqcup \mathrm{id}"] \ar[r, "\mathrm{id} \sqcup \tau" below] & \mcA_{\bullet} 
	\end{tikzcd} \]
	which induces an $A_{\infty}$-functor $\chi$ from $\mcG$ to the Grothendieck construction of the bottom line (see Proposition \ref{prop invariance of homotopy colimits}).
	Observe that $\chi$ sends $W_{\mcC}$ to the set $U$ of units in $\mcA$. 
	Now, according to Proposition \ref{prop cylinder object}, the $A_{\infty}$-functor $\widetilde{\pi} : \mcC yl_{\mcA} = \mcC \left[ W_{\mcC}^{-1} \right] \to \mcA \left[ U^{-1} \right]$ is a quasi-equivalence. According to Lemma A.6 in \cite{GPS19} (called ``localization and homotopy colimits commute''), this implies that the $A_{\infty}$-functor induced by $\chi$
	\[\mathrm{hocolim} \left(  
	\begin{tikzcd}[column sep = 0.5cm]
	\mcA_- \sqcup \mcA_+ \ar[d, "\iota_{\bot} \sqcup \iota_{\top}"] \ar[r, "\mathrm{id} \sqcup \tau"] & \mcA_{\bullet} \\
	\mcC
	\end{tikzcd}
	\right) \left[ W_{\mcC}^{-1} \right] \xrightarrow{\widetilde{\chi}}
	\mathrm{hocolim} \left(  
	\begin{tikzcd}[column sep = 0.5cm]
	\mcA_- \sqcup \mcA_+ \ar[d, "\mathrm{id} \sqcup \mathrm{id}"] \ar[r, "\mathrm{id} \sqcup \tau"] & \mcA_{\bullet} \\
	\mcA
	\end{tikzcd}
	\right) \left[ U^{-1} \right] \]
	is a quasi-equivalence.
	This completes the proof because the source of $\widetilde{\chi}$ is exactly $\mcH$, and its target is quasi-equivalent to the mapping torus of $\tau$. 
	
\end{proof}

\subsubsection{Modules over $\mcG$}

In the following, we fix some element $E_{\diamond} \in \mcE$. When we write an object $X_{\triangle}^n$ without specifying the element of $\mcE$, we mean $X_{\triangle}^n (E_{\diamond})$.
Moreover, we denote by 
\[t_{\triangle \square}^n : \mcG \left( -, X_{\triangle}^n \right) \to \mcG \left( -, X_{\square}^{n+\delta_{\triangle \square}} \right) \]
the $\mcG$-module map induced by the adjacent unit in $\mcG \left( X_{\triangle}^n, X_{\square}^{n+\delta_{\triangle \square}} \right)$ (see Definition \ref{definition Yoneda functor}), where 
\[\delta_{\triangle \square} = \left\{
\begin{array}{ll}
1 & \text{if } \left( \triangle, \square \right) = \left( +, \bullet \right) \\
0 & \text{otherwise} .
\end{array}
\right. \]

\begin{defin}\label{definition module for G}
	
	We denote by $\mcM_{\mcG}$ the $\mcG$-module defined by
	\[\mcM_{\mcG} = 
	\left[ 
	\begin{tikzcd}[column sep = scriptsize]
	\dots \ar[rd, "t_{+ \bullet}^{-1}"] & \mcG \left( - , X_-^{0} \right) \ar[d, "t_{- \bullet}^0"] \ar[rd, "t_{- \bot}^0"] & \mcG \left( - , X_I^{0} \right) \ar[d, "t_{I \bot}^0"] \ar[rd, "t_{I \top}^0"] & \mcG \left( - , X_+^{0} \right) \ar[d, "t_{+ \top}^0"] \ar[rd, "t_{+ \bullet}^0"] & \mcG \left( - , X_{-}^{1} \right) \ar[d, "t_{- \bullet}^1"] \ar[rd, "t_{- \bot}^1"] & \dots \\
	\dots & \mcG \left( - , X_{\bullet}^{0} \right) & \mcG \left( - , X_{\bot}^0 \right) & \mcG \left( - , X_{\top}^0 \right) & \mcG \left( - , X_{\bullet}^{1} \right) & \dots 
	\end{tikzcd}
	\right] \]
	(see Definition \ref{definition cone of a morphism between modules}). 
	For practical reasons, we also consider the $\mcG$-modules
	\[ \mcM_{\star}^n := 
	\left[
	\begin{tikzcd}
	& \mcG \left( - , X_I^n \right) \ar[ld, "t_{I \bot}^n"] \ar[rd, "t_{I \top}^n"] & \\
	\mcG \left( - , X_{\bot}^n \right) & & \mcG \left( - , X_{\top}^n \right)
	\end{tikzcd}
	\right], \quad n \in \mathbf{Z}. \]
	Besides, we denote by $t_{\mcG} : \mcG \left( - , X_{\bullet}^0 \right) \to \mcM_{\mcG}$ the $\mcG$-module inclusion.
	
\end{defin}

\begin{rmk}
	
	We can write 
	\[ \mcM_{\mcG} = 
	\left[
	\begin{tikzcd}
	& \bigoplus \limits_{n \in \mathbf{Z}} \left( \mcG \left( - , X_-^n \right) \oplus \mcG \left( - , X_+^n \right) \right) \ar[ld, "\bigoplus \limits_{n \in \mathbf{Z}} \left( t_{- \bot}^n \oplus t_{+ \top}^n \right)"] \ar[rd, "\bigoplus \limits_{n \in \mathbf{Z}} \left( t_{- \bullet}^n \oplus t_{+ \bullet}^n \right)"] & \\
	\bigoplus \limits_{n \in \mathbf{Z}} \mcM_{\star}^n & & \bigoplus \limits_{n \in \mathbf{Z}} \mcG \left( - , X_{\bullet}^n \right)
	\end{tikzcd}
	\right] . \]
	
\end{rmk}

The following two lemmas are analogs of Lemmas \ref{lemma second result for specific module} and \ref{lemma third result for specific module} respectively. They will be used later in order to apply Proposition \ref{prop quasi-isomorphism between localizations}.

\begin{lemma}\label{lemma properties of modules}
	
	For every $w$ in $W_{\mcG}$, the complex $\mcM_{\mcG} \left( \mathrm{Cone} \, w \right)$ is acyclic.
	
\end{lemma}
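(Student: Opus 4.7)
By definition $\mcM_{\mcG}(\mathrm{Cone}\, w)$ is the mapping cone of the chain map
\[\mu^2_{\mcM_{\mcG}}(w, -) : \mcM_{\mcG}(X_\square^{n+\delta}) \to \mcM_{\mcG}(X_\triangle^n),\]
so it suffices to show this chain map is a quasi-isomorphism for every $w \in W_{\mcG}$. The plan is to show it is injective with acyclic cokernel, proceeding by a case analysis over the six types of adjacent units: the two in $W_{\mcC}$ (namely $X_I^n \to X_\bot^n$ and $X_I^n \to X_\top^n$), and the four in $\mcG$ coming from the adjacency morphisms out of $\mcA_-$ and $\mcA_+$ (namely $X_-^n \to X_\bullet^n$, $X_+^n \to X_\bullet^{n+1}$, $X_-^n \to X_\bot^n$, $X_+^n \to X_\top^n$).

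In each case I would first use the Grothendieck construction formula for hom-spaces to evaluate both $\mcM_{\mcG}(X_\square^{n+\delta})$ and $\mcM_{\mcG}(X_\triangle^n)$ explicitly. Because morphisms in a Grothendieck construction only go from the source factor to the target factors (and $\mcC$ itself has this property), many of the entries in the zigzag definition of $\mcM_{\mcG}$ vanish on any fixed object, so both evaluations collapse to small explicit complexes involving direct sums of $\mcA(X^*, X^*)$. A direct computation then shows that $\mu^2(w, -)$ embeds $\mcM_{\mcG}(X_\square^{n+\delta})$ isomorphically onto the row of $\mcM_{\mcG}(X_\triangle^n)$ with targets of type $X_\square^*$, using the fact that multiplication by a unit in the Grothendieck construction reduces to applying the linear component of the relevant functor, which acts by the identity on the surviving hom-spaces. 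The cokernel then decomposes as a direct sum, indexed by the zigzag column $m$, of two-term complexes of the form
\[\mcA(X^n, X^m)[1] \xrightarrow{d_m} \mcA(X^a, X^b),\]
where $d_m$ is either the identity (when the residual connecting map is a strict-unit multiplication through $\Phi_2 = \iota_\bot \sqcup \iota_\top$, or through $\Phi_1 = \mathrm{id}$ on $\mcA_-$) or is the linear component $\tau^1$ (when the residual connecting map passes through $\Phi_1 = \tau$ on $\mcA_+$). In both subcases $d_m$ is a quasi-isomorphism, so each summand is acyclic; hence the cokernel is acyclic and $\mu^2(w, -)$ is a quasi-isomorphism.

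The main obstacle is the bookkeeping of the case analysis: in each of the six cases one must identify which entries of the zigzag survive evaluation, verify that $\mu^2(w, -)$ lands exactly on a single row, and determine whether the residual cokernel differential is the identity or $\tau^1$. Fortunately the hom-space vanishing in the nested Grothendieck constructions is extensive enough that only two or three terms per zigzag column survive, and the symmetries $X_\bot \leftrightarrow X_\top$ and $X_- \leftrightarrow X_+$ reduce the six cases to essentially two distinct computations, handled uniformly as above. This mirrors the style of argument already used in the proof of Proposition \ref{prop cylinder object}.
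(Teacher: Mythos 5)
Your proposal is correct and follows essentially the same route as the paper's own proof: in each case the paper identifies $\mcM_{\mcG}(\mathrm{Cone}\,w)$ as a direct sum over the zigzag index $n$ of cones, observes that $\mu^2_{\mcM_{\mcG}}(w,-)$ is injective (landing on the summand you describe), and identifies the cokernel with the cone of the remaining connecting map $t^n_{I\bot}$, $t^n_{+\bullet}$ or $t^{n-1}_{+\top}$, which is a quasi-isomorphism either trivially or because $\tau$ is a quasi-equivalence. The only cosmetic difference is that the paper organizes the six morphism types into three symmetric pairs rather than two.
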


\begin{proof}
	
	Let $w$ be the morphism in $W_{\mcG} \cap \mcG \left( X_I^k (E) , X_{\top}^k (E) \right)$ (the proof is analogous for the morphism in $W_{\mcG} \cap \mcG \left( X_I^k (E) , X_{\bot}^k (E) \right)$). Then 
	\begin{align*}
	\mcM_{\mcG} \left( \mathrm{Cone} \, w \right) & = \mathrm{Cone} \left( \mcM_{\mcG} \left( X_{\top}^k (E) \right) \xrightarrow{\mu_{\mcM_{\mcG}}^2 \left( w , - \right)} \mcM_{\mcG} \left( X_I^k (E) \right) \right) \\
	& = \bigoplus\limits_{n} \mathrm{Cone} \left( \mcG \left( X_{\top}^k (E) , X_{\top}^n \right) \xrightarrow{\mu_{\mcM_{\mcG}}^2 \left( w , - \right)} \mcM_{\star}^n \left( X_I^k (E) \right) \right) .
	\end{align*}
	We want to prove that $\mu_{\mcM_{\mcG}}^2 \left( w , - \right) : \mcG \left( X_{\top}^k (E) , X_{\top}^n \right) \to \mcM_{\star}^n \left( X_I^k (E) \right)$ is a quasi-isomorphism for every $n$. Observe that the following diagram of chain complexes is commutative 
	\[\begin{tikzcd}
	\mcG \left( X_{\top}^k (E) , X_{\top}^n \right) \ar[rr, "{ \mu_{\mcM_{\mcG}}^2 \left( w , - \right) }"] & & \mcM_{\star}^n \left( X_I^k (E) \right) \\
	\mcG \left( X_{\top}^k (E) , X_{\top}^n \right) \ar[u, equal] \ar[rr, "{\mu_{\mcG}^2 \left( w, - \right)}"] & & \mcG \left( X_I^k (E) , X_{\top}^n \right) \ar[u, hook] .
	\end{tikzcd} \]
	The rightmost vertical arrow is injective, so its cone is quasi-isomorphic to its cokernel, which is the cone of $t_{I \bot}^n : \mcG \left( X_I^k (E), X_I^n \right) \to \mcG \left( X_I^k (E), X_{\bot}^n \right)$. Since the latter map is a quasi-isomorphism, the cone of $\mu_{\mcM_{\mcG}}^2 \left( w , - \right) : \mcG \left( X_{\top}^k (E) , X_{\top}^n \right) \to \mcM_{\star}^n \left( X_I^k (E) \right)$ is quasi-isomorphic to the cone of $\mu_{\mcG}^2 \left( w, - \right) : \mcG \left( X_{\top}^k (E) , X_{\top}^n \right) \to \mcG \left( X_I^k (E) , X_{\top}^n \right)$. The latter map is a quasi-isomorphism, so we conclude that $\mu_{\mcM_{\mcG}}^2 \left( w , - \right) : \mcG \left( X_{\top}^k (E) , X_{\top}^n \right) \to \mcM_{\star}^n \left( X_I^k (E) \right)$ is a quasi-isomorphism for every $n$, and thus $\mcM_{\mcG} \left( \mathrm{Cone} \, w \right)$ is acyclic. 
	
	Now let $w$ be the morphism in $W_{\mcG} \cap \mcG \left( X_+^k (E) , X_{\top}^k (E)  \right)$ (the proof is analogous for the morphism in $W_{\mcG} \cap \mcG \left( X_-^k (E) , X_{\bot}^k (E) \right)$). Then 
	\begin{align*}
	\mcM_{\mcG} \left( \mathrm{Cone} \, w \right) & = \mathrm{Cone} \left( \mcM_{\mcG} \left( X_{\top}^k (E) \right) \xrightarrow{\mu_{\mcM_{\mcG}}^2 \left( w , - \right)} \mcM_{\mcG} \left( X_+^k (E) \right) \right) \\
	& = \bigoplus\limits_{n} \mathrm{Cone} \left( \mcG \left( X_{\top}^k (E) , X_{\top}^n \right) \xrightarrow{\mu_{\mcM_{\mcG}}^2 \left( w , - \right)} K^n \right).
	\end{align*}
	where 
	\[K^n = \left[ 
	\begin{tikzcd}
	& \mcG \left( X_+^k (E) , X_+^n \right) \ar[ld, "t_{+ \top}^n"] \ar[rd, "t_{+ \bullet}^n"] \\
	\mcG \left( X_+^k (E) , X_{\top}^n \right) & & \mcG \left( X_+^k (E) , X_{\bullet}^{n+1} \right)
	\end{tikzcd}
	\right] . \]
	Observe that $\mu_{\mcM_{\mcG}}^2 \left( w , - \right) : \mcG \left( X_{\top}^k (E) , X_{\top}^n \right) \to K^n$ is injective, so its cone is quasi-isomorphic to its cokernel, which is the cone of $t_{+ \bullet}^n : \mcG \left( X_+^k (E) , X_+^n \right) \to \mcG \left( X_+^k (E) , X_{\bullet}^{n+1} \right)$. The latter map is a quasi-isomorphism because $\tau$ is a quasi-equivalence. This implies that the cone of $\mu_{\mcM_{\mcG}}^2 \left( w , - \right) : \mcG \left( X_{\top}^k (E) , X_{\top}^n \right) \to K^n$ is acyclic for every $n$, and thus $\mcM_{\mcG} \left( \mathrm{Cone} \, w \right)$ is acyclic.
	
	It remains to consider a morphism $w$ in $W_{\mcG} \cap \mcG \left( X_+^k (E) , X_{\bullet}^{k+1} (E) \right)$ (the proof is analogous for the morphism in $W_{\mcG} \cap \mcG \left( X_-^k (E) , X_{\bullet}^k (E) \right)$). Then 
	\begin{align*}
	\mcM_{\mcG} \left( \mathrm{Cone} \, w \right) & = \mathrm{Cone} \left( \mcM_{\mcG} \left( X_{\bullet}^{k+1} (E) \right) \xrightarrow{\mu_{\mcM_{\mcG}}^2 \left( w , - \right)} \mcM_{\mcG} \left( X_+^k (E) \right) \right) \\
	& = \bigoplus\limits_{n} \mathrm{Cone} \left( \mcG \left( X_{\bullet}^{k+1} (E) , X_{\bullet}^{n} \right) \xrightarrow{\mu_{\mcM_{\mcG}}^2 \left( w , - \right)} K^n \right).
	\end{align*}
	where 
	\[K^n = \left[ 
	\begin{tikzcd}
	& \mcG \left( X_+^k (E) , X_+^{n-1} \right) \ar[ld, "t_{+ \top}^{n-1}"] \ar[rd, "t_{+ \bullet}^{n-1}"] \\
	\mcG \left( X_+^k (E) , X_{\top}^{n-1} \right) & & \mcG \left( X_+^k (E) , X_{\bullet}^{n} \right)
	\end{tikzcd}
	\right] . \]
	Observe that $\mu_{\mcM_{\mcG}}^2 \left( w , - \right) : \mcG \left( X_{\bullet}^{k+1} (E) , X_{\bullet}^{n} \right) \to K^n$ is injective, so its cone is quasi-isomorphic to its cokernel, which is the cone of $t_{+ \top}^{n-1} : \mcG \left( X_+^k (E) , X_+^{n-1} \right) \to \mcG \left( X_+^k (E) , X_{\top}^{n-1} \right)$. The latter map is a quasi-isomorphism, so we conclude that the cone of $\mu_{\mcM_{\mcG}}^2 \left( w , - \right) : \mcG \left( X_{\bullet}^{k+1} (E) , X_{\bullet}^{n} \right) \to K^n$ is acyclic for every $n$, and thus $\mcM_{\mcG} \left( \mathrm{Cone} \, w \right)$ is acyclic.
	
\end{proof}

\begin{lemma}\label{lemma properties of modules bis}
	
	The $\mcH$-module map $_{W_{\mcG}^{-1}} t_{\mcG} : \, _{W_{\mcG}^{-1}} \mcG \left( - , X_{\bullet}^0 \right) \to \, _{W_{\mcG}^{-1}} \mcM_{\mcG}$ is a quasi-isomorphism.
	
\end{lemma}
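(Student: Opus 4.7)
The plan is to follow the template of Lemma \ref{lemma third result for specific module} very closely. I would fix an object $X$ of $\mcG$ and aim to show that the chain map
\[ _{W_\mcG^{-1}} t_\mcG \,:\, \mcH(X, X_\bullet^0) \to \, _{W_\mcG^{-1}} \mcM_\mcG(X) \]
is a quasi-isomorphism. By Remark \ref{rmk cone and localization commute}, localization of modules commutes with cones, so $_{W_\mcG^{-1}} \mcM_\mcG$ is obtained from $\mcM_\mcG$ by replacing every $\mcG(-, Y)$ with $\mcH(-, Y)$ and every structure map $t_{\triangle \square}^n$ with its localization $_{W_\mcG^{-1}} t_{\triangle \square}^n$. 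The map $_{W_\mcG^{-1}} t_\mcG$ is then a split injection with image the graded-vector-space summand $\mcH(X, X_\bullet^0)$, so its cone is quasi-isomorphic to the cokernel $Q(X)$ obtained by deleting this summand (and hence the contributions of the two arrows $t_{-\bullet}^0$ and $t_{+\bullet}^{-1}$ that originally targeted it). It therefore suffices to prove that $Q(X)$ is acyclic.

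For this I would construct an exhaustive, bounded-below, increasing filtration $(G^q)_{q \geq 0}$ on $Q(X)$ by building outward from the deleted vertex $X_\bullet^0$. The model is the pair $(F_{n-1}^q, \widetilde{F}_n^q)$ used in the proof of Lemma \ref{lemma third result for specific module}: we incorporate column by column the summands of $Q(X)$ indexed by integers $n$ with $|n| \leq q$, together with all structure arrows lying between pieces already included. The filtration should be arranged so that each associated graded $G^q / G^{q-1}$ is a finite direct sum of cones of two simple shapes, namely identity cones $\mathrm{Cone}(\mcH(X, Y) \xrightarrow{\mathrm{id}} \mcH(X, Y))$ (coming from the ``vertical'' arrows of $\mcM_\mcG$), and two-term cones $\mathrm{Cone}(\mcH(X, Y_1) \xrightarrow{_{W_\mcG^{-1}} t_{\triangle \square}^n} \mcH(X, Y_2))$ on a single localized adjacent unit.

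Each such graded piece is acyclic: identity cones obviously so, and every $t_{\triangle \square}^n$ lies in $W_\mcG$ by the definition of $W_\mcG$ as the union of $W_\mcC$ and the adjacent units of $\mcG$ (Definition \ref{definition category G}), hence becomes a quasi-isomorphism in $\mcH$ by \cite[Lemma 3.12]{GPS20}, so the second type of cone is acyclic as well. A routine induction on $q$ using the long exact sequence of the pair $(G^q, G^{q-1})$ then yields acyclicity of every $G^q$, and acyclicity passes to the filtered colimit $Q(X)$, completing the argument.

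The main technical obstacle I anticipate is the combinatorial bookkeeping needed to arrange the filtration so that each $G^q / G^{q-1}$ really does split as a direct sum of the two basic shapes above without leaving ``dangling'' arrows. The orphan pieces $\mcH(X, X_-^0)[1] \to \mcH(X, X_\bot^0)$ and $\mcH(X, X_+^{-1})[1] \to \mcH(X, X_\top^{-1})$, created by the deletion of $X_\bullet^0$, are the only atypical contributions; but they are themselves cones on $t_{-\bot}^0$ and $t_{+\top}^{-1}$, both of which lie in $W_\mcG$, so they fit into the second shape above and should cause no trouble.
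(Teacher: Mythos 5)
Your proposal is correct and follows essentially the same route as the paper: the paper likewise identifies the cone of the split inclusion $_{W_{\mcG}^{-1}} t_{\mcG}$ with its cokernel, splits that cokernel into the two half-infinite pieces $K'$ and $K''$ on either side of the deleted vertex $X_{\bullet}^0$ (your ``orphan pieces'' are exactly their endpoints), and kills each by an increasing exhaustive bounded-below filtration whose graded quotients are cones on localized elements of $W_{\mcG}$, invoking \cite[Lemma 3.12]{GPS20}. The only cosmetic inaccuracy is that in $\mcM_{\mcG}$, unlike in $\mcM_{\mcA}$, the ``vertical'' arrows $t_{-\bullet}^n$, $t_{I\bot}^n$, $t_{+\top}^n$ are not identities but are themselves induced by adjacent units, so only the second of your two basic shapes actually occurs --- which is harmless, since those cones are acyclic for the same reason.
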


\begin{proof}
	
	We fix an object $X$ in $\mcG$, and we want to prove that $_{W_{\mcG}^{-1}} t_{\mcG} : \, _{W_{\mcG}^{-1}} \mcG \left( X , X_{\bullet}^0 \right) \to \, _{W_{\mcG}^{-1}} \mcM_{\mcG} \left( X \right)$ is a quasi-isomorphism. 
	Observe that
	\[_{W_{\mcG}^{-1}} \mcM_{\mcG} := \left[ 
	\begin{tikzcd}
		\dots \ar[rd] & \mcG \left[ W_{\mcG}^{-1} \right] \left( - , X_+^{-1} \right) \ar[d, "_{W_{\mcG}^{-1}} t_{+ \top}^{-1}"] \ar[rd, "_{W_{\mcG}^{-1}} t_{+ \bullet}^{-1}" below = 2] & \mcG \left[ W_{\mcG}^{-1} \right] \left( - , X_{-}^{0} \right) \ar[d, "_{W_{\mcG}^{-1}} t_{- \bullet}^0"] \ar[rd, "_{W_{\mcG}^{-1}} t_{- \bot}^0"] & \dots \\
		\dots & \mcG \left[ W_{\mcG}^{-1} \right] \left( - , X_{\top}^{-1} \right) & \mcG \left[ W_{\mcG}^{-1} \right] \left( - , X_{\bullet}^0 \right) & \dots 
	\end{tikzcd}
	\right] \]
	and that the chain map $_{W_{\mcG}^{-1}} t_{\mcG} : \, _{W_{\mcG}^{-1}} \mcG \left( X , X_{\bullet}^0 \right) \to \, _{W_{\mcG}^{-1}} \mcM_{\mcG} \left( X \right)$ is the inclusion. The cone of the latter is then quasi-isomorphic to its cokernel, which can be written $K' \oplus K''$ with
	\[K' = \left[ 
	\begin{tikzcd}
		\dots \ar[rd] & \mcG \left[ W_{\mcG}^{-1} \right] \left( X , X_I^{-1} \right) \ar[d, "_{W_{\mcG}^{-1}} t_{I \bot}^{-1}"] \ar[rd, "_{W_{\mcG}^{-1}} t_{I \top}^{-1}" below = 2] & \mcG \left[ W_{\mcG}^{-1} \right] \left( X , X_+^{-1} \right) \ar[d, "_{W_{\mcG}^{-1}} t_{+ \top}^{-1}"] \\
		\dots & \mcG \left[ W_{\mcG}^{-1} \right] \left( X , X_{\bot}^{-1} \right) & \mcG \left[ W_{\mcG}^{-1} \right] \left( X , X_{\top}^{-1} \right) & 
	\end{tikzcd}
	\right] \]
	and 
	\[K'' = \left[ 
	\begin{tikzcd}	
		\mcG \left[ W_{\mcG}^{-1} \right] \left( X , X_-^0 \right) \ar[rd, "_{W_{\mcG}^{-1}} t_{- \bot}^0" below=2] & \mcG \left[ W_{\mcG}^{-1} \right] \left( X , X_I^0 \right) \ar[d, "_{W_{\mcG}^{-1}} t_{I \bot}^0" left=1] \ar[rd, "_{W_{\mcG}^{-1}} t_{I \top}^0"] & \dots \\
		& \mcG \left[ W_{\mcG}^{-1} \right] \left( X , X_{\bot}^0 \right) & \dots
	\end{tikzcd}
	\right] . \]
	Observe that the maps defining the chain complexes structures in $K'$ and $K''$ are all quasi-isomorphisms (see \cite[Lemma 3.12]{GPS20}). Thus it is not difficult to show using an increasing exhaustive and bounded from below filtration of $K'$ and $K''$ that these complexes are acyclic (compare the proof of Lemma \ref{lemma third result for specific module}). This implies that the map $_{W_{\mcG}^{-1}} t_{\mcG} : \, _{W_{\mcG}^{-1}} \mcG \left( X , X_{\bullet}^0 \right) \to \, _{W_{\mcG}^{-1}} \mcM_{\mcG} \left( X \right)$ is a quasi-isomorphism. 
	
\end{proof}

\subsection{Proof of the first result}\label{subsection proof of the first result}

Let $\tau$ be a quasi-autoequivalence of an $A_{\infty}$-category $\mcA$ equipped with a compatible $\mathbf{Z}$-splitting of $\mathrm{ob} \left( \mcA \right)$.
Assume that $\tau$ is strict and acts bijectively on hom-sets.

Observe that there is a strict $A_{\infty}$-functor $\sigma : \mcA \to \mcA_{\tau}$ which sends $X^n (E)$ to $E$, and which sends $x \in \mcA \left( X^i (E_1) , X^j (E_2) \right)$ to $[x] \in \mcA_{\tau} \left( E_1, E_2 \right)$. 
Besides, let $\pi : \mcC \to \mcA$ be the $A_{\infty}$-functor induced by the following commutative diagram 
\[\begin{tikzcd}
\mcA \ar[r, "\mathrm{id}"] \ar[d, "\mathrm{id}"] & \mcA \ar[d, "\mathrm{id}"] \\
\mcA \ar[r, "\mathrm{id}"] & \mcA
\end{tikzcd} \]
(see Proposition \ref{prop induced functor from Grothendieck construction}).
Then the diagram of Adams-graded $A_{\infty}$-categories
\[\begin{tikzcd}
\mcA_- \sqcup \mcA_+ \ar[r, "\mathrm{id} \sqcup \tau"] \ar[d, "\iota_{\bot} \sqcup \iota_{\top}" left] & \mcA_{\bullet} \ar[d, "\sigma"] \\
\mcC \ar[r, "\sigma \circ \pi"] & \mcA_{\tau}
\end{tikzcd} \]
is commutative because $\sigma \circ \tau = \sigma$. Moreover, the induced $A_{\infty}$-functor $\Phi : \mcG \to \mcA_{\tau}$ is strict, and it sends $W_{\mcG}$ to the set of units in $\mcA_{\tau}$.
Let 
\[ \widetilde{\Phi} : \mcH \to \mcA_{\tau} \left[ \{ \mathrm{units} \}^{-1} \right] \]
be the $A_{\infty}$-functor induced by $\Phi$.

According to Lemma \ref{lemma relation G - mapping torus}, $\mcH$ is quasi-equivalent to the mapping torus of $\tau$. Moreover, $\mcA_{\tau} \left[ \{ \mathrm{units} \}^{-1} \right]$ is quasi-equivalent to $\mcA_{\tau}$. Thus, Theorem \ref{thm mapping torus in strict situation} will follow if we prove that $\widetilde{\Phi}$ is a quasi-equivalence. 
Our strategy is to apply Proposition \ref{prop quasi-isomorphism between localizations}. 
Observe that it suffices to prove that 
\[ \widetilde{\Phi} : \mcH (X, Y) \to \mcA_{\tau} \left[ \{ \mathrm{units} \}^{-1} \right] (\Phi X, \Phi Y) \]
is a quasi-isomorphism for every object $X$, $Y$ in $\mcA_{\bullet}^0$ (recall that $\mcA_{\bullet}^0$ denotes the subcategory of $\mcA_{\bullet}$ generated by the objects $X_{\bullet}^0 (E)$, $E \in \mcE$) because every object of $\mcG$ can be related to one of $\mcA_{\bullet}^0$ by a zigzag of morphisms in $W_{\mcG}$, which are quasi-isomorphisms in $\mcH$ (see \cite[Lemma 3.12]{GPS20}).  

In the following, we fix some element $E_{\diamond} \in \mcE$. When we write an object $X_{\triangle}^n$ without specifying the element of $\mcE$, we mean $X_{\triangle}^n (E_{\diamond})$.
We consider the corresponding $\mcG$-module $\mcM_{\mcG}$ and the $\mcG$-module map $t_{\mcG} : \mcG \left( -, X_{\bullet}^0 \right) \to \mcG$ of Definition \ref{definition module for G}. 
Moreover, we set 
\[\mcM_{\mcA_{\tau}} := \mcA_{\tau} \left( -, E_{\diamond} \right) \text{ and } t_{\mcA_{\tau}} := \mathrm{id} : \mcA_{\tau} \left( -, E_{\diamond} \right) \to \mcM_{\mcA_{\tau}}. \]

\begin{lemma}\label{lemma G-module map for relation G - coinvariants}
	
	There exists a $\mcG$-module map $t_0 : \mcM_{\mcG} \to \Phi^* \mcM_{\mcA_{\tau}}$ (see Definition \ref{definition pullback functor} for the pullback functor) such that
	\begin{enumerate}
		
		\item the following diagram of $\mcG$-modules commutes
		\[\begin{tikzcd}
			\mcG \left( - , X_{\bullet}^0 \right) \ar[d, "t_{\mcG}"] \ar[r, "t_{\Phi}"] & \Phi^* \mcA_{\tau} \left( - , E_{\diamond} \right) \ar[d, "\Phi^* t_{\mcA_{\tau}} = \mathrm{id}"] \\
			\mcM_{\mcG} \ar[r, "t_0"] & \Phi^* \mcM_{\mcA_{\tau}}
		\end{tikzcd} \]
		(see Definition \ref{definition modules morphism induced by functor} for the map $t_{\Phi}$),
		
		\item for every $E \in \mcE$, the map $t_0 : \mcM_{\mcG} (X^0_{\bullet} (E)) \to \Phi^* \mcM_{\mcA_{\tau}} (X^0_{\bullet} (E))$ is a quasi-isomorphism.
		
	\end{enumerate}
	
\end{lemma}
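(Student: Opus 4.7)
The plan is to build $t_0$ piece by piece using the Yoneda-style map of Definition \ref{definition modules morphism induced by functor}. Since $\Phi$ is strict and sends every object $X_\triangle^n(E_\diamond)$ in $\mcG$ to $E_\diamond$, each summand $\mcG(-, X_\square^n(E_\diamond))$ appearing in the ``bottom'' row of $\mcM_\mcG$ (for $\square \in \{\bullet, \bot, \top\}$) carries the closed $\mcG$-module map
\[ t_\Phi \colon \mcG(-, X_\square^n(E_\diamond)) \longrightarrow \Phi^* \mcA_\tau(-, E_\diamond). \]
On each ``top'' summand $\mcG(-, X_\triangle^n(E_\diamond))[1]$ (for $\triangle \in \{-, I, +\}$) I would set $t_0 := 0$.

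The required closedness then amounts to checking that for each top piece, the two outgoing cone arrows $t_{\triangle\square_1}^n, t_{\triangle\square_2}^n$ are postcomposed by $t_\Phi$ to the same map. Strictness of $\Phi$, together with the fact that $\Phi$ carries every adjacent unit $a \in \mcG(X_\triangle^n, X_\square^{n+\delta})$ to the strict unit $e_{E_\diamond}$ in $\mcA_\tau$, yields the strict identity $t_\Phi \circ t_{\triangle\square}^n = t_\Phi$ (and its higher-arity analogues, which collapse by strict unitality). The two cone contributions therefore cancel over $\mathbf{F} = \mathbf{Z}/2$. Item (1) is then immediate because $\Phi^* t_{\mcA_\tau} = \mathrm{id}$ and $t_\mcG$ is simply the inclusion of the summand $\mcG(-, X_\bullet^0)$, on which $t_0$ coincides with $t_\Phi$ by construction.

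For item (2), the key structural observation is that $X_\bullet^0(E)$ lies in $\mcA_\bullet$, which plays the role of a ``target'' category $\mcD_i$ in the Grothendieck construction defining $\mcG$. By Definition \ref{definition Grothendieck construction}, this forces $\mcG(X_\bullet^0(E), Z) = 0$ whenever $Z$ is not in $\mcA_\bullet$, so all top summands and the $\bot, \top$ bottom summands of $\mcM_\mcG(X_\bullet^0(E))$ vanish. What remains is
\[ \mcM_\mcG(X_\bullet^0(E)) = \bigoplus_{n \in \mathbf{Z}} \mcA(X^0(E), X^n(E_\diamond)), \]
with trivial cone differentials (the cone arrows have zero domain), on which $t_0$ is the map $x \mapsto [x] \in \mcA_\tau(E, E_\diamond)$. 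Bijectivity of $\tau$ on hom-sets guarantees that every equivalence class has a unique representative with source index $0$, so this is actually an \emph{isomorphism} of chain complexes, hence a quasi-isomorphism. The only genuinely nontrivial check is the strict identity $t_\Phi \circ t_{\triangle\square}^n = t_\Phi$; strictness of $\Phi$ and of the units $\Phi$ produces is exactly what lets us avoid introducing homotopies on the top pieces via Proposition \ref{prop morphism induced by homotopy}, which would otherwise be necessary to assemble $t_0$ out of the cone structure.
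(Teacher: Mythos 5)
Your construction is exactly the paper's: $t_0$ is $t_{\Phi}$ on the unshifted summands and $0$ on the shifted ones (the paper packages the closedness check as two applications of Proposition \ref{prop morphism induced by homotopy} to strictly commuting squares, which is the same cancellation $t_{\Phi} \circ t_{\triangle \square}^n = t_{\Phi}$ over $\mathbf{F}$ that you verify directly), and your identification of $\mcM_{\mcG} ( X_{\bullet}^0 (E) )$ with $\bigoplus_n \mcA ( X^0(E), X^n )$ mapping isomorphically onto $\mcA_{\tau} (E, E_{\diamond})$ is the paper's argument for item (2). The proof is correct and follows the same route.
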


\begin{proof}
	
	Observe that the diagram of $\mcG$-modules 
	\[\begin{tikzcd}
	\mcG \left( - , X_I^n \right) \ar[r, "t_{I \top}^n"] \ar[d, "t_{I \bot}^n"] & \mcG \left( - , X_{\top}^n \right) \ar[d, "t_{\Phi}"] \\
	\mcG \left( - , X_{\bot}^n \right) \ar[r, "t_{\Phi}"] & \Phi^* \mcM_{\mcA_{\tau}}
	\end{tikzcd} \]
	is commutative, so that it induces a $\mcG$-module map $\mcM_{\star}^n \to \Phi^* \mcM_{\mcA_{\tau}}$ (see Proposition \ref{prop morphism induced by homotopy}). 
	Now observe that the following diagram of $\mcG$-modules commutes
	\[\begin{tikzcd}[column sep = huge]
	\bigoplus \limits_{n \in \mathbf{Z}} \left( \mcG \left( - , X_-^n \right) \oplus \mcG \left( - , X_+^n \right) \right) \ar[r, "\bigoplus \limits_{n \in \mathbf{Z}} \left( t_{- \bullet}^n \oplus t_{+ \bullet}^n \right)"] \ar[d, "\bigoplus \limits_{n \in \mathbf{Z}} \left( t_{- \bot}^n \oplus t_{+ \top}^n \right)"] & \bigoplus \limits_{n \in \mathbf{Z}} \mcG \left( - , X_{\bullet}^n \right) \ar[d, "t_{\Phi}"] \\
	\bigoplus \limits_{n \in \mathbf{Z}} \mcM_{\star}^n \ar[r] & \Phi^* \mcM_{\mcA_{\tau}}
	\end{tikzcd} . \]
	We let $t_0 : \mcM_{\mcG} \to \Phi^* \mcM_{\mcA_{\tau}}$ be the induced $\mcG$-module map. It is then easy to verify that the following diagram of $\mcG$-modules is commutative
	\[\begin{tikzcd}
	\mcG \left( - , X_{\bullet}^0 \right) \ar[d, "t_{\mcG}"] \ar[r, "t_{\Phi}"] & \Phi^* \mcA_{\tau} \left( - , E_{\diamond} \right) \ar[d, "\Phi^* t_{\mcA_{\tau}} = \mathrm{id}"] \\
	\mcM_{\mcG} \ar[r, "t_0"] & \Phi^* \mcM_{\mcA_{\tau}} .
	\end{tikzcd} \]
	
	We now prove the second part of the lemma.
	We have 
	\[\mcM_{\mcG} \left( X_{\bullet}^0 (E) \right) = \bigoplus\limits_n \mcG \left( X_{\bullet}^0 (E), X_{\bullet}^n \right) = \bigoplus\limits_n \mcA \left( X^k (E) , X^n \right) \]	
	and 
	\[t_0 : \bigoplus\limits_n \mcA \left( X^k (E) , X^n \right) = \mcM_{\mcG} \left( X_{\bullet}^0 (E) \right) \to \Phi^* \mcM_{\mcA_{\tau}} \left( X_{\bullet}^0 (E) \right) = \mcA_{\tau} \left( E, E_{\diamond} \right) \]
	is the sum of the projections, which is an isomorphism. This concludes the proof
	
\end{proof}

\begin{lemma}\label{lemma relation G - coinvariants}
	
	For every $E \in \mcE$, the chain map 
	\[ \widetilde{\Phi} : \mcH \left( X_{\bullet}^0 (E), X_{\bullet}^0 \right) \to \mcA_{\tau} \left[ \{ \mathrm{units} \}^{-1} \right] \left( E, E_{\diamond} \right) \]
	is a quasi-isomorphism.
	
\end{lemma}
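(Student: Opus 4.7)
The plan is to deduce this lemma from Proposition \ref{prop quasi-isomorphism between localizations}, taking $\mcC_1 = \mcG$, $\mcC_2 = \mcA_\tau$, $\Phi$ the functor built just above, $Y_1 = X_\bullet^0$ and $Y_2 = \Phi(Y_1) = E_\diamond$. For the subcategories used to localize, $\mcA_1$ is the one generated by cones of morphisms in $W_\mcG$ (so that the quotient $\mcG/\mcA_1$ computes $\mcH = \mcG[W_\mcG^{-1}]$), and $\mcA_2$ is the one generated by cones of units in $\mcA_\tau$. Since $\Phi$ sends $W_\mcG$ to units of $\mcA_\tau$, it carries $\mcA_1$ into $\mcA_2$.

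For the module data, I take $\mcM_{\mcC_1} = \mcM_\mcG$ (Definition \ref{definition module for G}) with $t_{\mcC_1} = t_\mcG$, and on the target side $\mcM_{\mcC_2} = \mcA_\tau(-, E_\diamond)$ with $t_{\mcC_2} = \mathrm{id}$. The $\mcG$-module map $t_0 : \mcM_\mcG \to \Phi^* \mcA_\tau(-, E_\diamond)$ is the one provided by Lemma \ref{lemma G-module map for relation G - coinvariants}, whose first item gives exactly the commutative square required in the hypothesis of Proposition \ref{prop quasi-isomorphism between localizations}.

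It then remains to verify the three numbered conditions in that proposition. Condition (1) follows from Lemma \ref{lemma properties of modules} on the $\mcG$-side (acyclicity of $\mcM_\mcG(\mathrm{Cone}\, w)$ for $w \in W_\mcG$), while on the $\mcA_\tau$-side it is automatic since cones of units are contractible and $\mcA_\tau(-, E_\diamond)$ is a representable module. Condition (2) is Lemma \ref{lemma properties of modules bis} on the $\mcG$-side, and is trivial on the $\mcA_\tau$-side since $t_{\mcA_\tau} = \mathrm{id}$. Condition (3), that $t_0$ is a quasi-isomorphism at $X_\bullet^0(E)$, is precisely the second item of Lemma \ref{lemma G-module map for relation G - coinvariants}.

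Putting these together, Proposition \ref{prop quasi-isomorphism between localizations} directly yields that $\widetilde{\Phi} : \mcH(X_\bullet^0(E), X_\bullet^0) \to \mcA_\tau[\{\mathrm{units}\}^{-1}](E, E_\diamond)$ is a quasi-isomorphism. There is essentially no obstacle: all technical work has been isolated into the previous lemmas, and the present proof amounts to bookkeeping to check that the hypotheses of Proposition \ref{prop quasi-isomorphism between localizations} are met. The only mildly delicate point to articulate is the identification of localization at $W_\mcG$ with the quotient by the subcategory generated by cones of $W_\mcG$, so that the proposition (phrased in terms of quotients by subcategories) applies.
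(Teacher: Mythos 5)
Your proposal is correct and matches the paper's argument exactly: the paper's proof is a one-line appeal to Proposition \ref{prop quasi-isomorphism between localizations}, with hypotheses supplied by Lemmas \ref{lemma properties of modules}, \ref{lemma properties of modules bis} and \ref{lemma G-module map for relation G - coinvariants}, which is precisely the bookkeeping you carry out. Your additional remarks (the trivial verifications on the $\mcA_{\tau}$-side and the identification of localization with quotient by cones) are consistent with the paper's conventions and fill in details the paper leaves implicit.
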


\begin{proof}
	
	According to Lemmas \ref{lemma properties of modules}, \ref{lemma properties of modules bis} and \ref{lemma G-module map for relation G - coinvariants}, the assumptions of Proposition \ref{prop quasi-isomorphism between localizations} are satisfied. This concludes the proof. 
	
\end{proof}

As explained above, Theorem \ref{thm mapping torus in strict situation} follows from Lemma \ref{lemma relation G - coinvariants} since $\mcH$ is quasi-equivalent to the mapping torus of $\tau$ (see Lemma \ref{lemma relation G - mapping torus}) and $\mcA_{\tau} \left[ \{ \mathrm{units} \}^{-1} \right]$ is quasi-equivalent to $\mcA_{\tau}$.

\subsection{Proof of the second result}\label{subsection proof of the second result}

Let $\tau$ be a quasi-autoequivalence of an $A_{\infty}$-category $\mcA$ equipped with a compatible $\mathbf{Z}$-splitting of $\mathrm{ob} \left( \mcA \right)$.
Assume that the following holds: 
\begin{enumerate}
	
	\item $\mcA$ is weakly directed with respect to the $\mathbf{Z}$-splitting of $\mathrm{ob} \left( \mcA \right)$ (see Definition \ref{definition group-action}),
	
	\item there exists a closed degree $0$ bimodule map $f : \mcA_m \left( -, - \right) \to \mcA_m \left( -, \tau(-) \right)$ (see Definitions \ref{definition bimodule} and \ref{definition pullback bimodule}) such that $f : \mcA_m \left( X^i(E) , X^j(E') \right) \to \mcA_m \left( X^i(E), X^{j+1} (E') \right)$ is a quasi-somorphism for every $i < j$ and $E, E' \in \mcE$.
	
\end{enumerate}

\begin{rmk}\label{rmk multiplication by f(unit) is quasi-iso}
	
	It follows from Lemma \ref{coro closed module map homotopic to Yoneda module map} and $\tau$ being a quasi-equivalence that the chain maps
	\[\left\{
	\begin{array}{ccccl}
		\mu_{\mcA_m}^2 \left( - , f(e_{X^j(E)}) \right) & : & \mcA_m \left( X^i(E') , X^j(E) \right) & \to & \mcA_m \left( X^i(E') , X^{j+1} (E) \right) \\
		\mu_{\mcA_m}^2 \left( f(e_{X^j (E)}) , - \right) & : & \mcA_m \left( X^{j+1} (E) , X^{k+1} (E') \right) & \to & \mcA_m \left( X^j (E) , X^{k+1} (E') \right)
	\end{array}
	\right. \]
	are quasi-isomorphisms for every $i < j < k$ and $E, E' \in \mcE$. 
	
\end{rmk}

In the following, we set 
\[c_n (E) := f \left( e_{X^n (E)} \right) \]
for every $n \in \mathbf{Z}$, $E \in \mcE$, and
\[W_{\mcA_m} := \left\{ c_n (E) \mid n \in \mathbf{Z}, E \in \mcE \right\} \cup \left\{ \text{units of } \mcA_m \right\} . \]

\subsubsection{Generalized homotopy}\label{subsection generalized homotopy}

Recall that we introduced a functor $\mcB \mapsto \mcB_m$ from the category of Adams-graded $A_{\infty}$-categories to the category of (non Adams-graded) $A_{\infty}$-categories. 
Also, recall that we introduced Adams-graded $A_{\infty}$-categories $\mcC$ and $\mcG$ in Definitions \ref{definition cylinder object} and \ref{definition category G} respectively.
Observe that $\mcC_m$ and $\mcG_m$ are the Grothendieck constructions of the diagrams
\[\begin{tikzcd}
(\mcA_I)_m \ar[r, "\mathrm{id}"] \ar[d, "\mathrm{id}" left] & (\mcA_{\top})_m \\
(\mcA_{\bot})_m &
\end{tikzcd} \text{ and }
\begin{tikzcd}
(\mcA_-)_m \sqcup (\mcA_+)_m \ar[r, "\mathrm{id} \sqcup \tau"] \ar[d, "\iota_{\bot} \sqcup \iota_{\top}" left] & (\mcA_{\bullet})_m \\
\mcC_m
\end{tikzcd}
\]
respectively. 
We denote by $W_{\mcC_m}$ the set of adjacent units in $\mcC_m$, and by $W_{\mcG_m}$ the union of $W_{\mcC_m}$ and the set of adjacent units in $\mcG_m$. 

We would like to define an $A_{\infty}$-functor $\Psi_m : \mcG_m \to \mcA_m$ which sends $W_{\mcG_m}$ to $W_{\mcA_m}$. According to Proposition \ref{prop induced functor from Grothendieck construction}, it is enough to prove the following result.

\begin{lemma}\label{lemma relation tau continuation}
	
	There exists an $A_{\infty}$-functor $\eta : \mcC_m \to \mcA_m$ which sends $W_{\mcC_m}$ to $W_{\mcA_m}$, and such that
	\[\eta \circ \iota_I = \eta \circ \iota_{\bot} = \mathrm{id}, \quad \eta \circ \iota_{\top} = \tau . \]
	
\end{lemma}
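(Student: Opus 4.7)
The plan is to construct $\eta$ by analogy with Proposition \ref{prop homotopy is generalized homotopy}, using the bimodule map $f$ as a ``generalized natural transformation'' from $\mathrm{id}$ to $\tau$. Indeed, by the first remark following Theorem \ref{thm mapping torus in weak situation introduction}, a closed degree $0$ bimodule map $\mcA_m (-,-) \to \mcA_m (-, \tau(-))$ defines a Hochschild $0$-cocycle of $\mcA_m$ with coefficients in the pullback bimodule $\mcA_m(-, \tau(-))$, which is exactly the kind of data that, together with the pair of functors $\mathrm{id}$ and $\tau$, glues along a cylinder. The passage from $\mcC$ to $\mcC_m$ (and $\mcA$ to $\mcA_m$) is forced by the fact that $f(e_X)$ has bidegree $(m, 1)$ in $\mcA$ whereas the adjacent unit in $\mcC(X_I, X_\top)$ has bidegree $(0, 0)$; only after the grading collapse $\mcB \mapsto \mcB_m$ do these become degree-matched and sit in degree $0$.

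On objects, I set $\eta(X_I) = \eta(X_\bot) = X$ and $\eta(X_\top) = \tau(X)$. On pure sequences inside $\mcA_I$ or $\mcA_\bot$, I take $\eta^1 = \mathrm{id}$ and $\eta^d = 0$ for $d \geq 2$; on pure sequences inside $\mcA_\top$, I take $\eta^d = \tau^d$. For a mixed sequence $(x_0, \ldots, x_{p-1}, y, z_0, \ldots, z_{q-1})$ whose transition $y$ lies in $\mcA_\bot(X_p, Y_0)$, I set $\eta^1(y) = y$ on a length-one transition and $\eta^d = 0$ for $d \geq 2$ (this is what Proposition \ref{prop induced functor from Grothendieck construction} produces for the identity square). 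Finally, for a mixed sequence whose transition lies in $\mcA_\top(X_p, Y_0)$, I use the Taylor coefficients of $f$ twisted by $\tau$ on the right:
\[\eta^{p+1+q}(x_0, \ldots, x_{p-1}, y, z_0, \ldots, z_{q-1}) = \sum f^{p|1|s}\bigl( x_0, \ldots, x_{p-1}, y, \tau^{k_1}(\ldots), \ldots, \tau^{k_s}(\ldots) \bigr),\]
where the sum runs over all ways of cutting the reversed tail $(z_{q-1}, \ldots, z_0)$ into $s \geq 0$ contiguous blocks of positive length (the reversal is forced by the right-action convention of Definition \ref{definition bimodule}). This formula mirrors the pullback-bimodule structure of $\mcA_m(-, \tau(-))$ from Definition \ref{definition pullback bimodule}.

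The verification of the $A_\infty$-functor equations splits by case. On pure sequences the equations are those satisfied by $\mathrm{id}$ and $\tau$. On mixed sequences transitioning to $\mcA_\bot$, only length-one Taylor coefficients of $\eta$ survive, so both sides reduce to the single term $\mu_\mcA$ applied to the whole sequence, which cancel in characteristic $2$. On mixed sequences transitioning to $\mcA_\top$, unpacking both sides of the $A_\infty$-functor relation produces a sum of terms of three shapes: $f(\ldots, \mu_\mcA(\ldots), \ldots)$ coming from differentiating the $x_i$'s, $y$, or $z_j$'s inside $f$; $\mu_\mcA(\ldots, f(\ldots), \ldots)$ coming from the left factor of the outer $A_\infty$-operation; and $\mu_\mcA(\ldots, f(\ldots), \tau(\ldots), \ldots, \tau(\ldots))$ from the right factors. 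After grouping (and using the $A_\infty$-relations for $\tau$ to merge nested $\tau$-outputs into single $\tau$-operations), the total vanishing is exactly the statement that $f$ is a closed bimodule map, i.e.\ $\mu^1_{\mathrm{Mod}_{\mcA_m, \mcA_m}}(f) = 0$.

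The remaining conditions are immediate: $\eta \circ \iota_I = \eta \circ \iota_\bot = \mathrm{id}$ and $\eta \circ \iota_\top = \tau$ by construction; the adjacent unit in $\mcC_m(X_I, X_\bot)$ maps to the unit $e_X \in W_{\mcA_m}$; and the adjacent unit in $\mcC_m(X_I, X_\top)$ maps under $\eta^1$ to $f^{0|1|0}(e_X) = f(e_X) = c_n(E) \in W_{\mcA_m}$. The main obstacle I expect is the combinatorial bookkeeping on the $\mcA_\top$-side: one must check that every partition of the $z_j$'s arising when expanding $\eta^{d-k+1}(\ldots, \mu^k_\mcC(\ldots), \ldots)$ on the left pairs uniquely with a partition arising when expanding $\mu_\mcA(\eta(\ldots), \ldots, \eta(\ldots))$ on the right, so that the terms match according to the three pieces of the closedness relation for $f$. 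The characteristic-$2$ setting eliminates all sign issues, leaving only the combinatorics to track.
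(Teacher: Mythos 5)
Your overall strategy is the same as the paper's: define $\eta$ to be the identity on $(\mcA_I)_m$, $(\mcA_\bot)_m$ and on sequences transitioning into $(\mcA_\bot)_m$, to be $\tau$ on $(\mcA_\top)_m$, to use $f$ for the components on sequences transitioning into $(\mcA_\top)_m$, and to reduce the $A_\infty$-functor equations to the closedness of $f$. However, your explicit formula for the mixed component is wrong. The right-hand inputs of a bimodule map $f : \mcA_m(-,-) \to \mcA_m(-,\tau(-))$ are \emph{raw} morphisms of $\mcA_m$ (see Definition \ref{definition bimodule}): the $\tau$-twisting in the target bimodule $\mcA_m(-,\tau(-))$ enters only through its structure maps $\mu_{\mcA_m(-,\tau(-))}(\dots, f(\dots), \tau(\dots), \dots, \tau(\dots))$, i.e.\ in the last term of $\mu^1_{\mathrm{Mod}}(f)=0$, not through the inputs of $f$ itself. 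Your expression $f^{p|1|s}(x_0,\dots,x_{p-1},y,\tau^{k_1}(\dots),\dots,\tau^{k_s}(\dots))$ does not even typecheck: $y$ ends at $X^{i_{p+1}}(E_{p+1})$ while $\tau(\mathrm{block}_1)$ starts at $X^{i_{p+1}+1}(E_{p+1})$, and the output would land in $\mcA_m(X^{i_0}(E_0), X^{j+2}(E))$ instead of $\mcA_m(X^{i_0}(E_0), X^{j+1}(E))$. (The ``reversal'' of the tail is likewise unnecessary: in Definition \ref{definition bimodule} the right inputs $\mcD(Y_q,Y_{q-1})\otimes\dots\otimes\mcD(Y_1,Y_0)$ are already composable left-to-right with the module element.)

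The correct component is simply $f$ applied to the whole sequence, $\eta(x_0,\dots,x_{p-1},y,z_0,\dots,z_{q-1}) := f(x_0,\dots,x_{p-1},y,z_0,\dots,z_{q-1})$, which is what the paper does. With this choice the verification is a literal transcription of $\mu^1_{\mathrm{Mod}}(f)=0$: the $\tau$'s appearing on the right-hand side of the functor equation (from $\eta$ applied to trailing pure-$\mcA_\top$ blocks) match exactly the $\tau$'s in the structure maps of $\mcA_m(-,\tau(-))$, so no ``merging of nested $\tau$-outputs'' via the $A_\infty$-relations for $\tau$ is needed. With your formula, those extra manipulations would be required and the bookkeeping would not close up against the closedness relation for $f$, precisely because that relation is stated with untwisted right inputs. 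The rest of your proposal (the grading justification for passing to $\mcC_m$, $\mcA_m$, the images of the two kinds of adjacent units landing in $W_{\mcA_m}$, and the compatibility with $\iota_\bot$, $\iota_I$, $\iota_\top$) is correct once the formula is fixed.
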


\begin{proof}
	
	We first define $\eta$ to be $\mathrm{id}$ on $(\mcA_{\bot})_m$, $(\mcA_I)_m$. and to be $\tau$ on $(\mcA_{\top})_m$. Observe that this completely defines $\eta$ on the objects.
	Besides, we ask for $\eta$ to act as the identity on the sequences involving an adjacent morphism from $(\mcA_I)_m$ to $(\mcA_{\bot})_m$.
	
	It remains to define $\eta$ on the sequences involving an adjacent morphism from $(\mcA_I)_m$ to $(\mcA_{\top})_m$.
	Consider a sequence of morphisms
	\begin{align*}
	\left( x_0, \dots, x_{p+q} \right) & \in \mcC_m \left( X_I^{i_0} (E_0), X_I^{i_1} (E_1) \right) \times \dots \times \mcC_m \left( X_I^{i_{p-1}} (E_{p-1}), X_I^{i_p} (E_p) \right) \\ 
	& \times \mcC_m \left( X_I^{i_p} (E_p), X_{\top}^{i_{p+1}} (E_{p+1}) \right) \\
	& \times \mcC_m \left( X_{\top}^{i_{p+1}} (E_{p+1}), X_{\top}^{i_{p+2}} (E_{p+2}) \right) \times \dots \times \mcC_m \left( X_{\top}^{i_{p+q}} (E_{p+q}), X_{\top}^{i_{p+q+1}} (E_{p+q+1}) \right) .
	\end{align*} 
	Observe that 
	\[\mcC_m \left( X_I^i (E), X_I^j (E') \right) = \mcC_m \left( X_I^i (E), X_ {\top}^j (E') \right) = \mcC_m \left( X_{\top}^i (E), X_{\top}^j (E') \right) = \mcA_m \left( X^i (E), X^j (E) \right) . \]
	Then we set 
	\[\eta \left( x_0, \dots, x_{p+q} \right) := f \left( x_0, \dots, x_{p-1}, x_p, x_{p+1}, \dots, x_{p+q+1} \right) \in \mcA_m \left( X^{i_0} (E_0), \tau X^{i_{p+q+1}} (E_{p+q+1}) \right) . \]
	The functor $\eta$ we defined satisfies the $A_{\infty}$-relations because $f : \mcA_m \left( -, - \right) \to \mcA_m \left( -, \tau (-) \right)$ is a closed degree $0$ bimodule map. Moreover, $\eta$ sends $W_{\mcC_m}$ to $W_{\mcA_m}$ by construction. 
	
\end{proof}

\begin{rmk}
	
	First observe that
	\[\mcC yl_{\mcA_m} = \mcC_m \left[ W_{\mcC_m}^{-1} \right] = \left( \mcC \left[ W_{\mcC}^{-1} \right] \right)_m = \left( \mcC yl_{\mcA} \right)_m . \]
	According to Lemma \ref{lemma relation tau continuation}, the functor $\eta$ induces an $A_{\infty}$-functor $\widetilde{\eta} : \mcC yl_{\mcA_m} \to \mcA_m \left[ W_{\mcA_m}^{-1} \right]$. Moreover, Lemma \ref{lemma relation tau continuation} implies that the following diagram commutes
	\[\begin{tikzcd}
	(\mcA_+)_m \ar[d, "\lambda_{\mcC_m} \circ \iota_{\top}" left] \ar[rd, bend left, "\lambda_{\mcA_m} \circ \tau"] & \\
	\mcC yl_{\mcA_m} \ar[r, "\widetilde{\eta}"] & \mcA_m \left[ W_{\mcA_m}^{-1} \right] \\
	(\mcA_-)_m \ar[u, "\lambda_{\mcC_m} \circ \iota_{\bot}"] \ar[ru, bend right, "\lambda_{\mcA_m}"] & 
	\end{tikzcd}
	\] 
	($\lambda_{\mcA_m} : \mcA_m \to \mcA_m \left[ W_{\mcA_m}^{-1} \right]$ and $\lambda_{\mcC_m} : \mcC_m \to \mcC_m \left[ W_{\mcC_m}^{-1} \right]$ denote the localization functors). Since $\mcC yl_{\mcA_m}$ should be thought as a cylinder object for $\mcA_m$ (see Proposition \ref{prop cylinder object}), we should think that the functors $\lambda_{\mcA_m}$ and $\lambda_{\mcA_m} \circ \tau$ are homotopic (even if they do not act the same way on objects) and that $\widetilde{\eta}$ is a generalized homotopy between them (see Proposition \ref{prop homotopy is generalized homotopy} for a justification of this terminology).
	
\end{rmk}

\subsubsection{Relation between $\mcG$ and $\mcA_m$}

Using the $A_{\infty}$-functor $\eta : \mcC_m \to \mcA_m$ of Lemma \ref{lemma relation tau continuation}, we get a commutative diagram of (non Adams-graded) $A_{\infty}$-categories 
\[\begin{tikzcd}
(\mcA_-)_m \sqcup (\mcA_+)_m \ar[r, "\mathrm{id} \sqcup \tau"] \ar[d, "\iota_{\bot} \sqcup \iota_{\top}" left] & (\mcA_{\bullet})_m \ar[d, "\mathrm{id}"] \\
\mcC_m \ar[r, "\eta"] & \mcA_m
\end{tikzcd} \]
and the induced $A_{\infty}$-functor $\Psi_m : \mcG_m \to \mcA_m$ (see Proposition \ref{prop induced functor from Grothendieck construction}) sends $W_{\mcG_m}$ to $W_{\mcA_m}$ (see Lemma \ref{lemma relation tau continuation}). Let 
\[ \widetilde{\Psi_m} : \mcH_m = \mcG_m \left[ W_{\mcG_m}^{-1} \right] \to \mcA_m \left[ W_{\mcA_m}^{-1} \right] \]
be the $A_{\infty}$-functor induced by $\Psi_m$. 
Observe that, since $\mcA$ is assumed to be weakly directed and since the Adams-degree of $\mcA$ comes from the $\mathbf{Z}$-splitting of $\mathrm{ob} \left( \mcA \right)$ (see Definition \ref{definition group-action}), $\mcH$ is concentrated in non-negative Adams-degree.
In particular, we can apply the adjunction of Definition \ref{definition adjunction} to $\widetilde{\Psi_m}$, which gives an $A_{\infty}$-functor
\[ \widetilde{\Psi} : \mcH = \mcG \left[ W_{\mcG}^{-1} \right] \to \mathbf{F} \left[ t_m \right] \otimes \mcA_m \left[ W_{\mcA_m}^{-1} \right] . \]
We would like to prove that for every Adams degree $j \geq 1$, and for every objects $X,Y$ in $\mcA_{\bullet}^0$ (recall that $\mcA_{\bullet}^0$ denotes the subcategory of $\mcA_{\bullet}$ generated by the objects $X_{\bullet}^0 (E)$, $E \in \mcE$), the map
\[\widetilde{\Psi} : \mcH \left( X,Y \right)^{*,j} \to \left( \mathbf{F} \left[ t_m \right] \otimes \mcA_m \left[ W_{\mcA_m}^{-1} \right] \right) \left( \Psi X , \Psi Y \right)^{*,j} = \mathbf{F}_m^j \otimes \mcA_m \left[ W_{\mcA_m}^{-1} \right] \left( \Psi X , \Psi Y \right) \]
is a quasi-isomorphism, ($\mathbf{F}_m^j$ is the vector space generated by $t_m^j$ ; also recall that if $V$ is an Adams-graded vector space, $V^{*,j}$ denotes the subspace of Adams degree $j$ elements). Our strategy is once again to apply Proposition \ref{prop quasi-isomorphism between localizations}.

In the following we fix some element $E_{\diamond} \in \mcE$. When we write $X_{\triangle}^n$ or $c_n$ without specifying the element of $\mcE$, we mean $X_{\triangle}^n (E_{\diamond})$ or $c_n (E_{\diamond})$ respectively.
We consider the corresponding $\mcG$-module $\mcM_{\mcG}$, and the $\mcG$-module map $t_{\mcG} : \mcG \left( -, X_{\triangle}^n \right) \to \mcG$ of Definition \ref{definition module for G}. 
Moreover, we consider the $\mcA_m$-module $\mcM_{\mcA_m}$ and the $\mcA_m$-module maps
\[t_{\mcA_m}^n : \mcA_m \left( - , X^n \right) \to \mcM_{\mcA_m}, \quad n \in \mathbf{Z}, \]
associated to the morphisms $\left( c_n \right)_{n \in \mathbf{Z}}$ as in Definition \ref{definition specific module}.

The following result is a first step in order to define a $\mcG_m$-module map $(t_0)_m : (\mcM_{\mcG})_m \to \Psi_m^* \mcM_{\mcA_m}$ as in Proposition \ref{prop quasi-isomorphism between localizations}.

\begin{lemma}\label{lemma first step to define the module map}
	
	For every $n \in \mathbf{Z}$, the diagram of $\mcG_m$-modules 
	\[\begin{tikzcd}[column sep = huge]
	\mcG_m \left( - , X_I^n \right) \ar[r, "t_{I \top}^n"] \ar[d, "t_{I \bot}^n"] & \mcG_m \left( - , X_{\top}^n \right) \ar[d, "\Psi_m^* t_{\mcA_m}^{n+1} \circ t_{\Psi_m}"] \\
	\mcG_m \left( - , X_{\bot}^n \right) \ar[r, "\Psi_m^* t_{\mcA_m}^n \circ t_{\Psi_m}"] & \Psi_m^* \mcM_{\mcA_m}
	\end{tikzcd} \]
	commutes up to homotopy.
	
\end{lemma}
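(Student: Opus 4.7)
The plan is to apply the Yoneda lemma to reduce the question to a cohomology computation in $\mcM_{\mcA_m}(X^n)$. Both compositions
\[\Psi_m^{\ast} t_{\mcA_m}^{n+1} \circ t_{\Psi_m} \circ t_{I \top}^n \quad \text{and} \quad \Psi_m^{\ast} t_{\mcA_m}^{n} \circ t_{\Psi_m} \circ t_{I \bot}^n\]
are closed degree-$0$ $\mcG_m$-module maps from the Yoneda module $\mcG_m(-, X_I^n)$ to $\Psi_m^{\ast} \mcM_{\mcA_m}$, as compositions of closed module maps. By Proposition \ref{prop Yoneda lemma}, two such maps are chain-homotopic if and only if their values at $e_{X_I^n}$ are homologous in $(\Psi_m^{\ast} \mcM_{\mcA_m})(X_I^n) = \mcM_{\mcA_m}(X^n)$.

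The next step is to compute these two values by unwinding the definitions. By Definition \ref{definition Yoneda functor} and unitality, each $t_{I \triangle}^n$ sends $e_{X_I^n}$ to the adjacent unit in $\mcG_m(X_I^n, X_\triangle^n)$. Applying $t_{\Psi_m}$ amounts to applying $\Psi_m^1 = \eta^1$, and by the construction in the proof of Lemma \ref{lemma relation tau continuation}, $\eta^1$ sends the adjacent $I \to \bot$ unit to $e_{X^n}$ (since $\eta$ acts as the identity on sequences involving such an adjacent morphism) and sends the adjacent $I \to \top$ unit to $f(e_{X^n}) = c_n$. Finally $\Psi_m^{\ast} t_{\mcA_m}^k$ includes into the $k$-th summand of the bottom row of $\mcM_{\mcA_m}$. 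Summing up, the down-then-right composition evaluates at $e_{X_I^n}$ to $e_{X^n}$ placed in the bottom row at position $n$, while the right-then-down composition evaluates to $c_n$ placed in the bottom row at position $n+1$.

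The key observation is that the sum of these two elements is precisely $\mu^1_{\mathrm{Mod}_{\mcA_m}}$ applied to $e_{X^n}$ viewed as an element of the top row of $\mcM_{\mcA_m}(X^n)$ at position $n$: indeed, this is the formula
\[\mu^1_{\mathrm{Mod}_{\mcA_m}}(s^n) = t_{\mcA_m}^{n+1} \circ t_{c_n} + t_{\mcA_m}^n\]
established in the proof of Lemma \ref{lemma multiplication by continuation element becomes homotopic to inclusion}, evaluated on $e_{X^n}$, which splits the differential of a top-row element as the vertical identity to the bottom row plus the diagonal $t_{c_n}$ into the next bottom position. Hence the two values are homologous in $\mcM_{\mcA_m}(X^n)$, and Yoneda provides the required $\mcG_m$-module homotopy. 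No deep obstacle is expected; the only subtlety lies in keeping track of the identifications between $\mcC_m$- and $\mcA_m$-morphism spaces and the precise action of $\eta$ on the adjacent units.
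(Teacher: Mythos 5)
Your proof is correct and follows essentially the same route as the paper: both arguments come down to evaluating the two compositions on the unit $e_{X_I^n}$, computing that $\eta$ sends the two adjacent units to $e_{X^n}$ and $c_n = f(e_{X^n})$ respectively, and observing that the resulting elements $t^n_{\mcA_m}(e_{X^n})$ and $t^{n+1}_{\mcA_m}(c_n)$ of $\mcM_{\mcA_m}(X^n)$ differ by the boundary of the shifted unit in the top row of the cone --- exactly the identity $\mu^1_{\mathrm{Mod}_{\mcA_m}}(s) = t^{n+1}_{\mcA_m}\circ t_{c_n} + t^n_{\mcA_m}$ underlying Lemma \ref{lemma multiplication by continuation element becomes homotopic to inclusion}. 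The only packaging difference is that the paper first restricts along $\iota_I$ (using that $\mcG_m(X^k_\triangle, X^n_I)=0$ for $\triangle \ne I$) and then composes homotopies of $\mcA_m$-module maps via Corollary \ref{coro closed module map homotopic to Yoneda module map}, whereas you invoke the Yoneda lemma directly over $\mcG_m$ in the form ``evaluation at the unit is a quasi-isomorphism onto $\mcN(X_I^n)$ for an arbitrary module $\mcN$''; this is slightly stronger than the literal statement of Proposition \ref{prop Yoneda lemma} (cohomological full faithfulness between Yoneda modules), but it is exactly what the cited references prove, so the argument is sound.
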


\begin{proof}
	
	First observe that $\iota_I^* \mcG_m \left( - , X_I^n \right) = \mcA_m \left( -, X^n \right)$, and $\iota_I^* \Psi_m^* \mcM_{\mcA_m} = \mcM_{\mcA_m}$	because $\Psi_m \circ \iota_I = \mathrm{id}$ (see Remark \ref{rmk composition of pullback functors}).
	Moreover, it suffices to show that the $\mcA_m$-module maps 
	\[\iota_I^* \left( \Psi_m^* t_{\mcA_m}^n \circ t_{\Psi_m} \circ t_{I \bot}^n \right) = t_{\mcA_m}^n \circ \iota_I^* \left( t_{\Psi_m} \circ t_{I \bot}^n \right) : \mcA_m \left( -, X^n \right) \to \mcM_{\mcA_m} \]
	and 
	\[\iota_I^* \left( \Psi_m^* t_{\mcA_m}^{n+1} \circ t_{\Psi_m} \circ t_{I \top}^n \right) = t_{\mcA_m}^{n+1} \circ \iota_I^* \left( t_{\Psi_m} \circ t_{I \top}^n \right) : \mcA_m \left( -, X^n \right) \to \mcM_{\mcA_m} \]
	are homotopic because 
	\[\mcG_m \left( X_{\triangle}^k, X_I^n \right) = 0 \text{ if } \triangle \ne I . \]
	On the one hand, 
	\[t_{\mcA_m}^n \circ \iota_I^* \left( t_{\Psi_m} \circ t_{I \bot}^n \right) = t_{\mcA_m}^n .\]
	On the other hand, $\iota_I^* \left( t_{\Psi_m} \circ t_{I \top}^n \right) : \mcA_m \left( -, X^n \right) \to \mcA_m \left( -, X^{n+1} \right)$ is closed (as composition and pullback of closed module maps), and 
	\[\iota_I^* \left( t_{\Psi_m} \circ t_{I \top}^n \right) \left( e_{X^n} \right) = \eta \left( e_{X^n} \right) = c_n \]
	according to Lemma \ref{lemma relation tau continuation}. 
	Therefore, $\iota_I^* \left( t_{\Psi_m} \circ t_{I \top}^n \right)$ is homotopic to $t_{c_n}$ according to Corollary \ref{coro closed module map homotopic to Yoneda module map}, and thus $t_{\mcA_m}^{n+1} \circ \iota_I^* \left( t_{\Psi_m} \circ t_{I \top}^n \right)$ is homotopic to $t_{\mcA_m}^{n+1} \circ t_{c_n}$. Now according to Lemma \ref{lemma multiplication by continuation element becomes homotopic to inclusion}, $t_{\mcA_m}^{n+1} \circ t_{c_n}$ is homotopic to $t_{\mcA_m}^n$. This concludes the proof.
	
\end{proof}

We can now state the result establishing the existence of a $\mcG_m$-module map $(t_0)_m : (\mcM_{\mcG})_m \to \Psi_m^* \mcM_{\mcA_m}$ as in Proposition \ref{prop quasi-isomorphism between localizations}.

\begin{lemma}\label{lemma G-module map for relation G-A}
	
	There exists a $\mcG_m$-module map $(t_0)_m : (\mcM_{\mcG})_m \to \Psi_m^* \mcM_{\mcA_m}$ such that the following holds. 
	\begin{enumerate}
		
		\item The following diagram of $\mcG_m$-modules commutes
		\[\begin{tikzcd}
			\mcG_m \left( - , X_{\bullet}^0 \right) \ar[d, "t_{\mcG_m}"] \ar[r, "t_{\Psi_m}"] & \Psi_m^* \mcA_m \left( - , X^0 \right) \ar[d, "\Psi_m^* t_{\mcA_m}^0"] \\
			(\mcM_{\mcG})_m \ar[r, "(t_0)_m"] & \Psi_m^* \mcM_{\mcA_m}
		\end{tikzcd} \]
	
		\item for every $E \in \mcE$ and $j \geq 1$, the induced map $t_0 : \mcM_{\mcG} (X_{\bullet}^0 (E)) \to \mathbf{F} \left[ t_m \right] \otimes \Psi_m^* \mcM_{\mcA_m} (X_{\bullet}^0 (E))$ (see Definition \ref{definition adjunction}) is a quasi-isomorphism in each positive Adams-degree.
		
	\end{enumerate}
	
\end{lemma}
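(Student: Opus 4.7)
The plan is to mimic the strategy of Lemma \ref{lemma G-module map for relation G - coinvariants}, the only essential complication being that the square bordering the $X_I^n \to X_\top^n$ edge no longer commutes on the nose but only up to homotopy, as captured by Lemma \ref{lemma first step to define the module map}.

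First I would construct $(t_0)_m$ summand by summand. For each $n \in \mathbf{Z}$, Lemma \ref{lemma first step to define the module map} provides the needed homotopy, and Proposition \ref{prop morphism induced by homotopy} converts it into a $\mcG_m$-module map $(\mcM_\star^n)_m \to \Psi_m^* \mcM_{\mcA_m}$ whose restrictions to $\mcG_m(-, X_\bot^n)$ and $\mcG_m(-, X_\top^n)$ are the prescribed maps $\Psi_m^* t_{\mcA_m}^n \circ t_{\Psi_m}$ and $\Psi_m^* t_{\mcA_m}^{n+1} \circ t_{\Psi_m}$ respectively. The remaining squares in the cone defining $\mcM_{\mcG}$ -- those bordering $X_-^n$ (where both legs land at level $n$ of $\mcM_{\mcA_m}$) and $X_+^n$ (where both legs land at level $n+1$) -- commute strictly, because for each the two compositions into $\Psi_m^* \mcM_{\mcA_m}$ factor through a single $\Psi_m^* t_{\mcA_m}^k$. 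Iterating Proposition \ref{prop morphism induced by homotopy} then assembles these into $(t_0)_m : (\mcM_{\mcG})_m \to \Psi_m^* \mcM_{\mcA_m}$. Since $t_{\mcG_m}$ is the inclusion of the summand $\mcG_m(-, X_\bullet^0)$ and, by construction, $(t_0)_m$ restricts on that summand to $\Psi_m^* t_{\mcA_m}^0 \circ t_{\Psi_m}$, item 1 follows at once.

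For item 2, the key simplification is that in the Grothendieck construction $\mcG$ there are no morphisms from $\mcA_\bullet$ to any of $\mcA_-, \mcA_+, \mcA_\bot, \mcA_I, \mcA_\top$. Hence $\mcG(X_\bullet^0(E), X_\triangle^n(E'))$ vanishes unless $\triangle = \bullet$, so
\[\mcM_{\mcG}(X_\bullet^0(E)) = \bigoplus_n \mcG(X_\bullet^0(E), X_\bullet^n) = \bigoplus_n \mcA(X^0(E), X^n),\]
and the Adams degree $j$ piece is precisely $\mcA(X^0(E), X^j)$, the $\mathbf{Z}$-splitting placing only the $n=j$ summand in that Adams degree. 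The adjunction-induced map then sends this to $t_m^j \otimes \mcA_m(X^0(E), X^j)$ via the inclusion into $t_m^j \otimes \mcM_{\mcA_m}(X^0(E))$. For $j \geq 1$ this inclusion is a quasi-isomorphism by Lemma \ref{lemma first result for specific module}, whose hypothesis is exactly the content of Remark \ref{rmk multiplication by f(unit) is quasi-iso}.

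The main technical obstacle I would expect is the careful bookkeeping needed to apply Proposition \ref{prop morphism induced by homotopy} systematically across the whole cone diagram defining $\mcM_{\mcG}$; once that is done, everything reduces either to the single up-to-homotopy square provided by Lemma \ref{lemma first step to define the module map} or to the acyclicity analysis of $\mcM_{\mcA_m}$ already established in Lemma \ref{lemma first result for specific module}.
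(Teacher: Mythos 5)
Your proposal is correct and follows essentially the same route as the paper: both build $(t_0)_m$ by converting the up-to-homotopy square of Lemma \ref{lemma first step to define the module map} into module maps $(\mcM_{\star}^n)_m \to \Psi_m^* \mcM_{\mcA_m}$ via Proposition \ref{prop morphism induced by homotopy}, assemble them using the strictly commuting squares at the $X_-^n$ and $X_+^n$ corners, and then identify $\mcM_{\mcG}(X_\bullet^0(E))^{*,j}$ with $\mcA(X^0(E),X^j)$ so that item 2 reduces to Lemma \ref{lemma first result for specific module}. No gaps.
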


\begin{proof}
	
	Using Lemma \ref{lemma first step to define the module map} and Proposition \ref{prop morphism induced by homotopy}, we get a $\mcG_m$-module map $t_{\star}^n : \left( \mcM_{\star}^n \right)_m \to \Psi_m^* \mcM_{\mcA_m}$ for every $n \in \mathbf{Z}$ (see Definition \ref{definition module for G} for the $\mcG$-modules $\mcM_{\star}^n$). 
	Observe that the diagram of $\mcG_m$-modules 
	\[\begin{tikzcd}[column sep = huge]
	\bigoplus \limits_{n \in \mathbf{Z}} \left( \mcG_m \left( - , X_-^n \right) \oplus \mcG_m \left( - , X_+^n \right) \right) \ar[r, "\bigoplus \limits_{n \in \mathbf{Z}} \left( t_{- \bullet}^n \oplus t_{+ \bullet}^n \right)"] \ar[d, "\bigoplus \limits_{n \in \mathbf{Z}} \left( t_{- \bot}^n \oplus t_{+ \top}^n \right)"] & \bigoplus \limits_{n \in \mathbf{Z}} \mcG_m \left( - , X_{\bullet}^n \right) \ar[d, "\bigoplus \limits_{n \in \mathbf{Z}} \Psi_m^* t_{\mcA_m}^n \circ t_{\Psi_m}"] \\
	\bigoplus \limits_{n \in \mathbf{Z}} \left( \mcM_{\star}^n \right)_m \ar[r, "\bigoplus \limits_{n \in \mathbf{Z}} t_{\star}^n"] & \Psi_m^* \mcM_{\mcA_m}
	\end{tikzcd} \]
	is commutative (the composition is $\mathrm{id}$ for $-$-terms and $\tau$ for $+$-terms), so that it induces a $\mcG_m$-module map $(t_0)_m : (\mcM_{\mcG})_m \to \Psi_m^* \mcM_{\mcA_m}$. It is then easy to verify that the following diagram of $\mcG_m$-modules is commutative
	\[\begin{tikzcd}
	\mcG_m \left( - , X_{\bullet}^0 \right) \ar[d, "t_{\mcG_m}"] \ar[r, "t_{\Psi_m}"] & \Psi_m^* \mcA_m \left( - , X^0 \right) \ar[d, "\Psi_m^* t_{\mcA_m}^0"] \\
	(\mcM_{\mcG})_m \ar[r, "(t_0)_m"] & \Psi_m^* \mcM_{\mcA_m} .
	\end{tikzcd} \]
	
	It remains to show that the map $t_0 : \mcM_{\mcG} \left( X_{\bullet}^0 (E) \right)^{*,j} \to \mathbf{F}_m^j \otimes \mcM_{\mcA_m} \left( X^0 (E) \right)$ is a quasi-isomorphism for every $E \in \mcE$ and $j \geq 1$.
	Note that 
	\[\mcM_{\mcG} \left( X_{\bullet}^0 (E) \right)^{*,j} = \mcG \left( X_{\bullet}^0 (E), X_{\bullet}^j \right) = \mcA \left( X^0 , X^j \right) \\  \]
	and the map
	\[\mcA \left( X^0 (E) , X^j \right) = \mcM_{\mcG} \left( X_{\bullet}^0 (E) \right)^{*,j} \xrightarrow{t_0} \mathbf{F}_m^j \otimes \mcM_{\mcA_m} \left( X^0 (E) \right) \]
	is the inclusion. 
	Now observe that the following diagram of chain complexes commutes 
	\[\begin{tikzcd}
	\mcA \left( X^0 (E), X^j \right) \ar[r, "t_0"] & \mathbf{F}_m^j \otimes \mcM_{\mcA_m} \left( X^0 (E) \right) \\
	\mathbf{F}_m^j \otimes \mcA_m \left( X^0 (E), X^j \right) \ar[u, equal] \ar[r, equal] & \mathbf{F}_m^j \otimes \mcA_m \left( X^0 (E), X^j \right) \ar[u, hook] .
	\end{tikzcd} \]
	The inclusion $\mcA_m \left( X^0 (E) , X^j \right) \hookrightarrow \mcM_{\mcA_m} \left( X^0 (E) \right)$ is a quasi-isomorphism according to Lemma \ref{lemma first result for specific module} (observe that it is important here that $j$ is \emph{strictly} greater than $0$). Therefore the map $t_0 : \mcA \left( X^0 (E), X^j \right) \to \mathbf{F}_m^j \otimes \mcM_{\mcA_m} \left( X^0 (E) \right)$ is a quasi-isomorphism, which is what we needed to prove. 
	
\end{proof}

\begin{lemma}\label{lemma relation G-A}
	
	For every $E \in \mcE$ and $j \geq 1$, the map
	\[\widetilde{\Psi} : \mcH \left( X_{\bullet}^0 (E), X_{\bullet}^0 \right)^{*,j} \to \left( \mathbf{F} \left[ t_m \right] \otimes \mcA_m \left[ W_{\mcA_m}^{-1} \right] \right) \left( X^0 (E) , X^0 \right)^{*,j} = \mathbf{F}_m^j \otimes \mcA_m \left[ W_{\mcA_m}^{-1} \right] \left( X^0 (E) , X^0 \right) \]
	is a quasi-isomorphism.
	
\end{lemma}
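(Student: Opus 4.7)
The plan is to apply Proposition \ref{prop quasi-isomorphism between localizations} to the $A_{\infty}$-functor $\Psi_m : \mcG_m \to \mcA_m$, with the subcategories taken to be those generated by the cones of the morphisms in $W_{\mcG_m}$ and $W_{\mcA_m}$, and with the modules and module maps supplied by Lemma \ref{lemma G-module map for relation G-A}: namely $(\mcM_{\mcG})_m$ and $\mcM_{\mcA_m}$, together with $t_{\mcG_m} : \mcG_m(-, X_{\bullet}^0) \to (\mcM_{\mcG})_m$, $t_{\mcA_m}^0 : \mcA_m(-, X^0) \to \mcM_{\mcA_m}$, and $(t_0)_m : (\mcM_{\mcG})_m \to \Psi_m^* \mcM_{\mcA_m}$. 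The commutativity of the square required by the proposition is the first assertion of Lemma \ref{lemma G-module map for relation G-A}.

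I would then verify the three numerical hypotheses of the proposition. Condition $(1)$---acyclicity of $\mcM_{\mcG}(\mathrm{Cone}\,w)$ for $w \in W_{\mcG_m}$---is exactly Lemma \ref{lemma properties of modules}, while the analogous statement for $\mcM_{\mcA_m}(\mathrm{Cone}\,w)$ with $w \in W_{\mcA_m}$ follows from Lemma \ref{lemma second result for specific module}, whose input hypotheses are precisely those of Remark \ref{rmk multiplication by f(unit) is quasi-iso}. Condition $(2)$---that the localized maps $_{W_{\mcG_m}^{-1}}\, t_{\mcG_m}$ and $_{W_{\mcA_m}^{-1}}\, t_{\mcA_m}^0$ are quasi-isomorphisms---comes from Lemmas \ref{lemma properties of modules bis} and \ref{lemma third result for specific module} respectively (the latter applied with $W_{\mcA_m}$ playing the role of $W_{\mcA}$, since $W_{\mcA_m}$ contains all the $c_n(E)$).

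The third hypothesis requires more care: Lemma \ref{lemma G-module map for relation G-A} only provides that the adjoint map $t_0 : \mcM_{\mcG}(X_{\bullet}^0(E)) \to \mathbf{F}[t_m] \otimes \mcM_{\mcA_m}(X^0(E))$ is a quasi-isomorphism in each \emph{positive} Adams-degree, and typically fails in Adams-degree $0$. The saving observation is that the Adams grading on $\mcG$ induced by the $\mathbf{Z}$-splitting of $\mathrm{ob}(\mcA)$ (Definition \ref{definition group-action}) is preserved by every structure appearing in the proof of Proposition \ref{prop quasi-isomorphism between localizations}---the module operations, the localization functor of Definition \ref{definition localization of modules}, and the natural transformation of Definition \ref{definition natural transformation}---so the entire commutative diagram in that proof decomposes as a direct sum of Adams-degree-$j$ subcomplexes, and it suffices to check each hypothesis on a single Adams-degree summand. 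Running the argument in Adams-degree $j \geq 1$ produces a quasi-isomorphism $\widetilde{\Psi_m} : \mcH(X_{\bullet}^0(E), X_{\bullet}^0)^{*,j} \to \mcA_m\bigl[W_{\mcA_m}^{-1}\bigr](X^0(E), X^0)$, which under the adjunction of Definition \ref{definition adjunction} is precisely the Adams-degree $j$ component of $\widetilde{\Psi}$ landing in $\mathbf{F}_m^j \otimes \mcA_m\bigl[W_{\mcA_m}^{-1}\bigr](X^0(E), X^0)$.

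The main obstacle I expect is the Adams-grading bookkeeping needed to justify the summand-wise application of Proposition \ref{prop quasi-isomorphism between localizations}: one has to check that each construction in its proof respects the Adams grading, so that the positive-Adams-degree form of hypothesis $(3)$ is enough to conclude the statement in each positive Adams degree. Once this refinement is in place, the three lemmas recalled above slot in directly and the result follows.
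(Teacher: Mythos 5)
Your proposal matches the paper's argument: the paper likewise invokes the first (diagram-producing) part of Proposition \ref{prop quasi-isomorphism between localizations} via the commutative square of Lemma \ref{lemma G-module map for relation G-A}, checks the acyclicity and localized-quasi-isomorphism hypotheses with Lemmas \ref{lemma properties of modules}, \ref{lemma properties of modules bis}, \ref{lemma second result for specific module} and \ref{lemma third result for specific module}, and resolves the failure of hypothesis (3) in Adams-degree $0$ exactly as you do, by applying the adjunction of Definition \ref{definition adjunction} and restricting the whole diagram to a fixed Adams degree $j \geq 1$ before running the two-out-of-three argument. The only cosmetic difference is that the paper extracts the commutative diagram once and then specializes it degreewise, rather than phrasing this as a summand-wise re-application of the Proposition; the content is identical.
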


\begin{proof}
	
	Using the first part of Lemma \ref{lemma G-module map for relation G-A} and Proposition \ref{prop quasi-isomorphism between localizations}, we know that there exists a chain map $u_m : \, _{W_{\mcG_m}^{-1}} (\mcM_{\mcG})_m \left( X_{\bullet}^0 (E) \right) \to \, _{W_{\mcA_m}^{-1}} \mcM_{\mcA_m} \left( X^0 \right)$ such that the following diagram of chain complexes commutes
	\[\begin{tikzcd}
	\mcH_m \left( X_{\bullet}^0 (E) , X_{\bullet}^0 \right) \ar[d, "_{W_{\mcG_m}^{-1}} t_{\mcG_m}"] \ar[r, "\widetilde{\Psi}_m"] & \mcA_m \left[ W_{\mcA_m}^{-1} \right] \left( X^0 (E) , X^0 \right) \ar[d, "_{W_{\mcA_m}^{-1}} t_{\mcA_m}^0"] \\
	_{W_{\mcG_m}^{-1}} (\mcM_{\mcG})_m \left( X_{\bullet}^0 (E) \right) \ar[r, "u_m"] & _{W_{\mcA_m}^{-1}} \mcM_{\mcA_m} \left( \Psi_m X \right) \\
	(\mcM_{\mcG})_m \left( X_{\bullet}^0 (E) \right) \ar[u, hook] \ar[r, "(t_0)_m"] & \mcM_{\mcA_m} \left( \Psi_m X \right) \ar[u, hook] .
	\end{tikzcd}  \]
	Observe that 
	\[\left\{
	\begin{array}{lll}
	\mcH_m \left( X_{\bullet}^0 (E) , X_{\bullet}^0 \right) & = & \mcH \left( X_{\bullet}^0 (E) , X_{\bullet}^0 \right)_m \\
	_{W_{\mcG_m}^{-1}} (\mcM_{\mcG})_m \left( X_{\bullet}^0 (E) \right) & = & _{W_{\mcG}^{-1}} \mcM_{\mcG} \left( X_{\bullet}^0 (E) \right)_m \\
	(\mcM_{\mcG})_m \left( X_{\bullet}^0 (E) \right) & = & \mcM_{\mcG} \left( X_{\bullet}^0 (E) \right)_m .
	\end{array}
	\right. \]
	Applying the adjunction of Definition \ref{definition adjunction} to the last diagram, we get the following commutative diagram of Adams-graded chain complexes
	\[\begin{tikzcd}
	\mcH \left( X_{\bullet}^0 (E) , X_{\bullet}^0 \right) \ar[d, "_{W_{\mcG}^{-1}} t_{\mcG}"] \ar[r, "\widetilde{\Psi}"] & \mathbf{F} \left[ t_m \right] \otimes \mcA_m \left[ W_{\mcA_m}^{-1} \right] \left( X^0 (E) , X^0 \right) \ar[d, "\mathrm{id} \otimes _{W_{\mcA_m}^{-1}} t_{\mcA_m}^0"] \\
	_{W_{\mcG}^{-1}} \mcM_{\mcG} \left( X_{\bullet}^0 (E) \right) \ar[r, "u"] & \mathbf{F} \left[ t_m \right] \otimes _{W_{\mcA_m}^{-1}} \mcM_{\mcA_m} \left( \Psi_m X \right) \\
	\mcM_{\mcG} \left( X_{\bullet}^0 (E) \right) \ar[u, hook] \ar[r, "t_0"] & \mathbf{F} \left[ t_m \right] \otimes \mcM_{\mcA_m} \left( \Psi_m X \right) \ar[u, hook] .
	\end{tikzcd} \]
	Specializing to the components of fixed Adams degree $j \geq 1$, we get the following commutative diagram of chain complexes 
	\[\begin{tikzcd}
	\mcH \left( X_{\bullet}^0 (E) , X_{\bullet}^0 \right)^{*,j} \ar[d, "_{W_{\mcG}^{-1}} t_{\mcG}"] \ar[r, "\widetilde{\Psi}"] & \mathbf{F}_m^j \otimes \mcA_m \left[ W_{\mcA_m}^{-1} \right] \left( X^0 (E) , X^0 \right) \ar[d, "\mathrm{id} \otimes _{W_{\mcA_m}^{-1}} t_{\mcA_m}^0"] \\
	_{W_{\mcG}^{-1}} \mcM_{\mcG} \left( X_{\bullet}^0 (E) \right)^{*,j} \ar[r, "u"] & \mathbf{F}_m^j \otimes _{W_{\mcA_m}^{-1}} \mcM_{\mcA_m} \left( \Psi_m X \right) \\
	\mcM_{\mcG} \left( X_{\bullet}^0 (E) \right)^{*,j} \ar[u, hook] \ar[r, "t_0"] & \mathbf{F}_m^j \otimes \mcM_{\mcA_m} \left( \Psi_m X \right) \ar[u, hook] .
	\end{tikzcd} \]
	Using Lemmas \ref{lemma properties of modules}, \ref{lemma properties of modules bis}, and \cite[Lemma 3.13]{GPS20}, we know that all the vertical maps on the left are quasi-isomorphisms. 
	Similarly, using Lemmas \ref{lemma second result for specific module}, \ref{lemma third result for specific module}, and \cite[Lemma 3.13]{GPS20}, we know that all the vertical maps on the right are quasi-isomorphisms. 
	Moreover, the second part of Lemma \ref{lemma G-module map for relation G-A} states that the lowest horizontal map is a quasi-isomorphism. 
	Thus, the chain map $\widetilde{\Psi} : \mcH \left( X_{\bullet}^0 (E) , X_{\bullet}^0  \right)^{*,j} \to \mathbf{F}_m^j \otimes \mcA_m [ W_{\mcA_m}^{-1} ] \left( X^0 (E) , X^0 \right)$ is a quasi-isomorphism. 
	
\end{proof}

\subsubsection{End of the proof}

We end the section with the proof of Theorem \ref{thm mapping torus in weak situation}.
Now that we have proved Lemma \ref{lemma relation G-A} which takes care of the \emph{positive} Adams-degrees, we have to treat the zero Adams-degree part (recall that $\mcH$ is concentrated in non-negative Adams-degree because $\mcA$ is assumed to be weakly directed).
 
Let $\mcI$ be the (non full) $A_{\infty}$-subcategory of $\mcH$ with 
\[\mathrm{ob} \left( \mcI \right) = \left\{ X_{\bullet}^0 (E) \mid E \in \mcE \right\} \text{ and } \mcI \left( X, Y \right) = \mcG \left( X, Y \right) \oplus \left( \bigoplus \limits_{j \geq 1} \mcH \left( X, Y \right)^{*,j} \right) \]
(recall that if $V$ is an Adams-graded vector space, we denote by $V^{*,j}$ its component of Adams-degree $j$). 

\begin{lemma}
	
	The inclusion $\mcI \hookrightarrow \mcH$ is a quasi-equivalence.
	
\end{lemma}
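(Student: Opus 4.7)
The inclusion $\mcI \hookrightarrow \mcH$ is the identity on objects and on hom complexes in positive Adams degree, so two facts remain to be verified: essential surjectivity on $H^0(\mcH)$, and that for every $E_1, E_2 \in \mcE$ the inclusion
\[\mcG(X_\bullet^0(E_1), X_\bullet^0(E_2)) = \mcA(X^0(E_1), X^0(E_2)) \hookrightarrow \mcH(X_\bullet^0(E_1), X_\bullet^0(E_2))^{*,0}\]
is a quasi-isomorphism.

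For essential surjectivity, every object of $\mcG$ is connected to some $X_\bullet^0(E)$ by a finite zigzag of morphisms in $W_\mcG$: any object of $\mcA_-$, $\mcA_+$, or $\mcC$ is directly adjacent to an $\mcA_\bullet$-object through one morphism in $W_\mcG$, and the zigzag
\[X_\bullet^{n+1} \leftarrow X_+^n \to X_\top^n \leftarrow X_I^n \to X_\bot^n \leftarrow X_-^n \to X_\bullet^n\]
lowers the level of an $\mcA_\bullet$-object by one; iterating yields the conclusion. By \cite[Lemma 3.12]{GPS20}, these morphisms become quasi-isomorphisms in $\mcH = \mcG[W_\mcG^{-1}]$.

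For the Adams-degree-$0$ claim, the weakly directed hypothesis implies that hom complexes in $\mcG$ between objects at distinct levels are concentrated in nonzero Adams degree. Consequently, the Adams-degree-$0$ component of $\mcH(X_\bullet^0(E_1), X_\bullet^0(E_2))$ is computed by localizing, at the Adams-degree-$0$ adjacent units in $W_\mcG$, the level-$0$ Adams-degree-$0$ subcategory of $\mcG$, which is a Grothendieck construction built from copies of $\mcA^0 := \mcA|_{\{X^0(E)\}}$ (using $\tau$ to identify the copy of $\mcA^{-1}$ carried by the objects $X_+^{-1}(E)$ with $\mcA^0$). By an argument parallel to Proposition \ref{prop cylinder object}, this localization is quasi-equivalent to $\mcA^0$, matching $\mcG(X_\bullet^0(E_1), X_\bullet^0(E_2))$. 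Concretely, the plan is to run the module-theoretic argument of Lemmas \ref{lemma properties of modules} and \ref{lemma properties of modules bis} with $\mcM_\mcG$ replaced by its restriction to the level-$0$ subcategory, and then apply Proposition \ref{prop quasi-isomorphism between localizations} to the identity functor (with the empty set of morphisms to be inverted on the source side and $W_\mcG$ on the target side) to conclude that the localization does not change the Adams-degree-$0$ hom complex.

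The principal obstacle is the bookkeeping required to isolate precisely the level-$0$ Adams-degree-$0$ part of $\mcG$, including the use of $\tau$ to reconcile the slight level mismatch introduced by the objects $X_+^{-1}(E)$, and to verify the acyclicity and quasi-isomorphism statements in that restricted setting.
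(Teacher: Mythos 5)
The first half of your argument (cohomological essential surjectivity via zigzags of morphisms in $W_{\mcG}$) matches the paper. The second half, however, is where the real content lies, and there you only sketch a plan rather than give a proof. The reduction you propose --- that the Adams-degree-$0$ component of $\mcH(X_{\bullet}^0(E_1), X_{\bullet}^0(E_2))$ is "computed by localizing the level-$0$ Adams-degree-$0$ subcategory of $\mcG$", which you then want to identify with a Grothendieck construction built from copies of $\mcA^0$ and handle by an argument parallel to Proposition \ref{prop cylinder object} --- is never justified, and you yourself flag the "bookkeeping" (including the level shift at the objects $X_+^{n}(E)$) as an unresolved obstacle. Worse, your concrete fallback of running Lemmas \ref{lemma properties of modules} and \ref{lemma properties of modules bis} with "$\mcM_{\mcG}$ replaced by its restriction to the level-$0$ subcategory" would not go through: the acyclicity of $\mcM_{\mcG}(\mathrm{Cone}\, w)$ and the quasi-isomorphism property of $_{W_{\mcG}^{-1}} t_{\mcG}$ are proved using the full two-sided telescope over all $n \in \mathbf{Z}$ (the cokernels split as infinite telescopes $K' \oplus K''$ whose acyclicity depends on having all levels present), so a truncated module loses exactly the properties you want to cite.

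The paper's proof needs no truncation and no new localization computation. One keeps the module $\mcM_{\mcG}$ and the inclusion $t_{\mcG} : \mcG(-, X_{\bullet}^0) \to \mcM_{\mcG}$ exactly as in Definition \ref{definition module for G}. Lemmas \ref{lemma properties of modules}, \ref{lemma properties of modules bis} together with \cite[Lemma 3.13]{GPS20} give that both the inclusion $\mcM_{\mcG}(X_{\bullet}^0(E)) \hookrightarrow {}_{W_{\mcG}^{-1}} \mcM_{\mcG}(X_{\bullet}^0(E))$ and the map $_{W_{\mcG}^{-1}} t_{\mcG} : \mcH(X_{\bullet}^0(E), X_{\bullet}^0) \to {}_{W_{\mcG}^{-1}} \mcM_{\mcG}(X_{\bullet}^0(E))$ are quasi-isomorphisms. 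Since $\mcG(X_{\bullet}^0(E), X_{\triangle}^n) = 0$ for $\triangle \neq \bullet$, one has $\mcM_{\mcG}(X_{\bullet}^0(E)) = \bigoplus_n \mcG(X_{\bullet}^0(E), X_{\bullet}^n)$, whose Adams-degree-$0$ part is exactly $\mcG(X_{\bullet}^0(E), X_{\bullet}^0)$. Restricting the resulting commutative diagram to Adams degree $0$ then shows directly that the inclusion $\mcG(X_{\bullet}^0(E), X_{\bullet}^0) \hookrightarrow \mcH(X_{\bullet}^0(E), X_{\bullet}^0)^{*,0}$ is a quasi-isomorphism. I recommend you replace your second half with this argument: the only new observation needed beyond the already-proven lemmas is the computation of $\mcM_{\mcG}(X_{\bullet}^0(E))^{*,0}$.
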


\begin{proof}
	
	Observe that the inclusion $\mcI \hookrightarrow \mcH$ is cohomologically essentially surjective because every object of $\mcH$ can be related to one of $\mcI$ by a zigzag of morphisms in $W_{\mcG}$, which are quasi-isomorphisms in $\mcH$ (see \cite[Lemma 3.12]{GPS20}). 
	Therefore, it suffices to show that the inclusion 
	\[\mcG \left( X_{\bullet}^0 \left( E \right), X_{\bullet}^0 \left( E_{\diamond} \right) \right) \hookrightarrow \mcH \left( X_{\bullet}^0 \left( E \right), X_{\bullet}^0 \left( E_{\diamond} \right) \right)^{*,0} \]
	is a quasi-isomorphism for every $E, E_{\diamond} \in \mcE$.
	
	Let $E_{\diamond}$ be an element of $\mcE$. When we write an object $X_{\bullet}^n$ without specifying the element of $\mcE$, we mean $X_{\bullet}^n (E_{\diamond})$.
	Recall that we introduced a pair $\left( \mcM_{\mcG}, t_{\mcG} \right)$ in Definition \ref{definition module for G}.
	According to Lemmas \ref{lemma properties of modules}, \ref{lemma properties of modules bis} and \cite[Lemma 3.13]{GPS20}, the inclusion $\mcM_{\mcG} \left( X^0_{\bullet} (E) \right) \hookrightarrow \, _{W_{\mcG}^{-1}} \mcM_{\mcG} \left( X^0_{\bullet} (E) \right)$ and the map $_{W_{\mcG}^{-1}} t_{\mcG} : \, \mcH \left( X^0_{\bullet} (E) , X^0_{\bullet} \right) \to \, _{W_{\mcG}^{-1}} \mcM_{\mcG} \left( X^0_{\bullet} (E) \right)$ are quasi-isomorphisms for every $E \in \mcE$. Besides, observe that 
	\[\mcM_{\mcG} \left( X^0_{\bullet} (E) \right)^{*,0} = \mcG \left( X^0_{\bullet} (E),X^0_{\bullet} \right) . \]
	The result then follows from the commutativity of the following diagram
	\[\begin{tikzcd}
	\mcG \left( X_{\bullet}^0 \left( E \right), X_{\bullet}^0 \right) \ar[d, equal] \ar[r, equal] & \mcG \left( X_{\bullet}^0 \left( E \right), X_{\bullet}^0 \right) \ar[d, hook] \ar[r, equal] & \mcG \left( X_{\bullet}^0 \left( E \right), X_{\bullet}^0 \right) \ar[d, hook] \\
	\mcM_{\mcG} \left( X^0_{\bullet} (E) \right)^{*,0} \ar[r, hook, "\sim"] & _{W_{\mcG}^{-1}} \mcM_{\mcG} \left( X^0_{\bullet} (E) \right)^{*,0} & \mcH \left( X^0_{\bullet} (E) , X^0_{\bullet} \right)^{*,0} \ar[l, "\sim" above, "_{W_{\mcG}^{-1}} t_{\mcG}" below] .
	\end{tikzcd} \]  
	
\end{proof}

The following diagram of Adams-graded $A_{\infty}$-categories is commutative
\[ \begin{tikzcd}
\mcH \ar[r, "\widetilde{\Psi}"] & \mathbf{F} \left[ t_m \right] \otimes \mcA_m \left[ W_{\mcA_m}^{-1} \right] \\
\mcI \ar[u, hook, "\sim" right] \ar[r, "\widetilde{\Psi}"] & \mcA_m^0 \oplus \left( t \mathbf{F} \left[ t_m \right] \otimes \mcA_m \left[ W_{\mcA_m}^{-1} \right]^0 \right) \ar[u, hook]
\end{tikzcd} \]
(recall that if $\mcC$ is an $A_{\infty}$-category equipped with a splitting $\mathrm{ob} \left( \mcC \right) \simeq \mathbf{Z} \times \mcE$, then we denote by $\mcC^0$ the full $A_{\infty}$-subcategory of $\mcC$ whose set of objects corresponds to $\{ 0 \} \times \mcE$).
Moreover, since $\mcA$ is assumed to be weakly directed with respect to the $\mathbf{Z}$-splitting of $\mathrm{ob} \left( \mcA \right)$, Lemma \ref{lemma relation G-A} implies that the bottom horizontal $A_{\infty}$-functor is a quasi-equivalence. 
Therefore we have
\[\mcH \simeq \mcA_m^0 \oplus \left( t \mathbf{F} \left[ t_m \right] \otimes \mcA_m \left[ W_{\mcA_m}^{-1} \right]^0 \right) . \]
Recall that $W_{\mcA_m} = f \left( \mathrm{units} \right) \cup \{ \mathrm{units} \}$, so that 
\[\mcA_m \left[ W_{\mcA_m}^{-1} \right] \simeq \mcA_m \left[ f \left( \mathrm{units} \right)^{-1} \right] . \]
This concludes the proof of Theorem \ref{thm mapping torus in weak situation}, since $\mcH$ is quasi-equivalent to the mapping torus of $\tau$ (see Lemma \ref{lemma relation G - mapping torus}).

\section{Chekanov-Eliashberg DG-category}\label{section Legendrian invariants}

In this section, we recall the definition of the Chekanov-Eliashberg DG-category associated to a family of Legendrians in a contact manifold equipped with a hypertight contact form $\alpha$, which means that the Reeb vector field of $\alpha$ has no contractible periodic orbits. . We also describe the behavior of the Chekanov-Eliashberg DG-category under change of data. 

In the following, $(V, \xi)$ is a contact manifold of dimension $(2n+1)$.
In order to have well-defined gradings in $\mathbf{Z}$, we assume that $H_1 \left( V \right)$ is free and that the first Chern class of $\xi$ (equipped with any compatible almost complex structure) is $2$-torsion.
We will need the following definition.

\begin{defin}\label{definition chord generic}
	
	We say that a Legendrian submanifold $\Lambda$ in $\left( V, \xi \right)$ is \emph{chord generic} with respect to a contact form $\alpha$ if the following holds:
	\begin{enumerate}
		
		\item for every Reeb chord $c : \left[ 0,T \right] \to V$ of $\Lambda$, the space $D \varphi_{R_{\alpha}}^T \left( T_{c(0)} \Lambda \right)$ is transverse to $T_{c(T)} \Lambda$ in $\xi$,
		
		\item different Reeb chords belong to different Reeb trajectories. 
		
	\end{enumerate}
	
\end{defin}

\subsection{Conley-Zehnder index}\label{subsection Conley-Zehnder index}

Let $\alpha$ be a hypertight contact form on $(V, \xi)$ and let $\Lambda$ be a chord generic Legendrian submanifold of $(V, \alpha)$.
In the following, we define the Conley-Zehnder index of a Reeb chord of $\Lambda$ starting and ending on the same connected component (such chords are called \emph{pure}).

We briefly recall what is the Maslov index of a loop in the Grassmanian of Lagrangian subspaces in $\mathbf{C}^n$. We refer to \cite{RS93} for a precise exposition. Fix a Lagrangian subspace $K$, and denote by $\Sigma_k \left( K \right)$ the set of Lagrangian subspaces in $\mathbf{C}^n$ whose intersection with $K$ is $k$ dimensional. Consider the \emph{Maslov cycle} 
\[\Sigma = \Sigma_1 \left( K \right) \cup \dots \cup \Sigma_n \left( K \right) . \]
This is an algebraic variety of codimension one in the Lagrangian Grassmanian. Now if $\Gamma$ is a loop in the Lagrangian Grassmanian, its Maslov index $\mu \left( \Gamma \right) \in \mathbf{Z}$ is the intersection number of $\Gamma$ with $\Sigma$. The contribution of an intersection instant $t_0$ is computed as follows. Choose a Lagrangian complement $W$ of $K$ in $\mathbf{C}^n$. Then for each $v$ in $\Gamma (t_0) \cap K$, there exists a vector $w(t)$ in $W$ such that $v + w(t)$ is in $\Gamma (t)$ for every $t$ near $t_0$. Consider the quadratic form
\[Q(v) = \frac{d}{dt} \omega \left( v, w(t) \right)_{|t=t_0} \]
on $\Gamma (t_0) \cap K$. Without loss of generality, $Q$ can be assumed non-singular and the contribution of $t_0$ to $\mu (\Gamma)$ is the signature of $Q$.

Recall that $H_1 \left( V \right)$ is assumed to be free. We choose a family $(h_1, \dots, h_r)$ of embedded circles in $V$ which represent a basis of $H_1 ( V )$, and a symplectic trivialization of $\xi$ over each $h_i$. If $\gamma$ is some loop in $\Lambda$, there is a unique family $(a_1, \dots, a_r)$ of integers such that $\left[ \gamma_c - \sum_i a_i h_i \right]$ is zero in $H_1 (V)$. Choose a surface $\Sigma_{\gamma}$ in $V$ such that 
\[ \partial \Sigma_{\gamma} = \gamma - \sum_i a_i h_i . \]
There is a unique trivialization of $\xi$ over $\Sigma_{\gamma}$ which extends the chosen trivializations over $h_i$.
Thus we get a trivialization $\gamma^{-1} \xi \simeq S^1 \times \mathbf{C}^n$ (where $n$ is the dimension of $\Lambda$). We denote by $\Gamma$ the loop of Lagrangian planes in $\mathbf{C}^n$ corresponding, via the latter trivialization, to the loop $t \mapsto T_{\gamma (t)} \Lambda$. The Maslov index of $\Gamma$ does not depend on the choice of the surface $\Sigma_{\gamma}$ because we assumed $2 c_1 (\xi) = 0$. This construction defines a morphism $H_1 \left( \Lambda, \mathbf{Z} \right) \to \mathbf{Z}$, and the \emph{Maslov number} $m(\Lambda)$ of $\Lambda$ is the generator of its image. 
In the following, we assume that the Maslov number of $\Lambda$ is zero.  

Now, let $c$ be a pure Reeb chord of $\Lambda$. We choose a path $\gamma_c : \left[ 0,1 \right] \to \Lambda$ which starts at the endpoint of $c$, and ends at its starting point ($\gamma_c$ is called a \emph{capping path} of $c$). We denote by $\overline{\gamma}_c$ the loop obtained by concatenating $\gamma$ and $c$. Let $(a_1, \dots, a_r)$ be the unique family of integers such that $\left[ \overline{\gamma}_c - \sum_i a_i h_i \right]$ is zero in $H_1 (V)$, and choose a surface $\Sigma_c$ in $V$ such that 
\[ \partial \Sigma_c = \overline{\gamma}_c - \sum_i a_i h_i . \]
There is a unique trivialization of $\xi$ over $\Sigma_c$ which extends the chosen trivializations over $h_i$.
Thus we get a trivialization $\overline{\gamma}_c^{-1} \xi \simeq S^1 \times \mathbf{C}^n$ (where $n$ is the dimension of $\Lambda$). We denote by $\Gamma_c$ the path of Lagrangian planes in $\mathbf{C}^n$ corresponding, via the latter trivialization, to the concatenation of $t \mapsto T_{\gamma (t)} \Lambda$ and $t \mapsto D \varphi_{R_{\alpha}}^t \left( T_{c(0)} \Lambda \right)$. Since $\Lambda$ is chord generic, $\Gamma_c$ is not a loop: we close it in the following way. Let $I$ be a complex structure on $\mathbf{C}^n$ which is compatible with the standard symplectic form on $\mathbf{C}^n$ and such that $I \left( \Gamma_c (1) \right) = \Gamma_c (0)$. Then we let $\overline{\Gamma}_c$ be the loop of Lagrangian subspaces obtained by concatenating $\Gamma_c$ and the path $t \mapsto e^{tI} \Gamma_c (1)$. The Conley-Zehnder index of $c$ is the Maslov index of $\overline{\Gamma}_c$: 
\[CZ (c) := \mu \left( \overline{\Gamma}_c \right). \]
The Conley-Zehnder index of a Reeb chord does not depend on the choice of $\Sigma_c$ because the first Chern class of $\xi$ is $2$-torsion, and it does not depend on the choice of $\gamma_c$ because the Maslov number of $\Lambda$ vanishes.  

\subsection{Moduli spaces}\label{subsection moduli spaces}

Recall that $(V, \xi)$ is a contact manifold such that $H_1 (V)$ is free and the first Chern class of $\xi$ (equipped with any compatible almost complex structure) is 2-torsion. 
Let $\alpha$ be a hypertight contact form on $(V, \xi)$ and let $\Lambda$ be a chord generic Legendrian submanifold of $(V, \alpha)$ with vanishing Maslov number.
In the following, we introduce the moduli spaces needed to define the Chekanov-Eliashberg category of $\Lambda$. 

\begin{defin}\label{definition Riemann disk}
	
	A Riemann $(d+1)$-pointed disk is a triple $\left( D, \boldsymbol{\zeta}, j \right)$ such that 
	\begin{enumerate}
		
		\item $D$ is a smooth oriented manifold-with-boundary diffeomorphic to the closed unit disk in $\mathbf{C}$,
		
		\item a family $\boldsymbol{\zeta} = \left( \zeta_d, \dots, \zeta_1, \zeta_0 \right)$ is a cyclically ordered family of distinct points on $\partial D$,
		
		\item $j$ is an integrable almost complex structure on $D$ which induces the given orientation on $D$.
		
	\end{enumerate}
	If $\left( D, \boldsymbol{\zeta}, j \right)$ is a Riemann pointed disk, we denote by $\Delta := D \setminus \{ \zeta_d, \dots, \zeta_1, \zeta_0 \}$ the corresponding punctured disk.
	
\end{defin}

\begin{defin}
	
	A family of Riemann $(d+1)$-pointed discs is a bundle $\mcS \to \mcR$ with 
	\begin{enumerate}
		
		\item a family $\boldsymbol{\zeta} = \left( \zeta_d, \dots, \zeta_1, \zeta_0 \right)$ of non-intersecting sections $\zeta_k : \mcR \to \mcS$ and
		
		\item a section $j : \mcR \to \mathrm{End} (T \mcS )$
		
	\end{enumerate}
	such that $(\mcS_r, \boldsymbol{\zeta} (r), j(r))$ is a Riemann $(d+1)$-pointed disk for every $r \in \mcR$.
	
\end{defin}

\begin{defin}\label{definition choice of strip-like ends}
	
	Let $\mcS \to \mcR$ be a family of Riemann $(d+1)$-pointed discs.
	A choice of strip-like ends for $\mcS \to \mcR$ is a family of sections
	\[\epsilon_d, \dots, \epsilon_1 : \mcR \times \mathbf{R}_{\leq 0} \times \left[ 0,1 \right] \to \Delta_r, \quad \epsilon_0 : \mcR \times \mathbf{R}_{\geq 0} \times \left[ 0,1 \right] \to \Delta_r \]
	such that
	\begin{enumerate}
		
		\item $\epsilon_d (r), \dots, \epsilon_1 (r), \epsilon_0 (r)$ are proper embeddings with
		\[\epsilon_k (r) \left( \mathbf{R}_{\leq 0} \times \{ 0, 1 \} \right) \subset \partial \Delta_r \text{ and } \epsilon_0 (r) \left( \mathbf{R}_{\geq 0} \times \{ 0, 1 \} \right) \subset \partial \Delta_r,  \]
		
		\item $\epsilon_d (r), \dots, \epsilon_1 (r), \epsilon_0 (r)$ satisfy the asymptotic conditions
		\[\epsilon_k (r) \left( s, t \right) \underset{s \to - \infty}{\longrightarrow} \zeta_k (r) \text{ and } \epsilon_0 (r) \left( s, t \right) \underset{s \to + \infty}{\longrightarrow} \zeta_0 (r) \]
		
		\item $\epsilon_d (r), \dots, \epsilon_1 (r), \epsilon_0 (r)$ are $\left( i, j(r) \right)$-holomorphic, where $i$ is the standard complex structure on $\mathbf{C}$.
		
	\end{enumerate}
	
\end{defin}

As explained in \cite[section (9c)]{Sei08}, there is a universal family $\mcS^{d+1} \to \mcR^{d+1}$ of Riemann $(d+1)$-pointed discs when $d \geq 2$, which means that any other family $\mcS \to \mcR$ is isomorphic to the pullback of $\mcS^{d+1} \to \mcR^{d+1}$ by a map $\mcR \to \mcR^{d+1}$.
In the following, we fix a choice of strip-like ends for the unversal family $\mcS^{d+1} \to \mcR^{d+1}$.

\begin{defin}\label{definition moduli spaces}
	
	Let $J$ be an almost complex structure on $\xi$ compatible with $\left( d \alpha \right)_{| \xi}$. We denote by $J^{\alpha}$ the unique almost complex structure on $\mathbf{R}_{\sigma} \times V$ which sends $\partial_{\sigma}$ to $R_{\alpha}$ and which restricts to $J$ on $\xi$.
	Let $c_d, \dots, c_1, c_0$ be a Reeb chords of $\Lambda$, where $c_k : [0, T_k] \to V$. 
	\begin{enumerate}
		
		\item If $d=1$, we denote by $\widetilde{\mcM}_{c_1, c_0} \left( \mathbf{R} \times \Lambda, J, \alpha \right)$ the set of equivalence classes of maps $u : \mathbf{R} \times \left[ 0, 1 \right] \to \mathbf{R} \times V$ such that
		\begin{itemize}                  
			\item $u$ maps the boundary of $\mathbf{R} \times \left[ 0, 1 \right]$ to $\mathbf{R} \times \Lambda$,
			\item $u$ satisfies the asymptotic conditions
			\[u \left( s, t \right) \underset{s \to - \infty}{\longrightarrow} \left( - \infty, c_1 (T_1 t) \right) \text{ and } u \left( s, t \right) \underset{s \to + \infty}{\longrightarrow} \left( + \infty, c_0 (T_0 t) \right), \]
			\item $u$ is $(i, J^{\alpha})$-holomorphic,
		\end{itemize}
		where two maps $u$ and $u'$ are identified if there exists $s_0 \in \mathbf{R}$ such that $u' (\cdot, \cdot) = u (\cdot + s_0, \cdot)$.
		
		\item If $d \geq 2$, we denote by $\widetilde{\mcM}_{c_d, \dots, c_1, c_0} \left( \mathbf{R} \times \Lambda, J, \alpha \right)$ the set of of pairs $\left( r, u \right)$ such that
		\begin{itemize}
			\item $r \in \mcR^{d+1}$ and $u : \Delta_r \to \mathbf{R} \times V$ maps the boundary of $\Delta_r$ to $\mathbf{R} \times \Lambda$,
			\item $u$ satisfies the asymptotic conditions
			\[\left( u \circ \epsilon_k (r) \right) \left( s, t \right) \underset{s \to - \infty}{\longrightarrow} \left( -\infty, c_k (T_k t) \right) \text{ and } \left( u \circ \epsilon_0 (r) \right) \left( s, t \right) \underset{s \to + \infty}{\longrightarrow} \left( + \infty, c_0 (T_0 t) \right), \]
			\item $u$ is $(i, J^{\alpha})$-holomorphic.
		\end{itemize}
		
	\end{enumerate}
	Observe that $\mathbf{R}$ acts on $\widetilde{\mcM}_{c_d, \dots, c_1, c_0} \left( \mathbf{R} \times \Lambda, J, \alpha \right)$ by translation in the $\mathbf{R}_{\sigma}$-coordinate. 
	We set
	\[\mcM_{c_d, \dots, c_1, c_0} \left( \mathbf{R} \times \Lambda, J, \alpha \right) := \widetilde{\mcM}_{c_d, \dots, c_1, c_0} \left( \mathbf{R} \times \Lambda, J, \alpha \right) / \mathbf{R} . \] 
	
\end{defin}

The moduli space $\widetilde{\mcM}_{c_d, \dots, c_1, c_0} \left( \mathbf{R} \times \Lambda, J, \alpha \right)$ can be realized as the zero-set of a section $\overline{\partial} : \mcB \to \mcE$ of a Banach bundle $\mcE \to \mcB$ (see for example \cite{EES07}). We say that $\widetilde{\mcM}_{c_d, \dots, c_1, c_0} \left( \mathbf{R} \times \Lambda, J \right)$ is transversely cut out if $\overline{\partial}$ is transverse to the 0-section. 

\begin{defin}\label{definition regular almost complex structure}
	
	We say that $J$ is \emph{regular} (with repect to $\alpha$ and $\Lambda$) if the moduli spaces $\widetilde{\mcM}_{c_d, \dots, c_1, c_0} \left( \mathbf{R} \times \Lambda, J, \alpha \right)$ are all transversely cut out.
	
\end{defin}

\begin{prop}[\cite{DR16bis} Proposition 3.13]\label{prop transverslity moduli spaces}

	The set of regular almost complex structures on $\xi$ is Baire.  
	Moreover, the dimension of a transversely cut out moduli space is 
	\[\mathrm{dim} \, \widetilde{\mcM}_{c_d, \dots, c_1, c_0} \left( \mathbf{R} \times \Lambda, J, \alpha \right) = CZ \left( a \right) - \left( \sum_{k=1}^{d} CZ \left( b_k \right) \right) + d-1 . \]

\end{prop}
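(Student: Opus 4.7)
The plan is to follow the standard Floer-theoretic transversality scheme adapted to the symplectization setting with Legendrian boundary conditions and Reeb-chord asymptotics, as carried out in detail in \cite{DR16bis}. I would first set up the analytical framework: fix exponential weights adapted to the hyperbolic linearized operator at each Reeb chord, and define a Banach manifold $\mcB$ of $W^{1,p}_{\delta}$-maps $u:\Delta_r\to \mathbf{R}\times V$ (with $r\in\mcR^{d+1}$ when $d\geq 2$) sending $\partial\Delta_r$ to $\mathbf{R}\times\Lambda$ and satisfying the prescribed asymptotic convergence on the strip-like ends. Over $\mcB$, let $\mcE$ be the Banach bundle whose fiber at $u$ is the space of $L^p_{\delta}$-sections of $\Omega^{0,1}(\Delta_r)\otimes_{\mathbf{C}}u^*T(\mathbf{R}\times V)$, and let $\overline{\partial}_{J^{\alpha}}$ be the Cauchy--Riemann section. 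The moduli space in Definition \ref{definition moduli spaces} is its zero locus, and transversality of $\overline{\partial}_{J^{\alpha}}$ to the zero section is exactly the condition that $J$ be regular.

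The transversality statement is then obtained by the Sard--Smale theorem applied to the projection from the universal moduli space (where $J$ is allowed to vary in the Baire space of almost complex structures on $\xi$ compatible with $d\alpha$) onto the space of almost complex structures. The key analytic step is to show that the universal linearized operator $D\overline{\partial}^{\mathrm{univ}}$ is surjective at every solution. After identifying the cokernel of the fiberwise linearization with a space of $L^q$-sections satisfying a formal adjoint equation, surjectivity reduces to proving that variations of $J$ supported near an injective interior point of $u$ can realize any prescribed infinitesimal direction. This rests on the existence of somewhere injective points for holomorphic curves in symplectizations: in the hypertight setting, one invokes the unique continuation and asymptotic analysis arguments (as in \cite{DR16bis}, ultimately descended from Floer and from Hofer--Wysocki--Zehnder) showing that every non-constant such curve has an injective point in its interior where $du$ has nontrivial horizontal projection to $V$; by the hypertight assumption on $\alpha$, bubbling of planes is excluded, so the moduli spaces consist of genuine discs and the somewhere-injective analysis applies.

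The dimension formula is then a Riemann--Roch computation for the Cauchy--Riemann operator on a punctured disc with Lagrangian boundary conditions and Reeb-chord asymptotics. The index decomposes as a sum of the topological index of the linearized operator (expressible via the Maslov index of the boundary Lagrangian loop, which under the trivializations chosen in section \ref{subsection Conley-Zehnder index} is $CZ(c_0)-\sum_{k=1}^{d}CZ(c_k)$, since the Conley--Zehnder indices were defined precisely as the relevant Maslov indices of the capped Lagrangian paths) plus the contribution $d-1$ from the dimension of $\mcR^{d+1}$ (which equals $d-2$) together with a correction of $+1$ coming from the $\mathbf{R}$-family of conformal reparametrizations before quotienting, or, equivalently from the index shift due to the weights $\delta$ at the punctures; the hypothesis that $2c_1(\xi)=0$ guarantees the formula is independent of the choice of cappings.

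The main obstacle is the somewhere-injective step, because in the presence of Reeb chord punctures one must rule out multiply-covered behavior not only in the interior but also near the boundary asymptotics; this is where the hypertight hypothesis on $\alpha$ and the chord-generic hypothesis on $\Lambda$ enter crucially, via the asymptotic formula for holomorphic half-strips that forces any non-simple curve to have a distinguished simple underlying curve with strictly smaller energy. As this analysis is executed in full in \cite[Proposition 3.13]{DR16bis}, I would simply appeal to that reference for the detailed verification, and restrict my exposition to explaining how the hypotheses made on $(V,\xi,\alpha,\Lambda)$ in this section fit into that framework.
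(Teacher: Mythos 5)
The paper offers no proof of this proposition beyond the citation to \cite{DR16bis}, and your proposal ultimately defers to that same reference; the intervening sketch (weighted Sobolev Banach-manifold setup, Sard--Smale via somewhere-injectivity of the curves, Riemann--Roch with the $\mcR^{d+1}$ and translation contributions accounting for the $d-1$) is the standard argument carried out there and is consistent with the stated dimension formula. So this is essentially the same approach as the paper.
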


\subsection{Chekanov-Eliashberg DG-category}\label{subsection Chekanov-Eliashberg algebra}

Recall that $(V, \xi)$ is a contact manifold such that $H_1 (V)$ is free and the first Chern class of $\xi$ (equipped with any compatible almost complex structure) is 2-torsion. 
Let $\alpha$ be a hypertight contact form on $(V, \xi)$ and let $\mathbf{\Lambda} = (\Lambda (E))_{E \in \mcE}$ be a family of Legendrian submanifolds of $(V, \xi)$. We set $\Lambda := \bigcup_{E \in \mcE} \Lambda(E)$ and we assume that $\Lambda$ is chord generic with vanishing Maslov number. 
Moreover, we denote by $\mcC (\Lambda(E), \Lambda(E'))$ the graded vector space generated by the words of Reeb chords $c_1 \cdots c_d$ where $c_1$ starts on $\Lambda(E)$, $c_d$ ends on $\Lambda(E')$, and the ending component of $c_i$ is the starting component of $c_{i+1}$ for every $1 \leq i \leq d-1$, with grading 
\[\vert c_1 \cdots c_d \vert := \sum_{i=1}^{d} \left( CZ \left( c_i \right) - 1 \right). \]
Finally, let $J$ be a regular almost complex structure on $\xi$.

\begin{defin}\label{definition Chekanov-Eliashberg}
	
	We denote by $CE_* \left( \mathbf{\Lambda} \right) = CE_* \left( \mathbf{\Lambda}, J, \alpha \right)$ the graded category defined as follows 
	\begin{enumerate}
		
		\item the objects are the Legendrians $\Lambda (E)$, $E \in \mcE$,
		
		\item the space of morphisms from $\Lambda (E)$ to $\Lambda (E')$ is either $\mcC (\Lambda(E), \Lambda(E'))$ if $E \ne E'$, or $\mathbf{F} \oplus \mcC (\Lambda(E), \Lambda(E'))$ if $E = E'$,
		
		\item the composition is given by concatenation of words.
		
	\end{enumerate}
	If $c_0$ is a Reeb chord in $CE_* \left( \mathbf{\Lambda} \right)$, we set 
	\[\partial (c_0) := \sum \limits_{c_d, \dots,c_1} \# \mcM_{c_d, \dots, c_1, c_0} \left( \mathbf{R} \times \Lambda, J, \alpha \right) c_d \cdots c_1, \]
	where $\# \mcM \in \mathbf{F}$ denotes the number of elements modulo $2$ in $\mcM$ if $\mcM$ is finite, and $0$ otherwise. Finally, we extend $\partial$ to $CE_* \left( \mathbf{\Lambda} \right)$ so that it is linear and satisfies the Leibniz rule with respect to the concatenation product.
	
\end{defin}

\begin{thm}\label{thm d rond d egal 0}
	
	$\partial : CE_* \left( \mathbf{\Lambda} \right) \to CE_* \left( \mathbf{\Lambda} \right)$ decreases the grading by $1$ and satisfies $\partial \circ \partial = 0$.
	As a result, $(CE_{-*} \left( \mathbf{\Lambda} \right), \partial)$ is a DG-category. 
	
\end{thm}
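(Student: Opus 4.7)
The first claim is a direct application of the index formula of Proposition~\ref{prop transverslity moduli spaces}: the terms $c_d \cdots c_1$ contributing to $\partial c_0$ are counted by zero-dimensional moduli spaces $\mcM_{c_d,\dots,c_1,c_0}$, i.e.\ by $\widetilde{\mcM}$'s of dimension two. The dimension formula then gives $CZ(c_0) = \sum_k CZ(c_k) - d + 2$, which rearranges to $|c_0| = |c_d \cdots c_1| + 1$. Hence $\partial$ has degree $-1$ on $CE_*(\mathbf{\Lambda})$, i.e.\ degree $+1$ on $CE_{-*}(\mathbf{\Lambda})$, as required for the differential of a DG-category.

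For $\partial \circ \partial = 0$, since $\partial$ is already built to satisfy the Leibniz rule with respect to concatenation, it suffices to verify $\partial^2 c_0 = 0$ on a single Reeb chord generator $c_0$. The strategy is the classical one for Legendrian contact homology, as carried out in \cite{EES07} for the contactization case and adapted to the hypertight setting in \cite{DR16bis}: one studies the SFT-compactifications $\overline{\mcM}$ of the one-dimensional moduli spaces $\mcM_{c_d,\dots,c_1,c_0}$ (those with $\widetilde{\mcM}$ of dimension two). For regular $J$ each such $\mcM$ is a smooth one-manifold, and the proof reduces to an analysis of the boundary of its compactification.

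The crucial role of the hypertight hypothesis is to constrain these boundaries: since $\alpha$ has no contractible closed Reeb orbits and the curves in question have disk domains, any closed orbit that were to appear as a bubble or as an intermediate breaking orbit would bound a disk and hence be contractible; this is forbidden. Combined with the standard exclusion of sphere bubbles (via exactness of $d\alpha|_{\xi}$) and the fact that transversality rules out disk bubbles with fewer positive punctures, the boundary of $\overline{\mcM}$ consists exclusively of two-level holomorphic buildings glued along a single Reeb chord. Each such boundary point matches exactly one term in the Leibniz expansion of $\partial^2 c_0$, and counting the boundary of the compact one-manifold $\overline{\mcM}$ modulo~$2$ yields $\partial^2 c_0 = 0$. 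The main obstacle is the compactness-and-gluing package—checking that every two-level building is the limit of a unique end of $\overline{\mcM}$, and conversely that every end so arises—but this is a by-now-standard consequence of the Fredholm and gluing theory for punctured holomorphic curves in symplectizations, as set up in \cite{EES07} and recalled in the hypertight context in \cite{DR16bis}.
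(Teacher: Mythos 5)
Your proposal follows essentially the same route as the paper, whose own proof is a one-line appeal to exactly the three ingredients you spell out: the index formula of Proposition \ref{prop transverslity moduli spaces}, SFT compactness, and pseudo-holomorphic gluing (with the hypertight hypothesis excluding breaking along contractible closed orbits). One small slip: the rigid configurations counted by $\partial$ correspond to $\widetilde{\mcM}$ of dimension \emph{one} (so that $\mcM = \widetilde{\mcM}/\mathbf{R}$ is zero-dimensional), not two — your subsequent identity $CZ(c_0) = \sum_k CZ(c_k) - d + 2$ and the conclusion $|c_0| = |c_d\cdots c_1| + 1$ are consistent with dimension one, so only the stated dimension needs correcting.
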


\begin{proof}
	
	This follows from Proposition  \ref{prop transverslity moduli spaces}, SFT compactness (see \cite{BEHW03} and \cite{Abb14}, in particular \cite[Theorem 3.20]{Abb14}) and pseudo-holomorphic gluing. See \cite{Ekh08}, \cite{EES05} and \cite{EES07} for details.
	
\end{proof}

\paragraph{Augmentations and Legendrian $A_{\infty}$-(co)category.} 

Let $\mathbf{F}_{\mcE}$ be the category with $\mcE$ as set of objects, and morphism space from $E$ to $E'$ equal to $\mathbf{F}$ if $E = E'$, or $0$ if $E \ne E'$.
Assume that we have an augmentation of $CE_{-*} \left( \mathbf{\Lambda} \right)$, i.e. a DG-functor $\varepsilon : CE_{-*} \left( \mathbf{\Lambda} \right) \to \mathbf{F}_{\mcE}$. Denote by $\phi_{\varepsilon}$ the automorphism of $CE_{-*} \left( \mathbf{\Lambda} \right)$ defined by 
\[\phi_{\varepsilon} (c) = c + \varepsilon (c) \]
for every Reeb chord $c$ of $\Lambda$.  We denote by $CE_{-*}^{\varepsilon} \left( \mathbf{\Lambda} \right)$ the DG-category whose underlying graded category is the same as for $CE_{-*} \left( \mathbf{\Lambda} \right)$, but the differential is $\partial_{\varepsilon} = \phi_{\varepsilon} \circ \partial \circ \phi_{\varepsilon}^{-1}$. Now let $\overline{LC_*^{\varepsilon} \left( \mathbf{\Lambda} \right)}$ be the graded pre-category (no composition) with
\begin{enumerate}
	
	\item objects the Legendrians $\Lambda (E)$, $E \in \mcE$,
	
	\item space of morphisms from $\Lambda (E)$ to $\Lambda (E')$ generated by Reeb chords $c$ which starts on $\Lambda(E)$ and ends on $\Lambda(E')$, with grading 
	\[\vert c \vert := -CZ(c). \]
\end{enumerate} 
Observe that, as a graded pre-category, we have
\[CE_{-*}^{\varepsilon} \left( \mathbf{\Lambda} \right) = \mathbf{F}_{\mcE} \oplus \left( \bigoplus \limits_{d \geq 1} \overline{LC_*^{\varepsilon} \left( \mathbf{\Lambda} \right)} \left[ -1 \right]^{\otimes d} \right) . \] 
If we write
\[\left( \partial_{\varepsilon} \right)_{| \overline{LC_*^{\varepsilon} \left( \mathbf{\Lambda} \right)} } = \sum_{d \geq 0} \partial_{\varepsilon}^d \text{ with } \partial_{\varepsilon}^d : \overline{LC_*^{\varepsilon} \left( \mathbf{\Lambda} \right)} \to \overline{LC_*^{\varepsilon} \left( \mathbf{\Lambda} \right)}^{\otimes d}, \]
then $\partial_{\varepsilon}^0 = \varepsilon \circ \partial = 0$. Moreover, the operations $\left( \partial_{\varepsilon}^d \right)_{d \geq 1}$ make $\overline{LC_*^{\varepsilon} \left( \mathbf{\Lambda} \right)}$ a (non-counital) $A_{\infty}$-cocategory (see Definition \ref{definition coalgebra}). We define the coaugmented $A_{\infty}$-cocategory of $\left( \mathbf{\Lambda}, \varepsilon \right)$ to be
\[LC_*^{\varepsilon} \left( \mathbf{\Lambda} \right) := \mathbf{F}_{\mcE} \oplus \overline{LC_*^{\varepsilon} \left( \mathbf{\Lambda} \right)} \]
(the $A_{\infty}$-cooperations are naturally extended so that $1 \in \mathbf{F}_{\mcE} (E, E)$, $E \in \mcE$ are counits).
Now observe that, as a DG-category, 
\[CE_{-*}^{\varepsilon} \left( \mathbf{\Lambda} \right) = \Omega \left( LC_*^{\varepsilon} \left( \mathbf{\Lambda} \right) \right) \]
(see \cite[section 2.2]{EL21} for the cobar construction).
Finally, we define the augmented $A_{\infty}$-category of $\left( \mathbf{\Lambda}, \varepsilon \right)$ to be the graded dual (see \cite[section 2.1.3]{EL21}) of $LC_*^{\varepsilon} \left( \mathbf{\Lambda} \right)$:
\[LA^*_{\varepsilon} \left( \mathbf{\Lambda} \right) = LC_*^{\varepsilon} \left( \mathbf{\Lambda} \right)^{\#} . \]

\subsection{Functoriality}\label{subsection invariance}

Recall that $(V, \xi)$ is a contact manifold such that $H_1 (V)$ is free and the first Chern class of $\xi$ (equipped with any compatible almost complex structure) is 2-torsion. 
Let $\mathbf{M}=(M(E))_{E \in \mcE}$ be a family of $n$-dimensional manifolds.
When we write a map $\mathbf{\Lambda} : \mathbf{M} \to V$, we mean that $\mathbf{\Lambda}$ is a family of maps $\Lambda (E) : M(E) \to V$ indexed by $\mcE$, and we set
\[\Lambda = \bigsqcup_{E \in \mcE} \Lambda (E) : \bigsqcup_{E \in \mcE} M (E) \to V . \]

\begin{defin}\label{definition geometric category}
	
	Let $\alpha$ be a hypertight contact form on $(V, \xi)$.
	We denote by $\mcL_\mathbf{M} (\alpha)$ the bicategory where
	\begin{enumerate}
		
		\item objects are the pairs $(\mathbf{\Lambda}, J)$, where $\mathbf{\Lambda} : \mathbf{M} \to V$ is a family of Legendrian embedding such that $\Lambda$ is chord generic with vanishing Maslov number, and $J$ is a regular almost complex structure on $\xi$,
		\item morphisms from $(\mathbf{\Lambda}_0, J_0)$ to $(\mathbf{\Lambda}_1, J_1)$ are the smooth paths $\Phi = (\mathbf{\Lambda}_t, J_t)_{0 \leq t \leq 1}$ going from $(\mathbf{\Lambda}_0, J_0)$ to $(\mathbf{\Lambda}_1, J_1)$, where $\mathbf{\Lambda}_t : \mathbf{M} \to V$ is a family of Legendrian embeddings and $J_t$ is an almost complex structure on $\xi$, 
		\item homotopies from a morphism $\Phi = (\mathbf{\Lambda}_t, J_t)_{0 \leq t \leq 1} : (\mathbf{\Lambda}_0, J_0) \to (\mathbf{\Lambda}_1, J_1)$ to another morphism $\Phi' = (\mathbf{\Lambda}_t', J_t')_{0 \leq t \leq 1} : (\mathbf{\Lambda}_0, J_0) \to (\mathbf{\Lambda}_1, J_1)$ are the smooth families $(\mathbf{\Lambda}_{s,t}, J_{s,t})_{0 \leq s \leq S, 0 \leq t \leq 1}$, where $\mathbf{\Lambda}_{s,t} : \mathbf{M} \to V$ is a family of Legendrian embeddings, $J_{s,t}$ is an almost complex structure on $\xi$, and
		\[(\mathbf{\Lambda}_{s,0}, J_{s,0}) = (\mathbf{\Lambda}_0, J_0), \; (\mathbf{\Lambda}_{s,1}, J_{s,1}) = (\mathbf{\Lambda}_1, J_1), \quad (\mathbf{\Lambda}_{0,t}, J_{0,t}) = (\mathbf{\Lambda}_t, J_t), \; (\mathbf{\Lambda}_{S,t}, J_{S,t}) = (\mathbf{\Lambda}_t', J_t'). \]
		
	\end{enumerate}
		
\end{defin}

\begin{defin}\label{definition action of contact isotopy}
	
	Let $\alpha$, $\alpha'$ be hypertight contact forms on $(V, \xi)$, and let $\varphi$ be a contactomorphism of $(V, \xi)$ such that $\varphi^* \alpha  = \alpha'$.
	If $\Phi = (\mathbf{\Lambda}_t, J_t)_{0 \leq t \leq 1}$ is a morphism in $\mcL_\mathbf{M} (\alpha)$, we denote by 
	\[\varphi^* \Phi = (\varphi^{-1} (\mathbf{\Lambda}_t), \varphi^* J_t)_{0 \leq t \leq 1} \]
	the corresponding morphism in $\mcL_\mathbf{M} (\alpha')$, and by 
	\[f_{(\mathbf{\Lambda}_t, J_t)}^{\varphi} : CE_{-*} (\mathbf{\Lambda}_t, J_t, \alpha) \to CE_{-*} (\varphi^{-1} (\mathbf{\Lambda}_t), \varphi^* J_t, \alpha') \]
	the DG-functor which sends a Reeb chord $c$ to $\varphi^{-1} (c)$.
	
\end{defin}

\begin{defin}\label{definition handle slide instant}
	
	Let $\alpha$ be a hypertight contact form on $(V, \xi)$, and let $\Phi = (\mathbf{\Lambda}_t, J_t)_{0 \leq t \leq 1}$ be a morphism in $\mcL_\mathbf{M} (\alpha)$.
	A \emph{handle slide instant} in $\Phi$ is a time $t_0$ where $\mathbf{\Lambda}_{t_0}$ is chord generic and has Reeb chords $c_d, \dots, c_1, c_0$ such that the moduli space $\widetilde{\mcM}_{c_d, \dots, c_1, c_0} \left( \mathbf{R} \times \Lambda_{t_0}, J_{t_0}, \alpha \right)$ is not transversely cut out.
	
\end{defin}

\begin{conj}\label{conjecture functoriality}
	
	There exist functors $\mcF_{\alpha}$ from $\mcL_\mathbf{M} (\alpha)$ to the bicategory\footnote{Homotopies between DG-maps are DG-homotopies, see for example \cite[section 2.1]{PR21}.} of DG-categories such that 
	\begin{enumerate}
		
		\item $\mcF_{\alpha}$ sends an object $( \mathbf{\Lambda}, J )$ to $CE_{-*} ( \mathbf{\Lambda}, J, \alpha)$,
		
		\item $\mcF_{\alpha}$ sends a morphism to a homotopy equivalence,
		
		\item if $\varphi$ is a contactomorphism of $(V, \xi)$ such that $\varphi^* \alpha  = \alpha'$ and if $\Phi = (\mathbf{\Lambda}_t, J_t)_{0 \leq t \leq 1}$ is a morphism in $\mcL_\mathbf{M} (\alpha)$, then 
		\[\mcF_{\alpha'} (\varphi^* \Phi) = f_{(\mathbf{\Lambda}_1, J_1)}^{\varphi} \circ \mcF_{\alpha} (\Phi) \circ (f_{(\mathbf{\Lambda}_0, J_0)}^{\varphi})^{-1} \]
		
		\item if $(\varphi_t)_{0 \leq t \leq 1}$ is a contact isotopy of $(V, \xi)$ satisfying $\varphi_t^* \alpha  = \alpha'$ for every $t$, and if $(\mathbf{\Lambda}, J)$ is an object of $\mcL_\mathbf{M} (\alpha)$ such that there is neither birth/death of Reeb chords nor handle slide instants in the path $\Phi' = (\varphi_t^{-1} (\mathbf{\Lambda}), \varphi_t^* J)$, then 
		\[\mcF_{\alpha'} (\Phi') = f_{(\mathbf{\Lambda}, J)}^{\varphi_1} \circ (f_{(\mathbf{\Lambda}, J)}^{\varphi_0})^{-1} . \]
				
	\end{enumerate}
	
\end{conj}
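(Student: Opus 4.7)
The plan is to build $\mcF_\alpha$ via the cobordism-theoretic approach to Legendrian contact homology, originally due to Ekholm and refined in the absence of fillings by works such as \cite{CDGG17}: to each 1-morphism $\Phi = (\mathbf{\Lambda}_t, J_t)_{0 \leq t \leq 1}$ one associates an exact Lagrangian cobordism in the symplectization $(\mathbf{R}_\sigma \times V, d(e^\sigma \alpha))$, and the DG-functor $\mcF_\alpha(\Phi)$ is defined by counting rigid pseudo-holomorphic discs in this cobordism. The hypertight hypothesis is essential: it rules out closed Reeb orbit bubbling, so SFT compactness holds cleanly without recourse to virtual perturbation techniques.

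More precisely, I would first reparametrize $\Phi$ to be constant near $t=0$ and $t=1$, and lift the family $(\mathbf{\Lambda}_t)$ to an exact Lagrangian cobordism $L_\Phi \subset \mathbf{R}_\sigma \times V$ which is cylindrical over $\mathbf{\Lambda}_i$ near $\sigma = \pm\infty$; then choose a generic cylindrical-at-infinity almost complex structure $\widetilde{J}_\Phi$ interpolating between $J_0^\alpha$ and $J_1^\alpha$. The functor $\mcF_\alpha(\Phi)$ sends a Reeb chord $c_0$ of $\mathbf{\Lambda}_1$ to $\sum \# \mcM_{c_d, \dots, c_1, c_0}(L_\Phi, \widetilde{J}_\Phi)\, c_d \cdots c_1$, where the moduli space consists of rigid $\widetilde{J}_\Phi$-holomorphic discs with boundary on $L_\Phi$, positive puncture at $c_0$, and negative punctures at the chords $c_i$ of $\mathbf{\Lambda}_0$; extending by the Leibniz rule yields a DG-functor, and a homotopy between 1-morphisms induces a DG-homotopy via a one-parameter family of cobordism moduli spaces. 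Items (3) and (4) of the conjecture are then verified by explicit identification of moduli spaces: a contactomorphism $\varphi$ with $\varphi^*\alpha = \alpha'$ yields a biholomorphism of cobordisms and an evident bijection on discs; when no handle slides or births/deaths occur along $(\varphi_t^{-1}(\mathbf{\Lambda}), \varphi_t^* J)$, the corresponding trace-cobordism admits only rigid discs that implement the chord-continuation map $f_{(\mathbf{\Lambda},J)}^{\varphi_1} \circ (f_{(\mathbf{\Lambda},J)}^{\varphi_0})^{-1}$.

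The main obstacles, and presumably the reason the statement is posed as a conjecture rather than a theorem, are twofold. First, achieving transversality for the parametrized moduli spaces arising from 2-morphisms requires delicate geometric perturbation, and the codimension-one bifurcation analysis at handle-slide and birth/death instants along a generic 1-morphism must be carried out following the template of Ekholm--Ng \cite{EN15}, matching each local model with an explicit algebraic modification of $\mcF_\alpha(\Phi)$. Second, showing that concatenation of 1-morphisms corresponds to composition of DG-functors \emph{up to DG-homotopy} requires a neck-stretching argument in the symplectization and a careful analysis of the SFT limit of broken pseudo-holomorphic buildings; absence of closed Reeb orbits makes the compactification manageable in the hypertight setting, but verifying the resulting algebraic identity in the full bicategorical sense (including the coherence of the homotopies between $\mcF_\alpha(\Phi \cdot \Phi')$ and $\mcF_\alpha(\Phi') \circ \mcF_\alpha(\Phi)$) is the most delicate point and would constitute the bulk of a full proof.
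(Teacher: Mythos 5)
The statement you are trying to prove is stated in the paper as a \emph{conjecture}, and the paper deliberately offers no proof of it: the author writes that it is ``stated as a conjecture out of caution,'' points to \cite{Ekh08} and \cite{EO17} for detailed strategies in the general case, and then only establishes (by citation to \cite{Pet22}) the strictly weaker Theorem \ref{thm invariance}, in which every object, morphism and homotopy is required to consist of chord-generic Legendrians with \emph{finitely many} Reeb chords. So the honest benchmark here is not ``does your argument match the paper's proof'' but ``does your argument close the gaps that made the author refrain from claiming a theorem.'' It does not.

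Your proposal is a correct description of the standard cobordism-theoretic strategy (exact Lagrangian traces in the symplectization, disc counts defining DG-functors, one-parameter families giving DG-homotopies), and items (3) and (4) would indeed follow from explicit identifications of moduli spaces once the functors exist. But the existence of the functors is exactly what is at stake, and you yourself list the unproved ingredients: transversality and gluing for the parametrized moduli spaces attached to 2-morphisms, the bifurcation analysis at handle-slide and birth/death instants, the neck-stretching argument identifying $\mcF_\alpha(\Phi\cdot\Phi')$ with $\mcF_\alpha(\Phi')\circ\mcF_\alpha(\Phi)$ up to homotopy, and the coherence of these homotopies at the bicategorical level. None of these is carried out, and each is a substantial analytic theorem rather than a routine verification; the second paragraph of your proposal is a statement of the problem, not a solution to it. Note also that the weaker version the paper does use was obtained by a different route --- generalizing the bifurcation-analysis methods of \cite{EES05} and \cite{PR21} rather than constructing cobordism maps --- precisely because the finiteness and chord-genericity hypotheses there make the bifurcation bookkeeping tractable. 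As written, your text should be presented as a strategy outline (which is how the paper itself treats the matter), not as a proof.
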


We chose to state the latter result as a conjecture out of caution, even if some parts of it have already been proved, or at least detailed strategies of proofs have been given. The existence of such functors at the category level (without homotopies) has been completely established in the case $(V, \alpha) = (\mathbf{R} \times P, dz - \lambda)$, see \cite[section 2.4]{EES07}. Detailed strategies of proofs for the general case can be found in \cite[section 4]{Ekh08} and \cite[section 5]{EO17}.
 
Note that I proved a weaker version of this result in my thesis by generalizing methods of \cite{EES05} and \cite{PR21}.
The following is the only version of Conjecture \ref{conjecture functoriality} that we will use in this paper. 

\begin{thm}[\cite{Pet22} Theorem 3.8]\label{thm invariance}
	
	Conjecture \ref{conjecture functoriality} is true if we replace the categories $\mcL_\mathbf{M} (\alpha)$ by the subcategories $\mcL_\mathbf{M}^0 (\alpha)$ where
	\begin{enumerate}
		
		\item objects are the pairs $(\mathbf{\Lambda}, J)$ such that $\Lambda$ has finitely many Reeb chords,
		
		\item morphisms from $(\mathbf{\Lambda}_0, J_0)$ to $(\mathbf{\Lambda}_1, J_1)$ are the families $\Phi = (\mathbf{\Lambda}_t, J_t)_{0 \leq t \leq 1}$ such that $\Lambda_t$ is chord generic and has finitely many Reeb chords for every $t$,
		
		\item homotopies from a morphism $\Phi = (\mathbf{\Lambda}_t, J_t)_{0 \leq t \leq 1} : (\mathbf{\Lambda}_0, J_0) \to (\mathbf{\Lambda}_1, J_1)$ to another morphism $\Phi' = (\mathbf{\Lambda}_t', J_t')_{0 \leq t \leq 1} : (\mathbf{\Lambda}_0, J_0) \to (\mathbf{\Lambda}_1, J_1)$ are the families $(\mathbf{\Lambda}_{s,t}, J_{s,t})_{0 \leq s \leq S, 0 \leq t \leq 1}$ such that $\Lambda_{s,t}$ is chord generic and has finitely many Reeb chords for every $s, t$.
		
	\end{enumerate}
	
\end{thm}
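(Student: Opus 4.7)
The plan is to adapt the bifurcation analysis approach of \cite{EES05} and \cite{PR21} to the hypertight setting. The restriction to $\mcL_\mathbf{M}^0(\alpha)$, where every Legendrian in play carries only finitely many Reeb chords throughout the parameter family, is precisely what allows one to bypass the more delicate Lagrangian cobordism methods and instead work with one-parameter families of moduli spaces of $J^{\alpha}$-holomorphic discs in the symplectization.

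First I would define $\mcF_{\alpha}$ on objects tautologically by $(\mathbf{\Lambda}, J) \mapsto CE_{-*}(\mathbf{\Lambda}, J, \alpha)$. Given a morphism $\Phi = (\mathbf{\Lambda}_t, J_t)_{t \in [0,1]}$, I would perturb $\Phi$ rel endpoints to be generic, in the sense that $\Lambda_t$ is chord generic and $J_t$ is regular for all $t$ outside a finite set of times $t_1 < \cdots < t_N$, and at each $t_i$ exactly one codimension-one phenomenon takes place, either a handle-slide instant in the sense of Definition \ref{definition handle slide instant} or a birth/death of a canceling pair of Reeb chords. Such a perturbation exists because the corresponding loci are stratified of codimension one in parameter space, and the finiteness hypothesis built into $\mcL_\mathbf{M}^0$ makes the resulting decomposition into finitely many elementary pieces well-behaved.

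On a subinterval $[t_i, t_{i+1}]$ free of bifurcations, the parametrized moduli space yields a canonical identification of Reeb chord generators and their Floer counts, and $\mcF_\alpha$ restricts there to the tautological DG-isomorphism. At a bifurcation instant $t_i$ I would define $\mcF_\alpha$ on the corresponding piece as a continuation DG-functor whose linear part sends each Reeb chord $c_0$ of $\Lambda_{t_i - \varepsilon}$ to $c_0 + \sum \# \widetilde{\mcM}^{\mathrm{par}}_{c_d, \dots, c_1, c_0} \cdot c_d \cdots c_1$, where the parametrized moduli space counts $J_{t}$-holomorphic discs for $t \in [t_i - \varepsilon, t_i + \varepsilon]$ of virtual dimension $-1$ (without $\mathbf{R}$-quotient), extended multiplicatively to the tensor algebra. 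The chain-map identity $\partial \mcF_\alpha(\Phi) + \mcF_\alpha(\Phi) \partial = 0$ then encodes the boundary structure of the one-dimensional parametrized moduli spaces, controlled by SFT compactness (\cite{BEHW03}, \cite{Abb14}) combined with hypertightness of $\alpha$ to rule out limiting contractible closed Reeb orbits. The existence of a homotopy inverse, and the independence of the construction up to DG-homotopy, would follow from the analogous two-parameter analysis: a homotopy $(\mathbf{\Lambda}_{s,t}, J_{s,t})$ gives rise to two-parameter moduli spaces whose codimension-two strata produce precisely the required DG-homotopy.

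Items (3) and (4) of Conjecture \ref{conjecture functoriality} are comparatively formal: (3) follows from the tautological commutation of the parametrized moduli space count with pullback along $\varphi$ (since $\varphi^* \alpha = \alpha'$ and $\varphi$ intertwines Reeb flows and almost complex structures), and (4) is the observation that in the absence of handle slides and births/deaths the continuation functor collapses to the strict identification $f^{\varphi_1}_{(\mathbf{\Lambda},J)} \circ (f^{\varphi_0}_{(\mathbf{\Lambda},J)})^{-1}$ provided by the contact isotopy itself. The main technical obstacle lies in establishing transversality for the parametrized moduli spaces and the gluing theorem relating codimension-one degenerations to boundary strata in the generalized hypertight setting, where one loses the convenient $1$-jet space structure used in \cite{EES05} and must instead lean entirely on hypertightness to tame the SFT compactness arguments — this is the step where the detailed generalization of \cite{Pet22} is required.
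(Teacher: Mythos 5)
The paper does not actually prove this statement in the text: it is quoted from \cite{Pet22}, with the surrounding discussion saying only that the proof generalizes the bifurcation-analysis methods of \cite{EES05} and \cite{PR21}. Your overall strategy (decompose a generic path into elementary pieces, define the DG-functor piece by piece via parametrized moduli spaces of virtual dimension $-1$, and use two-parameter families for the homotopies) is therefore consistent with the route the author indicates.

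However, there is a concrete misreading of the hypotheses that puts the weight of your argument on exactly the wrong step. In $\mcL_\mathbf{M}^0(\alpha)$ the morphisms are required to have $\Lambda_t$ \emph{chord generic for every $t$} (and likewise $\Lambda_{s,t}$ for homotopies); a birth/death of Reeb chords is precisely a failure of chord genericity, so birth/death instants are excluded by definition from the category in which the theorem is stated. Since chord genericity of a Legendrian with finitely many Reeb chords is an open condition, a small perturbation rel endpoints stays inside $\mcL_\mathbf{M}^0(\alpha)$, and the only codimension-one phenomena you need to treat are handle-slide instants. By contrast, your proposal explicitly perturbs the path so that births/deaths of canceling pairs occur and then defines the continuation functor across them. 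This is exactly the part that the remark following the theorem defers as ``a more serious issue that we will address in a future work'': building it into your proof both oversteps the statement and makes the argument rest on the unresolved piece. Relatedly, you attribute the simplification to the finiteness of the chord set, but the paper's remark says the finiteness condition is the one expected to be ``easily dropped''; the restrictive hypothesis doing the real work here is chord genericity along the entire family. If you delete the birth/death stratum from your bifurcation analysis and justify that the perturbation can be taken within $\mcL_\mathbf{M}^0(\alpha)$, the remainder of your sketch (handle-slide continuation maps, SFT compactness plus hypertightness to control breaking, two-parameter families for DG-homotopies, and the formal verifications of items (3) and (4)) is the expected argument.
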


\begin{rmk}

	We expect that the finiteness of Reeb chords condition in Theorem \ref{thm invariance} (which is very restrictive) can be easily dropped using (homotopy) colimits of DG-categories diagrams. 
	On the other hand, studying birth/death of Reeb chords phenomena is a more serious issue that we will address in a future work.

\end{rmk}

\section{Legendrian lifts of exact Lagrangians in the circular contactization}\label{subsection proof of main thm}

In this section, we start with a family $\mathbf{L}$ of mutually transverse compact connected exact Lagrangian submanifolds in a Liouville manifold, and we study a Legendrian lift $\mathbf{\Lambda^{\circ}}$ of $\mathbf{L}$ in the circular contactization.  
For the standard contact form, each point on a Legendrian gives rise to a (countable) infinite set of Reeb chords, and thus the situation is degenerate. In section \ref{subsection setting and Legendrian invariants}, we explain how we perturb the contact form and we state our central result, which relates the Chekanov-Eliashberg DG-category of $\mathbf{\Lambda^{\circ}}$ and the Fukaya $A_{\infty}$-category of $\mathbf{L}$. 

\subsection{Setting}\label{subsection setting and Legendrian invariants}

Let $\left( P , \lambda \right)$ be a Liouville manifold, and let 
\[\mathbf{L} = \left( L(E) \right)_{E \in \mcE}, \quad \mcE = \left\{ 1, \dots, N \right\}, \]
be a family of mutually transverse compact connected exact Lagrangian submanifolds in $(P, \lambda)$ such that there are primitives $f_E : L(E) \to \mathbf{R}$ of $\lambda_{| L(E)}$ satisfying $0 \leq f_1 < \dots < f_N \leq 1/2$.
We consider the contact manifold 
\[ \left( V^{\circ}, \xi^{\circ} \right) = \left( S^1 \times P, \mathrm{ker} \, \alpha^{\circ} \right), \text{ where } S^1 = \mathbf{R}_{\theta} / \mathbf{Z}, \, \alpha^{\circ} = d \theta - \lambda, \]
and the family of Legendrian submanifolds 
\[\mathbf{\Lambda^{\circ}} := (\Lambda^{\circ}(E))_{E \in \mcE}, \text{ where } \Lambda^{\circ}(E) = \left\{ (f_E(x), x) \in (\mathbf{R} / \mathbf{Z}) \times P \mid x \in L(E) \right\} . \]
In order for the Chekanov-Eliashberg category of $\mathbf{\Lambda^{\circ}}$ and the Fukaya category of $\mathbf{L}$ to be $\mathbf{Z}$-graded, we assume that $H_1(P)$ is free, that the first Chern class of $P$ (equipped with any almost complex structure compatible with $(- d \lambda)$) is 2-torsion, and that the Maslov classes of the Lagrangians $L(E)$ vanish.

\subsubsection{Reeb chords}\label{subsection perturbed Reeb chords}

Observe that $\Lambda^{\circ} = \bigcup_{E \in \mcE} \Lambda (E)$ is not chord generic for $\alpha^{\circ}$ (see Definition \ref{definition chord generic}). We will choose a compactly supported function $H : P \to \mathbf{R}$, and consider the perturbed contact form 
\[ \alpha_H^{\circ} = e^H \alpha^{\circ} . \]
The Reeb vector field of $\alpha_H^{\circ}$ is then 
\[ R_{\alpha_H^{\circ}} = e^{- H} \left( 
\begin{array}{c}
1 + \lambda \left( X_H \right) \\
X_H
\end{array} \right), \]
where $X_H$ is the unique vector field on $P$ satisfying $\iota_{X_H} d \lambda = -d H$.

We fix a compact neighborhood $K$ of $L$ which is contained in a Weinstein neighborhood of $L$ in $P$. 
It is not hard to see that for every positive integer $N$, the space of smooth functions $H$ on $P$ supported in $K$, such that the $R_{\alpha_H^{\circ}}$-chords of $\Lambda^{\circ}$ with action less than $N$ are generic, is open and dense in $C_K^{\infty} \left( P \right)$. Therefore, the space of functions $H \in C_K^{\infty} \left( P \right)$ such that $\Lambda^{\circ}$ is chord generic with respect to $\alpha_H^{\circ}$ is a Baire subset of $C_K^{\infty} \left( P \right)$. In particular, the latter is dense in $C_K^{\infty} \left( P \right)$. In the following, we choose $H \in C_K^{\infty} \left( P \right)$ such that 
\begin{enumerate}
	
	\item $\Lambda^{\circ}$ is chord generic with respect to $\alpha_H^{\circ}$,
	
	\item $H$ is sufficiently close to $0$ so that
	\[d \theta \left( R_{\alpha_H^{\circ}} \right) = e^{-H} \left( 1 + \lambda \left( X_H \right) \right) \geq 1/2 . \]
	
\end{enumerate}

\begin{exa}\label{example cotangent case}
	
	Assume that we are in the case
	\[\left( P, \lambda \right) = \left( T^* M, p dq \right), \, L = 0_M, \text{ and } H \left( q, p \right) = h(q), \]
	where $h : M \to \mathbf{R}$ is a Morse function (we present this example in order to see what happens, even if $H$ is not compactly supported in $T^*M$).
	The Reeb vector field of $\alpha_H^{\circ}$ is 
	\[ R_{\alpha_H^{\circ}} = e^{- h} \left( 
	\begin{array}{c}
	1 \\
	0 \\
	-d h
	\end{array} \right), \]
	and therefore the Reeb flow satisfies
	\[ \varphi_{R_{\alpha_H^{\circ}}}^t \left( \theta , \left( q,p \right) \right) = \left( \theta + t e^{- h (q)}, \left( q , p - t e^{-h (q)} dh \left( q \right) \right) \right) . \]
	Thus, the $R_{\alpha_H^{\circ}}$-chords of $\Lambda^{\circ}$ are the paths $c : \left[ 0,T \right] \to S^1 \times T^* M$ of the form 
	\[ c(t) = \left( t e^{- h (q_0)} , \left( q_0 , 0 \right) \right), \text{ with } T e^{- h (q_0)} \in \mathbf{Z}_{\geq 1} \text{ and } q_0 \in \mathrm{Crit} \, h. \]
	Observe that these Reeb chords are transverse but lie on top of each others. See Figure \ref{figure perturbed Reeb chords}, where we illustrate this perturbation when $M = S^1$.
	
\end{exa}

\begin{figure}
	\def\svgwidth{1.2\textwidth}
\begingroup%
  \makeatletter%
  \providecommand\color[2][]{%
    \errmessage{(Inkscape) Color is used for the text in Inkscape, but the package 'color.sty' is not loaded}%
    \renewcommand\color[2][]{}%
  }%
  \providecommand\transparent[1]{%
    \errmessage{(Inkscape) Transparency is used (non-zero) for the text in Inkscape, but the package 'transparent.sty' is not loaded}%
    \renewcommand\transparent[1]{}%
  }%
  \providecommand\rotatebox[2]{#2}%
  \newcommand*\fsize{\dimexpr\f@size pt\relax}%
  \newcommand*\lineheight[1]{\fontsize{\fsize}{#1\fsize}\selectfont}%
  \ifx\svgwidth\undefined%
    \setlength{\unitlength}{718.73001012bp}%
    \ifx\svgscale\undefined%
      \relax%
    \else%
      \setlength{\unitlength}{\unitlength * \real{\svgscale}}%
    \fi%
  \else%
    \setlength{\unitlength}{\svgwidth}%
  \fi%
  \global\let\svgwidth\undefined%
  \global\let\svgscale\undefined%
  \makeatother%
  \begin{picture}(1,0.2679528)%
    \lineheight{1}%
    \setlength\tabcolsep{0pt}%
    \put(0,0){\includegraphics[width=\unitlength,page=1]{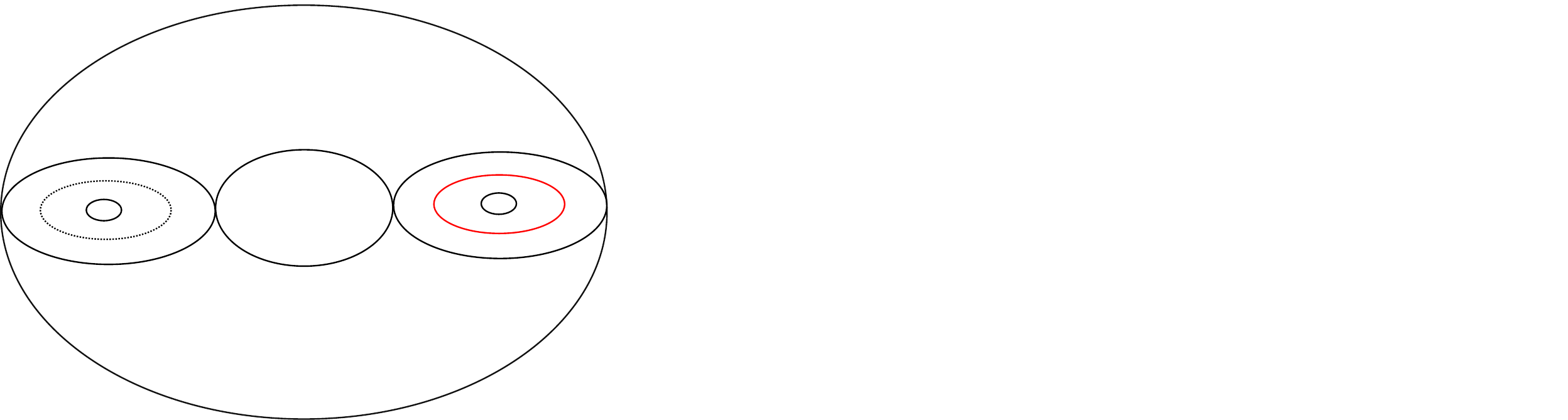}}%
    \put(0.36056249,0.12814535){\color[rgb]{0,0,0}\makebox(0,0)[lt]{\lineheight{1.25}\smash{\begin{tabular}[t]{l}\textcolor{red}{$\Lambda^{\circ}$}\end{tabular}}}}%
    \put(0.2577924,0.08072042){\color[rgb]{0,0,0}\makebox(0,0)[lt]{\lineheight{1.25}\smash{\begin{tabular}[t]{l}$T^*S^1$\end{tabular}}}}%
    \put(0.35444981,0.01167934){\color[rgb]{0,0,0}\makebox(0,0)[lt]{\lineheight{1.25}\smash{\begin{tabular}[t]{l}$S^1 \times T^*S^1$\end{tabular}}}}%
    \put(0,0){\includegraphics[width=\unitlength,page=2]{Invariants_lift/Figures/dessin10.pdf}}%
    \put(0.79490552,0.12836039){\color[rgb]{0,0,0}\makebox(0,0)[lt]{\lineheight{1.25}\smash{\begin{tabular}[t]{l}\textcolor{red}{$\Lambda^{\circ}$}\end{tabular}}}}%
    \put(0.70186705,0.08214395){\color[rgb]{0,0,0}\makebox(0,0)[lt]{\lineheight{1.25}\smash{\begin{tabular}[t]{l}$T^*S^1$\end{tabular}}}}%
    \put(0,0){\includegraphics[width=\unitlength,page=3]{Invariants_lift/Figures/dessin10.pdf}}%
  \end{picture}%
\endgroup%
		
	\caption{Reeb chords (in blue) of $\Lambda^{\circ} = \{ 0 \} \times 0_{S^1}$ for $\alpha^{\circ}$ (on the left) and for $\alpha_H^{\circ}$ (on the right)}	
	\label{figure perturbed Reeb chords}
\end{figure}

\subsubsection{Conley-Zehnder index}

In order to define the Conley-Zehnder index (see section \ref{subsection Conley-Zehnder index}), we need to choose a family $(h_0, h_1, \dots, h_s)$ of embedded circles in $V^{\circ} = S^1 \times P$ which represent a basis of $H_1 ( V^{\circ} )$, and a symplectic trivialization of $\xi^{\circ}$ over each $h_i$. We let $h_0 = S^1 \times \{ a_0 \}$ be some fiber of $S^1 \times P \to P$, and we fix $\left( h_1, \dots, h_s \right)$ to be any family of embedded circles in $P$ which represent a basis of $H_1 ( P )$. 
We choose a symplectic isomorphism $\psi : \left( T_{a_0} P, - d \lambda_{a_0} \right) \xrightarrow{\sim} \left( \mathbf{C}^n, dx \wedge dy \right)$, and then we choose the symplectic trivialization
\[\left( \xi^{\circ}_{| h_0}, d \alpha^{\circ} \right) \xrightarrow{\sim} \left( h_0 \times \mathbf{C}^n, dx \wedge dy \right), \quad \left( \left( \theta, a_0  \right), \left( \lambda_{a_0} (v), v \right) \right) \mapsto \left( \left( \theta, a_0  \right) , e^{2 i \pi r \theta} \psi (v) \right) \]
where $r$ is some integer, that we call \emph{$r$-trivialization of $\xi^{\circ}$ over the fiber}.
Finally, we choose some trivialization of $\xi^{\circ}$ over each $h_i$, $1 \leq i \leq s$.

\begin{exa}\label{example CZ index in cotangent case}
	
	We compute the Conley-Zehnder index of a Reeb chord in the case of Example \ref{example cotangent case}, i.e. when 
	\[\left( P, \lambda \right) = \left( T^* M, p dq \right), \, L = 0_M, \text{ and } H \left( q, p \right) = h(q), \]
	where $h : M \to \mathbf{R}$ is a Morse function .
	In this case, the Reeb flow is given by
	\[ \varphi_{R_{\alpha_H^{\circ}}}^t \left( \theta , \left( q,p \right) \right) = \left( \theta + t e^{-  h (q)}, \left( q , p - t  e^{-  h (q)} dh \left( q \right) \right) \right) . \]
	Let $c : \left[ 0,T \right] \to V^{\circ}$ be a Reeb chord of $\Lambda^{\circ}$. Then there exists a positive integer $k$ and a critical point $q_0$ of $h$ such that 
	\[c(t) = \left( t e^{-  h (q_0)} , \left( q_0 , 0 \right) \right) \text{ and } T e^{-  h (q_0)} = k. \]
	Observe that $c(0)  = c(T)$, and thus there is no need to choose a capping path for $c$. 
	Besides, for every $u$ in $T_{q_0} M$, we have  
	\[D \varphi_{R_{\alpha_H^{\circ}}}^t \left( c(0) \right) \left( 0, u, 0 \right) = \left( 0, u, -  t e^{-  h (q_0)} D^2 h \left( q_0 \right) u \right) . \]
	In order to compute the index of $c$, we first choose coordinates $\left( x_1, \dots, x_n \right)$ around $q_0 \in M$ in which  
	\[h = h \left( q_0 \right) + \frac{1}{2} \sum_{j=1}^{\mathrm{dim}(M)} \sigma_j x_j^2, \text{ where } \sigma_j = \pm 1, \]
	and we extend it to symplectic coordinates $\left( x_1, \dots, x_n, y_1, \dots, y_n \right)$ around $\left( q_0, 0 \right) \in T^*M$ by setting 
	\[y_j \left( q,p \right) = \langle p, \frac{\partial}{\partial x_j} (q) \rangle . \] 
	Our choice of trivialization for a fiber of $S^1 \times 
	P \to P$ induces the trivialization 
	\[e^{2 i \pi r k t / T} \left( dx + i dy \right) : c^{-1} \xi^{\circ} \xrightarrow{\sim} \left( \mathbf{R} / T \mathbf{Z} \right) \times \mathbf{C}^n \]
	(observe that $\xi^{\circ}_{c(t)} = \{ 0 \} \times T_{\left( q_0, 0 \right)} \left( T^*M \right)$).
	Accordingly, the path $t \mapsto D \varphi_{R_{\alpha_H^{\circ}}}^t \left( T_{c(0)} \Lambda^{\circ} \right)$ induces a path of Lagrangians 
	\[\Gamma_c : t \in \left[ 0, T \right] \mapsto \left\{ \left( e^{2 i \pi m k t / T} \left( u_j - i  t e^{- h(q_0)} \sigma_j u_j \right) \right)_{1 \leq j \leq n} \mid u \in \mathbf{R}^n \right\} \subset \mathbf{C}^n . \]
	We close this path using a counterclockwise rotation $\Gamma$, and call the resulting loop $\overline{\Gamma_c}$. In order to compute the Conley-Zehnder index of $c$, we have to look at how $\overline{\Gamma_c}$ intersects the Lagrangian $i \mathbf{R}^n$ (as explained in \cite[section 2.2]{EES05}).
	Observe that $\Gamma_c$ intersects $i \mathbf{R}^n$ positively $2rk$ times, so that $\Gamma_c$ contributes $2rk$ to the Conley-Zehnder index of $c$. Moreover, since $\Gamma$ is a counterclockwise rotation bringing
	\[\left\{ \left( u_j - i  T e^{- h(q_0)} \sigma_j u_j \right)_{1 \leq j \leq n} \mid u \in \mathbf{R}^n \right\} \text{ to } \mathbf{R}^n, \]
	the contributions to the intersection between $\Gamma$ and $i \mathbf{R}^n$ come from the negative eigenvalues $\sigma_j$. The computation done in \cite[Lemma 3.4]{EES05bisbis} implies that $\Gamma$ contributes $\mathrm{ind} (q_0)$ to the Conley-Zehnder index of $c$. We conclude that the Conley-Zehnder index of $c$ is
	\[CZ \left( c \right) = \mu \left( \overline{\Gamma_c} \right) = 2rk + \mathrm{ind} (q_0) . \] 

\end{exa}

\subsubsection{Main result and strategy of proof}\label{subsection invariants and statement of the result}

Let $j$ be an almost complex structure on $P$ compatible with $\left( -d\lambda \right)$, and let $J^{\circ}$ be its lift to a complex structure on $\xi^{\circ}$.
Recall from section \ref{subsection Chekanov-Eliashberg algebra} the definition of the Chekanov-Eliashberg DG-category of a family of Legendrians. In our situation, $CE_{-*}^r \left( \mathbf{\Lambda^{\circ}} \right) = CE_{-*} \left( \mathbf{\Lambda^{\circ}}, J^{\circ}, \alpha_{H}^{\circ} \right)$ (with grading induced by the $r$-trivialization of $\xi^{\circ}$ over the fiber) is an Adams-graded DG-algebra, where the Adams-degree of a Reeb chord $c$ is the number of times $c$ winds around the fiber. Besides, the map $CE_{-*}^r \left( \mathbf{\Lambda^{\circ}} \right) \to \mathbf{F}$ which sends every Reeb chord to zero (and preserves units) defines an augmentation of $CE_{-*}^r \left( \mathbf{\Lambda^{\circ}} \right)$. 

\begin{rmk}
	
	In the case of Example \ref{example cotangent case}, the cohomological degree of a Reeb chord $c$ in $CE_{-*}^r \left( \mathbf{\Lambda^{\circ}} \right)$ corresponding to a positive integer $k$ and a critical point $q_0$ is 
	\[1 - CZ (c) = 1 -2rk - \mathrm{ind} (q_0) \]
	(see Example \ref{example CZ index in cotangent case}).
	
\end{rmk}

Besides, we denote by $\mcF uk (\mathbf{L})$ the Fukaya category generated by the Lagrangians $L(E)$ (see for example \cite[chapter 2]{Sei08}), and by $\overrightarrow{\mcF uk} (\mathbf{L})$ its directed subcategory:
\[\hom_{\overrightarrow{\mcF uk} (\mathbf{L})} (L(E), L(E')) =
\left\{
\begin{array}{cl}
	\langle L(E) \cap L(E') \rangle & \text{if } E < E' \\
	\mathbf{F} & \text{if } E = E' \\
	0 & \text{if } E > E' \\
\end{array}
\right. \]
(see \cite[paragraph (5n)]{Sei08}). 

Let $\mathbf{F} \left[ t_m \right]$ be the augmented Adams-graded associative algebra generated by a variable $t_m$ of bidegree $\left( m,1 \right)$. 
Observe that if $\mcC$ is a subcategory of an $A_{\infty}$-category $\mcD$ with $\mathrm{ob} (\mcC) = \mathrm{ob} (\mcD)$, then $\mcC \oplus (t_m \mathbf{F} [t_m] \otimes \mcD)$ is naturally an Adams-graded $A_{\infty}$-category, where the Adams degree of $t_m^k \otimes x$ equals $k$.
Moreover, we denote by $E(-) = B(-)^{\#}$ (graded dual of bar construction) the Koszul dual functor (see \cite[section 2.3]{EL21} or \cite[section 2]{LPWZ08}).

\begin{thm}[Theorem \ref{thm mainthm} in the Introduction]\label{thm mainthm restatement}
	
	Koszul duality holds for $CE_{-*}^r \left( \mathbf{\Lambda^{\circ}} \right)$, and there is a quasi-equivalence of augmented Adams-graded $A_{\infty}$-categories
	\[E \left( CE_{-*}^r \left( \mathbf{\Lambda^{\circ}} \right) \right) \simeq \overrightarrow{\mcF uk} (\mathbf{L}) \oplus \left( t_{2r} \mathbf{F} \left[ t_{2r} \right] \otimes \mcF uk (\mathbf{L}) \right). \]
	
\end{thm}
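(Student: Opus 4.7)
The strategy is to lift $\mathbf{\Lambda^{\circ}}$ to the standard contactization, express the Koszul dual of the circular Chekanov-Eliashberg invariant as a mapping torus, and apply Theorem \ref{thm mapping torus in weak situation}. First I would lift $\mathbf{\Lambda^{\circ}}$ to the $\mathbf{Z}$-family $\widetilde{\mathbf{\Lambda}} = (\Lambda^n(E))_{(n,E) \in \mathbf{Z}\times\mcE}$ in $(\mathbf{R}\times P, \ker(dz-\lambda))$ with $\Lambda^n(E) = \{(f_E(x)+n, x) \mid x \in L(E)\}$. The deck transformation $\tau(z,x)=(z+1,x)$ induces a strict autoequivalence of the Koszul dual $A_\infty$-category $\mcA := E(CE_{-*}(\widetilde{\mathbf{\Lambda}}))$, compatible with the natural $\mathbf{Z}$-splitting. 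Since $0\leq f_1<\cdots<f_N\leq 1/2$ and $H$ is chosen so that Reeb chords strictly increase $z$, the category $\mcA$ is weakly directed in the sense of Definition \ref{definition group-action}.

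Next, I would prove a quasi-equivalence $E(CE_{-*}^r(\mathbf{\Lambda^{\circ}})) \simeq \mathrm{MT}(\tau)$, which is the geometric heart of the theorem. Each Reeb chord of $\mathbf{\Lambda^{\circ}}$ with winding number $k$ around the fiber lifts uniquely, modulo an overall $\mathbf{Z}$-translation, to a chord of $\widetilde{\mathbf{\Lambda}}$ from $\Lambda^i(E)$ to $\Lambda^{i+k}(E')$, and holomorphic discs in the symplectization of $S^1\times P$ correspond bijectively to $\mathbf{Z}$-equivariant discs in $\mathbf{R}\times\mathbf{R}\times P$. The Adams degree (winding number on one side, $j-i$ on the other) matches, and the Conley-Zehnder computation of Example \ref{example CZ index in cotangent case} shows that one application of $\tau$ produces a cohomological shift by $-2r$; this forces $m=2r$.

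The third step is to supply the bimodule map $f : \mcA_{2r}(-,-) \to \mcA_{2r}(-,\tau(-))$ required to apply Theorem \ref{thm mapping torus in weak situation}. I would take $f$ to be (up to homotopy) multiplication by the distinguished cocycle $c_n(E) \in \mcA(\Lambda^n(E), \Lambda^{n+1}(E))$ associated to the shortest Reeb chord that winds exactly once from $\Lambda^n(E)$ to $\Lambda^{n+1}(E)$; this chord is guaranteed to exist after perturbing by $H$ and produces a closed degree-zero bimodule map via the Yoneda discussion of Corollary \ref{coro closed module map homotopic to Yoneda module map}. The quasi-isomorphism hypothesis on off-diagonal pieces $\mcA_{2r}(X^i(E), X^j(E')) \to \mcA_{2r}(X^i(E), X^{j+1}(E'))$ for $i<j$ should be verified through the \cite{DR16} correspondence between holomorphic discs in $\mathbf{R}\times\mathbf{R}\times P$ and pseudo-holomorphic polygons in $P$: both sides compute the Floer cohomology $HF^*(L(E), L(E'))$, so multiplication by $c_n(E)$ is an isomorphism on cohomology.

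Finally, the same \cite{DR16} polygon correspondence identifies $\mcA_{2r}^0$ with the directed Fukaya category $\overrightarrow{\mcF uk}(\mathbf{L})$ (weak directedness kills morphisms from $L(E)$ to $L(E')$ with $E > E'$ in Adams degree zero), and identifies the localization $\mcA_{2r}[f(\mathrm{units})^{-1}]^0$ with the full Fukaya category $\mcF uk(\mathbf{L})$ (inverting the continuation elements recovers the missing Floer groups by effectively undoing the directedness constraint). The Koszul duality assertion follows formally once this $A_\infty$-model $\mcA$ whose bar construction recovers $CE_{-*}(\widetilde{\mathbf{\Lambda}})$ is in hand. I expect the main obstacle to be the second step: the faithful identification of the circular Chekanov-Eliashberg invariant with $\mathrm{MT}(\tau)$, which requires careful SFT compactness and transversality arguments in a setting where the fiber direction produces Reeb orbits that must be excluded or controlled using the hypertightness of the lifted contact form.
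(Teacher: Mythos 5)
Your overall architecture coincides with the paper's: lift to $\mathbf{R}\times P$, use the strictness of the deck transformation to identify $E(CE_{-*}^r(\mathbf{\Lambda^{\circ}}))\simeq LA^*(\mathbf{\Lambda^{\circ}})$ with $\mathrm{MT}(\tau)$ (this is Theorem \ref{thm mapping torus in strict situation} plus the covering-space bijection on moduli spaces), then compute the mapping torus via Theorem \ref{thm mapping torus in weak situation} and identify the outcome with the (directed) Fukaya category by localization at continuation elements. However, there are two genuine gaps.

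First, your construction of the bimodule map $f$ does not work as stated. ``Multiplication by the distinguished cocycle $c_n(E)$ via Corollary \ref{coro closed module map homotopic to Yoneda module map}'' produces a closed \emph{one-sided} module map $\mcA(-,X^n(E))\to\mcA(-,X^{n+1}(E))$; a closed \emph{bimodule} map $\mcA_{2r}(-,-)\to\mcA_{2r}(-,\tau(-))$ requires coherent higher components with inputs on both sides of the distinguished element, i.e.\ a degree-zero Hochschild cocycle, and these do not come for free from the single chord $c_n(E)$. Moreover the map must send the \emph{formal} unit $e_{X^n(E)}$ (which is not a Reeb chord, since the category is directed) to a geometric generator. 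The paper resolves this by perturbing $L_H^k$ to $L_H^{k+\delta}$, invoking the analytic theorem of \cite{EES09} to identify rigid discs on the perturbed collection with rigid discs carrying attached gradient flow lines (Lemma \ref{lemma discs with boundary on small deformation}), and assembling the functors $\sigma_{n,k}=\nu_{n,k}\circ\rho_{n,k}$ into the bimodule map (Lemma \ref{lemma bimodule map}). This is a substantial geometric construction, not a formal consequence of the Yoneda lemma.

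Second, you apply Theorem \ref{thm mapping torus in weak situation} and the correspondence of \cite{DR16} directly to the Reeb-chord category $\mcA$ upstairs, but \cite{DR16} identifies discs in the symplectization with polygons in $P$ only for the standard contact form $d\theta-\lambda$ and an almost complex structure lifted from $P$, whereas chord genericity forces you to work with $\alpha_H=e^H\alpha$ and a correspondingly adapted structure. Bridging this requires the intermediate chain $(\mcA,\tau)\to(\mcA_1,\tau_1)\to(\mcA_2,\tau_2)\to(\mcO,\gamma)$ of the paper (rectifying the contact form by the contactomorphism $\phi_H$ of Lemma \ref{lemma rectify contact form}, changing the almost complex structure back, then projecting), together with the invariance of mapping tori under quasi-equivalences and under homotopy of functors (Propositions \ref{prop invariance of homotopy colimits} and \ref{prop homotopic functors induce quasi-isomorphic homotopy colimits}) and the functoriality package of Theorem \ref{thm invariance}. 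Relatedly, you locate the main difficulty in the identification of $LA^*(\mathbf{\Lambda^{\circ}})$ with $\mathrm{MT}(\tau)$; in fact that step is comparatively soft (the projection induces a literal bijection on moduli spaces), and the hard work sits precisely in the two places above.
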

	
\begin{coro}\label{coro topological expression}
	
	If $L$ is a connected compact exact Lagrangian and $\Lambda^{\circ}$ is a Legendrian lift of $L$ in the circular contactization, then there is a quasi-equivalence of augmented DG-algebras 
	\[ CE_{-*}^1 \left( \Lambda^{\circ} \right) \simeq C_{-*} \left( \Omega \left( \mathbf{CP}^{\infty} \rtimes L \right) \right). \]
	
\end{coro}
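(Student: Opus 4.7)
The plan is to specialize Theorem~\ref{thm mainthm restatement} to $r=1$ and a single Lagrangian, identify the resulting bigraded $A_\infty$-algebra with the cochains on $\mathbf{CP}^\infty \rtimes L$, and then invoke the classical Adams--Eilenberg--Moore description of loop space chains.

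Taking $r=1$ and $\mcE = \{L\}$ in Theorem~\ref{thm mainthm restatement}, together with the Koszul duality it provides, yields a quasi-equivalence of augmented DG-algebras
\[CE_{-*}^1(\Lambda^{\circ}) \simeq E\bigl(\,\overrightarrow{\mcF uk}(L) \oplus (t_2 \mathbf{F}[t_2] \otimes \mcF uk(L))\,\bigr).\]
Since $L$ is connected, $\overrightarrow{\mcF uk}(L) = \mathbf{F}$, and for a single exact compact Lagrangian the $A_\infty$-algebra $\mcF uk(L) = CF^*(L,L)$ is quasi-isomorphic to the singular cochain algebra $C^*(L)$ via the standard PSS / Abbondandolo--Schwarz comparison between Floer and Morse theories.

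Next I would identify the bracketed $A_\infty$-algebra with $C^*(\mathbf{CP}^\infty \rtimes L)$. Unpacking the half-smash, $\mathbf{CP}^\infty \rtimes L = \mathbf{CP}^\infty \wedge L_+$, so the Künneth quasi-isomorphism for reduced cochains on a smash product gives
\[\widetilde{C}^*(\mathbf{CP}^\infty \wedge L_+) \simeq \widetilde{C}^*(\mathbf{CP}^\infty) \otimes \widetilde{C}^*(L_+) \simeq (t_2\mathbf{F}[t_2]) \otimes C^*(L),\]
using $\widetilde{C}^*(L_+) \simeq C^*(L)$ and the formality of $\mathbf{CP}^\infty$ over $\mathbf{F} = \mathbf{Z}/2\mathbf{Z}$ (with $\widetilde{H}^*(\mathbf{CP}^\infty) = t\mathbf{F}[t]$, $|t|=2$), the Adams grading being assigned by the power of $t$. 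Reintroducing the basepoint contribution then produces
\[C^*(\mathbf{CP}^\infty \rtimes L) \simeq \mathbf{F} \oplus \bigl(t_2 \mathbf{F}[t_2] \otimes C^*(L)\bigr) \simeq \overrightarrow{\mcF uk}(L) \oplus \bigl(t_2 \mathbf{F}[t_2] \otimes \mcF uk(L)\bigr).\]

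Finally, $\mathbf{CP}^\infty \rtimes L$ is simply-connected of finite type (its minimal CW structure has cells only in dimensions $\geq 2$), so the dualized form of Adams' cobar/Eilenberg--Moore theorem applies:
\[E(C^*(Y)) = B(C^*(Y))^{\#} \simeq C_{-*}(\Omega Y)\]
for any such $Y$. Composing the three steps yields
\[CE_{-*}^1(\Lambda^\circ) \simeq E\bigl(C^*(\mathbf{CP}^\infty \rtimes L)\bigr) \simeq C_{-*}\bigl(\Omega(\mathbf{CP}^\infty \rtimes L)\bigr),\]
as augmented DG-algebras. The most delicate step is the second one: upgrading the Künneth comparison to a quasi-equivalence of augmented, Adams-bigraded $A_\infty$-algebras (rather than just of chain complexes). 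This should be handled either by working with minimal models and the formality of $\mathbf{CP}^\infty$, or by appealing to an $A_\infty$-version of Künneth for smash products together with an explicit $A_\infty$-model of $C^*(L_+)$.
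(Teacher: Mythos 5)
Your proposal is correct and follows essentially the same route as the paper: specialize the main theorem to $r=1$, identify $\mathbf{F}\oplus\left(t_2\mathbf{F}\left[t_2\right]\otimes C^*(L)\right)$ with $C^*\left(\mathbf{CP}^{\infty}\rtimes L\right)$ via $\mathbf{CP}^{\infty}\rtimes L=\mathbf{CP}^{\infty}\wedge L_+$, use Koszul duality to pass to $CE_{-*}^1\left(\Lambda^{\circ}\right)\simeq E\left(C^*\left(\mathbf{CP}^{\infty}\rtimes L\right)\right)$, and conclude by the Adams/cobar comparison. The only cosmetic difference is that the paper factors your last step into an algebraic duality $E\left(C^*(Y)\right)\simeq\Omega\left(C_{-*}(Y)\right)$ (valid for locally finite, simply connected $A_{\infty}$-models, via homological perturbation) followed by Adams' theorem $\Omega\left(C_{-*}(Y)\right)\simeq C_{-*}\left(\Omega Y\right)$, and both you and the paper leave the multiplicative upgrade of the Künneth identification implicit.
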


\begin{proof}
	
	Let $x_0$ be the base point of $\mathbf{CP}^{\infty}$, and set $P := \mathbf{CP}^{\infty} \setminus \{ x_0 \}$. 
	Observe that 
	\[\left( P \times L \right)^* = P^* \wedge L^* = \mathbf{CP}^{\infty} \wedge L^* = \mathbf{CP}^{\infty} \rtimes L . \]
	We have  
	\[\mathbf{F} \oplus \left( t_2 \mathbf{F} \left[ t_2 \right] \otimes CF^* \left( L \right) \right) \simeq \mathbf{F} \oplus \left( t_2 \mathbf{F} \left[ t_2 \right] \otimes C^* \left( L \right) \right) \simeq C^* \left( \left( P \times L \right)^* \right) \simeq C^* \left( \mathbf{CP}^{\infty} \rtimes L \right). \]
	Thus, it follows from Theorem \ref{thm mainthm restatement} that 
	\[E \left( CE_{-*}^1 \left( \Lambda^{\circ} \right) \right) \simeq C^* \left( \mathbf{CP}^{\infty} \rtimes L \right) . \]
	Since Koszul duality holds for $CE_{-*}^1 \left( \Lambda^{\circ} \right)$, 
	\[CE_{-*}^1 \left( \Lambda^{\circ} \right) \simeq E \left( C^* \left( \mathbf{CP}^{\infty} \rtimes L \right)  \right) . \]
	Observe that the graded algebra $H^*\left( \mathbf{CP}^{\infty} \rtimes L \right)$ is locally finite (i.e. each degree component is finitely generated) and simply connected (i.e. its augmentation ideal is concentrated in components of degree strictly greater than 1). Thus, according to the homological perturbation lemma (see \cite[Proposition 1.12]{Sei08}), we can assume that $C^*\left( \mathbf{CP}^{\infty} \rtimes L \right)$ is a locally finite  and simply connected $A_{\infty}$ model for the DG-algebra of cochains on $\mathbf{CP}^{\infty} \rtimes L$. 
	Therefore, \cite[Lemma 10]{EL21} implies that
	\[CE_{-*}^1 \left( \Lambda^{\circ} \right) \simeq \Omega \left( C_{-*} \left( \mathbf{CP}^{\infty} \rtimes L \right) \right). \]
	Now, since $\mathbf{CP}^{\infty} \rtimes L$ is simply connected, Adams result (see \cite{Ada56}, \cite{AH56} and also \cite{EL21}) yields
	\[\Omega \left( C_{-*} \left( \mathbf{CP}^{\infty} \rtimes L \right) \right) \simeq C_{-*} \left( \Omega \left( \mathbf{CP}^{\infty} \rtimes L \right) \right) . \]
	This concludes the proof.
	
\end{proof}

\paragraph{Strategy of proof.}

We explain the startegy to compute $E ( CE_{-*}^r \left( \mathbf{\Lambda^{\circ}} \right) )$.
Recall from the last paragraph of section \ref{subsection Chekanov-Eliashberg algebra} that there is a coaugmented $A_{\infty}$-cocategory $LC_* \left( \mathbf{\Lambda^{\circ}} \right)$ such that 
\[CE_{-*}^r \left( \mathbf{\Lambda^{\circ}} \right) = \Omega ( LC_* \left( \mathbf{\Lambda^{\circ}} \right) ). \]
$LC_* \left( \mathbf{\Lambda^{\circ}} \right)$ inherits an Adams-grading from $CE_{-*}^r \left( \mathbf{\Lambda^{\circ}} \right)$ (the same), and we denote by $LA^* \left( \mathbf{\Lambda^{\circ}} \right)$ its graded dual (see \cite[section 2.1.3]{EL21}). In our situation, $LA^* \left( \mathbf{\Lambda^{\circ}} \right)$ is an augmented Adams-graded $A_{\infty}$-category whose augmentation ideal is generated by the Reeb chords of $\mathbf{\Lambda^{\circ}}$ (and the Adams-degree of a Reeb chord $c$ is the number of times $c$ winds around the fiber).
Since there is a quasi-isomorphism $B \left( \Omega C \right) \simeq C$ for every $A_{\infty}$-cocategory $C$ (see \cite[section 2.2.2]{EL21}), it follows that 
\[E \left( CE_{-*}^r \left( \mathbf{\Lambda^{\circ}} \right) \right) = B \left( CE_{-*}^r \left( \mathbf{\Lambda^{\circ}} \right) \right)^{\#} \simeq LC_* \left( \mathbf{\Lambda^{\circ}} \right)^{\#} = LA^* \left( \mathbf{\Lambda^{\circ}} \right) \]
(graded dual preserves quasi-isomorphisms).

\begin{rmk}
	
	In the case of Example \ref{example cotangent case}, the cohomological degree of a Reeb chord $c$ in $LA^* \left( \mathbf{\Lambda^{\circ}} \right)$ corresponding to a positive integer $k$ and a critical point $q_0$ is 
	\[CZ (c) = 2rk + \mathrm{ind} (q_0) \]
	(see Example \ref{example CZ index in cotangent case}).
	
\end{rmk}

In order to compute $LA^* \left( \mathbf{\Lambda^{\circ}} \right)$, we apply a sequence of geometric modifications to the situation. Each of the section numbered from \ref{subsection lift} to \ref{subsection projection on P} explains one of these modifications and describes how the algebraic invariants change accordingly. 
The main ingredients are respectively: Theorem \ref{thm mapping torus in strict situation} in section \ref{subsection lift}, Lemma \ref{lemma rectify contact form} in section \ref{subsection rectification of the contact form}, Theorem \ref{thm invariance} in section \ref{subsection back to the original acs}, and \cite[Theorem 2.1]{DR16} in section \ref{subsection projection on P}.
Finally, we end the proof in section \ref{subsection proof of the main result} using Theorem \ref{thm mapping torus in weak situation} (Theorem \ref{thm mapping torus in weak situation introduction} in the introduction).

\subsection{Lift to $\mathbf{R} \times P$}\label{subsection lift}

In the following we will consider the contact manifold 
\[ \left( V, \xi \right) = \left( \mathbf{R}_{\theta} \times P, \ker \, \alpha \right) \text{ where } \alpha = d \theta - \lambda, \]
and the family of Legendrian submanifolds
\[\mathbf{\Lambda} := (\Lambda^n(E))_{(n, E) \in \mathbf{Z} \times \mcE}, \text{ where } \Lambda^{\theta}(E) = \left\{ (f_E(x) + \theta, x) \in \mathbf{R} \times P \mid x \in L(E) \right\} . \]
Moreover we set $\Lambda^n := \bigcup_{E \in \mcE} \Lambda^n (E)$ and $\Lambda := \bigcup_{n \in \mathbf{Z}} \Lambda^n$.

Recall from section \ref{subsection perturbed Reeb chords} that we chose a compactly supported function $H : P \to \mathbf{R}$ such that 
\begin{enumerate}
	
	\item $\Lambda^{\circ}$ is chord generic with respect to $\alpha_H^{\circ}$,
	
	\item $H$ is sufficiently close to $0$ so that
	\[d \theta \left( R_{\alpha_H^{\circ}} \right) = e^{-H} \left( 1 + \lambda \left( X_H \right) \right) \geq 1/2 . \]
	
\end{enumerate}
We consider the contact form 
\[ \alpha_{H} = e^{ H} \alpha, \]
with Reeb vector field 
\[ R_{\alpha_H} = e^{-  H} \left( 
\begin{array}{c}
1 +  \lambda \left( X_H \right) \\
 X_H
\end{array} \right). \]
Moreover, we denote by $J$ the lift of $J^{\circ}$ to an almost complex structure on $\xi$. 

\subsubsection{The $A_{\infty}$-category $\mcA$}

\begin{defin}\label{definition category A}
	
	We consider the $A_{\infty}$-category $\mcA$ defined as follows
	\begin{enumerate}
		
		\item the objects of $\mcA$ are the Legendrians $\Lambda^n (E)$, $(n, E) \in \mathbf{Z} \times \mcE$,
		
		\item the space of morphisms from $\Lambda^i (E)$ to $\Lambda^j (E')$ is either generated by the $R_{\alpha_{H}}$-chords from $\Lambda^i (E)$ to $\Lambda^j (E')$ if $(i,E)<(j,E')$, or $\mathbf{F}$ if $(i,E)=(j,E')$, or $0$ otherwise,
		
		\item the operations are such that $1 \in \mcA \left( \Lambda^n (E), \Lambda^n (E) \right)$ is a strict unit, and for every sequence $(i_0, E_0) < \dots < (i_d, E_d)$, for every sequence of Reeb chords 
		\[\left( c_1, \dots, c_d \right) \in \mcR \left( \Lambda^{i_0} (E_0), \Lambda^{i_1} (E_1) \right) \times \dots \times \mcR \left( \Lambda^{i_{d-1}} (E_{d-1}), \Lambda^{i_d} (E_d) \right), \]
		we have 
		\[\mu_{\mcA} \left( c_1, \dots, c_d \right) = \sum \limits_{c_0 \in \mcR \left( \Lambda^{i_0} (E_0), \Lambda^{i_d} (E_d) \right)} \# \mcM_{c_d, \dots, c_1, c_0} \left( \mathbf{R} \times \Lambda, J, \alpha_{H} \right) c_0 \]
		(see Definition \ref{definition moduli spaces} for the moduli spaces).
		
	\end{enumerate} 
	
\end{defin}

\subsubsection{The quasi-autoequivalence $\tau$} 

We introduce an $A_{\infty}$-endofunctor of $\mcA$ that will be important in the following.

\begin{defin}\label{definition functor tau}
	
	We denote by $\tau : \mcA \to \mcA$ the $A_{\infty}$-functor defined as follows:
	\begin{enumerate}
		
		\item on objects, $\tau$ sends $\Lambda^n (E)$ to $\Lambda^{n+1} (E)$,
		
		\item on morphisms, the map 
		\[\tau : \mcA \left( \Lambda^{i_0} (E_0), \Lambda^{i_1} (E_1) \right) \otimes \dots \otimes \mcA \left( \Lambda^{i_{d-1}} (E_{d-1}), \Lambda^{i_d} (E_d) \right) \to \mcA \left( \Lambda^{i_0+1} (E_0), \Lambda^{i_d+1} (E_d) \right)  \]
		is obtained by dualizing the components of the DG-isomorphism
		\[CE_{-*} \left( \left( \Lambda^{n+1} \right)_{i_0 \leq n \leq i_d}, J, \alpha_{H} \right) \to CE_{-*} \left( \left( \Lambda^n \right)_{i_0 \leq n \leq i_d}, J, \alpha_{H} \right) \]
		induced by the path $ \left( \left( \Lambda^{n+1-t} \right)_{i_0 \leq n \leq i_d}, J \right)_{0 \leq t \leq 1}$ (see Theorem \ref{thm invariance}). 
		
	\end{enumerate}
	
\end{defin}

\begin{lemma}\label{lemma tau is a quasi-equivalence}
	
	The $A_{\infty}$-functor $\tau : \mcA \to \mcA$ is a quasi-equivalence.
	
\end{lemma}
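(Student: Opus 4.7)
The plan is to show that $\tau$ arises directly from the translation symmetry $\theta \mapsto \theta+1$ of the setup, and is essentially the (strict) $A_{\infty}$-isomorphism dual to this relabeling. First I would observe that $\tau$ acts as a bijection on objects, with inverse $\Lambda^n(E) \mapsto \Lambda^{n-1}(E)$, so $\tau$ is automatically cohomologically essentially surjective. It then remains only to verify that the linear component $\tau^1$ is a quasi-isomorphism on each hom-complex of $\mcA$.

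The next step is to identify the underlying DG-isomorphism in Definition \ref{definition functor tau} concretely. Consider the contact isotopy $\varphi_t(\theta, x) = (\theta + t, x)$ of $(V, \alpha_H)$: since $\alpha_H = e^H(d\theta - \lambda)$ and $J$ are independent of $\theta$, one has $\varphi_t^* \alpha_H = \alpha_H$ and $\varphi_t^* J = J$. A direct computation gives $\varphi_t^{-1}(\Lambda^{n+1}(E)) = \Lambda^{n+1-t}(E)$, so the path $((\Lambda^{n+1-t})_{i_0 \leq n \leq i_d}, J)_{0 \leq t \leq 1}$ appearing in the definition of $\tau$ is the $\varphi_t$-pullback of the constant path at $((\Lambda^{n+1})_{i_0 \leq n \leq i_d}, J)$. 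Because $\varphi_t$ is a pure translation in $\theta$, all Reeb chord configurations along the path are mutual translates: chord genericity is preserved throughout, no birth/death or handle-slide instants occur, and each finite truncation $(\Lambda^n)_{i_0 \leq n \leq i_d}$ has only finitely many Reeb chords (because $H$ is compactly supported and each $L(E)$ is compact). Item 4 of Theorem \ref{thm invariance} therefore applies and identifies $\mcF_{\alpha_H}$ of this path with $f^{\varphi_1} \circ (f^{\varphi_0})^{-1}$, namely the strict DG-isomorphism sending every Reeb chord $c$ of $\Lambda^{n+1}$ to its $\theta$-translate $\varphi_1^{-1}(c)$, a chord of $\Lambda^n$.

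Finally I would dualize. Since the underlying DG-map is a strict bijection on chord generators (merely a relabeling by $\theta$-translation), taking graded duals of its bar-length components produces a strict $A_{\infty}$-isomorphism at the $LA^*$ level, whose linear part $\tau^1$ is a chain isomorphism on each hom-space of $\mcA$, hence a quasi-isomorphism. This concludes the proof. The main obstacle is bookkeeping: one must carefully track how the dualization converts a DG-isomorphism $CE_{-*}((\Lambda^{n+1})_{i_0 \leq n \leq i_d}) \to CE_{-*}((\Lambda^n)_{i_0 \leq n \leq i_d})$ (which lowers the index) into an $A_{\infty}$-functor $\mcA \to \mcA$ raising the index, and confirm the compatibility of these constructions with the finite truncations over all choices of $i_0 \leq i_d$; the hypotheses of chord-genericity and finiteness in Theorem \ref{thm invariance} are all visibly preserved by translation, so this verification is routine but essential.
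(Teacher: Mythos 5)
Your argument is correct, but it takes a different route from the paper's. The paper's proof is purely formal: it defines an inverse functor $\overline{\tau}$ by dualizing the components of the \emph{inverse} of the defining DG-isomorphism, observes $\tau \circ \overline{\tau} = \overline{\tau} \circ \tau = \mathrm{id}_{\mcA}$, and is done — no geometry is used beyond the fact that the map in Definition \ref{definition functor tau} is a DG-\emph{isomorphism}. You instead verify the two conditions of a quasi-equivalence directly, and to control $\tau^1$ you identify the defining DG-map concretely as the relabeling $c \mapsto \varphi_1^{-1}(c)$ induced by the translation contactomorphism, invoking item 4 of Theorem \ref{thm invariance}. In doing so you essentially reprove Lemma \ref{lemma tau is strict} (the paper's later lemma that $\tau$ is strict and acts by $\theta$-translation on chords) and derive the quasi-equivalence as a corollary. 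What each approach buys: the paper's argument is shorter and transfers verbatim to $\tau_1$, $\tau_2$, $\gamma$ and $\zeta_{12}$, where the underlying DG-map is an isomorphism but \emph{not} a mere translation (the paper indeed cites this proof repeatedly for those cases); your argument yields strictly more information here (strictness of $\tau$ and the explicit chord formula), but it leans on translation-invariance of $(\alpha_H, J)$ and on the absence of birth/death and handle-slide instants, so it would not generalize to those later functors. Your finiteness and genericity checks (bounded $\theta$-displacement forces bounded action, compact support of $H$ and compactness of $L$ give finiteness, and translation preserves genericity) are the same implicit hypotheses the paper needs to invoke Theorem \ref{thm invariance} in Definition \ref{definition functor tau}, so no gap there.
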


\begin{proof}
	
	Consider the $A_{\infty}$-functor $\overline{\tau} : \mcA \to \mcA$ defined as follows:
	\begin{enumerate}
		
		\item on objects, $\overline{\tau}$ sends $\Lambda^n (E)$ to $\Lambda^{n-1} (E)$,

		\item on morphisms, the map 
		\[\overline{\tau} : \mcA \left( \Lambda^{i_0} (E_0), \Lambda^{i_1} (E_1) \right) \otimes \dots \otimes \mcA \left( \Lambda^{i_{d-1}} (E_{d-1}), \Lambda^{i_d} (E_d) \right) \to \mcA \left( \Lambda^{i_0-1} (E_0), \Lambda^{i_d-1} (E_d) \right)  \]
		is obtained by dualizing the components of the \emph{inverse} of the DG-isomorphism
		\[CE_{-*} \left( \left( \Lambda^{n+1} \right)_{i_0 \leq n \leq i_d}, J, \alpha_{H} \right) \to CE_{-*} \left( \left( \Lambda^n \right)_{i_0 \leq n \leq i_d}, J, \alpha_{H} \right) \]
		induced by the path $ \left( \left( \Lambda^{n+1-t} \right)_{i_0 \leq n \leq i_d}, J \right)_{0 \leq t \leq 1}$. 
		
	\end{enumerate} 
	Then $\tau \circ \overline{\tau} = \overline{\tau} \circ \tau = \mathrm{id}_{\mcA}$. 
	
\end{proof}

Here the $\mathbf{Z}$-splitting 
\[\mathbf{Z} \times \mcE \xrightarrow{\sim} \mathrm{ob} \left( \mcA \right), \quad (n, E) \mapsto \Lambda^n (E) \]
is compatible with the quasi-autoequivalence $\tau$ in the sense of Definition \ref{definition group-action}. As explained there, this turns $\mcA$ into an Adams-graded $A_{\infty}$-category: the Adams-degree of a morphism $c \in \mcA \left( \Lambda^i (E), \Lambda^j (E') \right)$ is $\left( j-i \right)$. 

\subsubsection{Relation between $LA^* \left( \Lambda^{\circ} \right)$ and $\left( \mcA, \tau \right)$}

We now explain how $LA^* \left( \Lambda^{\circ} \right)$ and $\left( \mcA, \tau \right)$ are related.
See Figure \ref{figure relation usual and circular contactization}, where we illustrate the action of the projection $\Pi_{S^1 \times P}$ in the case 
\[\left( P, \lambda \right) = \left( T^* S^1, p dq \right), \, L = 0_{S^1}, \text{ and } H \left( q, p \right) = h(q), \]
where $h : S^1 \to \mathbf{R}$ is a Morse function. 

\begin{figure}
	\def\svgwidth{1.3\textwidth}
	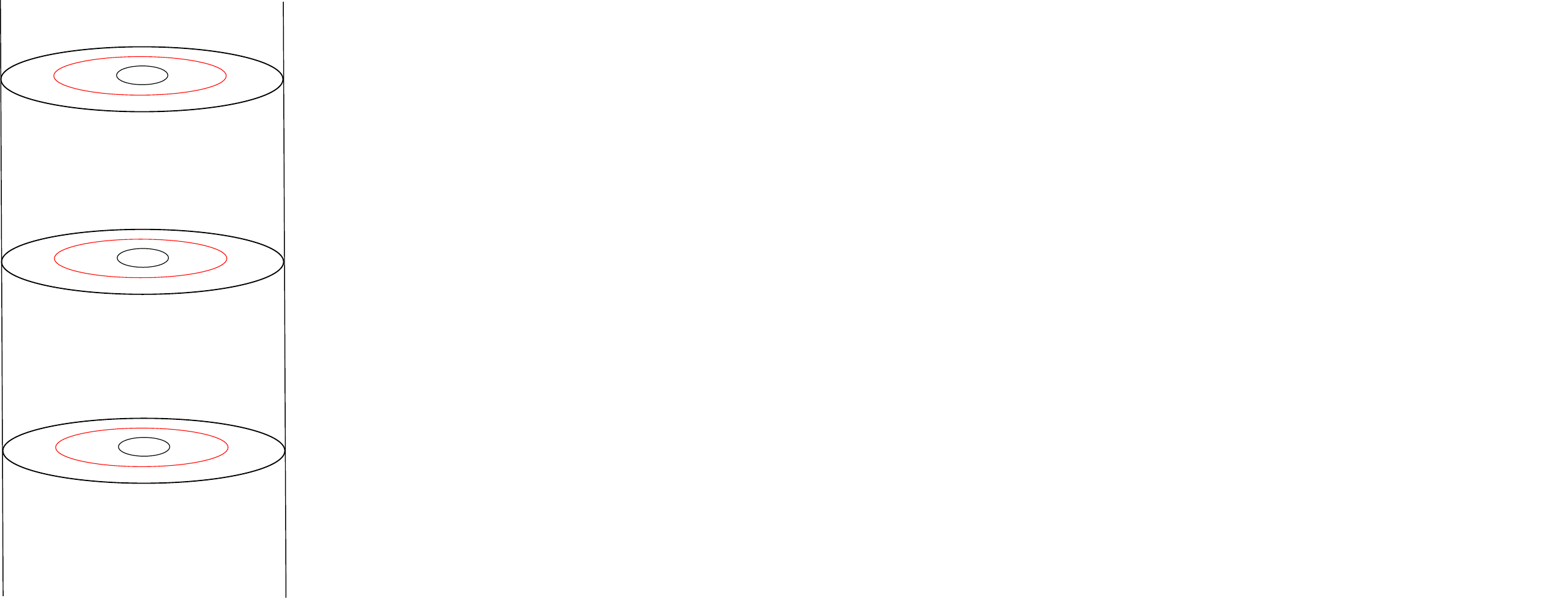		
	\caption{Action of the projection $\Pi_{S^1 \times T^* S^1}$}	
	\label{figure relation usual and circular contactization}
\end{figure}

\begin{lemma}\label{lemma tau is strict}
	
	The $A_{\infty}$-functor $\tau$ is strict, and it sends a Reeb chord $t \mapsto \left( \theta \left( t \right) , x \left( t \right) \right)$ in $\mcA \left( \Lambda^i (E) , \Lambda^j (E') \right)$ to the Reeb chord $t \mapsto \left( \theta \left( t \right) + 1 , x \left( t \right) \right)$ in $\mcA \left( \Lambda^{i+1} (E) , \Lambda^{j+1} (E') \right)$.
	In particular, $\tau$ acts bijectively on hom-sets. 
	
\end{lemma}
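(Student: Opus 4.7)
The plan is to realize the path $(\Lambda^{n+1-t})_{0 \le t \le 1}$ used in Definition \ref{definition functor tau} as the orbit of $\Lambda^{n+1}$ under a strict contact isotopy of $(V, \alpha_H)$, and then apply item (4) of Theorem \ref{thm invariance}.

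First I would introduce the one-parameter family $\varphi_t : V \to V$ defined by $\varphi_t(\theta, x) = (\theta + t, x)$. Because $\alpha_H = e^{H(x)}(d\theta - \lambda)$ is independent of $\theta$, one has $\varphi_t^* \alpha_H = \alpha_H$ for every $t$, so $(\varphi_t)_{0 \le t \le 1}$ is a strict contact isotopy; similarly $\varphi_t^* J = J$, since $J$ is the lift of an almost complex structure on $P$. From the definition $\Lambda^n(E) = \{(f_E(x) + n, x)\}$ one immediately gets $\varphi_t^{-1}(\Lambda^{n+1}) = \Lambda^{n+1-t}$, and the same identity at the level of the finite sub-family $\mathbf{\Lambda}' := (\Lambda^{n+1})_{i_0 \le n \le i_d}$. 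Hence the path used to define $\tau$ coincides with $\Phi' = (\varphi_t^{-1}(\mathbf{\Lambda}'), \varphi_t^* J)_{0 \le t \le 1}$.

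Next I would verify that $\Phi'$ is admissible for Theorem \ref{thm invariance}, i.e.\ that it lies in $\mcL_\mathbf{M}^0(\alpha_H)$ and has no birth/death of Reeb chords nor handle slides. Each $\Lambda^{n+1-t}$ is a $\theta$-translate of $\Lambda^{n+1}$, and within the bounded $\theta$-range spanned by the indices $i_0 \le n \le i_d$ the condition $d\theta(R_{\alpha_H}) \ge 1/2$ forces only finitely many Reeb chords. Moreover, translation by $(1-t)$ in the $\theta$-direction is a biholomorphism of $(\mathbf{R} \times V, J^{\alpha_H})$ that identifies the moduli spaces $\widetilde{\mcM}_{c_d, \dots, c_0}(\mathbf{R} \times \Lambda^{n+1-t}, J, \alpha_H)$ with those for $\Lambda^{n+1}$, so regularity is preserved along the path and neither birth/death nor handle slides occur. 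Item (4) of Theorem \ref{thm invariance} then yields
\[ \mcF_{\alpha_H}(\Phi') = f_{(\mathbf{\Lambda}', J)}^{\varphi_1} \circ (f_{(\mathbf{\Lambda}', J)}^{\varphi_0})^{-1} = f_{(\mathbf{\Lambda}', J)}^{\varphi_1}, \]
because $\varphi_0 = \mathrm{id}$. By Definition \ref{definition action of contact isotopy}, $f^{\varphi_1}$ is the \emph{strict} DG-isomorphism sending a Reeb chord $c : t \mapsto (\theta(t), x(t))$ of $\Lambda^{n+1}$ to $\varphi_1^{-1}(c) : t \mapsto (\theta(t) - 1, x(t))$ of $\Lambda^n$.

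Finally I would dualize. Since $\mcF_{\alpha_H}(\Phi')$ is a strict DG-isomorphism permuting Reeb-chord generators, the induced map on the graded-dual cocategories $LA^*$ --- which, after applying the inclusion into $\mcA$, is exactly $\tau$ --- is also strict (its higher $A_\infty$-components vanish) and is given by the inverse bijection on generators. Explicitly, a Reeb chord $(\theta(t), x(t)) \in \mcA(\Lambda^i(E), \Lambda^j(E'))$ is sent to $(\theta(t)+1, x(t)) \in \mcA(\Lambda^{i+1}(E), \Lambda^{j+1}(E'))$, which is exactly the claimed formula; bijectivity on hom-sets then follows tautologically because the translation is a bijection between the chord sets. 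The only real subtlety I foresee is the finiteness clause in Theorem \ref{thm invariance}: globally $\Lambda^n$ has infinitely many Reeb chords, but this is handled by working componentwise on the finite sub-family $(\Lambda^n)_{i_0 \le n \le i_d}$ already present in Definition \ref{definition functor tau}.
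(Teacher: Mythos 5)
Your proposal is correct and follows essentially the same route as the paper: realize the path $(\Lambda^{n+1-t})$ as the pullback of $(\Lambda^{n+1})$ under the strict contact isotopy generated by $\partial_\theta$ (which preserves $\alpha_H$ since $H$ is defined on $P$, and preserves $J$ since $J$ is lifted from $P$), and then invoke item (4) of Theorem \ref{thm invariance} to identify the defining DG-map with the strict isomorphism $c \mapsto \varphi_1^{-1}(c)$. Your additional verification of the admissibility hypotheses (finiteness of chords, absence of birth/death and handle slides via the translation biholomorphism of moduli spaces) is a detail the paper leaves implicit, but it does not change the argument.
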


\begin{proof}
	
	Recall that $\alpha_H = e^H \alpha$, with $H$ a function defined on the base manifold $P$.
	In particular, the flow $\varphi_{\partial_{\theta}}^t$ of $\partial_{\theta}$ is a strict contactomorphism of $\left( V, \alpha_{H} \right)$.
	Moreover, since $J$ is the lift of an almost complex structure $j$ on $P$, we have
	\[\left( \left( \Lambda^{n+1-t} \right)_{i_0 \leq n \leq i_d}, J \right) = \left( \left( (\varphi_{\partial_{\theta}}^t)^{-1} \Lambda^{n+1} \right)_{i_0 \leq n \leq i_d}, (\varphi_{\partial_{\theta}}^t)^* J \right) . \]
	The result follows from Theorem \ref{thm invariance}.
	
\end{proof}

We denote by $\mcA_{\tau}$ the Adams-graded $A_{\infty}$-category associated to $\tau$ as in Definition \ref{definition category of coinvariants}. 

\begin{lemma}\label{lemma A^rond = A_Z}
	
	There is a quasi-isomorphism of Adams-graded $A_{\infty}$-categories
	\[LA^* \left( \Lambda^{\circ} \right) \simeq \mcA_{\tau} . \]
	
\end{lemma}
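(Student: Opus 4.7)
The plan is to construct a strict $A_{\infty}$-functor $\Phi : \mcA_{\tau} \to LA^* \left( \Lambda^{\circ} \right)$ induced by the covering projection
\[\pi : \mathbf{R}_{\theta} \times P \to S^1_{\theta} \times P , \quad \left( \theta, x \right) \mapsto \left( \theta \bmod 1, x \right), \]
and to show that it is a strict isomorphism (in particular a quasi-isomorphism). By construction, $\pi^* \alpha_H^{\circ} = \alpha_H$, $\pi$ maps each $\Lambda^n (E)$ onto $\Lambda^{\circ} (E)$, $J$ is the lift of $J^{\circ}$, and the group of deck transformations is generated by the shift $\theta \mapsto \theta +1$, which acts on Legendrians exactly as $\tau$ (Lemma \ref{lemma tau is strict}). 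Consequently the induced symplectization covering $\mathrm{id}_{\mathbf{R}_s} \times \pi$ is biholomorphic with respect to the lifted cylindrical almost complex structures. Define $\Phi (E) := \Lambda^{\circ} (E)$ on objects and $\Phi \left( [x] \right) := \pi \circ x$ for a Reeb chord $x \in \mcA (\Lambda^i (E), \Lambda^j (E'))$ on morphisms, with all higher components $\Phi^d = 0$ for $d \geq 2$. Well-definedness on the $\tau$-quotient follows from $\pi \circ \tau = \pi$.

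The next step is to show that $\Phi$ is a bijection on hom-sets preserving both gradings. Because $d \theta \left( R_{\alpha_H} \right) \geq 1/2 > 0$, every Reeb chord of $\Lambda^{\circ}$ from $\Lambda^{\circ} (E)$ to $\Lambda^{\circ} (E')$ of winding number $k$ lifts uniquely, after fixing its starting endpoint on $\Lambda^0 (E)$, to a Reeb chord in $\mathbf{R}_{\theta} \times P$ going from $\Lambda^0 (E)$ to $\Lambda^k (E')$ (when $k=0$ this forces $E<E'$, since $f_E < f_{E'}$; self-chords of $\Lambda^0 (E)$ do not exist, matching $\mcA \left( \Lambda^0 (E), \Lambda^0 (E) \right) = \mathbf{F}$). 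In every case $(0, E) < (k, E')$ in the lexicographic order, so the lift is a morphism in $\mcA$; its $\tau$-orbit is exactly the fiber of $\pi$ over the original chord. The Adams-degree matches, as winding number equals the level difference $j-i$ in the lift. The cohomological grading matches because the $r$-trivialization of $\xi^{\circ}$ over the fiber $h_0$ pulls back to a trivialization of $\xi$ over each $\{ n \} \times h_0$, under which the Conley-Zehnder index of a chord and of its lift are computed from the same path of Lagrangian planes in $\mathbf{C}^n$, and thus coincide.

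To check that $\Phi$ intertwines the $A_{\infty}$-operations, pick representatives $x_1, \dots, x_d$ of classes $[x_1], \dots, [x_d]$ forming a composable chain in $\mcA$. Any $J^{\alpha}$-holomorphic disc in $\mathbf{R}_s \times \mathbf{R}_{\theta} \times P$ with boundary on $\mathbf{R}_s \times \Lambda$ and asymptotes $(x_0 ; x_1, \dots, x_d)$ projects via $\mathrm{id}_{\mathbf{R}_s} \times \pi$ to a $(J^{\circ})^{\alpha^{\circ}}$-holomorphic disc in $\mathbf{R}_s \times S^1 \times P$ with boundary on $\mathbf{R}_s \times \Lambda^{\circ}$ and asymptotes $(\pi x_0 ; \pi x_1, \dots, \pi x_d)$. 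Conversely, such a disc downstairs lifts uniquely once the lift of one asymptote (say the positive one) is prescribed; composability of the chain then guarantees that the remaining asymptotes lift to exactly $x_1, \dots, x_d$. This gives a canonical bijection
\[\mcM_{x_d, \dots, x_1, x_0} \left( \mathbf{R} \times \Lambda, J, \alpha_H \right) \xrightarrow{\sim} \mcM_{\pi x_d, \dots, \pi x_1, \pi x_0} \left( \mathbf{R} \times \Lambda^{\circ}, J^{\circ}, \alpha_H^{\circ} \right), \]
commuting with the $\mathbf{R}_s$-action. Regularity transfers across the cover since it is a local property of the linearized $\overline{\partial}$-operator, so the relevant $0$-dimensional moduli spaces are in bijection and their mod $2$ counts agree.

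Consequently $\Phi$ is a strict isomorphism of augmented Adams-graded $A_{\infty}$-categories, hence a quasi-isomorphism. The main technical obstacle I anticipate is verifying compatibility of the lift/project correspondence with the SFT compactification underlying the $A_{\infty}$-relations: a priori, broken configurations in $\mathbf{R}_s \times S^1 \times P$ must lift consistently to broken configurations in $\mathbf{R}_s \times \mathbf{R}_{\theta} \times P$. This, however, reduces to the previous step applied to each piece, together with the observation that the breaking Reeb chords match along boundary segments because each Legendrian piece uniquely determines the level of the corresponding lift. As an independent algebraic consistency check, Theorem \ref{thm mapping torus in strict situation} identifies $\mcA_{\tau}$ with $\mathrm{MT} (\tau)$, which is the homotopy colimit one expects from the geometric decomposition of $S^1 \times P$ as a mapping torus of $\mathbf{R} \times P$ under $\tau$.
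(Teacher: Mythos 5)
Your proposal is correct and follows essentially the same route as the paper: the projection $\Pi_{S^1 \times P}$ induces a map $\mcA_{\tau} \to LA^* \left( \Lambda^{\circ} \right)$ that is well defined on the $\tau$-quotient by Lemma \ref{lemma tau is strict}, is a bijection on hom-spaces (via unique lifting of Reeb chords, using $d\theta(R_{\alpha_H^{\circ}}) > 0$), and intertwines the operations because $(\sigma, v) \mapsto (\sigma, \Pi_{S^1 \times P} \circ v)$ gives a bijection between the relevant moduli spaces. Your additional checks (grading compatibility, lifting of broken configurations, transfer of regularity) are correct elaborations of points the paper leaves implicit.
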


\begin{proof}
	
	Consider the map which sends a Reeb chord $c \in \mcR \left( \Lambda^i (E), \Lambda^j (E') \right)$ to the corresponding chord $\Pi_{S^1 \times P} \left( c \right) \in \mcR \left( \Lambda^{\circ} (E), \Lambda^{\circ} (E') \right)$ (where $\Pi_{S^1 \times P} : \mathbf{R} \times P \to S^1 \times P$ is the projection). According to Lemma \ref{lemma tau is strict}, $\Pi_{S^1 \times P} \left( \tau c \right) = \Pi_{S^1 \times P} \left( c \right)$, and thus the map $c \mapsto \Pi_{S^1 \times P} \left( c \right)$ induces a map $\psi : \mcA_{\tau} \to LA^* \left( \Lambda^{\circ} \right)$. Moreover, observe that $\psi$ is a bijection on hom-spaces. It remains to prove that $\psi$ is an $A_{\infty}$-map. This follows from the fact that the map
	\[ u = \left( \sigma , v \right) \mapsto \left( \sigma , \Pi_{S^1 \times P} \circ v \right) \]
	induces a bijection
	\[\mcM_{c_d, \dots, c_1, c_0} \left( \mathbf{R} \times \Lambda, J, \alpha_{H} \right) \xrightarrow{\sim} \mcM_{\psi (c_d), \dots, \psi (c_1), \psi (c_0)} \left( \mathbf{R} \times \Lambda^{\circ}, J^{\circ}, \alpha_{H}^{\circ} \right) . \] 
	
\end{proof}

\begin{lemma}\label{lemma A^rond = mapping torus of tau}
	
	The Adams-graded $A_{\infty}$-category $LA^* \left( \Lambda^{\circ} \right)$ is quasi-equivalent to the mapping torus of $\tau : \mcA \to \mcA$ (see Definition \ref{definition mapping torus}). 
	
\end{lemma}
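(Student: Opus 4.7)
The plan is to assemble the statement directly from results already established in the paper, using no new geometry. First, Lemma \ref{lemma A^rond = A_Z} produces a quasi-equivalence of Adams-graded $A_\infty$-categories
\[LA^* \left( \Lambda^{\circ} \right) \simeq \mcA_{\tau},\]
where $\mcA_\tau$ is the $A_\infty$-category of Definition \ref{definition category of coinvariants}. So the problem reduces to identifying $\mcA_\tau$ with the mapping torus $\mathrm{MT}(\tau)$.

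Next, I would invoke Theorem \ref{thm mapping torus in strict situation}, which provides exactly such an identification under the hypotheses that $\tau$ is a quasi-autoequivalence, that $\mathrm{ob}(\mcA)$ carries a compatible $\mathbf{Z}$-splitting, that $\tau$ is strict, and that $\tau$ acts bijectively on hom-sets. The compatible $\mathbf{Z}$-splitting is the one $(n,E)\mapsto \Lambda^n(E)$ introduced just after Definition \ref{definition functor tau}; Lemma \ref{lemma tau is a quasi-equivalence} tells us $\tau$ is a quasi-equivalence; and Lemma \ref{lemma tau is strict} verifies both that $\tau$ is strict and that its action on morphism spaces is a bijection (it simply shifts the $\theta$-coordinate of a Reeb chord by $1$). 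Theorem \ref{thm mapping torus in strict situation} then yields a quasi-equivalence of Adams-graded $A_\infty$-categories
\[\mathrm{MT}(\tau) \simeq \mcA_{\tau}.\]

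Combining the two quasi-equivalences gives
\[LA^*(\Lambda^{\circ}) \simeq \mcA_\tau \simeq \mathrm{MT}(\tau),\]
which is the desired statement. There is no serious obstacle here: the entire content has been packaged into earlier results, and the present lemma is simply the point where they are concatenated. The only mild verification is that the Adams-gradings match, which is immediate since the Adams-grading on both $LA^*(\Lambda^\circ)$ (counting winding around the fiber) and $\mcA_\tau$ (induced by the $\mathbf{Z}$-splitting via Definition \ref{definition group-action}) coincide with the one on $\mcA_\tau$ used in Theorem \ref{thm mapping torus in strict situation}, and the identification of Lemma \ref{lemma A^rond = A_Z} respects this grading by construction (the winding number of $\Pi_{S^1\times P}(c)$ equals $j-i$ for a chord $c\in\mcA(\Lambda^i(E),\Lambda^j(E'))$).
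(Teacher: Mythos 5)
Your proposal is correct and coincides with the paper's own proof, which likewise cites Lemma \ref{lemma A^rond = A_Z}, Lemma \ref{lemma tau is strict}, and Theorem \ref{thm mapping torus in strict situation} and concatenates them. The remark about the Adams-gradings matching is a nice extra sanity check but not a point of divergence.
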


\begin{proof}
	
	This follows directly from Theorem \ref{thm mapping torus in strict situation} using Lemmas \ref{lemma tau is strict} and \ref{lemma A^rond = A_Z}. 
	
\end{proof}

\subsection{Rectification of the contact form}\label{subsection rectification of the contact form}

Now that we are in the usual contactization, we have the following result. 

\begin{lemma}\label{lemma rectify contact form} 
	
	There exists a contactomorphism $\phi_H$ of $\left( V,\xi \right)$ such that 
	\[ \phi_H^* \alpha_{H} = \alpha . \]
	
\end{lemma}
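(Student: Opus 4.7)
The plan is to apply a Moser deformation argument to the one-parameter family of contact forms $\alpha_s := e^{sH}\alpha$ on $V$, interpolating between $\alpha_0 = \alpha$ and $\alpha_1 = \alpha_H$. I will seek a family of diffeomorphisms $\psi_s : V \to V$ with $\psi_0 = \mathrm{id}$ and $\psi_s^*\alpha_s = \alpha$; the contactomorphism $\phi_H := \psi_1$ will then satisfy $\phi_H^*\alpha_H = \alpha$. Writing $\psi_s$ as the flow of a time-dependent vector field $Y_s$ and differentiating in $s$, the Moser equation becomes $\mathcal{L}_{Y_s}\alpha_s = -H\alpha_s$. Decomposing $Y_s = \rho_s R_{\alpha_s} + W_s$ with $W_s \in \xi$, it splits (after taking $\iota_{R_{\alpha_s}}$ and then restricting to $\xi$) into the transport equation $R_{\alpha_s}(\rho_s) = -H$ together with the pointwise linear equation $\iota_{W_s} d\alpha_s|_\xi = -d\rho_s|_\xi$, the latter uniquely determining $W_s$ from $\rho_s$ by non-degeneracy of $d\alpha_s|_\xi$.

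The transport equation admits a global smooth solution because $H$ is constant along the Reeb orbits of $\alpha_s$: from the formula $R_{\alpha_s} = e^{-sH}\bigl((1 + s\lambda(X_H))\partial_\theta + s X_H\bigr)$, the $\partial_\theta$-component acts trivially on $H$ (as $H$ is $\theta$-independent), while $X_H(H) = -d\lambda(X_H, X_H) = 0$ kills the $P$-component. Using that $\{\theta = 0\}$ is a global cross-section of the Reeb flow of $\alpha_s$ (thanks to the standing hypothesis $d\theta(R_{\alpha_s}) \geq 1/2$), I set $\rho_s(\theta, x) := -t_s(\theta, x)\,H(x)$, where $t_s(\theta, x)$ is the $\alpha_s$-Reeb time from $\{\theta = 0\}$ to $(\theta, x)$. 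This is smooth on $V$ and vanishes identically off $\mathbf{R}\times\mathrm{supp}(H)$ (where $H$, and hence $\rho_s$, is zero).

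It remains to verify that the flow of $Y_s$ exists on all of $[0, 1]$. Off $\mathbf{R}\times\mathrm{supp}(H)$ one has $\rho_s \equiv 0$, hence $d\rho_s \equiv 0$, so $W_s \equiv 0$ and $Y_s$ vanishes identically there; in particular the $P$-projection of every trajectory of $Y_s$ stays confined to the compact set $\mathrm{supp}(H)$. Inside, $\rho_s$ grows at most linearly in $\theta$ (with $\partial_\theta\rho_s$ uniformly bounded), hence so do $d\rho_s$, $W_s$, and the $\partial_\theta$-component of $Y_s$. A Grönwall estimate then guarantees that $\psi_s$ is defined on all of $V$ for every $s \in [0, 1]$, and $\phi_H := \psi_1$ is the desired strict contactomorphism.

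The only technical subtlety is the completeness of the Moser flow, which I expect to be the main obstacle to check carefully; it is controlled by the compact support of $H$ in the $P$-direction together with the at-most-linear growth of $Y_s$ in $\theta$. The remainder is the standard Moser trick, specialized to the fact that the two contact forms differ by a conformal factor $e^H$ which is $\theta$-invariant and compactly supported in the Liouville direction.
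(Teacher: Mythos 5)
Your proposal is correct and follows essentially the same route as the paper: a Moser deformation through $\alpha_s = e^{sH}\alpha$, reduced to the transport equation $R_{\alpha_s}(\rho_s) = -H$, which is solved globally via the Reeb time from the cross-section $\{\theta = 0\}$ using that $R_{\alpha_s}(H)=0$ and $d\theta(R_{\alpha_s})\geq 1/2$ (the paper phrases this through the contact Hamiltonian $F_t = \alpha(Y_t)$, with $e^{tH}F_t$ equal to your $\rho_s$). Your explicit verification that the Moser flow is complete is a point the paper's proof leaves implicit, and your argument for it (vanishing of $Y_s$ off $\mathbf{R}\times\mathrm{supp}(H)$ plus linear growth in $\theta$) is sound.
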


\begin{proof}
	
	Recall that $\alpha_{H} = e^{ H} \alpha$, with $H$ a compactly supported function on the base manifold $P$ such that $e^{-H} \left( 1 + \lambda \left( X_H \right) \right) \geq 1/2$.
	
	Assume that there is a contact isotopy $\left( \phi_{t} \right)_{0 \leq t \leq 1}$ such that $\phi_0 = \mathrm{id}$ and 
	\begin{equation}\label{eq1}
	\phi_{t}^* \alpha_{t H} = \alpha
	\end{equation}
	for every $t$. Let $\left( F_{t} \right)_{t}$ be the family of functions on $V$ such that 
	\[ \frac{d}{d t} \phi_{t} = Y_{F_{t}} \circ \phi_{t}, \]
	where $Y_F$ is the contact vector field on $V$ satisfying 
	\begin{align*}
	\left\{
	\begin{array}{ccl}
	\alpha \left( Y_F \right) & = & F \\
	\iota_{Y_F} d \alpha & = & d F \left( R_{\alpha} \right) \alpha  - d F . 
	\end{array}
	\right.
	\end{align*}
	Taking the derivative of equation \eqref{eq1} with respect to $t$, we get 
	\begin{equation}\label{eq2}
	H + d \left( e^{t H} F_{t} \right) \left( R_{\alpha_{t H}} \right) = 0
	\end{equation}
	because $Y_F$ satisfies 
	\begin{align*}
	\left\{
	\begin{array}{ccl}
	\alpha_{t H} \left( Y_F \right) & = & e^{t H} F \\
	\iota_{Y_F} d \alpha_{t H} & = & d \left( e^{t H} F \right) \left( R_{\alpha_{t H}} \right) \alpha_{t H}  - d \left( e^{t H} F \right) .
	\end{array}
	\right.
	\end{align*}
	Besides, we deduce from 
	\[ R_{\alpha_{t H}} = e^{- t H} \left( 
	\begin{array}{c}
	1 + t \lambda \left( X_H \right) \\
	t X_H
	\end{array} \right), \quad \iota_{X_H} d \lambda = - dH, \]
	that 
	\[ d H \left( R_{\alpha_{t H}} \right) = 0 . \]
	Then equation \eqref{eq2} gives 
	\begin{equation}\label{eq3}
	d F_{t} \left( R_{\alpha_{t H}} \right) = - H e^{- t H} .
	\end{equation} 
	
	Conversely, if $\left( F_{t} \right)_{t}$ is a family of functions on $V$ satisfying equation \eqref{eq3}, then the contact isotopy $\left( \phi_{t} \right)_{t}$ defined by 
	\[ \phi_0 = \mathrm{id} \text{ and } \frac{d}{d t} \phi_{t} = Y_{F_{t}} \circ \phi_{t}, \]
	satisfies
	\[ \frac{d}{d t} \left( \phi_{t}^* \alpha_{t H} \right) = 0 , \]
	and thus $\phi_H := \phi_1$ gives the desired result.
	
	Therefore, it remains to find a family $\left( F_{t} \right)_{t}$ satisfying equation \eqref{eq3}. First recall that 
	\[ R_{\alpha_{t H}} = e^{- t H} \left( 
	\begin{array}{c}
	1 + t \lambda \left( X_H \right) \\
	t X_H
	\end{array} \right) . \]
	By assumption on $H$, the function $d \theta \left( R_{\alpha_{t H}} \right)$ is greater than $1/2$ for every $t \in \left[ 0,1 \right]$. Thus, for every $t \in \left[ 0,1 \right]$ and every $\left( \theta , x \right)$ in $V$, there exists a unique real number $\rho_{t} \left( \theta , x \right)$ such that 
	\[ \varphi_{R_{\alpha_{t H}}}^{- \rho_{t} \left( \theta , x \right) } \left( \theta , x \right) \in \{ 0 \} \times P . \]
	Then we let 
	\[ F_{t}:= - \rho_{t} H e^{- t H} . \]
	For every real number $t$, we have 
	\[ F_{t} \circ \varphi_{R_{\alpha_{t H}}}^t = - \left( \rho_{t} \circ \varphi_{R_{\alpha_{t H}}}^t \right) H e^{- t H} \text{ because } dH \left( R_{\alpha_{t H}} \right) = 0 . \]
	But the map $\varphi_{R_{\alpha_{t H}}}^{- \rho_{t} \circ \varphi_{R_{\alpha_{t H}}}^t + t}$ takes its values in $\{ 0 \} \times P$ by definition of $\rho_{t}$, so by uniqueness we have
	\[ \rho_{t} \circ \varphi_{R_{\alpha_{t H}}}^t = \rho_{t} + t . \] 
	Then we have 
	\[ F_{t} \circ \varphi_{R_{\alpha_{t H}}}^t = - \left( \rho_{t} + t \right) H e^{- t H}, \]
	and thus 
	\[ d F_{t} \left( R_{\alpha_{t H}} \right) = - H e^{- t H} . \]
	This concludes the proof.
	
\end{proof}

\begin{exa}\label{example rectify contact form in cotangent case}
	
	Assume that we are in the case 
	\[\left( P, \lambda \right) = \left( T^* M, p dq \right), \, L = 0_M, \text{ and } H \left( q, p \right) = h(q), \]
	where $h : M \to \mathbf{R}$ is a Morse function. 
	Then the diffeomorphism $\phi_H$ defined by
	\[\phi_H^{-1} \left( \theta, \left( q,p \right) \right) = \left( \theta e^{h(q)}, \left( q, e^{h(q)} p + \theta e^{h(q)} dh (q) \right) \right) \]
	satisfies $\phi_H^* \alpha_H = \alpha$.
	With this choice of $\phi_H$, we have in particular
	\[\phi_H^{-1} \left( \{ \theta \} \times 0_M \right) = j^1 \left( \theta e^h \right) \subset \mathbf{R} \times T^*M . \] 
	
\end{exa}

\subsubsection{The $A_{\infty}$-category $\mcA_1$}

In the following, we fix a contactomorphism $\phi_H$ as in Lemma \ref{lemma rectify contact form}.
We define an $A_{\infty}$-category $\mcA_1$, which is roughly obtained by pulling back the data of $\mcA$ by $\phi_H$. 

\begin{defin}
	
	Let 
	\[\Lambda_H^{\theta} (E) := \phi_H^{-1} \left( \Lambda^{\theta} (E) \right), \, \Lambda_H^n := \bigcup\limits_{E \in \mcE} \Lambda_H^n (E), \, \Lambda_H := \bigcup\limits_{n \in \mathbf{Z}} \Lambda_H^n \text{ and } J_H := \phi_H^* J. \]
	We consider the $A_{\infty}$-category $\mcA_1$ defined as follows
	\begin{enumerate}
		
		\item the objects of $\mcA_1$ are the Legendrians $\Lambda_H^n (E)$, $(n, E) \in \mathbf{Z} \times E$,
		
		\item the vector space $\mcA_1 \left( \Lambda_H^i (E), \Lambda_H^j (E') \right)$ is either generated by the $R_{\alpha}$-chords from $\Lambda_H^i (E)$ to $\Lambda_H^j (E')$ if $(i,E)<(j,E')$, or $\mathbf{F}$ if $(i,E)=(j,E')$, or $0$ otherwise, and
		
		\item the operations are such that $1 \in \mcA_1 \left( \Lambda_H^n (E), \Lambda_H^n (E) \right)$ is a strict unit, and for every sequence $(i_0, E_0) < \dots < (i_d, E_d)$, for every sequence of Reeb chords 
		\[\left( c_1, \dots, c_d \right) \in \mcR \left( \Lambda_H^{i_0} (E_0), \Lambda_H^{i_1} (E_1) \right) \times \dots \times \mcR \left( \Lambda_H^{i_{d-1}} (E_{d-1}), \Lambda_H^{i_d} (E_d) \right) \]
		we have 
		\[\mu_{\mcA_1} \left( c_1, \dots, c_d \right) = \sum \limits_{c_0 \in \mcR \left( \Lambda_H^{i_0} (E_0), \Lambda_H^{i_d} (E_d) \right)} \# \mcM_{c_d, \dots, c_1, c_0} \left( \mathbf{R} \times \Lambda_H, J_H, \alpha \right) c_0 \]
		(see Definition \ref{definition moduli spaces} for the moduli spaces).
		
	\end{enumerate}
	
\end{defin}

\subsubsection{The quasi-autoequivalence $\tau_1$}

\begin{defin}\label{definition functor tau_1}
	
	We denote by $\tau_1 : \mcA_1 \to \mcA_1$ the $A_{\infty}$-functor defined as follows:
	\begin{enumerate}
		
		\item on objects, $\tau_1 \left( \Lambda_H^n (E) \right) = \Lambda_H^{n+1} (E)$,
		
		\item on morphisms, the map 
		\[\tau_1^d : \mcA_1 \left( \Lambda_H^{i_0} (E_0), \Lambda_H^{i_1} (E_1) \right) \otimes \dots \otimes \mcA_1 \left( \Lambda_H^{i_{d-1}} (E_{d-1}), \Lambda_H^{i_d} (E_d) \right) \to \mcA_1 \left( \Lambda_H^{i_0+1} (E_0), \Lambda_H^{i_d+1} (E_d) \right)  \]
		is obtained by dualizing the components of the DG-isomorphism
		\[CE_{-*} \left( \left( \Lambda_H^{n+1} \right)_{i_0 \leq n \leq i_d}, J_H, \alpha \right) \to CE_{-*} \left( \left( \Lambda_H^n \right)_{i_0 \leq n \leq i_d}, J_H, \alpha \right) \]
		induced by the path $ \left( \left( \Lambda_H^{n+1-t} \right)_{i_0 \leq n \leq i_d}, J_H \right)_{0 \leq t \leq 1}$ (see Theorem \ref{thm invariance}). 
		
	\end{enumerate}
	
\end{defin}

\begin{lemma}\label{lemma tau_1 is a quasi-equivalence}
	
	The $A_{\infty}$-functor $\tau_1 : \mcA_1 \to \mcA_1$ is a quasi-equivalence.
	
\end{lemma}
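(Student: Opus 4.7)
The plan is to mimic the proof of Lemma \ref{lemma tau is a quasi-equivalence} word for word, since the construction of $\tau_1$ from the data $(\Lambda_H, J_H, \alpha)$ is structurally identical to the construction of $\tau$ from $(\Lambda, J, \alpha_H)$: both are defined by dualizing the DG-isomorphism of Chekanov--Eliashberg categories associated to a smooth family of Legendrian embeddings (with fixed almost complex structure) by invoking Theorem \ref{thm invariance}.

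Concretely, I would introduce an $A_\infty$-functor $\overline{\tau_1} : \mcA_1 \to \mcA_1$ acting as $\Lambda_H^n(E) \mapsto \Lambda_H^{n-1}(E)$ on objects, and on morphisms by dualizing the components of the \emph{inverse} of the DG-isomorphism
\[ CE_{-*}\!\left( \left(\Lambda_H^{n+1}\right)_{i_0 \leq n \leq i_d}, J_H, \alpha \right) \to CE_{-*}\!\left( \left(\Lambda_H^{n}\right)_{i_0 \leq n \leq i_d}, J_H, \alpha \right) \]
induced by the path $\left( \left(\Lambda_H^{n+1-t}\right)_{i_0 \leq n \leq i_d}, J_H \right)_{0 \leq t \leq 1}$ as provided by Theorem \ref{thm invariance}. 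The inverse of a DG-isomorphism is itself a DG-isomorphism, so dualizing its components yields a well-defined $A_\infty$-functor.

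Then by construction $\tau_1 \circ \overline{\tau_1} = \overline{\tau_1} \circ \tau_1 = \mathrm{id}_{\mcA_1}$ (strictly), since the compositions correspond on each hom-set to the dual of the composition of a DG-isomorphism with its inverse. In particular $\tau_1$ is a quasi-equivalence.

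The only potential subtlety is verifying that Theorem \ref{thm invariance} applies to the paths used here; but the family of Legendrians $\left(\Lambda_H^{n+1-t}\right)_{i_0 \leq n \leq i_d}$ stays chord generic throughout the isotopy (it is just a rigid translation in the $\theta$-direction, up to the contactomorphism $\phi_H$ which intertwines everything), and each $\Lambda_H^n$ has finitely many Reeb chords because $\phi_H$ identifies them with the Reeb chords of $\Lambda^n$, which are in bijection with the finite set of Reeb chords of $\Lambda^\circ$ for $\alpha_H^\circ$ in each fixed winding class. Hence the path is a morphism in $\mcL_{\mathbf{M}}^0(\alpha)$, so Theorem \ref{thm invariance} applies and the argument closes.
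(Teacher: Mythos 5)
Your proposal is correct and is essentially the paper's own argument: the paper's proof of this lemma simply refers back to the proof of Lemma \ref{lemma tau is a quasi-equivalence}, which constructs the strict inverse $\overline{\tau}$ exactly as you construct $\overline{\tau_1}$ (dualizing the components of the inverse DG-isomorphism and noting the compositions are the identity). Your extra check that the isotopy stays within $\mcL^0_{\mathbf{M}}(\alpha)$ so that Theorem \ref{thm invariance} applies is a reasonable addition, though it is already implicit in Definition \ref{definition functor tau_1}.
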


\begin{proof}
	
	This follows because $\tau_1$ is defined by dualizing the components of a DG-isomorphism (see the proof of Lemma \ref{lemma tau is a quasi-equivalence}). 
	
\end{proof}

Here the $\mathbf{Z}$-splitting 
\[\mathbf{Z} \times \mcE \xrightarrow{\sim} \mathrm{ob} \left( \mcA_1 \right), \quad (n, E) \mapsto \Lambda_H^n (E), \]
is compatible with the quasi-autoequivalence $\tau_1$ in the sense of Definition \ref{definition group-action}. As explained there, this turns $\mcA_1$ into an Adams-graded $A_{\infty}$-category. 

\subsubsection{Relation between $\left( \mcA, \tau \right)$ and $\left( \mcA_1, \tau_1 \right)$}

We now explain how the pairs $\left( \mcA, \tau \right)$ and $\left( \mcA_1, \tau_1 \right)$ are related.
See Figure \ref{figure action of the contactomorphism} where we illustrate the action of the contactomorphism $\phi_H^{-1}$ in the case 
\[\left( P, \lambda \right) = \left( T^* S^1, p dq \right), \, L = 0_{S^1}, \text{ and } H \left( q, p \right) = h(q), \]
where $h : S^1 \to \mathbf{R}$ is a Morse function.

\begin{figure}
	\def\svgwidth{1.4\textwidth}
	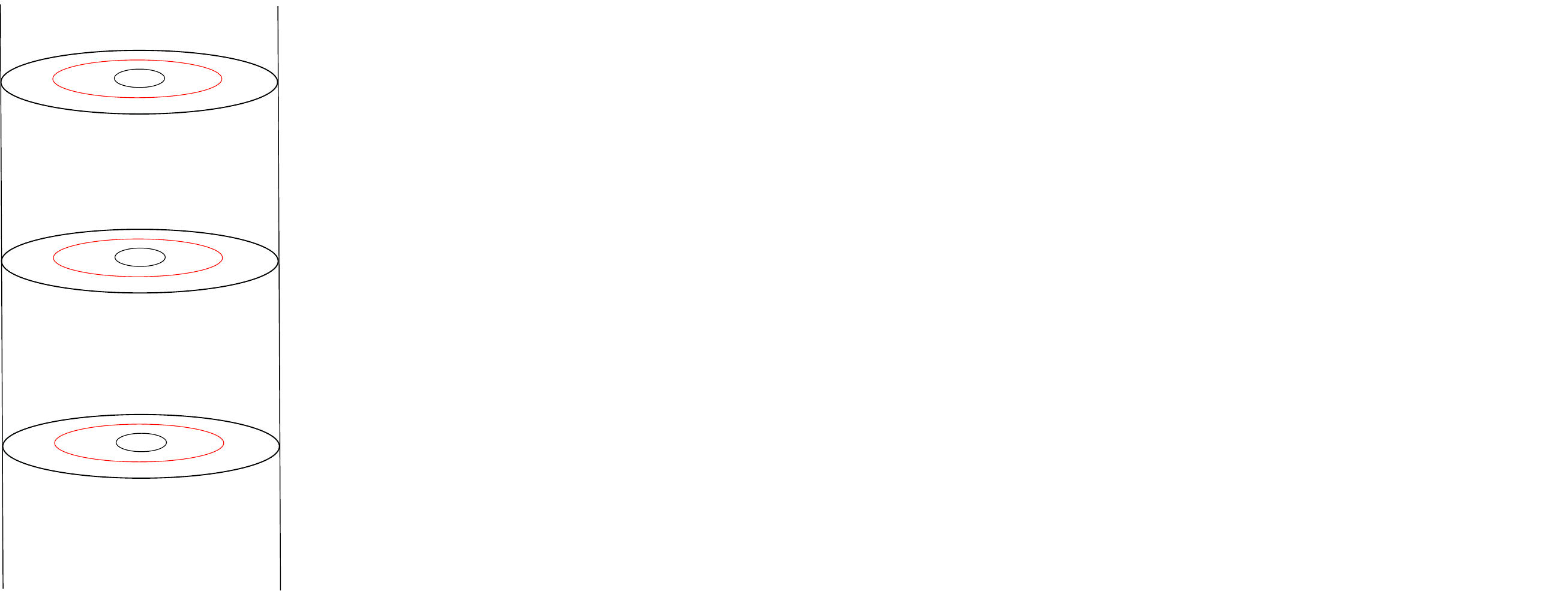		
	\caption{Action of the contactomorphism $\phi_H^{-1}$}	
	\label{figure action of the contactomorphism}
\end{figure}

\begin{lemma}\label{lemma relation A - A_1}
	
	There is a strict $A_{\infty}$-isomorphism $\zeta_1 : \mcA \to \mcA_1$ defined as follows:
	\begin{enumerate}
		
		\item on objects, $\zeta_1 \left( \Lambda^n (E) \right) = \Lambda_H^n (E)$,
		
		\item on morphisms, $\zeta_1$ sends a Reeb chord $c$ in $\mcA \left( \Lambda^i (E), \Lambda^j (E') \right)$ to the Reeb chord 
		\[\zeta_1 \left( c \right) = \phi_H^{-1} \circ c \]
		in $\mcA_1 \left( \Lambda_H^i (E), \Lambda_H^j (E') \right)$.  
		
	\end{enumerate} 

\end{lemma}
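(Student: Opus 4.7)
The plan is to show that the contactomorphism $\phi_H$ of Lemma \ref{lemma rectify contact form} lifts to a biholomorphism of symplectizations which identifies, on the nose, every piece of geometric data entering the definitions of $\mcA$ and $\mcA_1$, so that the stated formula for $\zeta_1$ is manifestly a strict $A_\infty$-isomorphism.

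First, I would check the bijection on objects and morphism generators. Since $\phi_H^*\alpha_H = \alpha$, the contactomorphism $\phi_H^{-1}$ sends the Reeb vector field $R_{\alpha_H}$ to $R_{\alpha}$, and hence sends $R_{\alpha_H}$-chords of $\Lambda = \bigcup_n \Lambda^n$ bijectively to $R_\alpha$-chords of $\Lambda_H = \phi_H^{-1}(\Lambda)$, respecting endpoints. In particular $c \mapsto \phi_H^{-1}\circ c$ is a bijection from $\mcR(\Lambda^i(E),\Lambda^j(E'))$ to $\mcR(\Lambda_H^i(E),\Lambda_H^j(E'))$. I would then verify that this bijection preserves the Conley--Zehnder grading: since $\phi_H$ is a contactomorphism and $J_H = \phi_H^* J$, the associated path of Lagrangian planes in $\xi$ along $\phi_H^{-1}\circ c$ is obtained from that along $c$ by $D\phi_H^{-1}$, so the Maslov indices agree once we transport the chosen trivializations of $\xi$ over the cycles $(h_i)$ through $\phi_H^{-1}$ (which is harmless, since the index is independent of the trivialization modulo the vanishing of the Maslov class and the $2$-torsion of $c_1(\xi)$, both of which hold here).

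Next, I would check the bijection on moduli spaces. Let $\tilde\phi_H : \mathbf{R}_\sigma\times V \to \mathbf{R}_\sigma\times V$ be defined by $\tilde\phi_H(\sigma,x)=(\sigma,\phi_H(x))$. Then $\tilde\phi_H^*(e^\sigma\alpha_H) = e^\sigma\alpha$, so $\tilde\phi_H$ is a symplectomorphism of symplectizations which sends $\mathbf{R}\times\Lambda_H$ to $\mathbf{R}\times\Lambda$. By the very definitions of the lifts, $\tilde\phi_H$ sends $\partial_\sigma$ to $\partial_\sigma$ and the Reeb vector field of $\alpha$ to that of $\alpha_H$, and sends $\phi_H^*J = J_H$ on $\xi$ to $J$ on $\xi$; therefore $\tilde\phi_H^* J^{\alpha_H} = (J_H)^\alpha$. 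Consequently the assignment $u \mapsto \tilde\phi_H^{-1}\circ u$ is a bijection
\[
\mcM_{c_d,\dots,c_0}(\mathbf{R}\times\Lambda,J,\alpha_H) \xrightarrow{\sim} \mcM_{\phi_H^{-1}c_d,\dots,\phi_H^{-1}c_0}(\mathbf{R}\times\Lambda_H,J_H,\alpha)
\]
compatible with the $\mathbf{R}$-action by $\sigma$-translation, regularity, and asymptotic matching (since $\tilde\phi_H$ commutes with the strip-like-end structure up to the identifications already made).

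Finally, I would assemble the $A_\infty$-isomorphism. The definition $\zeta_1(\Lambda^n(E)) = \Lambda_H^n(E)$ and $\zeta_1(c)=\phi_H^{-1}\circ c$ is strictly unital because $\phi_H^{-1}$ identifies the formal units of $\mcA(\Lambda^n(E),\Lambda^n(E))$ and $\mcA_1(\Lambda_H^n(E),\Lambda_H^n(E))$. The structure constants of $\mu_\mcA$ and $\mu_{\mcA_1}$ are counts (mod $2$) of the moduli spaces above, which by the previous paragraph coincide under $\zeta_1$; hence
\[
\zeta_1(\mu_\mcA(c_1,\dots,c_d)) = \mu_{\mcA_1}(\zeta_1(c_1),\dots,\zeta_1(c_d)),
\]
which is the strict $A_\infty$-functor relation with vanishing higher components. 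Since $\phi_H$ is a diffeomorphism, $\zeta_1$ is bijective on objects and on hom-spaces, so it is a strict isomorphism. The only mildly delicate point is the bookkeeping with the Conley--Zehnder grading under the change of trivializations of $\xi$ induced by $\phi_H$; everything else is a direct unpacking of the definitions.
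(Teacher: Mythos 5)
Your proposal is correct and follows essentially the same route as the paper: the paper's proof consists precisely of observing that $u = (\sigma, v) \mapsto (\sigma, \phi_H^{-1} \circ v)$ (your $\tilde\phi_H^{-1}$) induces a bijection $\mcM_{c_d, \dots, c_1, c_0}\left( \mathbf{R} \times \Lambda, J, \alpha_H \right) \xrightarrow{\sim} \mcM_{\phi_H^{-1} (c_d), \dots, \phi_H^{-1} (c_1), \phi_H^{-1} (c_0)}\left( \mathbf{R} \times \Lambda_H, J_H, \alpha \right)$. Your additional verifications (the identity $\tilde\phi_H^* J^{\alpha_H} = (J_H)^{\alpha}$, the bijection on Reeb chords, and the grading bookkeeping) are correct elaborations of details the paper leaves implicit.
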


\begin{proof}
	
	We have to show that $\zeta_1$ is an $A_{\infty}$-map.
	This follows from the fact that the map 
	\[ u = \left( \sigma , v \right) \mapsto \left( \sigma , \phi_H^{-1} \circ v \right) \]
	induces a bijection
	\[\mcM_{c_d, \dots, c_1, c_0} \left( \mathbf{R} \times \Lambda, J, \alpha_H \right) \xrightarrow{\sim} \mcM_{\phi_H^{-1} (c_d) \dots \phi_H^{-1} (c_1), \phi_H^{-1} (c_0)} \left( \mathbf{R} \times \Lambda_H, J_H, \alpha \right) . \]
	
\end{proof}

\begin{lemma}\label{lemma relation tau - tau_1}
	
	We have 
	\[\tau_1 = \zeta_1 \circ \tau \circ \zeta_1^{-1} . \]
	
\end{lemma}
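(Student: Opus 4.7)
The plan is to verify the identity $\tau_1 = \zeta_1 \circ \tau \circ \zeta_1^{-1}$ separately on objects and on the $A_\infty$-operations, in each case appealing directly to the definitions and then to the functoriality statement of Theorem \ref{thm invariance}. On objects the check is immediate: both sides send $\Lambda_H^n(E) = \zeta_1(\Lambda^n(E))$ to $\Lambda_H^{n+1}(E)$, since $\zeta_1$ shifts by the contactomorphism $\phi_H^{-1}$ (which commutes with the $\theta$-translations, as $\phi_H$ is independent of the sheet index $n$), while both $\tau$ and $\tau_1$ are constructed so as to shift the upper index by one.

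Next I would reformulate the $A_\infty$-operations of $\tau$ and $\tau_1$ as the duals of the DG-functors arising from paths of Legendrian data in the sense of Definition \ref{definition functor tau} and Definition \ref{definition functor tau_1}. The functor $\tau$ is built from the morphism $\Phi = ((\Lambda^{n+1-t})_n, J)_{0 \le t \le 1}$ in $\mcL^0_{\mathbf{M}}(\alpha_H)$, while $\tau_1$ is built from $\Phi_1 = ((\Lambda_H^{n+1-t})_n, J_H)_{0 \le t \le 1}$ in $\mcL^0_{\mathbf{M}}(\alpha)$. Since $\Lambda_H^{n+1-t} = \phi_H^{-1}(\Lambda^{n+1-t})$ and $J_H = \phi_H^*J$, and since $\phi_H^* \alpha_H = \alpha$ by Lemma \ref{lemma rectify contact form}, we have the strict identity $\Phi_1 = \phi_H^* \Phi$ in the sense of Definition \ref{definition action of contact isotopy}.

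Then I invoke item 3 of Conjecture \ref{conjecture functoriality} (which is available thanks to Theorem \ref{thm invariance}, since all the Legendrians involved here have finitely many Reeb chords), specialized to $\varphi = \phi_H$, $\alpha' = \alpha$, and $\Phi$ as above. It yields
\[\mcF_{\alpha}(\Phi_1) = f^{\phi_H}_{(\mathbf{\Lambda}^{n+1}, J)} \circ \mcF_{\alpha_H}(\Phi) \circ \left( f^{\phi_H}_{(\mathbf{\Lambda}^n, J)} \right)^{-1}.\]
Unwinding Definition \ref{definition action of contact isotopy}, the conjugating maps $f^{\phi_H}$ are precisely the restrictions of the DG-isomorphism whose $A_\infty$-dual is $\zeta_1$, because both are induced on Reeb chords by precomposition with $\phi_H^{-1}$ (see Lemma \ref{lemma relation A - A_1}).

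Finally, dualizing the displayed identity of DG-functors and extracting its homogeneous components (as is done when passing from the DG-isomorphism of Definition \ref{definition functor tau_1} to the $A_\infty$-functor $\tau_1$) converts the equality above into the $A_\infty$-identity $\tau_1 = \zeta_1 \circ \tau \circ \zeta_1^{-1}$. The only potentially delicate point is to keep track that dualization and pre/post-composition with strict $A_\infty$-isomorphisms commute, which is automatic since $\zeta_1$ is strict; no homotopy corrections appear. Thus once the translation between the contactomorphism-equivariance statement of Theorem \ref{thm invariance} and the $A_\infty$-language of $\tau_1$, $\tau$, $\zeta_1$ is made, the lemma follows directly. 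The main subtlety is purely notational, namely this dictionary between the DG- and $A_\infty$-formulations; there is no new geometric input beyond what is already packaged in Lemma \ref{lemma rectify contact form} and Theorem \ref{thm invariance}.
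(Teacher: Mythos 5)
Your proposal is correct and follows essentially the same route as the paper: the paper's (very terse) proof likewise observes that the path defining $\tau_1$ is the $\phi_H$-pullback of the path defining $\tau$ and that $\phi_H^*\alpha_H=\alpha$, then invokes item 3 of Conjecture \ref{conjecture functoriality} via Theorem \ref{thm invariance} and dualizes, with $\zeta_1$ arising as the dual of the relabeling maps $f^{\phi_H}$. Your write-up just makes explicit the bookkeeping the paper leaves implicit.
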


\begin{proof}
	
	This follows from Theorem \ref{thm invariance} using that $\phi_H^* \alpha_H = \alpha$ and
	\[\left( \left( \Lambda_H^{n+1-t} \right)_{i_0 \leq n \leq i_d}, J_H \right) = \left( \left( \phi_H^{-1} \Lambda^{n+1-t} \right)_{i_0 \leq n \leq i_d}, \phi_H^* J \right). \]
	
\end{proof}

\begin{lemma}\label{lemma mapping torus of tau = mapping torus of tau_1}
	
	The mapping torus of $\tau : \mcA \to \mcA$ is quasi-equivalent to the mapping torus of $\tau_1 : \mcA_1 \to \mcA_1$ (see Definition \ref{definition mapping torus}).
	
\end{lemma}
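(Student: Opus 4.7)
The plan is to apply Proposition \ref{prop invariance of homotopy colimits} directly, taking the vertical quasi-equivalences to be induced by the strict $A_\infty$-isomorphism $\zeta_1 : \mcA \to \mcA_1$ from Lemma \ref{lemma relation A - A_1}. More precisely, I would assemble the diagram
\[\begin{tikzcd}
\mcA \ar[d, "\zeta_1" left] & \mcA \sqcup \mcA \ar[l, "\mathrm{id} \sqcup \mathrm{id}" above] \ar[r, "\mathrm{id} \sqcup \tau"] \ar[d, "\zeta_1 \sqcup \zeta_1"] & \mcA \ar[d, "\zeta_1"] \\
\mcA_1 & \mcA_1 \sqcup \mcA_1 \ar[l, "\mathrm{id} \sqcup \mathrm{id}"] \ar[r, "\mathrm{id} \sqcup \tau_1" below] & \mcA_1
\end{tikzcd}\]
and verify that it strictly commutes.

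The left square commutes on the nose because both paths equal $\zeta_1 \sqcup \zeta_1$ followed (or preceded) by the identity. For the right square, the first component reduces to the tautology $\zeta_1 \circ \mathrm{id} = \mathrm{id} \circ \zeta_1$, while the second component requires $\zeta_1 \circ \tau = \tau_1 \circ \zeta_1$, which is exactly (the rearrangement of) Lemma \ref{lemma relation tau - tau_1}. Since $\zeta_1$ is a strict $A_\infty$-isomorphism it is, in particular, a quasi-equivalence, so all three vertical arrows in the diagram above are quasi-equivalences.

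Proposition \ref{prop invariance of homotopy colimits} then produces an $A_\infty$-functor
\[\mathrm{MT}(\tau) = \mathrm{hocolim}\left(\begin{tikzcd}[ampersand replacement=\&, cramped] \mcA \sqcup \mcA \ar[r, "\mathrm{id} \sqcup \tau"] \ar[d, "\mathrm{id} \sqcup \mathrm{id}" left] \& \mcA \\ \mcA \& \end{tikzcd}\right) \longrightarrow \mathrm{hocolim}\left(\begin{tikzcd}[ampersand replacement=\&, cramped] \mcA_1 \sqcup \mcA_1 \ar[r, "\mathrm{id} \sqcup \tau_1"] \ar[d, "\mathrm{id} \sqcup \mathrm{id}" left] \& \mcA_1 \\ \mcA_1 \& \end{tikzcd}\right) = \mathrm{MT}(\tau_1)\]
which is itself a quasi-equivalence, yielding the desired conclusion. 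There is no real obstacle here: once Lemmas \ref{lemma relation A - A_1} and \ref{lemma relation tau - tau_1} are in place, the statement is an immediate application of the general invariance property of homotopy colimits under pointwise quasi-equivalences of diagrams.
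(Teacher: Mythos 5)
Your proposal is correct and coincides with the paper's own proof: the same commutative diagram, the same appeal to Lemma \ref{lemma relation A - A_1} and Lemma \ref{lemma relation tau - tau_1} to justify that the vertical arrows are quasi-equivalences and the squares commute, and the same application of Proposition \ref{prop invariance of homotopy colimits}. Nothing to add.
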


\begin{proof}
	
	According to Lemma \ref{lemma relation tau - tau_1} the following diagram of Adams-graded $A_{\infty}$-categories is commutative 
	\[\begin{tikzcd}
	\mcA \ar[d, "\zeta_1"] & \mcA \sqcup \mcA \ar[l, "\mathrm{id} \sqcup \mathrm{id}" above] \ar[r, "\mathrm{id} \sqcup \tau"] \ar[d, "\zeta_1 \sqcup \zeta_1"] & \mcA \ar[d, "\zeta_1"] \\
	\mcA_1 & \mcA_1 \sqcup \mcA_1 \ar[l, "\mathrm{id} \sqcup \mathrm{id}" above] \ar[r, "\mathrm{id} \sqcup \tau_1"] & \mcA_1 .
	\end{tikzcd} \]
	Moreover, each vertical arrow is a quasi-equivalence according to Lemma \ref{lemma relation A - A_1}. Thus the result follows from Proposition \ref{prop invariance of homotopy colimits}.
	
\end{proof}

\subsection{Back to the original almost complex structure}\label{subsection back to the original acs}

In this section, we introduce an $A_{\infty}$-category $\mcA_2$ which has same objects and morphisms as $\mcA_1$, but whose operations count punctured discs in $\mathbf{R} \times V$ which are pseudo-holomorphic for the almost complex structure induced by $\alpha$ and $J$ (instead of $J_H$). 

\subsubsection{The $A_{\infty}$-category $\mcA_2$}

Recall that we chose a contactomorphism $\phi_H$ as in Lemma \ref{lemma rectify contact form}, and recall that
\[\Lambda_H^{\theta} (E) := \phi_H^{-1} \left( \Lambda^{\theta} (E) \right), \, \Lambda_H^n := \bigcup\limits_{E \in \mcE} \Lambda_H^n (E), \, \Lambda_H := \bigcup\limits_{n \in \mathbf{Z}} \Lambda_H^n \text{ and } J_H := \phi_H^* J. \]

\begin{defin}
	
	We consider the $A_{\infty}$-category $\mcA_2$ defined as follows
	\begin{enumerate}
		
		\item the objects of $\mcA_2$ are the Legendrians $\Lambda_H^n (E)$, $(n, E) \in \mathbf{Z} \times \mcE$,
		
		\item the vector space $\mcA_2 \left( \Lambda_H^i (E), \Lambda_H^j (E') \right)$ is either generated by the $R_{\alpha}$-chords from $\Lambda_H^i (E)$ to $\Lambda_H^j (E')$ if $(i,E)<(j,E')$, or $\mathbf{F}$ if $(i,E)=(j,E')$, or $0$ otherwise, and
		
		\item the operations are such that $1 \in \mcA_2 \left( \Lambda_H^n, \Lambda_H^n \right)$ is a strict unit, and for every sequence $(i_0, E_0) < \dots < (i_d, E_d)$, for every sequence of Reeb chords 
		\[\left( c_1, \dots, c_d \right) \in \mcR \left( \Lambda_H^{i_0} (E_0), \Lambda_H^{i_1} (E_1) \right) \times \dots \times \mcR \left( \Lambda_H^{i_{d-1}} (E_{d-1}), \Lambda_H^{i_d} (E_d) \right) \]
		we have 
		\[\mu_{\mcA_2} \left( c_1, \dots, c_d \right) = \sum \limits_{c_0 \in \mcR \left( \Lambda_H^{i_0} (E_0), \Lambda_H^{i_d} (E_d) \right)} \# \mcM_{c_d, \dots, c_1, c_0} \left( \mathbf{R} \times \Lambda_H, J, \alpha \right) c_0 . \]
		
	\end{enumerate}
	
\end{defin}

\subsubsection{The quasi-autoequivalence $\tau_2$}

\begin{defin}\label{definition functor tau_2}
	
	We denote by $\tau_2 : \mcA_1 \to \mcA_1$ the $A_{\infty}$-functor defined as follows:
	\begin{enumerate}
		
		\item on objects, $\tau_2 \left( \Lambda_H^n (E) \right) = \Lambda_H^{n+1} (E)$,
		
		\item on morphisms, the map 
		\[\tau_2^d : \mcA_1 \left( \Lambda_H^{i_0} (E_0), \Lambda_H^{i_1} (E_1) \right) \otimes \dots \otimes \mcA_1 \left( \Lambda_H^{i_{d-1}} (E_{d-1}), \Lambda_H^{i_d} (E_d) \right) \to \mcA_1 \left( \Lambda_H^{i_0+1} (E_0), \Lambda_H^{i_d+1} (E_d) \right)  \]
		is obtained by dualizing the components of the DG-isomorphism
		\[CE_{-*} \left( \left( \Lambda_H^{n+1} \right)_{i_0 \leq n \leq i_d}, J, \alpha \right) \to CE_{-*} \left( \left( \Lambda_H^n \right)_{i_0 \leq n \leq i_d}, J, \alpha \right) \]
		induced by the path $ \left( \left( \Lambda_H^{n+1-t} \right)_{i_0 \leq n \leq i_d}, J \right)_{0 \leq t \leq 1}$ (see Theorem \ref{thm invariance} or \cite[Proposition 2.6]{EES07}). 
		
	\end{enumerate}
	
\end{defin}

\begin{lemma}\label{lemma tau_2 is a quasi-equivalence}
	
	The $A_{\infty}$-functor $\tau_2 : \mcA_2 \to \mcA_2$ is a quasi-equivalence.
	
\end{lemma}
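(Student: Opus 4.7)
The plan is to imitate exactly the argument used for Lemma \ref{lemma tau is a quasi-equivalence} and Lemma \ref{lemma tau_1 is a quasi-equivalence}. Since $\tau_2$ is defined by dualizing the components of the DG-isomorphism
\[CE_{-*} \left( \left( \Lambda_H^{n+1} \right)_{i_0 \leq n \leq i_d}, J, \alpha \right) \to CE_{-*} \left( \left( \Lambda_H^n \right)_{i_0 \leq n \leq i_d}, J, \alpha \right) \]
induced by the path $\bigl( \left( \Lambda_H^{n+1-t} \right)_{i_0 \leq n \leq i_d}, J \bigr)_{0 \leq t \leq 1}$ via Theorem \ref{thm invariance}, it suffices to produce a strict two-sided inverse by dualizing the components of the inverse DG-isomorphism.

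More precisely, I will define an $A_{\infty}$-functor $\overline{\tau}_2 : \mcA_2 \to \mcA_2$ which acts as $\Lambda_H^n (E) \mapsto \Lambda_H^{n-1} (E)$ on objects, and whose components on morphisms are obtained by dualizing the components of the \emph{inverse} of the DG-isomorphism above (this inverse exists by the second item of Theorem \ref{thm invariance}, which asserts that the DG-functors induced by paths are homotopy equivalences, combined with the functoriality statement in the third item that applied to the concatenation with the reversed path yields the identity up to homotopy). Then $\tau_2 \circ \overline{\tau}_2$ and $\overline{\tau}_2 \circ \tau_2$ are equal to $\mathrm{id}_{\mcA_2}$ on the nose, exactly as in the proof of Lemma \ref{lemma tau is a quasi-equivalence}, so $\tau_2$ is even a strict isomorphism and in particular a quasi-equivalence.

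The only subtle point — and the one that would require a bit of care if one insists on strict invertibility rather than just quasi-invertibility — is the functoriality/composition property of the assignment path $\mapsto$ DG-functor from Theorem \ref{thm invariance}, which guarantees that composition of the path $\bigl( \left( \Lambda_H^{n+1-t} \right), J \bigr)_{0 \le t \le 1}$ with its reverse produces (up to the homotopies built into the bicategorical statement) the identity functor. If one only appeals to homotopy invertibility, then $\overline{\tau}_2$ is a two-sided quasi-inverse of $\tau_2$, which is all that is needed to conclude that $\tau_2$ is a quasi-equivalence. This is exactly the situation already encountered for $\tau_1$, so no new difficulty arises.
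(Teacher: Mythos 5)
Your proposal is correct and takes essentially the same route as the paper: the paper's proof also reduces to the argument of Lemma \ref{lemma tau is a quasi-equivalence}, namely constructing an inverse functor by dualizing the components of the inverse of the defining DG-isomorphism. The only cosmetic difference is that the paper takes the strict invertibility of the induced DG-functor as given by its definition (it is declared to be a DG-isomorphism), whereas you spend a paragraph hedging between strict and homotopy invertibility; either reading suffices to conclude.
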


\begin{proof}
	
	This follows from the fact that $\tau_2$ is defined by dualizing the components of a DG-isomorphism (see the proof of Lemma \ref{lemma tau is a quasi-equivalence}). 
	
\end{proof}

Here the $\mathbf{Z}$-splitting 
\[\mathbf{Z} \times \mcE \xrightarrow{\sim} \mathrm{ob} \left( \mcA_2 \right), \quad (n, E) \mapsto \Lambda_H^n (E) \]
is compatible with the quasi-autoequivalence $\tau_2$ in the sense of Definition \ref{definition group-action}. As explained there, this turns $\mcA_2$ into an Adams-graded $A_{\infty}$-category. 

\subsubsection{Relation between $\left( \mcA_1, \tau_1 \right)$ and $\left( \mcA_2, \tau_2 \right)$}

\begin{lemma}\label{lemma relation A_1 - A_2}
	
	Choose a generic path $(J_t^{12})_{0 \leq t \leq 1}$ such that $J_0^{12} = J$ and $J_1^{12} = J_H$. 
	There is an $A_{\infty}$-isomorphism $\zeta_{12} : \mcA_1 \to \mcA_2$ defined as follows
	\begin{enumerate}
		
		\item on objects, $\zeta_{12} \left( \Lambda_H^n (E) \right) = \Lambda_H^n (E)$, 
		
		\item on morphisms, the map
		\[\zeta_{12} : \mcA_1 \left( \Lambda_H^{i_0} (E_0), \Lambda_H^{i_1} (E_1) \right) \otimes \dots \otimes \mcA_1 \left( \Lambda_H^{i_{d-1}} (E_{d-1}), \Lambda_H^{i_d} (E_d) \right) \to \mcA_2 \left( \Lambda_H^{i_0} (E_0), \Lambda_H^{i_d} (E_d) \right) \]
		is obtained by dualizing the components of the DG-isomorphism 
		\[CE_{-*} \left( \left( \Lambda_H^n \right)_{i_0 \leq n \leq i_d}, J, \alpha \right) \to CE_{-*} \left( \left( \Lambda_H^n \right)_{i_0 \leq n \leq i_d}, J_H, \alpha \right) \]
		induced by the path $(\left( \Lambda_H^n \right)_{i_0 \leq n \leq i_d}, J_t^{12})_{0 \leq t \leq 1}$ (see Theorem \ref{thm invariance}).
		
	\end{enumerate}
	
\end{lemma}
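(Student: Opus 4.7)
The plan is to apply Theorem \ref{thm invariance} to the path
\[
\Phi := \left( \left( \Lambda_H^n \right)_{i_0 \leq n \leq i_d}, J_t^{12} \right)_{0 \leq t \leq 1}
\]
in the bicategory $\mcL_{\mathbf{M}}^0(\alpha)$. Since the Legendrians are constant along $\Phi$ and $R_{\alpha}$-Reeb chords depend only on $\alpha$, the chord sets at the two endpoints are literally identical; in particular $\Phi$ is a morphism in $\mcL_{\mathbf{M}}^0(\alpha)$ as soon as $(J_t^{12})$ is chosen so that the parametrized moduli spaces entering the definition of $\mcF_\alpha(\Phi)$ are transversely cut out, which is the meaning of \emph{generic} in the statement. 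Theorem \ref{thm invariance} then produces a DG-functor
\[
F := \mcF_\alpha(\Phi) : CE_{-*}\bigl((\Lambda_H^n), J, \alpha\bigr) \longrightarrow CE_{-*}\bigl((\Lambda_H^n), J_H, \alpha\bigr)
\]
between two DG-categories that share the same underlying graded category, and $F$ preserves the trivial augmentation (which vanishes on any word containing at least one Reeb chord).

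I would then upgrade $F$ from a homotopy equivalence to a genuine DG-isomorphism. By the standard action filtration argument for continuation maps in Legendrian contact homology (see e.g.\ the exposition in \cite[section 2.4]{EES07}), the image $F(c)$ of a Reeb chord $c$ has the form $c + (\text{sum of words of Reeb chords of strictly smaller total action})$, the correction terms coming from handle slide instants along $\Phi$. Filtering $CE_{-*}$ by action and using that the slices of this filtration are finite dimensional in our directed setting makes $F$ strictly lower-triangular with unit diagonal in the action-ordered basis of Reeb chord words; it is therefore bijective.

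Dualization through the chain of identifications $CE_{-*} = \Omega(LC_*)$ and $LA^* = LC_*^{\#}$ recalled in section \ref{subsection Chekanov-Eliashberg algebra} then sends the DG-isomorphism $F$ to an $A_{\infty}$-isomorphism of Koszul-dual $A_{\infty}$-categories; letting $(i_0, \ldots, i_d)$ range over all tuples and using the functoriality of $\mcF_\alpha$ under inclusions of sub-families of Legendrians assembles these into the single coherent functor $\zeta_{12} : \mcA_1 \to \mcA_2$. The hard part will be the second step: verifying that $F$ is actually an isomorphism and not merely a quasi-isomorphism. This amounts to a careful action estimate on the parametrized disks counted by $\mcF_\alpha(\Phi)$, together with checking that the action filtration on $CE_{-*}$ behaves well in the directed Adams-graded setting we work in. Once this is established, the rest of the argument is purely formal.
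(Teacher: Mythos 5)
Your proposal is correct and follows essentially the same route as the paper: the paper's proof simply observes that $\zeta_{12}$ is obtained by dualizing the components of a DG-isomorphism and gets the inverse by dualizing the inverse DG-map (as in the proof of Lemma \ref{lemma tau is a quasi-equivalence}), taking the invertibility of the continuation functor for granted as part of the package of Theorem \ref{thm invariance}. Your action-filtration argument for why that DG-map is an isomorphism rather than merely a homotopy equivalence is the standard justification of precisely the step the paper leaves implicit.
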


\begin{proof}
	
	We have to prove that $\zeta_{12}$ is an isomorphism. This follows from the fact that it is defined by dualizing the components of a DG-isomorphism (see the proof of Lemma \ref{lemma tau is a quasi-equivalence}). 
	
\end{proof}

\begin{lemma}\label{lemma relation tau_1 - tau_2}
	
	The $A_{\infty}$-functor $\tau_2$ is homotopic to $\zeta_{12} \circ \tau_1 \circ \zeta_{12}^{-1}$ (see \cite[paragraph (1h)]{Sei08}). 
	
\end{lemma}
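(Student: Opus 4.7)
The plan is to realise $\tau_2$ and $\zeta_{12}\circ\tau_1\circ\zeta_{12}^{-1}$ as the $A_\infty$-functors obtained by dualising DG-maps induced, via Theorem \ref{thm invariance}, by morphisms in $\mcL_{\mathbf{M}}^0(\alpha)$, and then to construct a homotopy between those morphisms. First, functoriality of $\mcF_\alpha$ (Theorem \ref{thm invariance}) shows that $\zeta_{12}\circ\tau_1\circ\zeta_{12}^{-1}$ is the dual of the DG-map induced by the concatenation $\Phi^b$ of the three paths
\[(\Lambda_H^{n+1},J_t^{12})_{t\in[0,1]},\quad (\Lambda_H^{n+1-t},J_H)_{t\in[0,1]},\quad (\Lambda_H^n,J_{1-t}^{12})_{t\in[0,1]},\]
while $\tau_2$ is the dual of the DG-map induced by the single path $\Phi^a:=(\Lambda_H^{n+1-t},J)_{t\in[0,1]}$. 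Both $\Phi^a$ and $\Phi^b$ are morphisms in $\mcL_{\mathbf{M}}^0(\alpha)$ from $\bigl(\Lambda_H^{n+1},J\bigr)$ to $\bigl(\Lambda_H^n,J\bigr)$ (restricted to the finite subfamily of Legendrians relevant to a given hom-space).

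Next I would construct a homotopy from $\Phi^a$ to $\Phi^b$ in the sense of Definition \ref{definition geometric category}. Since the space of almost complex structures on $\xi$ compatible with $-d\lambda$ is contractible, one can choose a two-parameter family $\widetilde J_{s,t}$, $s,t\in[0,1]$, satisfying $\widetilde J_{0,t}=J$, $\widetilde J_{s,0}=\widetilde J_{s,1}=J$, and such that $\widetilde J_{1,t}$ agrees, after an $s$-dependent reparametrisation of $t$, with the almost complex structure part of $\Phi^b$. Coupling this with a matching reparametrisation of the Legendrian isotopy $\Lambda_H^{n+1-t}$ (kept constant in $t$ on the ``pure acs'' portions of $\Phi^b$ when $s=1$) yields a two-parameter family $(\Lambda_{s,t},J_{s,t})$ interpolating between $\Phi^a$ and $\Phi^b$. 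Chord genericity of every $\Lambda_{s,t}$ follows because each $\Lambda_H^{n+1-t}$ is a global $\theta$-translate of the chord-generic configuration $\Lambda_H$, and finiteness of Reeb chords follows from the compact support of $H$ together with uniform action bounds on the compact 2-parameter family. After a generic perturbation of $\widetilde J_{s,t}$ with fixed boundary values, the whole family lies in $\mcL_{\mathbf{M}}^0(\alpha)$.

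Finally, applying Theorem \ref{thm invariance} to this homotopy produces a DG-homotopy between the primal DG-maps induced by $\Phi^a$ and $\Phi^b$. Dualising transforms this DG-homotopy into an $A_\infty$-homotopy between the corresponding $A_\infty$-functors, which is a direct algebraic verification using the characterisation of $A_\infty$-homotopies in terms of $(f,g)$-coderivations, in the spirit of \cite[section 2.1]{PR21}. This yields the $A_\infty$-homotopy between $\tau_2$ and $\zeta_{12}\circ\tau_1\circ\zeta_{12}^{-1}$ asserted by the lemma. The main technical subtlety is arranging the perturbed two-parameter family to remain in $\mcL_{\mathbf{M}}^0(\alpha)$ throughout -- avoiding handle-slide instants at \emph{every} interior slice and controlling the set of Reeb chords along the whole square; once this transversality is in place, the remainder of the argument is a formal application of functoriality of $\mcF_\alpha$ and of graded dualisation.
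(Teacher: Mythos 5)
Your proposal is correct and follows essentially the same route as the paper: both functors are realised as duals of DG-maps induced by paths of pairs (Legendrians, almost complex structure), the two paths are connected by a two-parameter family in $\mcL^0_{\mathbf{M}} (\alpha)$, and Theorem \ref{thm invariance} converts this into a DG-homotopy which dualises to an $A_{\infty}$-homotopy. The only (cosmetic) difference is that the paper compares $\tau_2 \circ \zeta_{12}$ with $\zeta_{12} \circ \tau_1$ --- two two-leg concatenations from $(\Lambda_H^{n+1}, J)$ to $(\Lambda_H^n, J_H)$ --- which sidesteps the small imprecision in your reduction, namely that the reversed path $(\Lambda_H^n, J^{12}_{1-t})$ induces only a homotopy inverse of $\zeta_{12}$ rather than the strict inverse $\zeta_{12}^{-1}$; since $\zeta_{12}$ is an $A_{\infty}$-isomorphism and homotopy of functors is preserved under composition with a fixed functor, this is harmless.
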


\begin{proof}
	
	First recall that $\tau_1$ is obtained by dualizing the components of the DG-map
	\[CE_{-*} \left( \left( \Lambda_H^{n+1} \right)_{i_0 \leq n \leq i_d}, J_H, \alpha \right) \to CE_{-*} \left( \left( \Lambda_H^n \right)_{i_0 \leq n \leq i_d}, J_H, \alpha \right) \]
	induced by the path $\left( \left( \Lambda_H^{n+1-t} \right)_{i_0 \leq n \leq i_d}, J_H \right)_{0 \leq t \leq 1}$. Thus, $\zeta_{12} \circ \tau_1$ is obtained by dualizing the components of the composition
	\[CE_{-*} \left( \left( \Lambda_H^{n+1} \right)_{i_0 \leq n \leq i_d}, J, \alpha \right) \to CE_{-*} \left( \left( \Lambda_H^{n+1} \right)_{i_0 \leq n \leq i_d}, J_H, \alpha \right) \to CE_{-*} \left( \left( \Lambda_H^n \right)_{i_0 \leq n \leq i_d}, J_H, \alpha \right) . \]
	On the other hand, $\tau_2$ is obtained by dualizing the components of the DG-map 
	\[CE_{-*} \left(\left( \Lambda_H^{n+1} \right)_{i_0 \leq n \leq i_d}, J, \alpha \right) \to CE_{-*} \left( \left( \Lambda_H^n \right)_{i_0 \leq n \leq i_d}, J, \alpha \right) \]
	induced by the path $\left( \left( \Lambda_H^{n+1-t} \right)_{i_0 \leq n \leq i_d}, J \right)_{0 \leq t \leq 1}$. Thus, $\tau_2 \circ \zeta_{12}$ is obtained by dualizing the components of the composition
	\[CE_{-*} \left( \left( \Lambda_H^{n+1} \right)_{i_0 \leq n \leq i_d}, J, \alpha \right) \to CE_{-*} \left( \left( \Lambda_H^n \right)_{i_0 \leq n \leq i_d}, J, \alpha \right) \to CE_{-*} \left( \left( \Lambda_H^n \right)_{i_0 \leq n \leq i_d}, J_H, \alpha \right) . \]
	According to Theorem \ref{thm invariance}, the DG-maps used to define $\zeta_{12} \circ \tau_1$ and $\tau_2 \circ \zeta_{12}$ are DG-homotopic. Therefore the $A_{\infty}$-functors $\zeta_{12} \circ \tau_1$ and $\tau_2 \circ \zeta_{12}$ are homotopic. This concludes the proof.
	
\end{proof}

\begin{lemma}\label{lemma mapping torus of tau_1 = mapping torus of tau_2}
	
	The mapping torus of $\tau_1 : \mcA_1 \to \mcA_1$ is quasi-equivalent to the mapping torus of $\tau_2 : \mcA_2 \to \mcA_2$ (see Definition \ref{definition mapping torus}).
	
\end{lemma}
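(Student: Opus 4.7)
The strategy is to factor the desired quasi-equivalence through an intermediate mapping torus, using the isomorphism $\zeta_{12}$ of Lemma \ref{lemma relation A_1 - A_2} to transport the diagram defining $\mathrm{MT}(\tau_1)$ into one on $\mcA_2$, and then using the homotopy of Lemma \ref{lemma relation tau_1 - tau_2} to identify the resulting mapping torus with $\mathrm{MT}(\tau_2)$.

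First, I would observe that the diagram of Adams-graded $A_\infty$-categories
\[\begin{tikzcd}
\mcA_1 \ar[d, "\zeta_{12}"] & \mcA_1 \sqcup \mcA_1 \ar[l, "\mathrm{id} \sqcup \mathrm{id}" above] \ar[r, "\mathrm{id} \sqcup \tau_1"] \ar[d, "\zeta_{12} \sqcup \zeta_{12}"] & \mcA_1 \ar[d, "\zeta_{12}"] \\
\mcA_2 & \mcA_2 \sqcup \mcA_2 \ar[l, "\mathrm{id} \sqcup \mathrm{id}" above] \ar[r, "\mathrm{id} \sqcup (\zeta_{12} \circ \tau_1 \circ \zeta_{12}^{-1})" below] & \mcA_2
\end{tikzcd}\]
is strictly commutative: commutativity of the left square is immediate, and commutativity of the right square reduces on the second component to the tautology $\zeta_{12} \circ \tau_1 = (\zeta_{12} \circ \tau_1 \circ \zeta_{12}^{-1}) \circ \zeta_{12}$. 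Since $\zeta_{12}$ is an $A_\infty$-isomorphism by Lemma \ref{lemma relation A_1 - A_2}, each vertical arrow is a quasi-equivalence, and Proposition \ref{prop invariance of homotopy colimits} yields a quasi-equivalence
\[\mathrm{MT}(\tau_1) \simeq \mathrm{hocolim} \left(
\begin{tikzcd}
\mcA_2 \sqcup \mcA_2 \ar[r, "\mathrm{id} \sqcup (\zeta_{12} \circ \tau_1 \circ \zeta_{12}^{-1})"] \ar[d, "\mathrm{id} \sqcup \mathrm{id}" left] & \mcA_2 \\
\mcA_2
\end{tikzcd} \right). \]

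Next, by Lemma \ref{lemma relation tau_1 - tau_2}, the $A_\infty$-functors $\tau_2$ and $\zeta_{12} \circ \tau_1 \circ \zeta_{12}^{-1}$ from $\mcA_2$ to $\mcA_2$ are homotopic. Consequently, the functors $\mathrm{id} \sqcup \tau_2$ and $\mathrm{id} \sqcup (\zeta_{12} \circ \tau_1 \circ \zeta_{12}^{-1})$ from $\mcA_2 \sqcup \mcA_2$ to $\mcA_2$ are homotopic, while the two copies of $\mathrm{id} \sqcup \mathrm{id}$ are literally equal. Proposition \ref{prop homotopic functors induce quasi-isomorphic homotopy colimits} then produces a quasi-equivalence
\[\mathrm{hocolim} \left(
\begin{tikzcd}
\mcA_2 \sqcup \mcA_2 \ar[r, "\mathrm{id} \sqcup (\zeta_{12} \circ \tau_1 \circ \zeta_{12}^{-1})"] \ar[d, "\mathrm{id} \sqcup \mathrm{id}" left] & \mcA_2 \\
\mcA_2
\end{tikzcd} \right) \simeq \mathrm{MT}(\tau_2). \]
Concatenating the two quasi-equivalences gives the result. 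There is no real obstacle here: the work was already done in Lemma \ref{lemma relation tau_1 - tau_2}, which packaged the DG-homotopy between the two continuation maps into an $A_\infty$-homotopy, and the general invariance statements of section \ref{Grothendieck subsection} do the rest.
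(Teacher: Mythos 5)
Your proof is correct and follows essentially the same route as the paper: both transport the defining diagram of $\mathrm{MT}(\tau_1)$ along the isomorphism $\zeta_{12}$ via Proposition \ref{prop invariance of homotopy colimits}, and then invoke Lemma \ref{lemma relation tau_1 - tau_2} together with Proposition \ref{prop homotopic functors induce quasi-isomorphic homotopy colimits} to pass from $\zeta_{12} \circ \tau_1 \circ \zeta_{12}^{-1}$ to $\tau_2$. The only difference is that you spell out the commutativity check that the paper leaves implicit.
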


\begin{proof}
	
	Let $\tau_{12} := \zeta_{12} \circ \tau_1 \circ \zeta_{12}^{-1}$.
	Consider the following commutative diagram of Adams-graded $A_{\infty}$-categories
	\[\begin{tikzcd}
	\mcA_1 \ar[d, "\zeta_{12}"] & \mcA_1 \sqcup \mcA_1 \ar[l, "\mathrm{id} \sqcup \mathrm{id}" above] \ar[r, "\mathrm{id} \sqcup \tau_1"] \ar[d, "\zeta_{12} \sqcup \zeta_{12}"] & \mcA_1 \ar[d, "\zeta_{12}"] \\
	\mcA_2 & \mcA_2 \sqcup \mcA_2 \ar[l, "\mathrm{id} \sqcup \mathrm{id}" above] \ar[r, "\mathrm{id} \sqcup \tau_{12}"] & \mcA_2 .
	\end{tikzcd} \]
	Each vertical arrow is a quasi-equivalence according to Lemma \ref{lemma relation A_1 - A_2}, so it follows from Proposition \ref{prop invariance of homotopy colimits} that the mapping torus of $\tau_1$ is quasi-equivalent to the mapping torus of $\tau_{12}$. 
	Now according to Lemma \ref{lemma relation tau_1 - tau_2}, $\tau_{12}$ is homotopic to $\tau_2$. Thus the result follows from Proposition \ref{prop homotopic functors induce quasi-isomorphic homotopy colimits}.
	
\end{proof}

\subsection{Projection to $P$}\label{subsection projection on P}

\subsubsection{The $A_{\infty}$-category $\mcO$}

In order to define the $A_{\infty}$-category $\mcO$, we need to introduce moduli spaces of pseudo-holomorphic discs in $P$. 

\begin{defin}\label{definition moduli spaces for Lagrangians}
	
	Let $\mathbf{L} = (L^n (E))_{(n, E) \in \mathbf{Z} \times \mcE}$ be a family of mutually transverse connected compact exact Lagrangians in $(P, \lambda)$. 
	Consider a sequence of integers $i_0 < \dots < i_d$, and a family of intersection points $\left( x_0, x_1, \dots, x_d \right)$, where
	\[x_0 \in L^{i_0} (E_0) \cap L^{i_d} (E_d) \text{ and } x_k \in L^{i_{k-1}} (E_{k-1}) \cap L^{i_k} (E_k), \, 1 \leq k \leq d. \] 
	\begin{enumerate}
		
		\item If $d=1$, we denote by $\mcM_{x_1, x_0} \left( \mathbf{L}, j \right)$ the set of equivalence classes of maps $u : \mathbf{R} \times \left[ 0, 1 \right] \to P$ such that
		\begin{itemize}                  
			\item $u$ maps $\mathbf{R} \times \{ 0 \}$ to $L^{i_0} (E_0)$ and $\mathbf{R} \times \{ 1 \}$ to $L^{i_1 (E_1)}$,
			\item $u$ satisfies the asymptotic conditions
			\[u \left( s, t \right) \underset{s \to - \infty}{\longrightarrow} x_1 \text{ and } u \left( s, t \right) \underset{s \to + \infty}{\longrightarrow} x_0, \]
			\item $u$ is $(i, j)$-holomorphic,
		\end{itemize}
		where two maps $u$ and $u'$ are identified if there exists $s_0 \in \mathbf{R}$ such that $u' (\cdot, \cdot) = u (\cdot + s_0, \cdot)$.
		
		\item If $d \geq 2$, we denote by $\mcM_{x_d, \dots, x_1, x_0} \left( \mathbf{L}, j \right)$ the set of of pairs $\left( r, u \right)$ such that
		\begin{itemize}
			\item $r \in \mcR^{d+1}$ and $u : \Delta_r \to P$ maps the boundary arc $\left( \zeta_{k+1}, \zeta_k \right)$ of $\Delta_r$ to $L^{i_k} (E_k)$,
			\item $u$ satisfies the asymptotic conditions
			\[\left( u \circ \epsilon_k (r) \right) \left( s, t \right) \underset{s \to - \infty}{\longrightarrow} x_k \text{ and } \left( u \circ \epsilon_0 (r) \right) \left( s, t \right) \underset{s \to + \infty}{\longrightarrow} x_0, \]
			\item $u$ is $(i, j)$-holomorphic.
		\end{itemize}
		
	\end{enumerate}
	
\end{defin}

Recall that we chose a contactomorphism $\phi_H$ as in Lemma \ref{lemma rectify contact form}.
We set 
\[L_H^n := \Pi_{P} \left( \Lambda_H^n (E) \right) \subset P \text{ and } \mathbf{L}_H := \left( L_H^n (E) \right)_{(n, E) \in \mathbf{Z} \times \mcE}. \]

\begin{defin}\label{definition category O}
	
	We denote by $\mcO$ the $A_{\infty}$-category defined as follows:
	\begin{enumerate}
		
		\item the objects of $\mcO$ are the Lagrangians $L_H^n (E)$, $(n, E) \in \mathbf{Z} \times \mcE$, 
		
		\item the space of morphisms from $L_H^i (E)$ to $L_H^j (E')$ is either generated by $L_H^i (E) \cap L_H^j (E')$ if $(i, E) < (j, E')$, or $\mathbf{F}$ if $(i, E) = (j, E')$, or $0$ otherwise, and
		
		\item the operations are such that $1 \in \mcO \left( L_H^n (E), L_H^n (E) \right)$ is a strict unit, and for every sequence $(i_0, E_0) < \dots < (i_d, E_d)$, for every sequence of intersection points 
		\[\left( x_1, \dots, x_d \right) \in \left( L_H^{i_0} (E_0) \cap L_H^{i_1} (E_1) \right) \times \dots \times \left( L_H^{i_{d-1}} (E_{d-1}) \cap L_H^{i_d} (E_d) \right), \]
		we have 
		\[\mu_{\mcO} \left( x_1, \dots, x_d \right) = \sum \limits_{x_0 \in L_H^{i_0} (E_0) \cap L_H^{i_d} (E_d)} \# \mcM_{x_d, \dots, x_1, x_0} \left( \mathbf{L}_H, j \right) x_0 . \]
		
	\end{enumerate} 	
	
\end{defin}

\subsubsection{The quasi-autoequivalence $\gamma$}

Before defining the $A_{\infty}$-functor $\gamma : \mcO \to \mcO$, we recall Legendrian contact homology as defined in \cite{EES07}. To each generic Legendrian $\Lambda$ in $\mathbf{R} \times P$, the authors associate a semi-free DG-algebra $A = A \left( \Lambda, j \right)$ generated by the self-intersection points of $\Pi_P \left( \Lambda \right)$, with a differential $\partial : A \to A$ defined using $j$-holomorphic discs in $P$. In our case, the differential of $A \left( \bigsqcup_k \Lambda_H^k (E), j \right)$ on a generator $x_0 \in L_H^{i_0} (E_0) \cap L_H^{i_d} (E_d)$ is given by 
\[\partial x_0 = \sum_{\left( x_1, \dots, x_d \right)} \# \mcM_{x_d, \dots, x_1, x_0} \left( \mathbf{L}_H, j \right) x_d \cdots x_1  \]
where the sum is over the sequences 
\[\left( x_1, \dots, x_d \right) \in \left( L_H^{i_0} (E_0) \cap L_H^{i_1} (E_1) \right) \times \dots \times \left( L_H^{i_{d-1}} (E_{d-1}) \cap L_H^{i_d} (E_d) \right) . \]
According to \cite[Theorem 2.1]{DR16}, Legendrian contact homology as defined in \cite{EES07} coincides with the version exposed in section \ref{section Legendrian invariants}: 
\[A \left( \Lambda, j \right) = CE_* \left( \Lambda, \left( D \Pi_P \right)_{|\xi}^* j, \alpha \right) . \]
We introduced this version only because it makes clearer the fact that some operations are defined using pseudo-holomorphics polygons in the base $P$.

\begin{defin}\label{definition functor gamma}
	
	We denote by $\gamma : \mcO \to \mcO$ the $A_{\infty}$-functor defined as follows
	\begin{enumerate}
		
		\item on objects, $\gamma \left( L_H^n (E) \right) = L_H^{n+1} (E)$,
		
		\item on morphisms, the map
		\[\gamma : \mcO \left( L_H^{i_0} (E_0), L_H^{i_1} (E_1) \right) \otimes \dots \otimes \mcO \left( L_H^{i_{d-1}} (E_{d-1}), L_H^{i_d} (E_d) \right) \to \mcO \left( L_H^{i_0 + 1} (E_0), L_H^{i_d+1} (E_d) \right) \]
		is obtained by dualizing the components of the DG-isomorphism
		\begin{align*}
		A \left( \bigsqcup \limits_{k=i_0}^{i_d} \Lambda_H^{k+1}, j \right) & = CE_{-*} \left( \mathbf{R} \times \bigsqcup \limits_{k=i_0}^{i_d} \Lambda_H^{k+1},  \left( D \Pi_P \right)_{|\xi}^* j, \alpha \right) \\
		 & \to CE_{-*} \left( \mathbf{R} \times \bigsqcup \limits_{k=i_0}^{i_d} \Lambda_H^k,  \left( D \Pi_P \right)_{|\xi}^* j, \alpha \right) = A \left( \bigsqcup \limits_{k=i_0}^{i_d} \Lambda_H^k, j \right)
		\end{align*}
		induced by the Legendrian isotopy $\left( \bigsqcup \limits_{k=i_0}^{i_d} \Lambda_H^{k+1-t} \right)_{0 \leq t \leq 1}$ (see Theorem \ref{thm invariance}).
		
	\end{enumerate}
	
\end{defin}

\begin{lemma}\label{lemma gamma is a quasi-equivalence}
	
	The $A_{\infty}$-functor $\gamma : \mcO \to \mcO$ is a quasi-equivalence.
	
\end{lemma}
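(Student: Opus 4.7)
The plan is to mimic the arguments used for Lemmas \ref{lemma tau is a quasi-equivalence}, \ref{lemma tau_1 is a quasi-equivalence}, and \ref{lemma tau_2 is a quasi-equivalence}. The point there was that $\tau$, $\tau_1$, $\tau_2$ were each obtained from the components of a DG-isomorphism of semi-free Chekanov--Eliashberg DG-algebras via the Koszul-type duality that sends such a DG-algebra to the $A_{\infty}$-category whose morphism spaces are generated by the chords and whose operations are dual to the components of the differential. The functor $\gamma$ fits into exactly the same pattern, with the only difference being that the DG-isomorphism being dualized is now the one coming from the Ekholm--Etnyre--Sullivan formulation of Legendrian contact homology, where the differential counts pseudo-holomorphic polygons in $P$. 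By \cite[Theorem 2.1]{DR16} this formulation agrees with the one of section \ref{subsection Chekanov-Eliashberg algebra} for the almost complex structure $\left( D \Pi_P \right)_{|\xi}^* j$, so the same invariance machinery (Theorem \ref{thm invariance}) applies.

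Concretely, I would first construct an inverse $\overline{\gamma} : \mcO \to \mcO$ exactly as in the proof of Lemma \ref{lemma tau is a quasi-equivalence}: $\overline{\gamma}$ sends $L_H^n(E)$ to $L_H^{n-1}(E)$ and, on morphism spaces, it is obtained by dualizing the components of the \emph{inverse} of the DG-isomorphism
\[ A \left( \bigsqcup\limits_{k=i_0}^{i_d} \Lambda_H^{k+1}, j \right) \to A \left( \bigsqcup\limits_{k=i_0}^{i_d} \Lambda_H^k, j \right) \]
induced by the Legendrian isotopy $\big( \bigsqcup_k \Lambda_H^{k+1-t} \big)_{0 \le t \le 1}$. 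This is a genuine DG-isomorphism because the reversed isotopy provides a strict two-sided inverse at the level of DG-maps (via Theorem \ref{thm invariance}). Dualization being contravariantly functorial and compatible with composition, one then checks that $\gamma \circ \overline{\gamma} = \overline{\gamma} \circ \gamma = \mathrm{id}_{\mcO}$, so $\gamma$ is a strict $A_{\infty}$-isomorphism and in particular a quasi-equivalence.

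There is no real obstacle here: the only thing to verify is the compatibility of the ``dualize and rename" construction with composition of DG-maps, and this is formal, essentially the same verification that was implicitly used in Lemmas \ref{lemma tau_1 is a quasi-equivalence} and \ref{lemma tau_2 is a quasi-equivalence}. The use of \cite[Theorem 2.1]{DR16} is needed only to interpret $A(-, j)$ as $CE_{-*}(-, (D\Pi_P)^*_{|\xi} j, \alpha)$ so that Theorem \ref{thm invariance} can be applied to produce the DG-isomorphism (and its inverse) from the Legendrian isotopy.
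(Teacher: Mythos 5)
Your proposal is correct and matches the paper's argument: the paper's proof consists precisely of the observation that $\gamma$ is obtained by dualizing the components of a DG-isomorphism, and then invokes the same inverse-construction as in the proof of Lemma \ref{lemma tau is a quasi-equivalence}, which is exactly what you spell out. No issues.
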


\begin{proof}
	
	This follows from the fact that $\gamma$ is defined by dualizing the components of a DG-isomorphism (see the proof of Lemma \ref{lemma tau is a quasi-equivalence}). 
	
\end{proof}

Here the $\mathbf{Z}$-splitting 
\[\mathbf{Z} \times \mcE \xrightarrow{\sim} \mathrm{ob} \left( \mcO \right), \quad (n, E) \mapsto L_H^n (E) \]
is compatible with the quasi-autoequivalence $\gamma$ in the sense of Definition \ref{definition group-action}. As explained there, this turns $\mcO$ into an Adams-graded $A_{\infty}$-category. 

\subsubsection{Relation between $\left( \mcA_2, \tau_2 \right)$ and $\left( \mcO, \gamma \right)$}

We now explain how the pairs $\left( \mcA_2, \tau_2 \right)$ and $\left( \mcO, \gamma \right)$ are related.
See Figure \ref{figure action of the projection to P}, where we illustrate the action of the projection $\Pi_P$ in the case 
\[\left( P, \lambda \right) = \left( T^* S^1, p dq \right), \, L = 0_{S^1}, \text{ and } H \left( q, p \right) = h(q), \]
where $h : S^1 \to \mathbf{R}$ is a Morse function.

\begin{figure}
	\def\svgwidth{1\textwidth}
	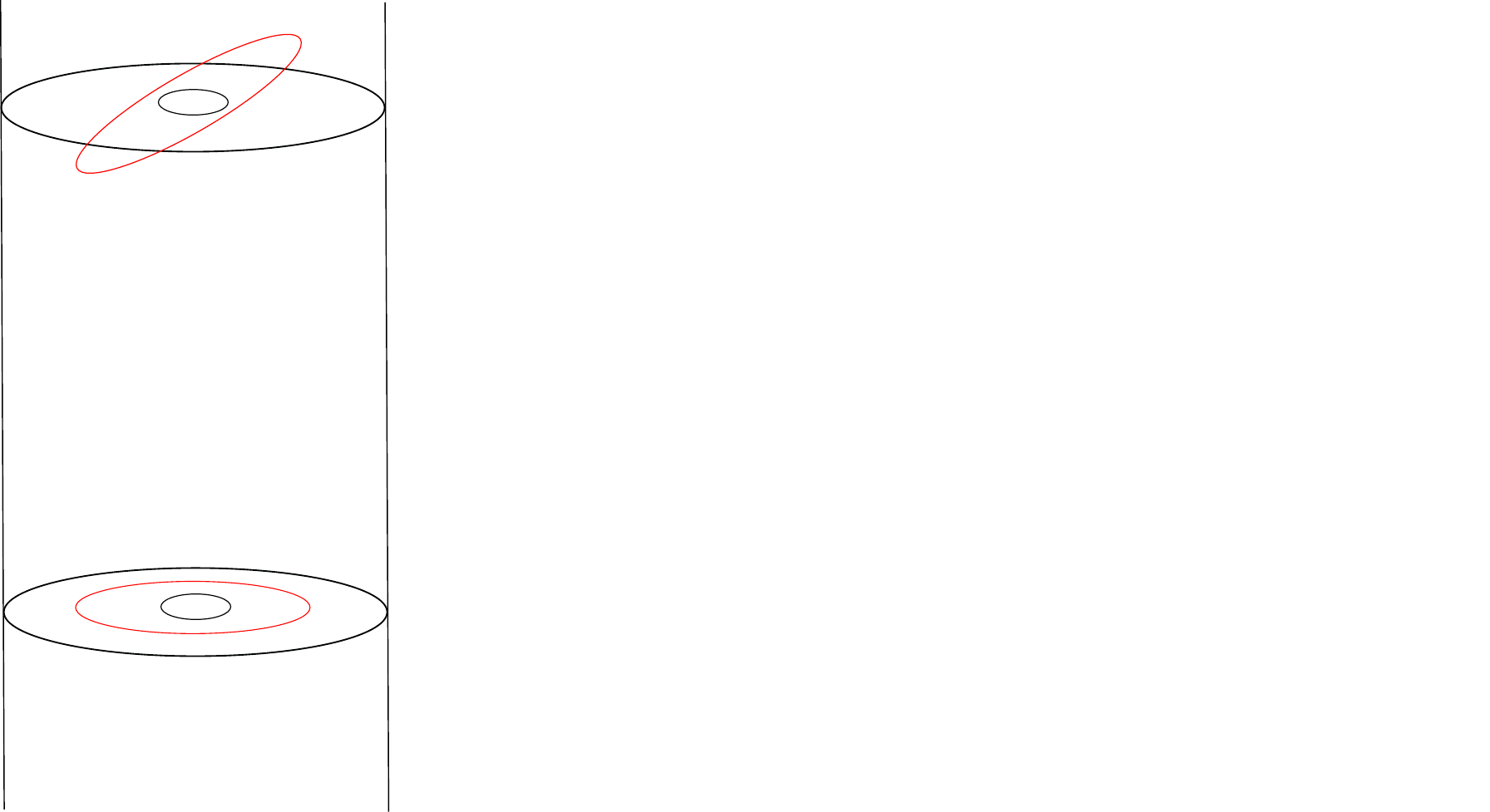		
	\caption{Action of the projection $\Pi_{T^*S^1}$}	
	\label{figure action of the projection to P}
\end{figure}
	
\begin{lemma}\label{lemma relation A_2 - O}
	
	There is a strict $A_{\infty}$-isomorphism $\zeta_2 : \mcA_2 \to \mcO$ defined as follows:
	\begin{enumerate}
		
		\item on objects, $\zeta_2 \left( \Lambda_H^n (E) \right) = L_H^n (E)$,
		
		\item on morphisms, $\zeta_2$ sends a Reeb chord $c$ in $\mcA_2 \left( \Lambda_H^i (E), \Lambda_H^j (E') \right)$ to the intersection point 
		\[\zeta_2 \left( c \right) = \Pi_P \left( c \right) \]
		in $\mcO \left( L_H^i (E), L_H^j (E') \right)$.
		
	\end{enumerate}
	
\end{lemma}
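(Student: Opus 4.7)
The plan is to establish the bijection on generators first, and then identify the moduli spaces defining the two sets of $A_\infty$-operations by appealing to the correspondence result of \cite{DR16}. Since $\zeta_2$ is defined to be strict, there are no higher components to control: the only verification needed is that, for every sequence of composable Reeb chords $(c_1,\dots,c_d)$ the equality
\[
\mu_{\mcO}(\Pi_P(c_1),\dots,\Pi_P(c_d)) = \Pi_P\bigl( \mu_{\mcA_2}(c_1,\dots,c_d) \bigr)
\]
holds. I will argue this by constructing a bijection between the relevant moduli spaces.

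First, I would check that $\zeta_2$ is a bijection on objects and hom-spaces. Since $\alpha = d\theta - \lambda$ and $d\alpha = -d\lambda$ has no $d\theta$-term, the Reeb vector field of $\alpha$ is $\partial_\theta$. Consequently, a Reeb chord from $\Lambda_H^i(E)$ to $\Lambda_H^j(E')$ is a vertical segment $\{(\theta_1, x), (\theta_2, x)\}$ with $\theta_2 > \theta_1$, and its projection to $P$ is an intersection point $x \in L_H^i(E) \cap L_H^j(E')$; conversely, every such intersection point lifts to a unique Reeb chord because $\Lambda_H$ is the lift of $\mathbf{L}_H$ as in Definition \ref{definition category A} (the slight subtlety due to the shift in $\theta$ caused by $\phi_H$ does not break transversality). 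So $\zeta_2$ gives a bijection between the sets of generators in each hom-space, and clearly sends the strict unit in $\mcA_2(\Lambda_H^n(E),\Lambda_H^n(E))$ to the strict unit in $\mcO(L_H^n(E),L_H^n(E))$.

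Next, I would identify the two types of moduli spaces. Recall from section \ref{subsection setting and Legendrian invariants} that $J$ is by construction the lift of $j$ to $\xi$, so $D\Pi_P \circ J = j \circ D\Pi_P$ on $\xi$. Therefore the almost complex structure $J^\alpha = \left( D\Pi_P \right)_{| \xi}^* j$ in the sense of the discussion preceding Definition \ref{definition functor gamma}. This is precisely the setting in which \cite[Theorem 2.1]{DR16} applies: for every sequence of Reeb chords $c_d,\dots,c_1,c_0$ of $\Lambda_H$, the projection $u = (\sigma, v) \mapsto \Pi_P \circ v$ induces a bijection
\[
\mcM_{c_d,\dots,c_1,c_0}\bigl( \mathbf{R} \times \Lambda_H, J, \alpha \bigr) \xrightarrow{\sim} \mcM_{\Pi_P(c_d),\dots,\Pi_P(c_1),\Pi_P(c_0)}\bigl( \mathbf{L}_H, j \bigr),
\]
where the target moduli space is the one from Definition \ref{definition moduli spaces for Lagrangians}. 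Counting elements modulo $2$ on both sides then gives the required intertwining of $\mu_{\mcA_2}$ and $\mu_{\mcO}$, so $\zeta_2$ is a strict $A_\infty$-functor; combined with the bijectivity on generators, it is a strict $A_\infty$-isomorphism.

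I do not expect any serious obstacle here: the work has been outsourced to \cite{DR16}. The only delicate point is checking that the projection $\Pi_P$ really does give a bijection on Reeb chords in the presence of the shifts $f_E$ defining $\Lambda_H^n(E)$ and the contactomorphism $\phi_H$ used to rectify the contact form, but this is a direct consequence of the fact that the Reeb flow of $\alpha$ translates only in the $\theta$-direction and that $\Lambda_H$ is chord generic by construction.
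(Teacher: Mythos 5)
Your proposal is correct and follows essentially the same route as the paper: the entire content is the bijection of moduli spaces $\mcM_{c_d, \dots, c_1, c_0} \left( \mathbf{R} \times \Lambda_H, J, \alpha \right) \xrightarrow{\sim} \mcM_{\Pi_P (c_d), \dots, \Pi_P (c_1), \Pi_P (c_0)} \left( \mathbf{L}_H, j \right)$ supplied by \cite[Theorem 2.1]{DR16}, using that $J = \left( D \Pi_P \right)_{|\xi}^* j$. Your extra paragraph verifying the bijection on generators is a detail the paper leaves implicit, and is fine.
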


\begin{proof}
	
	We have to show that $\zeta_2$ is an $A_{\infty}$-map.
	Since $J = \left( D \Pi_P \right)_{|\xi}^* j$, it follows from \cite[Theorem 2.1]{DR16} that the map 
	\[ u = \left( \sigma , v \right) \mapsto \Pi_P \circ v \]
	induces a bijection
	\[\mcM_{c_d, \dots, c_1, c_0} \left( \mathbf{R} \times \Lambda_H, J, \alpha \right) \xrightarrow{\sim} \mcM_{\Pi_P (c_d) \dots \Pi_P (c_1), \Pi_P (c_0)} \left( \mathbf{L}_H, j \right) . \]
	This implies the result.
	
\end{proof}

\begin{lemma}\label{lemma relation tau_2 - gamma}
	
	We have
	\[\gamma = \zeta_2 \circ \tau_2 \circ \zeta_2^{-1} . \]
	
\end{lemma}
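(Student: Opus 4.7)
The plan is to unwind both sides of the equation $\gamma = \zeta_2 \circ \tau_2 \circ \zeta_2^{-1}$ down to the cochain-level DG-maps from which they are defined, and check that these DG-maps coincide exactly via the projection $\Pi_P : \mathbf{R} \times P \to P$. The key observation is that, by construction, both $\tau_2$ and $\gamma$ are obtained by dualizing the components of DG-isomorphisms induced by the \emph{same} Legendrian isotopy $\bigl( \bigsqcup_{k=i_0}^{i_d} \Lambda_H^{k+1-t} \bigr)_{0 \leq t \leq 1}$; the only difference is that $\tau_2$ uses the almost complex structure $J^{\alpha}$ on $\mathbf{R} \times V$ induced by $J$ on $\xi$, while $\gamma$ uses the corresponding DG-algebra $A(-, j)$ from \cite{EES07} defined via $j$-holomorphic polygons in the base $P$. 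Since $J = (D \Pi_P)_{|\xi}^* j$ by our choice, this is exactly the setting where \cite[Theorem 2.1]{DR16} applies.

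More precisely, I would first write down explicitly the two cobordism DG-maps: one between $CE_{-*}\bigl( \bigsqcup_k \Lambda_H^{k+1}, J, \alpha \bigr)$ and $CE_{-*}\bigl( \bigsqcup_k \Lambda_H^k, J, \alpha \bigr)$ defined by counting $J^{\alpha}$-holomorphic discs in $\mathbf{R} \times V$ with boundary on the isotopy trace, and the other between $A\bigl( \bigsqcup_k \Lambda_H^{k+1}, j \bigr)$ and $A\bigl( \bigsqcup_k \Lambda_H^k, j \bigr)$ defined by counting $j$-holomorphic polygons in $P$ with boundary on the projected isotopy trace. Then I would invoke (the parametrized extension of) the Dimitroglou~Rizell correspondence, namely the bijection
\[
\bigl( \sigma, v \bigr) \longmapsto \Pi_P \circ v,
\]
which identifies moduli spaces of $J^{\alpha}$-holomorphic discs in the symplectization with moduli spaces of $j$-holomorphic polygons in $P$, exactly as already used in the proof of Lemma~\ref{lemma relation A_2 - O}. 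The parametrized version (with a path of Legendrians/Lagrangians) is proven in the same way as the static one, since the identification of moduli spaces is purely geometric and commutes with the deformation parameter; in practice one just observes that the boundary condition and asymptotics are preserved step-by-step by $\Pi_P$.

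Having established that the two DG-maps correspond coefficient-by-coefficient under $\Pi_P$, it follows that the dualized $A_{\infty}$-functors agree after transport by $\zeta_2$: on the level of objects both send $\Lambda_H^n(E) \mapsto L_H^{n+1}(E)$, and on morphisms both operations count the same (bijectively identified) configurations. The equality $\gamma = \zeta_2 \circ \tau_2 \circ \zeta_2^{-1}$ then follows at the nose (strictly, not just up to homotopy), because $\zeta_2$ itself is strict and because the continuation maps chosen on both sides come from the same isotopy of Legendrians.

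The main obstacle is the parametrized version of \cite[Theorem 2.1]{DR16} for cobordism maps rather than just for the operations $\mu^d$: one must check that the correspondence between discs in $\mathbf{R} \times V$ and polygons in $P$ extends to the setting where the Legendrian boundary condition depends on $t \in [0,1]$, including transversality of the moduli spaces with moving boundary and the matching of their signed counts (here just mod $2$). This is however a routine parametrized adaptation of the unparametrized statement, since the relevant projection argument only uses that $J$ is the lift of $j$ and that the Legendrian isotopy projects to the Lagrangian isotopy in $P$.
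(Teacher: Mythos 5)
Your conclusion is right and the essential inputs you identify (same Legendrian isotopy on both sides, $J = (D\Pi_P)_{|\xi}^{*} j$, and the fact that $\zeta_2$ is the strict identification of generators via $\Pi_P$) are exactly what the paper's one-line proof appeals to. However, you locate the content of the proof in the wrong place. You present the ``main obstacle'' as a parametrized extension of \cite[Theorem 2.1]{DR16} for cobordism maps, i.e.\ a correspondence between $J^{\alpha}$-holomorphic discs with moving Legendrian boundary in $\mathbf{R} \times V$ and $j$-holomorphic polygons with moving Lagrangian boundary in $P$. Given how $\gamma$ is actually defined (Definition \ref{definition functor gamma}), no such parametrized statement is needed: the DG-isomorphism defining $\gamma$ is not obtained by counting polygons in $P$ with a moving boundary condition, but by applying the functoriality theorem (Theorem \ref{thm invariance}) to the Legendrian isotopy $\bigl( \bigsqcup_{k} \Lambda_H^{k+1-t} \bigr)_t$ with the fixed almost complex structure $(D\Pi_P)_{|\xi}^{*} j = J$, after identifying $A\bigl(\bigsqcup_k \Lambda_H^k, j\bigr)$ with $CE_{-*}\bigl(\bigsqcup_k \Lambda_H^k, J, \alpha\bigr)$ via the \emph{static} DR16 theorem. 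Since $\tau_2$'s defining DG-isomorphism is produced by the same functor from literally the same data, the two DG-maps coincide on the nose, and conjugating by $\zeta_2$ (which implements the identification of generators) gives the strict equality. So your argument is correct, but the parametrized moduli-space analysis you flag as the crux — and then only sketch — is an unnecessary detour rather than a required step; if $\gamma$ had instead been defined intrinsically in $P$ by polygon counts with moving boundary, your worry would become a genuine gap that a one-sentence appeal to ``routine adaptation'' would not close.
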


\begin{proof}
	
	This follows from the definitions of $\tau_2$, $\gamma$, $\zeta_2$ and the fact that $J = \left( D \Pi_P \right)_{|\xi}^* j$.
	
\end{proof}

\begin{lemma}\label{lemma mapping torus of tau_2 = mapping torus of gamma}
	
	The mapping torus of $\tau_2 : \mcA_2 \to \mcA_2$ is quasi-equivalent to the mapping torus of $\gamma : \mcO \to \mcO$ (see Definition \ref{definition mapping torus}).
	
\end{lemma}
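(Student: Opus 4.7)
The plan is to follow the template of Lemma \ref{lemma mapping torus of tau = mapping torus of tau_1} exactly, since the situation here is actually the easier of the two prototypes: by Lemma \ref{lemma relation tau_2 - gamma} we have a \emph{strict} equality $\gamma = \zeta_2 \circ \tau_2 \circ \zeta_2^{-1}$, not merely a homotopy, so no analog of Proposition \ref{prop homotopic functors induce quasi-isomorphic homotopy colimits} is required.

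First I would assemble the strictly commutative diagram of Adams-graded $A_{\infty}$-categories
\[\begin{tikzcd}
\mcA_2 \ar[d, "\zeta_2"] & \mcA_2 \sqcup \mcA_2 \ar[l, "\mathrm{id} \sqcup \mathrm{id}" above] \ar[r, "\mathrm{id} \sqcup \tau_2"] \ar[d, "\zeta_2 \sqcup \zeta_2"] & \mcA_2 \ar[d, "\zeta_2"] \\
\mcO & \mcO \sqcup \mcO \ar[l, "\mathrm{id} \sqcup \mathrm{id}" above] \ar[r, "\mathrm{id} \sqcup \gamma"] & \mcO .
\end{tikzcd}\]
The left-hand square commutes on the nose because $\zeta_2$ is strict, and the right-hand square commutes on the nose by Lemma \ref{lemma relation tau_2 - gamma}, which gives $\gamma \circ \zeta_2 = \zeta_2 \circ \tau_2$.

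Next I would note that every vertical arrow is a quasi-equivalence: by Lemma \ref{lemma relation A_2 - O}, $\zeta_2$ is a strict $A_{\infty}$-isomorphism, and hence so is $\zeta_2 \sqcup \zeta_2$. With the commutative diagram and the vertical quasi-equivalences in hand, Proposition \ref{prop invariance of homotopy colimits} immediately yields a quasi-equivalence between the homotopy colimits of the top and bottom rows, i.e.\ between $\mathrm{MT}(\tau_2)$ and $\mathrm{MT}(\gamma)$.

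There is no real obstacle here; the entire content has already been packaged into Lemmas \ref{lemma relation A_2 - O} and \ref{lemma relation tau_2 - gamma}, and the argument is a direct application of the invariance property of homotopy colimits under levelwise quasi-equivalences recorded in Proposition \ref{prop invariance of homotopy colimits}.
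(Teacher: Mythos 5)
Your proposal is correct and coincides with the paper's own proof: the paper assembles exactly this strictly commutative diagram using Lemma \ref{lemma relation tau_2 - gamma}, notes that the vertical arrows are quasi-equivalences by Lemma \ref{lemma relation A_2 - O}, and concludes with Proposition \ref{prop invariance of homotopy colimits}. Your observation that no homotopy-invariance argument (Proposition \ref{prop homotopic functors induce quasi-isomorphic homotopy colimits}) is needed here, in contrast to the $\tau_1$/$\tau_2$ comparison, is also accurate.
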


\begin{proof}
	
	According to Lemma \ref{lemma relation tau_2 - gamma} the following diagram of Adams-graded $A_{\infty}$-categories is commutative 
	\[\begin{tikzcd}
	\mcA_2 \ar[d, "\zeta_2"] & \mcA_2 \sqcup \mcA_2 \ar[l, "\mathrm{id} \sqcup \mathrm{id}" above] \ar[r, "\mathrm{id} \sqcup \tau_2"] \ar[d, "\zeta_2 \sqcup \zeta_2"] & \mcA \ar[d, "\zeta_2"] \\
	\mcO & \mcO \sqcup \mcO \ar[l, "\mathrm{id} \sqcup \mathrm{id}" above] \ar[r, "\mathrm{id} \sqcup \gamma"] & \mcO .
	\end{tikzcd} \]
	Moreover, each vertical arrow is a quasi-equivalence according to Lemma \ref{lemma relation A_2 - O}. Thus, the result follows from Proposition \ref{prop invariance of homotopy colimits}.
	
\end{proof}

\subsection{Mapping torus of $\gamma$}\label{subsection proof of the main result}

In this section, we show that we can apply Theorem \ref{thm mapping torus in weak situation} (Theorem \ref{thm mapping torus in weak situation introduction} in the introduction) in order to compute the mapping torus of $\gamma : \mcO \to \mcO$. This allows us to finish the proof of Theorem \ref{thm mainthm restatement}.  

Recall that we fixed a contactomorphism $\phi_H$ of $V$ such that $\phi_H^* \alpha_H = \alpha$. Also recall that if $\theta$ is some real number, then 
\[\Lambda^{\theta} (E) = \left\{ \left( f_E(x) + \theta, x \right) \mid x \in L \right\}, \, \Lambda_H^{\theta} (E) = \phi_H^{-1} \left( \Lambda^{\theta} (E) \right), \text{ and } L_H^{\theta} (E) = \Pi_P \left( \Lambda_H^{\theta} (E) \right) . \]

\subsubsection{Continuation elements}

We denote by $\mcO_{2r}$ the $A_{\infty}$-category obtained from $\mcO$ by applying the functor of Definition \ref{definition forgetful functor}.   
Besides, we denote by
\[\Gamma = \left\{ c_n (E) \in \mcO_{2r} (L^n (E), L^{n+1} (E)) \mid (n,E) \in \mathbf{Z} \times \mcE \right\} \]
the set of continuation elements in $\mcO_{2r}$ induced by the exact Lagrangian isotopies $(L_H^{n+t})_{0 \leq t leq 1}$ (see for example \cite[section 3.3]{GPS20}).

Recall that if $\mcC$ is an $A_{\infty}$-category equipped with a $\mathbf{Z}$-splitting of $\mathrm{ob} \left( \mcC \right)$, we denote by $\mcC^0$ the full $A_{\infty}$-subcategory of $\mcC$ whose set of objects corresponds to $\{ 0 \} \times \mcE$.

\begin{lemma}\label{lemma localization of O equals CF}
	
	There are quasi-equivalences of $A_{\infty}$-categories
	\[\mcO_{2r}^0 \simeq \overrightarrow{\mcF uk} (\mathbf{L}_H) \text{ and } \mcO_{2r} \left[ \Gamma^{-1} \right]^0 \simeq \mcF uk (\mathbf{L}_H) . \]
	
\end{lemma}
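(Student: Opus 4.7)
The plan is to establish the two equivalences separately, the first essentially by unwinding definitions and the second by a standard localization argument.

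For the first equivalence $\mcO_{2r}^0 \simeq \overrightarrow{\mcF uk}(\mathbf{L}_H)$, I would proceed by direct identification. The objects of $\mcO^0$ are by definition the Lagrangians $L_H^0(E)$ indexed by $E \in \mcE = \{1, \dots, N\}$, which match the objects of the directed Fukaya category on the family $(L_H^0(E))_{E \in \mcE}$. On morphism spaces, both categories assign the intersection generators $L_H^0(E) \cap L_H^0(E')$ when $E < E'$, the ground field $\mathbf{F}$ when $E = E'$, and zero when $E > E'$ (note that the lexicographic ordering on $\mathbf{Z} \times \mcE$ used to define $\mcO$ restricts to the given order on $\{0\} \times \mcE$). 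The $A_\infty$-operations on both sides count $j$-holomorphic polygons in $P$ with boundary on the chosen Lagrangians, which is exactly Seidel's definition in \cite[paragraph (5n)]{Sei08}. Finally, the grading shift in $\mcO \mapsto \mcO_{2r}$ is trivial when restricted to $\mcO^0$, because every morphism there has Adams degree zero, so cohomological degrees are unchanged.

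For the second equivalence $\mcO_{2r}[\Gamma^{-1}]^0 \simeq \mcF uk(\mathbf{L}_H)$, the key idea is that localizing the "infinite directed" category $\mcO_{2r}$ at the continuation elements in $\Gamma$ has the effect of collapsing the distinct copies $L_H^n(E)$ of each Lagrangian into a single object, thereby recovering the undirected Fukaya category. Concretely, after localization each $c_n(E)$ becomes a quasi-isomorphism, so the Yoneda module $\mcO_{2r}[\Gamma^{-1}](-,L_H^0(E'))$ is quasi-isomorphic to $\mcO_{2r}[\Gamma^{-1}](-,L_H^n(E'))$ for every $n$. For objects $L_H^0(E)$ and $L_H^0(E')$ with $E > E'$, I would choose $n \geq 1$ so that $(0,E) < (n,E')$ lexicographically, giving
\[
\mcO_{2r}[\Gamma^{-1}](L_H^0(E), L_H^0(E')) \simeq \mcO_{2r}[\Gamma^{-1}](L_H^0(E), L_H^n(E')) \simeq \mcO_{2r}(L_H^0(E), L_H^n(E')),
\]
(using \cite[Lemma 3.13]{GPS20} for the second step, since the relevant complexes are acyclic after localization) and this last space is generated by the intersection points $L_H^0(E) \cap L_H^n(E')$, which up to the Hamiltonian isotopy induced by $c_n(E')$ compute $CF^*(L_H^0(E), L_H^0(E'))$. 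For $E \leq E'$, the morphism space is already the correct Floer complex before localization. One then verifies that the induced operations agree with the Fukaya $\mu^d$, which follows because both are defined by counting $j$-holomorphic polygons and the continuation elements realize the canonical isomorphisms between Floer complexes of Hamiltonian-isotopic Lagrangians.

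I expect the main obstacle to be a rigorous identification of the operations in the localization with the Fukaya $\mu^d$, in particular showing that compositions through continuation elements produce precisely the Fukaya operations (rather than merely the correct cohomological structure). This is essentially the content of Seidel's acceleration argument in \cite[paragraph (10a)]{Sei08} and is well established in the literature, so the proof can be completed by invoking that construction; however carrying it out directly in the $A_\infty$-category of localized modules would require a careful zigzag argument and is likely the place where the bulk of the technical work lies.
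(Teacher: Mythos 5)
Your proposal is essentially the argument the paper has in mind: the paper's entire proof is a citation of \cite[Lecture 10]{Sei13} and of \cite{GPS20} (where Fukaya categories are defined by localizing a directed category at continuation elements), and your sketch is an unwinding of exactly that construction, so the approaches coincide. One small inaccuracy: your claim that for $E \leq E'$ the morphism space is already the correct Floer complex before localization fails when $E = E'$, since $\mcO_{2r}(L_H^0(E), L_H^0(E)) = \mathbf{F}$ whereas $\mcF uk(\mathbf{L}_H)(L_H^0(E), L_H^0(E))$ should compute $H^*(L(E))$; the zigzag through $L_H^n(E)$ via the continuation elements, which you already describe for $E > E'$, is what repairs the endomorphism spaces as well.
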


\begin{proof}
	
	This is explained in \cite[Lecture 10]{Sei13}. See also \cite{GPS20}, where they define Fukaya categories by localization.
	
\end{proof}

\subsubsection{The $\mcO_{2r}$-bimodule map}\label{subsection bimodule map}

In order to apply Theorem \ref{thm mapping torus in weak situation}, we need a degree $0$ closed $\mcO_{2r}$-module map $f : \mcO_{2r} \left( -, - \right) \to \mcO_{2r} \left( -, \gamma \left( - \right) \right)$ such that the elements in $f \left( \text{units} \right)$ satisfy certain hypotheses. 
As usual, we would like to find such an $f$ ``geometrically'', i.e. using some Lagrangian (or Legendrian) isotopy. However, here the unit $1 = e_{L_H^k (E)} \in \mcO \left( L_H^k (E), L_H^k (E) \right)$, which is not a ``geometric'' morphism, is supposed to be sent by $f$ to something in $\mcO \left( L_H^k (E), L_H^{k+1} (E) \right)$, which is generated by the ``geometric'' elements of $L_H^k (E) \cap L_H^{k+1} (E)$.
Therefore, we need to somehow replace this unit by some intersection point between Lagrangians. The idea is that we will slightly perturb $L_H^k (E)$ to $L_H^{k+\delta} (E)$, and then replace $e_{L_H^k (E)}$ by the continuation element in the vector space generated by $L_H^k (E) \cap L_H^{k+\delta} (E)$. 

Observe that if $\delta$ is small enough, $L_H^{k+\delta} (E)$ is a small perturbation of $L_H^k (E)$. Therefore, in a Weinstein neighborhood of $L_H^k (E)$, the Lagrangian $L_H^{k+\delta} (E)$ is the graph of $d h_{\delta, k, E}$, where $h_{\delta, k, E}$ is some Morse function on $L (E)$.
In particular, the intersection points between $L_H^k (E)$ and $L_H^{k+\delta} (E)$ correspond to the critical points of $h_{\delta, k, E}$.
Moreover, the continuation element in the vector space generated by $L_H^k (E) \cap L_H^{k + \delta} (E)$ corresponds to the sum of the minima of $h_{\delta, k, E}$. 

\begin{exa}
	
	Assume that we are in the case
	\[\left( P, \lambda \right) = \left( T^* M, p dq \right), \, L = 0_M, \text{ and } H \left( q, p \right) = h(q), \]
	where $h : M \to \mathbf{R}$ is a Morse function. 
	As explained in example \ref{example rectify contact form in cotangent case}, in this case we have 
	\[L_H^{\theta} = \Pi_{T^*M} \left( j^1 \left( \theta e^h \right) \right) = \mathrm{graph} \left( d \left( \theta e^h \right) \right) . \]
	Thus, $L_H^{k+\delta}$ is the graph of $d \left( \delta e^h \right)$ over $L_H^k$. 
	
\end{exa}

The following result will allow us to ``replace'' the units by geometric morphisms as explained above. We denote by $g = - d\lambda (-, j -)$ the metric on $P$ induced by $j$ and $\left( -d \lambda \right)$.

\begin{lemma}\label{lemma discs with boundary on small deformation}
	
	For every positive integer $n$, there exists $\delta_n > 0$ such that the following holds for every $\delta \in \left] 0, \delta_n \right]$. For every sequence of integers
	\[(-n, E_0) \leq (j_0, E_0) < \dots < (j_p, E_p) \leq (\ell_0, E_0') < \dots < (\ell_q, E_q') \leq (n, E_q'), \quad p,q \geq 0, \]
	the rigid $j$-holomorphic discs in $P$ with boundary on 
	\[L_H^{j_0} (E_0) \cup \dots \cup L_H^{j_p} (E_p) \cup L_H^{\ell_0 + \delta} (E_0') \cup \dots \cup L_H^{\ell_q+\delta} (E_q') \]
	are 
	\begin{enumerate}
		
		\item in bijection with the rigid $j$-holomorphic discs in $P$ with boundary on 
		\[L_H^{j_0} (E_0) \cup \dots \cup L_H^{j_p} (E_p) \cup L_H^{\ell_0} (E_0') \cup \dots \cup L_H^{\ell_q} (E_q') \]
		if $(j_p, E_p) < (\ell_0, E_0')$, or
		
		\item in bijection with the rigid $j$-holomorphic discs in $P$ with boundary on 
		\[L_H^{j_0} (E_0) \cup \dots \cup L_H^{j_{p-1}} (E_{p-1}) \cup L_H^{\ell_0} (E_0') \cup L_H^{\ell_1} (E_1') \cup \dots \cup L_H^{\ell_q} (E_q') \]
		with a flow line of $\left( - \nabla_g h_{\delta, k, E_0'} \right)$ attached on the component in $L_H^{\ell_0} (E_0')$ if $(j_p, E_p) = (\ell_0, E_0')$.
		
	\end{enumerate}
	
\end{lemma}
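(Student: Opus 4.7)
The strategy is a standard compactness plus gluing argument, specialized to the $C^{\infty}$-small Lagrangian perturbation $L_H^{\ell_i}(E_i') \rightsquigarrow L_H^{\ell_i + \delta}(E_i')$. First I would fix $n$ and consider any sequence $\delta_{\nu} \to 0^+$, together with $j$-holomorphic discs $u_{\nu}$ of index $0$ with boundary on the $\delta_{\nu}$-perturbed configuration. Since the primitives $f_E$ are uniformly bounded and the $\delta_{\nu}$-shifts are $O(\delta_{\nu})$, the symplectic areas $\int u_{\nu}^* (-d\lambda)$ are bounded independently of $\nu$. Gromov compactness (using that the Lagrangians in question are compact and that $L$ is displaceable only by moving in $\theta$, which prevents sphere/disc bubbles thanks to the exactness of $\lambda$ and the vanishing Maslov class) then yields a subsequential limit. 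Because the only collapsing corners are those where $(j_p, E_p) = (\ell_0, E_0')$ --- in all other cases the asymptotic intersection points stay transverse and uniformly isolated --- the limit is either a rigid $j$-holomorphic disc on the unperturbed configuration (first case of the statement), or a rigid $j$-holomorphic disc on the unperturbed configuration with a negative gradient trajectory of $h_{\delta, k, E_0'}$ broken off at the coinciding corner (second case). This uses that, in a Weinstein neighborhood of $L_H^{\ell_0}(E_0')$, the perturbed Lagrangian $L_H^{\ell_0 + \delta}(E_0')$ is the graph of $d h_{\delta, \ell_0, E_0'}$, so the standard Morse--Bott degeneration of Floer theory applies.

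Conversely, I would construct an inverse to this limiting map by Morse--Bott gluing. Given a rigid disc on the unperturbed side (case 1), the implicit function theorem applied to the linearized $\bar\partial$-operator with shifted Lagrangian boundary condition produces, for all $\delta \leq \delta_n$ sufficiently small, a unique nearby rigid disc on the perturbed side. Given a rigid disc on the unperturbed side with an attached gradient trajectory of $h_{\delta, \ell_0, E_0'}$ of length $T$ (case 2), one uses the standard Morse--Bott gluing theorem, e.g.\ in the form developed for the Piunikhin--Salamon--Schwarz morphism or for the pearl complex of Biran--Cornea, to obtain a unique rigid $\delta$-perturbed disc for each sufficiently small $\delta$. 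The rigidity of both sides is preserved by these constructions because the index is additive and the gluing parameter is eliminated by the length parameter of the flow line.

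To obtain a single $\delta_n$ that works for \emph{all} the finitely many index sequences satisfying $|j_i|, |\ell_i| \leq n$, I would apply the above compactness-plus-gluing analysis to each such sequence separately and take the minimum of the resulting thresholds. Here one also uses standard transversality (Proposition \ref{prop transverslity moduli spaces} together with its Lagrangian analog) to ensure that $j$ can be taken regular for every Lagrangian label set appearing for $\delta \in [0, \delta_n]$; this is possible because the space of regular almost complex structures is Baire and only finitely many boundary conditions need to be arranged simultaneously.

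The main obstacle is the Morse--Bott gluing of case (2): showing that a pre-glued approximate solution (a rigid disc on the unperturbed configuration pre-composed with a finite-length Morse trajectory pushed into the Weinstein neighborhood) can be perturbed to an honest $j$-holomorphic disc on the $\delta$-perturbed configuration, uniquely, and that the gluing map is bijective onto the moduli space of rigid $\delta$-perturbed discs close to the broken configuration. This is technically the most involved step but is by now well-established in the Floer-theoretic literature; I would cite and apply the relevant gluing theorem rather than reprove it here.
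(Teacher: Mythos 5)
Your proposal is correct and follows essentially the same route as the paper: case (1) is disposed of by transversality of the relevant moduli spaces, and case (2) is exactly the degeneration of rigid discs into a rigid disc with a gradient flow line of the difference function attached at the colliding corner. The paper simply outsources the compactness-and-gluing analysis you outline to the main analytic theorem of \cite{EES09} (Theorem 3.6), which packages precisely this correspondence for a $C^1$-small perturbation of one of the Lagrangian boundary conditions, so no new argument is needed beyond invoking that reference.
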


\begin{proof}
	
	The case $j_p < \ell_0$ follows from transversality of the moduli spaces in consideration. The case $j_p = \ell_0$ follows from the main analytic theorem of \cite{EES09} (Theorem 3.6). 
	
\end{proof}

In order to define the $\mcO_{2r}$-bimodule map $f$ properly, we will use Lemma \ref{lemma discs with boundary on small deformation} to modify the $A_{\infty}$-category $\mcO_{2r}$.
In the following, we fix a \emph{decreasing} sequence of positive numbers $\left( \delta_n \right)_{n \geq 1}$ such that, for every $n$,
\begin{enumerate}
	
	\item Lemma \ref{lemma discs with boundary on small deformation} holds with $\delta_n$, and
	
	\item $\delta_n$ is small enough so that there is no handle slide instant in the Legendrian isotopy
	\[\bigcup_{\ell = -n}^n \Lambda_H^{\ell + \delta_n t} = \bigcup_{\ell = -n}^n \bigcup_{E \in \mcE} \Lambda_H^{\ell + \delta_n t} (E), \quad t \in \left[ 0, 1 \right]. \]
	
\end{enumerate}

We define two families of $A_{\infty}$-categories $\left( \mcO_{n, k} \right)_{n,k}$ and $\left( \widetilde{\mcO}_{n,k} \right)_{n,k}$ indexed by the couples $(n,k)$, where $n \geq 1$ and $-n \leq k \leq n$. 
The $A_{\infty}$-category $\mcO_{n, k}$ is basically obtained from $\mcO_{2r}$ by restricting to objects $L_H^i (E)$, $-n \leq i \leq n$, and adding a copy of the object $L_H^k (E)$.

\begin{defin}\label{definition category O n,k}
	
	For every $(j,E) \in \mathbf{Z} \times \mcE$, let $\overline{L_H^j} (E)$ be a copy of $L_H^j (E)$.
	We denote by $\mcO_{n,k}$ the $A_{\infty}$-category defined as follows:
	\begin{enumerate}
		
		\item the set of objects of $\mcO_{n,k}$ is  
		\[\mathrm{ob} \left( \mcO_{n,k} \right) = \left\{ L_H^j (E) \mid -n \leq j \leq k, E \in \mcE \right\} \cup \left\{ \overline{L_H^{\ell}} (E) \mid k \leq \ell \leq n, E \in \mcE \right\}, \]
		
		\item the spaces of morphisms in $\mcO_{n,k}$ are the corresponding spaces of morphisms in $\mcO_{2r}$ when we replace $\overline{L_H^{\ell}} (E)$, $k \leq \ell \leq n$, by $L_H^{\ell} (E)$, except that 
		\[\mcO_{n,k} \left( \overline{L_H^k} (E), L_H^k (E) \right) = \{ 0 \}, \]
		
		\item the operations are the same as in $\mcO_{2r}$.
		
	\end{enumerate} 	
	
\end{defin}

The $A_{\infty}$-category $\widetilde{\mcO}_{n,k}$ is obtained from $\mcO_{n,k}$ by perturbing the objects $\overline{L_H^{\ell}} (E)$, $k \leq \ell \leq n$, to $L_H^{\ell + \delta_n} (E)$.

\begin{defin}\label{definition category O n,k tilde}
	
	Let 
	\[\Theta_{n,k} := \left\{ -n, \dots, k \right\} \cup \left\{ \ell + \delta_n \mid k \leq \ell \leq n \right\} \subset \mathbf{R}, \text{ and } \widetilde{\mathbf{L}_H} := \left( L_H^{\theta} (E) \right)_{(\theta, E) \in \Theta_{n,k} \times \mcE}. \]
	We denote by $\widetilde{\mcO}_{n,k}$ the $A_{\infty}$-category defined as follows:
	\begin{enumerate}
		
		\item the objects of $\widetilde{\mcO}_{n,k}$ are the Lagrangians $L_H^{\theta} (E)$, $(\theta, E) \in \Theta_{n,k} \times \mcE$,
		
		\item the space of morphisms from $L_H^{\theta} (E)$ to $L_H^{\theta'} (E')$ is either generated by $L_H^{\theta} (E) \cap L_H^{\theta'} (E')$ if $(\theta, E) < (\theta', E')$, or $\mathbf{F}$ if $(\theta, E) = (\theta', E')$, or $0$ otherwise,

		\item the operations are such that $e_{L_H^{\theta} (E)} = 1 \in \widetilde{\mcO}_{n,k} \left( L_H^{\theta} (E), L_H^{\theta} (E) \right)$ is a strict unit, and for every sequence $(\theta_0, E_0) < \dots < (\theta_d, E_d)$, for every sequence of intersection points 
		\[\left( x_1, \dots, x_d \right) \in \left( L_H^{\theta_0} (E_0) \cap L_H^{\theta_1} (E_1) \right) \times \dots \times \left( L_H^{\theta_{d-1}} (E_{d-1}) \cap L_H^{\theta_d} (E_d) \right), \]
		we have 
		\[\mu_{\widetilde{\mcO}_{n,k}} \left( x_1, \dots, x_d \right) = \sum \limits_{x_0 \in L_H^{\theta_0} (E_0) \cap L_H^{\theta_d} (E_d)} \# \mcM_{x_d, \dots, x_1, x_0} \left( \widetilde{\mathbf{L}_H}, j \right) x_0 . \]
		
	\end{enumerate} 	
	
\end{defin}

These $A_{\infty}$-categories being defined, Lemma \ref{lemma discs with boundary on small deformation} implies the following result.

\begin{lemma}\label{lemma rho is a functor}
	
	There is a strict $A_{\infty}$-functor $\rho_{n,k} : \mcO_{n,k} \to \widetilde{\mcO}_{n,k}$  defined as follows:
	\begin{enumerate}
		
		\item on objects, $\rho_{n,k} \left( L_H^j (E) \right) = L_H^j (E)$ if $-n \leq j \leq k$ and $\rho_{n,k} \left( \overline{L_H^{\ell}} (E) \right) = L_H^{\ell + \delta_n} (E)$ if $k \leq \ell \leq n$,
		
		\item on morphisms, $\rho_{n,k}$ sends the unit of $\mcO_{n,k} \left( L_H^k (E), \overline{L_H^k} (E) \right) = \mathbf{F}$ to the continuation element in $\widetilde{\mcO}_{n,k} \left( L_H^k (E), L_H^{k + \delta_n} (E) \right)$, and it sends any other morphism of $\mcO_{n,k}$ to the corresponding one in $\widetilde{\mcO}_{n,k}$. 
		
	\end{enumerate}
	
\end{lemma}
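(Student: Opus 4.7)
The plan is to verify directly that $\rho_{n,k}$ satisfies the strict $A_\infty$-functor relation
\[\rho_{n,k}\bigl(\mu_{\mcO_{n,k}}(x_1, \dots, x_d)\bigr) = \mu_{\widetilde{\mcO}_{n,k}}\bigl(\rho_{n,k}(x_1), \dots, \rho_{n,k}(x_d)\bigr)\]
for every composable sequence and every $d \geq 1$; strictness ($\rho_{n,k}^d = 0$ for $d \geq 2$) is built into the definition, so only this identity is at stake. The key input is Lemma \ref{lemma discs with boundary on small deformation}, which identifies rigid $j$-holomorphic discs with boundary on a mixed unperturbed/perturbed collection of Lagrangians with simpler data, possibly decorated by a gradient flow line. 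I would split the verification according to whether the distinguished morphism $e \in \mcO_{n,k}(L_H^k(E), \overline{L_H^k}(E)) = \mathbf{F}$ appears among the inputs.

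First, suppose none of the $x_i$ equals such an $e$. Then the sequence of boundary Lagrangians obtained after applying $\rho_{n,k}$ has the form $L_H^{j_0}(E_0), \dots, L_H^{j_p}(E_p), L_H^{\ell_0+\delta_n}(E_0'), \dots, L_H^{\ell_q+\delta_n}(E_q')$ with $(j_p,E_p) < (\ell_0, E_0')$, placing us squarely in case~1 of Lemma~\ref{lemma discs with boundary on small deformation}. The lemma supplies a coefficient-preserving bijection between the rigid discs contributing to $\mu_{\widetilde{\mcO}_{n,k}}(\rho_{n,k}(x_1), \dots, \rho_{n,k}(x_d))$ and those contributing to $\mu_{\mcO_{n,k}}(x_1, \dots, x_d)$, delivering the $A_\infty$-relation on the nose.

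The remaining case, where some $x_i$ equals the distinguished $e$, is the delicate one: now the perturbed boundary sequence has $(j_p, E_p) = (\ell_0, E_0') = (k, E)$, and case~2 of Lemma~\ref{lemma discs with boundary on small deformation} identifies rigid discs for $\widetilde{\mcO}_{n,k}$ with rigid discs in which $L_H^{k+\delta_n}(E)$ has been deleted, decorated by a flow line of $-\nabla_g h_{\delta_n,k,E}$ on the remaining $L_H^k(E)$ component. Since $e$ is a strict unit in $\mcO_{n,k}$ (inherited from the unit of $L_H^k(E)$ in $\mcO_{2r}$), the left-hand side $\rho_{n,k}(\mu_{\mcO_{n,k}}(\dots, e, \dots))$ vanishes unless $d=2$, in which case it reproduces $\rho_{n,k}$ of the remaining argument. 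The task is thus to show that the disc-with-flow-line configurations on the right match: for $d \geq 3$, the extra boundary constraint at a minimum of $h_{\delta_n,k,E}$ creates a negative-dimensional moduli space (no compatible rigid disc), so the count is zero; for $d=2$, one gets a trivial (near-constant) holomorphic strip together with a unique gradient flow line from a minimum to each intersection point in the appropriate Morse neighborhood, contributing $\rho_{n,k}(x)$ with coefficient~$1$ modulo~$2$.

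The main obstacle will be pinning down this last geometric count, which requires that the flow lines from minima of $h_{\delta_n,k,E}$ interact with the perturbed intersection points exactly as dictated by Morse theory on $L(E)$, and that $\delta_n$ is small enough to rule out any degenerate strip besides the near-constant one. The absence of handle slides in the isotopy $\bigcup_\ell \Lambda_H^{\ell + \delta_n t}$, guaranteed by our choice of $\delta_n$, is what prevents additional contributions from entering and should make the sum of minima (i.e. the standard continuation element) the correct image of $e$ for $\rho_{n,k}$ to be strict.
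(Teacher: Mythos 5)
Your proposal is correct and follows essentially the same route as the paper: split according to whether the distinguished generator of $\mcO_{n,k}\left( L_H^k (E), \overline{L_H^k} (E) \right)$ appears among the inputs, apply item~1 of Lemma~\ref{lemma discs with boundary on small deformation} in the generic case, and combine item~2 with the strict-unit behaviour of $\mu_{\mcO_{n,k}}$ in the remaining case. If anything, your dimension count explaining why the disc-with-flow-line configurations vanish for $d \geq 3$ and reduce to near-constant strips for $d=2$ makes explicit what the paper leaves implicit in its appeal to item~2.
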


\begin{proof}
	
	Consider a sequence $\left( x_0, \dots, x_{d-1} \right)$ of morphisms in $\mcO_{n,k}$.
	If in this sequence there is no morphism from $L_H^k (E)$ to $\overline{L_H^k} (E)$, then the relation 
	\[\mu_{\widetilde{\mcO}_{n,k}} \left( \rho_{n,k} x_0, \dots, \rho_{n,k} x_d \right) = \rho_{n,k} \left( \mu_{\mcO_{n,k}} \left( x_0, \dots, x_d \right) \right) \]
	follows directly from the first item of Lemma \ref{lemma discs with boundary on small deformation}.
	Now assume that there is $p \in \{ 0, \dots, d-1 \}$ such that $x_p = e_{L_H^k (E)} \in \mcO_{n,k} \left( L_H^k (E), \overline{L_H^k} (E) \right)$. Recall that the continuation element in $\widetilde{\mcO}_{n,k} \left( L_H^k (E), L_H^{k + \delta_n} (E) \right)$ corresponds to the sum of the minima of $h_{\delta_n, k, E}$. 
	Then the second item of Lemma \ref{lemma discs with boundary on small deformation} implies that 
	\[\mu_{\widetilde{\mcO}_{n,k}} \left( \rho_{n,k} x_0, \dots, \rho_{n,k} x_d \right) = \left\{
	\begin{array}{ll}
	\rho_{n,k} x_1 & \text{if } d=1 \text{ and } p=0 \\
	\rho_{n,k} x_0 & \text{if } d=1 \text{ and } p=1 \\
	0 & \text{otherwise} .
	\end{array}
	\right. \]
	Thus, the $A_{\infty}$-relation for $\rho_{n,k}$ is still satisfied according to the behavior of the operations $\mu_{\mcO_{n,k}}$ with respect to the unit $e_{L_H^k (E)}$.
	
\end{proof}

Now that we have in some sense ``replace'' the unit $e_{L_H^k (E)} \in \mcO_{n,k} \left( L_H^k (E), \overline{L_H^k} (E) \right)$ by the continuation element in $\widetilde{\mcO}_{n,k} \left( L_H^k (E), L_H^{k + \delta_n} (E) \right)$, we can define geometrically an $A_{\infty}$-functor that will finally allow us to define the $\mcO_{2r}$-bimodule map $f$.

\begin{defin}\label{definition functor nu}
	
	We denote by $\nu_{n,k} : \widetilde{\mcO}_{n, k} \to \mcO_{2r}$ the $A_{\infty}$-functor defined as follows:
	\begin{enumerate}
		
		\item on objects, $\nu_{n,k} \left( L_H^j (E) \right) = L_H^j (E)$ if $-n \leq j \leq k$, and $\nu_{n,k} \left( L_H^{\ell + \delta_n} (E) \right) = L_H^{\ell + 1} (E)$ if $k \leq \ell \leq n$,
		
		\item on morphisms, $\nu_{n,k}$ is obtained by dualizing the components of the DG-isomorphism
		\[A \left( \bigsqcup\limits_{i=-n}^{n+1} \Lambda_H^i \right) \xrightarrow{\sim} A \left( \bigsqcup\limits_{j=-n}^{k} \Lambda_H^j \sqcup \bigsqcup\limits_{\ell=k}^{n} \Lambda_H^{\ell+\delta_n} \right) . \]
		induced by the Legendrian isotopy  
		\[\left( \bigsqcup\limits_{j=-n}^{k} \Lambda_H^j \right) \sqcup \left( \bigsqcup\limits_{\ell=k}^{n} \Lambda_H^{\ell+1-t(1-\delta_n)} \right), \quad t \in \left[ 0,1 \right] \]  
		(see Theorem \ref{thm invariance} or \cite[Proposition 2.6]{EES07}).
		
	\end{enumerate}
	
\end{defin}

\begin{rmks}\label{rmk properties of sigma}
	
	We point out some properties of the $A_{\infty}$-functors
	\[\sigma_{n,k} := \nu_{n,k} \circ \rho_{n,k} : \mcO_{n,k} \to \mcO_{2r} . \]
	\begin{enumerate}
		
		\item Let $n \leq p$ be two positive integers, and let $k \in \{ -n, \dots, n \}$. Recall that we chose $\delta_n$ small enough so that there is no handle slide instant in the Legendrian isotopy 
		\[\bigsqcup_{\ell=-n}^n \Lambda_H^{\ell + \delta_n t}, 0 \leq t \leq 1. \]
		Since $\delta_p \leq \delta_n$, neither is there any handle slide instant in the Legendrian isotopy 
		\[\bigsqcup_{\ell=-n}^n \Lambda_H^{\ell + \delta_p t}, 0 \leq t \leq 1. \]
		Therefore, $\sigma_{p,k}$ agrees with $\sigma_{n,k}$ on $\mcO_{n,k} \subset \mcO_{p,k}$.
		
		\item 	Consider a sequence of integers 
		\[-n \leq j_0 < \dots < j_p \leq k_1 < k_2 \leq \ell_0 < \dots < \ell_q \leq n , \]
		and a sequence of morphisms 
		\begin{align*}
			& \left( x_0, \dots, x_{p-1}, u, y_0, \dots, y_{q-1} \right) \in \mcO_{n,k_i} \left( L_H^{j_0} (E_0), L_H^{j_1} (E_1) \right) \times \dots \times \mcO_{n,k_i} \left( L_H^{j_{p-1}} (E_{p-1}), L_H^{j_p} (E_p) \right) \\
			& \times \mcO_{n,k_i} \left( L_H^{j_p} (E_p), \overline{L_H^{\ell_0}} (E_0') \right) \times \mcO_{n,k_i} \left( \overline{L_H^{\ell_0}} (E_0'), \overline{L_H^{\ell_1}} (E_1') \right) \times \dots \times \mcO_{n,k_i} \left( \overline{L_H^{\ell_{q-1}}} (E_{q-1}'), \overline{L_H^{\ell_q}} (E_q') \right) .
		\end{align*}
		Since the Legendrian isotopy defining $\nu_{n,k_i}$ is 
		\[\left( \bigsqcup\limits_{j=-n}^{k_i} \Lambda_H^j \right) \sqcup \left( \bigsqcup\limits_{\ell=k_i}^{n} \Lambda_H^{\ell+1-t(1-\delta_n)} \right), \quad t \in \left[ 0,1 \right], \] 
		we have
		\[\left\{
		\begin{array}{l}
		\sigma_{n, k_1} \left( x_0, \dots, x_{p-1} \right) = \delta_{1 p} x_0 \\
		\sigma_{n, k_2} \left( y_0, \dots, y_{q-1} \right) = \gamma \left( y_0, \dots, y_{q-1} \right) \\
		\sigma_{n, k_2} \left( x_0, \dots, x_{p-1}, u, y_0, \dots, y_{q-1} \right) = \sigma_{n, k_1} \left( x_0, \dots, x_{p-1}, u, y_0, \dots, y_{q-1} \right) .
		\end{array}
		\right. \]
		
		\item By construction, the $A_{\infty}$-functor $\nu_{n,k}$ sends the continuation element in $\widetilde{\mcO}_{n,k} \left( L_H^k (E), L_H^{k + \delta_n} (E) \right)$ (corresponding to the sum of the minima of $h_{\delta_n, k, E}$) to the continuation element $c_k (E)$ in $\mcO_{2r} \left( L_H^k (E), L_H^{k+1} (E) \right)$.
		In other words, $\sigma_{n,k}$ sends the unit $e_{L_H^k (E)} \in \mcO_{n,k} \left( L_H^k (E), \overline{L_H^k} (E) \right)$ to $c_k (E)$.
		
		\item The map $\sigma_{n,k} : \mcO_{n,k} \left( L_H^j(E) , \overline{L_H^k}(E') \right) \to \mcO_{2r} \left( L_H^j(E), L_H^{k+1} (E') \right)$ is a quasi-somorphism for every $j < k$ and $E, E' \in \mcE$
		
	\end{enumerate}
	
\end{rmks}

We can now state and prove the desired result. 

\begin{lemma}\label{lemma bimodule map}
	
	There exists a degree $0$ closed $\mcO_{2r}$-bimodule map $f : \mcO_{2r} \left( -, - \right) \to \mcO_{2r} \left( -, \gamma (-) \right)$ which sends the unit $e_{L_H^k (E)} \in \mcO_{2r} \left( L_H^k (E), L_H^k (E) \right)$ to the continuation element $c_k (E) \in \mcO_{2r} \left( L_H^k (E), L_H^{k+1} (E) \right) \cap \Gamma$, and such that $f : \mcO_{2r} \left( L_H^j(E) , L_H^k(E') \right) \to \mcO_{2r} \left( L_H^j(E), L_H^{k+1} (E') \right)$ is a quasi-somorphism for every $j< k$ and $E, E' \in \mcE$.
	
\end{lemma}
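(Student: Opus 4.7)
The plan is to define the bimodule map $f$ as the collection of components of the $A_{\infty}$-functors $\sigma_{n,k} : \mcO_{n,k} \to \mcO_{2r}$ already at hand, exploiting the fact that these functors package precisely the data needed to interpolate between the identity and $\gamma$ at the level of hom-spaces.

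Concretely, given input data consisting of left morphisms $x_0, \ldots, x_{p-1}$ between objects $L_H^{j_i}(E_i)$, a bimodule element $u \in \mcO_{2r}(L_H^{j_p}(E_p), L_H^{\ell_q}(E'_q))$, and right morphisms $y_0, \ldots, y_{q-1}$ ending at $L_H^{\ell_0}(E'_0)$, I will set $k := \ell_0$, pick $n$ large enough that all indices $j_i, \ell_i$ lie in $[-n, n]$, and lift the sequence to $\mcO_{n,k}$ by sending the terminal object $L_H^{\ell_0}(E'_0)$ to its barred copy $\overline{L_H^{\ell_0}}(E'_0)$ and every other object to its unbarred version. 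All hom-spaces involved then coincide with the corresponding hom-spaces of $\mcO_{2r}$ by Definition \ref{definition category O n,k}, so the lift is canonical, and I put
$$f(x_0, \ldots, x_{p-1}, u, y_0, \ldots, y_{q-1}) := \sigma_{n,k}^{p+q+1}(x_0, \ldots, x_{p-1}, u, y_0, \ldots, y_{q-1}).$$
By construction of $\nu_{n,k}$, this lands in $\mcO_{2r}(L_H^{j_0}(E_0), L_H^{\ell_0+1}(E'_0)) = \mcO_{2r}(L_H^{j_0}(E_0), \gamma L_H^{\ell_0}(E'_0))$, which is the correct target. Independence of the choice of $n$ follows from item~1 of Remarks \ref{rmk properties of sigma}, and the degrees match ($\sigma_{n,k}^{p+q+1}$ is of degree $-p-q$, exactly as a degree-$0$ bimodule map component should be).

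The main step is to verify closedness, i.e.\ $\mu^1_{\mathrm{Mod}_{\mcO_{2r}, \mcO_{2r}}}(f) = 0$. I plan to obtain this by simply unpacking the $A_{\infty}$-functor equation for $\sigma_{n,k}$ applied to the lifted sequence. On the left hand side of that equation, the operations $\mu_{\mcO_{n,k}}$ acting on subsequences lying entirely in the $x$'s, entirely in the $y$'s, or crossing through $u$ produce exactly the three internal-differential terms of the bimodule closedness condition, because the relevant operations in $\mcO_{n,k}$ coincide with those of $\mcO_{2r}$ by construction of the hom-spaces. On the right hand side, item~2 of Remarks \ref{rmk properties of sigma} forces purely-left subsequences in the partition to be singletons (higher-arity terms vanish and the linear term is the identity), each purely-right subsequence to contribute a $\gamma$, and the unique middle subsequence through $u$ to contribute $f$; the resulting sum matches precisely the $\mu_{\mcM_2}$-term of the bimodule differential for the pullback bimodule $\mcM_2 = \mcO_{2r}(-, \gamma(-))$ (Definition \ref{definition pullback bimodule}), since that operation partitions the right inputs and applies $\gamma$ to each block while keeping the left inputs as singletons.

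The two remaining properties are immediate from items~3 and~4 of Remarks \ref{rmk properties of sigma}: taking $p=q=0$ yields $f(e_{L_H^k(E)}) = \sigma_{n,k}^1(e_{L_H^k(E)}) = c_k(E)$, and for $j<k$ the linear component $f : \mcO_{2r}(L_H^j(E), L_H^k(E')) \to \mcO_{2r}(L_H^j(E), L_H^{k+1}(E'))$ agrees with $\sigma_{n,k}^1 : \mcO_{n,k}(L_H^j(E), \overline{L_H^k}(E')) \to \mcO_{2r}(L_H^j(E), L_H^{k+1}(E'))$, which is a quasi-isomorphism by assumption. I expect the only real difficulty to be the careful bookkeeping needed to match the right hand side of the $A_{\infty}$-functor equation against the right-action piece of the bimodule differential; the bridge is the observation that the barred-target encoding inside $\mcO_{n,k}$ realizes the pullback by $\gamma$ once $\nu_{n,k}$ is applied.
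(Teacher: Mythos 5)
Your overall strategy coincides with the paper's: define $f$ by feeding lifted sequences into the functors $\sigma_{n,k}$ and deduce closedness from their $A_{\infty}$-functor equations, then read off the unit and quasi-isomorphism properties from items 3 and 4 of Remarks \ref{rmk properties of sigma}. However, your choice of lift is wrong, and the closedness argument collapses because of it. You set $k$ to be the index $\ell_0$ of the \emph{terminal} object $Y_0$ of the right-hand chain and bar only that object. Since morphisms in $\mcO_{2r}$ only go from smaller to larger Adams index, $Y_0$ carries the \emph{largest} right-hand index, so in $\mcO_{n,\ell_0}$ the target $Y_q$ of $u$ and the intermediate objects $Y_{q-1},\dots,Y_1$ (all of smaller index) have no barred counterpart and your lift leaves them unbarred. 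Item 2 of Remarks \ref{rmk properties of sigma} then works against you: $\sigma_{n,k}$ acts as $\gamma$ only on blocks of morphisms between \emph{barred} objects, and as the identity (with vanishing higher components) on blocks between unbarred ones. With your lift the unique ``crossing'' block on the right-hand side of the $A_{\infty}$-functor equation must contain the last input $z_1 \in \mcD(Y_1,\overline{Y_0})$ and hence swallows the entire tail, so no term of the form $\mu_{\mcO_{2r}}(\dots, f(\dots,u,\dots), \gamma(\dots),\dots,\gamma(\dots))$ with a nonempty $\gamma$-tail is produced — yet these are exactly the terms required by the $\mu_{\mcM_2}$-part of the bimodule differential for $\mcM_2 = \mcO_{2r}(-,\gamma(-))$. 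Concretely, already for $p=0$, $q=1$ the functor equation yields $\mu^2(u,\sigma^1_{n,\ell_0}(z))$ (a composition through $L_H^{\ell_1}$, because $\sigma^1$ fixes the unbarred $u$), whereas closedness demands $\mu^2(f(u),\gamma(z))$ (a composition through $L_H^{\ell_1+1}$); these do not agree, so the map you define is not closed.

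The correct choice — the one the paper makes — is to take $k$ to be the Adams index of the \emph{target of the bimodule element $u$}, i.e.\ the smallest right-hand index, and to lift so that $u$ lands in $\overline{L_H^{k}}$ and \emph{all} subsequent objects are barred while the left-hand ones stay unbarred. Item 2 then delivers exactly what your matching argument needs: identity on left blocks, $\gamma$ on right blocks, and — a point your write-up also omits — independence of $\sigma_{n,k}$ from $k$ on crossing blocks. That last property becomes essential with the corrected definition, because the terms $f(\dots,\mu_{\mcM_1}(\dots,u,\dots,y),\dots)$ of the bimodule differential are, by definition of $f$, computed with $\sigma_{n,\ell_s}$ for the shifted index of the new target of the absorbed bimodule element, and must be converted back to $\sigma_{n,k}$-terms before the $A_{\infty}$-functor equation can be invoked. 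Your remaining points (well-definedness via item 1, the degree count, the unit going to the continuation element, and the quasi-isomorphism statement) are fine.
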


\begin{proof}
	
	Consider a sequence  
	\[(j_0, E_0) < \dots < (j_p, E_p) \leq (k ,E) = (\ell_0, E_0') < \dots < (\ell_q, E_q') , \]
	and a sequence of morphisms 
	\begin{align*}
	& \left( x_0, \dots, x_{p-1}, u, y_0, \dots, y_{q-1} \right) \in \mcO_{2r} \left( L_H^{j_0} (E_0), L_H^{j_1} (E_1) \right) \times \dots \times \mcO_{2r} \left( L_H^{j_{p-1}} (E_{p-1}), L_H^{j_p} (E_p) \right) \\
	& \times \mcO_{2r} \left( L_H^{j_p} (E_p), L_H^{k} (E_0') \right) \times \mcO_{2r} \left( L_H^{k} (E_0'), L_H^{\ell_1} (E_1') \right) \times \dots \times \mcO_{2r} \left( L_H^{\ell_{q-1}} (E_{q-1}'), L_H^{\ell_q} (E_q') \right) .
	\end{align*}
	We choose $n \geq 1$ such that $-n \leq j_0 \leq \ell_q \leq n$, and we set 
	\[f \left( x_0, \dots, x_{p-1}, u, y_0, \dots, y_{q-1} \right) := \sigma_{n, k} \left( x_0, \dots, x_{p-1}, u, y_0, \dots, y_{q-1} \right) \in \mcO_{2r} \left( L_H^{j_0} (E_0), \gamma L_H^{\ell_q} (E_q') \right), \]
	where on the right hand side we consider that 
	\begin{align*}
	& \left( x_0, \dots, x_{p-1}, u, y_0, \dots, y_{q-1} \right) \in \mcO_{n,k} \left( L_H^{j_0} (E_0), L_H^{j_1} (E_1) \right) \times \dots \times \mcO_{n,k} \left( L_H^{j_{p-1}} (E_{p-1}), L_H^{j_p} (E_p) \right) \\
	& \times \mcO_{n,k} \left( L_H^{j_p} (E_p), \overline{L_H^k} (E_0') \right) \times \mcO_{n,k} \left( \overline{L_H^k} (E_0'), \overline{L_H^{\ell_1}} (E_1') \right) \times \dots \times \mcO_{n,k} \left( \overline{L_H^{\ell_{q-1}}} (E_{q-1}'), \overline{L_H^{\ell_q}} (E_q') \right) .
	\end{align*}
	Observe that $f$ is well defined (it does not depend on the choice of $n$) according to the first item of Remark \ref{rmk properties of sigma}.

	We now verify that $f$ is closed. 
	According to Definition \ref{definition bimodule}, we have 
	\begin{align*}
	\mu_{\mathrm{Mod}_{\mcC, \mcC}}^1 & \left( f \right) \left( x_0, \dots, x_{p-1}, u, y_0, \dots, y_{q-1} \right) \\
	& = \sum \sigma_{n, k} \left( \dots, \mu_{\mcO_{2r}} \left( \dots \right), \dots, u, \dots \right) \\ 
	& + \sum \sigma_{n, \ell_s} \left( \dots, \mu_{\mcO_{2r}} \left( x_r, \dots, x_{p-1}, u, y_0, \dots, y_{s-1} \right), \dots \right) \\
	& + \sum \sigma_{n, k} \left( \dots, u, \dots, \mu_{\mcO_{2r}} \left( \dots \right), \dots \right) \\ 
	& + \sum \mu_{\mcO_{2r}} \left( \dots, \sigma_{n, k} \left( \dots, u, \dots \right), \gamma \left( \dots \right), \dots, \gamma \left( \dots \right) \right) .
	\end{align*}
	Now according to the second item of Remark \ref{rmk properties of sigma}, we have 
	\begin{align*}
	\sum \sigma_{n, \ell_s} & \left( \dots, \mu_{\mcO_{2r}} \left( x_r, \dots, x_{p-1}, u, y_0, \dots, y_{s-1} \right), \dots \right) \\
	& = \sum \sigma_{n, k} \left( \dots, \mu_{\mcO_{2r}} \left( x_r, \dots, x_{p-1}, u, y_0, \dots, y_{s-1} \right), \dots \right)
	\end{align*}
	and 
	\begin{align*}
	\sum & \mu_{\mcO_{2r}} \left( \dots, \sigma_{n, k} \left( \dots, u, \dots \right), \gamma \left( \dots \right), \dots, \gamma \left( \dots \right) \right) \\
	& = \sum \mu_{\mcO_{2r}} \left( \sigma_{n, k} \left( \dots \right), \dots, \sigma_{n, k} \left( \dots \right), \sigma_{n, k} \left( \dots, u, \dots \right), \sigma_{n, k} \left( \dots \right), \dots, \sigma_{n, k} \left( \dots \right) \right) .
	\end{align*}
	Therefore, we get 
	\[\mu_{\mathrm{Mod}_{\mcC, \mcC}}^1 \left( f \right) \left( x_0, \dots, x_{p-1}, u, y_0, \dots, y_{q-1} \right) = 0 \]
	from the fact that $\sigma_{n, k}$ is an $A_{\infty}$-functor. 
	
	Now $f$ sends the unit $e_{L_H^k (E)} \in \mcO_{2r} \left( L_H^k (E), L_H^k (E) \right)$ to the continuation element $c_k (E) \in \mcO_{2r} \left( L_H^k (E), L_H^{k+1} (E) \right) \cap \Gamma$ according to the third item of Remark \ref{rmk properties of sigma}. 
	Finally, the map $f : \mcO_{2r} \left( L_H^j(E) , L_H^k(E') \right) \to \mcO_{2r} \left( L_H^j(E), L_H^{k+1} (E') \right)$ is a quasi-somorphism for every $j< k$ and $E, E' \in \mcE$ according to the last item of Remark \ref{rmk properties of sigma}.
	
\end{proof}

\subsubsection{Proof of the main result}

We end the section with the proof of Theorem \ref{thm mainthm restatement} (Theorem \ref{thm mainthm} in the Introduction). 

Recall that we denote by $\mathbf{F} \left[ t_m \right]$ the augmented Adams-graded associative algebra generated by a variable $t_m$ of bidegree $(m, 1)$, and by $t_m \mathbf{F} \left[ t_m \right]$ its augmentation ideal (or equivalently, the ideal generated by $t_m$).
The key result is the following.

\begin{lemma}\label{lemma mapping torus of gamma}
	
	The mapping torus of $\gamma$ is quasi-equivalent to the Adams-graded $A_{\infty}$-category $\overrightarrow{\mcF uk} (\mathbf{L}) \oplus \left( t_{2r} \mathbf{F} \left[ t_{2r} \right] \otimes \mcF uk (\mathbf{L}) \right)$.
	
\end{lemma}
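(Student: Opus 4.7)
The plan is to apply Theorem \ref{thm mapping torus in weak situation} directly to the quasi-autoequivalence $\gamma : \mcO \to \mcO$, taking the integer parameter to be $m = 2r$. This choice is dictated by the grading: a morphism in $\mcO(L_H^i(E), L_H^j(E'))$ has Adams degree $(j-i)$ by the $\mathbf{Z}$-splitting, and its cohomological degree (inherited from $\mcA$ via $\zeta_1$ and $\zeta_2$) is the Conley--Zehnder index of the corresponding Reeb chord, which in the model computation of Example \ref{example CZ index in cotangent case} equals $2r(j-i) + \mathrm{ind}(q_0)$. Passing to $\mcO_{2r}$ via Definition \ref{definition forgetful functor} therefore recovers exactly the Floer/Maslov grading of the Fukaya category on generators.

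I first verify that the three hypotheses of Theorem \ref{thm mapping torus in weak situation} are met. The functor $\gamma$ is a quasi-equivalence by Lemma \ref{lemma gamma is a quasi-equivalence}, and the bijection $(n,E) \mapsto L_H^n(E)$ is a $\mathbf{Z}$-splitting of $\mathrm{ob}(\mcO)$ compatible with $\gamma$ by Definition \ref{definition functor gamma}. Weak directedness with respect to this splitting is immediate from Definition \ref{definition category O}: the hom space $\mcO(L_H^i(E), L_H^j(E'))$ vanishes whenever $(i,E) > (j,E')$, and in particular whenever $i > j$. Finally, Lemma \ref{lemma bimodule map} provides a closed degree $0$ $\mcO_{2r}$-bimodule map $f : \mcO_{2r}(-,-) \to \mcO_{2r}(-, \gamma(-))$ which sends each unit $e_{L_H^k(E)}$ to the continuation element $c_k(E) \in \Gamma$, and for which $f : \mcO_{2r}(L_H^i(E), L_H^j(E')) \to \mcO_{2r}(L_H^i(E), L_H^{j+1}(E'))$ is a quasi-isomorphism whenever $i < j$. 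This is precisely the quasi-isomorphism hypothesis required by the theorem.

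Applying Theorem \ref{thm mapping torus in weak situation} then yields a quasi-equivalence of Adams-graded $A_\infty$-categories
\[
\mathrm{MT}(\gamma) \simeq \mcO_{2r}^0 \oplus \bigl( t_{2r} \mathbf{F}[t_{2r}] \otimes \mcO_{2r}\bigl[f(\mathrm{units})^{-1}\bigr]^0 \bigr).
\]
By construction $f(\mathrm{units}) = \Gamma$ is the full set of continuation elements, so the localization $\mcO_{2r}[f(\mathrm{units})^{-1}] = \mcO_{2r}[\Gamma^{-1}]$. Lemma \ref{lemma localization of O equals CF} identifies $\mcO_{2r}^0 \simeq \overrightarrow{\mcF uk}(\mathbf{L}_H)$ and $\mcO_{2r}[\Gamma^{-1}]^0 \simeq \mcF uk(\mathbf{L}_H)$. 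To finish, I need to observe that since each $L_H(E)$ is obtained from $L(E)$ by applying the contactomorphism $\phi_H$ (which is generated by a contact isotopy that restricts over the zero section to a Hamiltonian isotopy of each Lagrangian $L(E)$), the families $\mathbf{L}_H$ and $\mathbf{L}$ are Hamiltonian isotopic. The corresponding Fukaya $A_\infty$-categories (directed or not) are therefore quasi-equivalent, giving the desired identification.

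There is no serious obstacle left in this step: all the heavy lifting has been done in Section \ref{subsection mapping torus} (the algebraic result) and in the construction of the bimodule map $f$ via the functors $\sigma_{n,k}$ of Section \ref{subsection bimodule map}. The only point that requires a bit of attention is bookkeeping of the Adams versus cohomological grading, which as noted above is precisely the reason for the shift $m = 2r$; and the identification of $\mcF uk(\mathbf{L}_H)$ with $\mcF uk(\mathbf{L})$, which is a standard invariance property of the Fukaya category under Hamiltonian isotopy of its objects.
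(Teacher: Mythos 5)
Your proposal is correct and follows essentially the same route as the paper: verify the hypotheses of Theorem \ref{thm mapping torus in weak situation} with $m=2r$ using Lemma \ref{lemma gamma is a quasi-equivalence} and the bimodule map of Lemma \ref{lemma bimodule map}, identify $f(\mathrm{units})=\Gamma$, apply Lemma \ref{lemma localization of O equals CF}, and conclude by invariance of the Fukaya category under the Hamiltonian deformation relating $\mathbf{L}_H$ and $\mathbf{L}$. Your added remarks on weak directedness and on why the grading shift is $2r$ are consistent with the paper and fill in details it leaves implicit.
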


\begin{proof}
	
	Let $f : \mcO_{2r} \left( -, - \right) \to \mcO_{2r} \left( -, \gamma (-) \right)$ be the degree $0$ closed bimodule map of Lemma \ref{lemma bimodule map}. According to the latter, the hypotheses of Theorem \ref{thm mapping torus in weak situation} are satisfied, and $f(\mathrm{units}) = \Gamma$. 
	Thus the mapping torus of $\gamma$ is quasi-equivalent to the Adams-graded $A_{\infty}$-algebra $\mcO_{2r}^0 \oplus \left( t_{2r} \mathbf{F} \left[ t_{2r} \right] \otimes \mcO_{2r} \left[ \Gamma^{-1} \right]^0 \right)$ (recall that if $\mcC$ is an $A_{\infty}$-category equipped with a $\mathbf{Z}$-splitting $\mathbf{Z} \times \mcE \simeq \mathrm{ob} \left( \mcC \right)$, we denote by $\mcC^0$ the full $A_{\infty}$-subcategory of $\mcC$ whose set of objects corresponds to $\{ 0 \} \times \mcE$).
	According to Lemma \ref{lemma localization of O equals CF} we have
	\[\mcO_{2r}^0 \simeq \overrightarrow{\mcF uk} (\mathbf{L}_H) \text{ and } \mcO_{2r} \left[ \Gamma^{-1} \right]^0 \simeq \mcF uk (\mathbf{L}_H) . \]
	The result follows from invariance of the Fukaya category
	\[\overrightarrow{\mcF uk} (\mathbf{L}_H) \simeq \overrightarrow{\mcF uk} (\mathbf{L}) \text{ and } \mcF uk (\mathbf{L}_H) \simeq \mcF uk (\mathbf{L}) . \]
	
\end{proof}

We now give the proof of Theorem \ref{thm mainthm restatement} (Theorem \ref{thm mainthm} in the introduction).
According to \cite[Theorem 2.4]{LPWZ08}, Koszul duality holds for the augmented Adams-graded DG-algebra $CE_{-*}^r \left( \mathbf{\Lambda^{\circ}} \right)$ because it is \emph{Adams connected} (see \cite[Definition 2.1]{LPWZ08}). Indeed, recall from section \ref{subsection invariants and statement of the result} that the Adams-degree in $CE_{-*}^r \left( \mathbf{\Lambda^{\circ}} \right)$ of a Reeb chord $c$ is the number of times $c$ winds around the fiber. 
Besides, recall from section \ref{subsection invariants and statement of the result} that there is a coaugmented Adams-graded $A_{\infty}$-cocategory $LC_* \left( \mathbf{\Lambda^{\circ}} \right)$ such that 
\[CE_{-*}^r \left( \mathbf{\Lambda^{\circ}} \right) = \Omega \left( LC_* \left( \mathbf{\Lambda^{\circ}} \right) \right) \text{ and } LA^* \left( \mathbf{\Lambda^{\circ}} \right) = LC_* \left( \mathbf{\Lambda^{\circ}} \right)^{\#}. \]
Since there is a quasi-isomorphism $B \left( \Omega C \right) \simeq C$ for every $A_{\infty}$-cocategory $C$ (see \cite[section 2.2.2]{EL21}), it follows that 
\[E \left( CE_{-*}^r \left( \mathbf{\Lambda^{\circ}} \right) \right) = B \left( CE_{-*}^r \left( \mathbf{\Lambda^{\circ}} \right) \right)^{\#} \simeq LC_* \left( \mathbf{\Lambda^{\circ}} \right)^{\#} = LA^* \left( \mathbf{\Lambda^{\circ}} \right) \]
(graded dual preserves quasi-isomorphisms).
Now the quasi-equivalence 
\[LA^* \left( \mathbf{\Lambda^{\circ}} \right) \simeq \overrightarrow{\mcF uk} (\mathbf{L}) \oplus \left( t_{2r} \mathbf{F} \left[ t_{2r} \right] \otimes \mcF uk (\mathbf{L}) \right) \]
follows from Lemmas \ref{lemma A^rond = mapping torus of tau}, \ref{lemma mapping torus of tau = mapping torus of tau_1}, \ref{lemma mapping torus of tau_1 = mapping torus of tau_2}, \ref{lemma mapping torus of tau_2 = mapping torus of gamma} and \ref{lemma mapping torus of gamma}.
This concludes the proof.


\phantomsection 
\addcontentsline{toc}{section}{Bibliography}
\printbibliography

@book {Abb14,
	AUTHOR = {Abbas, Casim},
	TITLE = {An introduction to compactness results in symplectic field
	theory},
	PUBLISHER = {Springer, Heidelberg},
	YEAR = {2014},
	PAGES = {viii+252}
}

@article{Ada56,
	title={On the cobar construction},
	author={Adams, J Frank},
	journal={Proceedings of the {N}ational {A}cademy of {S}ciences of the {U}nited {S}tates of {A}merica},
	volume={42},
	number={7},
	pages={409},
	year={1956},
	publisher={National Academy of Sciences}
}

@article{AH56,
	title={On the chain algebra of a loop space},
	author={Adams, J Frank and Hilton, Peter J},
	journal={Commentarii Mathematici Helvetici},
	volume={30},
	number={1},
	pages={305--330},
	year={1956},
	publisher={Springer}
}

@article {BEHW03,
	AUTHOR = {Bourgeois, F. and Eliashberg, Y. and Hofer, H. and Wysocki, K.
	and Zehnder, E.},
	TITLE = {Compactness results in symplectic field theory},
	JOURNAL = {Geom. Topol.},
	FJOURNAL = {Geometry and Topology},
	VOLUME = {7},
	YEAR = {2003},
	PAGES = {799--888}
}

@article{BEE12,
	title={Effect of {L}egendrian surgery},
	author={Bourgeois, Frédéric and Ekholm, Tobias and Eliashberg, Yakov},
	journal={Geometry \& {T}opology},
	volume={16},
	number={1},
	pages={301--389},
	year={2012},
	publisher={Mathematical Sciences Publishers}
}

@article{BW58,
	title={On contact manifolds},
	author={Boothby, William M and Wang, Hsieu-Chung},
	journal={Annals of Mathematics},
	pages={721--734},
	year={1958},
	publisher={JSTOR}
}

@article{CDGG17,
	title={Geometric generation of the wrapped {F}ukaya category of {W}einstein manifolds and sectors},
	author={Chantraine, Baptiste and Dimitroglou Rizell, Georgios and Ghiggini, Paolo and Golovko, Roman},
	journal={arXiv preprint arXiv:1712.09126},
	year={2017}
}

@article{Che02,
	title={Differential algebra of {L}egendrian links},
	author={Chekanov, Yuri},
	journal={Inventiones mathematicae},
	volume={150},
	number={3},
	pages={441--483},
	year={2002},
	publisher={Springer}
}

@article{Don96,
	title={Symplectic submanifolds and almost-complex geometry},
	author={Donaldson, Simon Kirwan},
	journal={Journal of Differential Geometry},
	volume={44},
	number={4},
	pages={666--705},
	year={1996},
	publisher={Lehigh University}
}

@article {DR16bis,
	AUTHOR = {Dimitroglou Rizell, Georgios},
	TITLE = {Legendrian ambient surgery and {L}egendrian contact homology},
	JOURNAL = {J. Symplectic Geom.},
	FJOURNAL = {The Journal of Symplectic Geometry},
	VOLUME = {14},
	YEAR = {2016},
	NUMBER = {3},
	PAGES = {811--901}
}

@article{DR16,
	title={Lifting pseudo-holomorphic polygons to the symplectisation of ${P} \times \mathbf{R}$ and applications},
	author={Dimitroglou Rizell, Georgios},
	journal={Quantum Topology},
	volume={7},
	number={1},
	pages={29--105},
	year={2016}
}

@inproceedings{Eli98,
	title={Invariants in contact topology},
	author={Eliashberg, Yakov},
	booktitle={Proceedings of the International Congress of Mathematicians},
	volume={2},
	pages={327--338},
	year={1998},
	organization={Berlin}
}

@incollection{EGH00,
	title={Introduction to {S}ymplectic {F}ield {T}heory},
	author={Eliashberg, Yakov and Givental, A and Hofer, Helmut},
	booktitle={Visions in mathematics},
	pages={560--673},
	year={2000},
	publisher={Springer}
}

@article{EES05,
	title={The contact homology of {L}egendrian submanifolds in $\mathbf{R}^{2n+1}$},
	author={Ekholm, Tobias and Etnyre, John and Sullivan, Michael},
	journal={Journal of Differential Geometry},
	volume={71},
	number={2},
	pages={177--305},
	year={2005},
	publisher={Lehigh University}
}

@article{EES05bisbis,
	title={Non-isotopic {L}egendrian submanifolds in $\mathbf{R}^{2n+1}$},
	author={Ekholm, Tobias and Etnyre, John and Sullivan, Michael},
	journal={Journal of Differential Geometry},
	volume={71},
	number={1},
	pages={85--128},
	year={2005},
	publisher={Lehigh University}
}

@article{EES07,
	title={Legendrian contact homology in ${P} \times \mathbf{R}$},
	author={Ekholm, Tobias and Etnyre, John and Sullivan, Michael},
	journal={Transactions of the American Mathematical Society},
	volume={359},
	number={7},
	pages={3301--3335},
	year={2007}
}

@article{EES09,
	title={A duality exact sequence for {L}egendrian contact homology},
	author={Ekholm, Tobias and Etnyre, John B and Sabloff, Joshua M},
	journal={Duke mathematical journal},
	volume={150},
	number={1},
	pages={1--75},
	year={2009},
	publisher={Duke University Press}
}

@article {Ekh08,
	AUTHOR = {Ekholm, Tobias},
	TITLE = {Rational symplectic field theory over {$\Bbb Z_2$} for exact
	{L}agrangian cobordisms},
	JOURNAL = {J. Eur. Math. Soc. (JEMS)},
	FJOURNAL = {Journal of the European Mathematical Society (JEMS)},
	VOLUME = {10},
	YEAR = {2008},
	NUMBER = {3},
	PAGES = {641--704}
}

@article{EL21,
	title={Duality between {L}agrangian and {L}egendrian invariants}, 
	author={Tobias Ekholm and Yanki Lekili},
	year={2021},
	journal={arXiv:1701.01284v5},
}

@article{EO17,
	title={Symplectic and contact differential graded algebras},
	author={Ekholm, Tobias and Oancea, Alexandru},
	journal={Geometry \& Topology},
	volume={21},
	number={4},
	pages={2161--2230},
	year={2017},
	publisher={Mathematical Sciences Publishers}
}

@article{EN15,
	title={Legendrian contact homology in the boundary of a subcritical {W}einstein 4-manifold},
	author={Ekholm, Tobias and Ng, Lenhard},
	journal={Journal of Differential Geometry},
	volume={101},
	number={1},
	pages={67--157},
	year={2015},
	publisher={Lehigh University}
}

@article{Gan13,
	title={Symplectic cohomology and duality for the wrapped {F}ukaya category},
	author={Ganatra, Sheel},
	journal={arXiv preprint arXiv:1304.7312},
	year={2013}
}

@article{GPS19,
	title={Sectorial descent for wrapped {F}ukaya categories},
	author={Ganatra, Sheel and Pardon, John and Shende, Vivek},
	journal={arXiv:1809.03427v2},
	year={2019}
}

@article{GPS20,
	title={Covariantly functorial wrapped {F}loer theory on {L}iouville sectors},
	author={Ganatra, Sheel and Pardon, John and Shende, Vivek},
	journal={Publications math{\'e}matiques de l'IH{\'E}S},
	volume={131},
	number={1},
	pages={73--200},
	year={2020},
	publisher={Springer}
}

@article {Kar21,
	AUTHOR = {Kartal, Yusuf Bar\i \c{s}},
	TITLE = {Dynamical invariants of mapping torus categories},
	JOURNAL = {Adv. Math.},
	FJOURNAL = {Advances in Mathematics},
	VOLUME = {389},
	YEAR = {2021},
	PAGES = {Paper No. 107882, 95}
}

@article {Kar21bis,
	AUTHOR = {Kartal, Yusuf Bar\i \c{s}},
	TITLE = {Distinguishing open symplectic mapping tori via their wrapped
	{F}ukaya categories},
	JOURNAL = {Geom. Topol.},
	FJOURNAL = {Geometry \& Topology},
	VOLUME = {25},
	YEAR = {2021},
	NUMBER = {3},
	PAGES = {1551--1630}
}

@article{LPWZ08,
	title={Koszul equivalences in ${A}_\infty$-algebras},
	author={Lu, D-M and Palmieri, John H and Wu, Q-S and Zhang, James J},
	journal={New York Journal of Mathematics},
	volume={14},
	year={2008}
}

@article {LO06,
	AUTHOR = {Lyubashenko, Volodymyr and Ovsienko, Sergiy},
	TITLE = {A construction of quotient ${A}_\infty$-categories},
	JOURNAL = {Homology Homotopy Appl.},
	FJOURNAL = {Homology, Homotopy and Applications},
	VOLUME = {8},
	YEAR = {2006},
	NUMBER = {2},
	PAGES = {157--203}
}

@article {PR21,
	AUTHOR = {Pan, Yu and Rutherford, Dan},
	TITLE = {Functorial {LCH} for immersed {L}agrangian cobordisms},
	JOURNAL = {J. Symplectic Geom.},
	FJOURNAL = {The Journal of Symplectic Geometry},
	VOLUME = {19},
	YEAR = {2021},
	NUMBER = {3},
	PAGES = {635--722}
}

@phdthesis{Pet22,
	TITLE = {Invariants of the Legendrian lift of an exact Lagrangian submanifold in the circular contactization of a Liouville manifold},
	AUTHOR = {Petr, Adrian},
	URL = {https://hal.archives-ouvertes.fr/tel-03793906},
	SCHOOL = {Nantes Université},
	YEAR = {2022}
}

@article{RS93,
	title={The {M}aslov index for paths},
	author={Robbin, Joel and Salamon, Dietmar},
	journal={Topology},
	volume={32},
	number={4},
	pages={827--844},
	year={1993},
	publisher={Citeseer}
}

@article{Sab03,
	title={Invariants of {L}egendrian knots in circle bundles},
	author={Sabloff, Joshua M},
	journal={Communications in Contemporary Mathematics},
	volume={5},
	number={04},
	pages={569--627},
	year={2003},
	publisher={World Scientific}
}

@book{Sei08,
	title={Fukaya categories and Picard-Lefschetz theory},
	author={Seidel, Paul},
	volume={10},
	year={2008},
	publisher={European Mathematical Society}
}

@article {Sei08bis,
	AUTHOR = {Seidel, Paul},
	TITLE = {{$A_\infty$}-subalgebras and natural transformations},
	JOURNAL = {Homology Homotopy Appl.},
	FJOURNAL = {Homology, Homotopy and Applications},
	VOLUME = {10},
	YEAR = {2008},
	NUMBER = {2},
	PAGES = {83--114}
}

@article{Sei13,
	title={Lectures on categorical dynamics and symplectic topology},
	author={Seidel, Paul},
	journal={Notes, available on the author’s homepage},
	year={2013}
}

\end{document}